\newtheorem{theorem}{Theorem}[section]
\newtheorem{lemma}{Lemma}[section]
\newtheorem{proposition}{Proposition}[section]
\newtheorem{corollary}{Corollary}[section]
\newtheorem{assumption}{A Priori Assumption}
\theoremstyle{definition}
\newtheorem*{definition}{Definition}
\theoremstyle{remark}
\newtheorem{remark}{Remark}[section]
\newcommand{\oht}{\mathfrak{H}}
\newcommand{\nht}{\mathcal{H}}
\newcommand{\pv}{\,\text{p.v.}}
\newcommand{\sgn}{\,\text{sgn}}
\newcommand{\ivec}{\mathbbm{i}}
\newcommand{\jvec}{\mathbbm{j}}
\newcommand{\kvec}{\mathbbm{k}}
\newcommand{\xnew}{\mathfrak{x}}
\newcommand{\ynew}{\mathfrak{y}}
\newcommand{\znew}{\mathfrak{z}}
\newcommand{\vecpart}{\mathfrak{V}}
\numberwithin{equation}{section}
\title{A Justification of the Modulation Approximation to the 3D Full Water Wave Problem}
\author{Nathan Totz}
\address{Department of Mathematics \\
Duke University \\
Durham, NC, 27708}
\thanks{The author would like to thank Sijue Wu for her discussions on her formulation of the 3D water wave problem, as well as her helpful comments and suggestions on the draft of this paper.  The author was supported in part by NSF grant DMS-0800194.}
\begin{document}

\maketitle

\begin{abstract}
We consider modulational solutions to the 3D inviscid incompressible irrotational infinite depth water wave problem neglecting surface tension.  For such solutions, it is well known that one formally expects the modulation to be a profile traveling at group velocity and governed by a 2D hyperbolic cubic nonlinear Schr\"odinger equation.  In this paper we justify this fact by providing rigorous error estimates in Sobolev spaces.  We reproduce the multiscale calculation to derive an approximate wave packet-like solution to the evolution equations with mild quadratic nonlinearities constructed by Sijue Wu.  Then we use the energy method along with the method of normal forms to provide suitable a priori bounds on the difference between the true and approximate solutions.
\end{abstract}

\section{Introduction}

The three dimensional water wave problem concerns the motion of an interface separating a region of zero density (e.g., air) from an inviscid, incompressible, irrotational fluid of uniform density that is under the influence of gravity.  We assume that the fluid region is below the air region, that the fluid region is of infinite depth, that the interface approaches a horizontal plane at infinity and that the velocity and acceleration tend to zero at spatial infinity.  Denote by $\kvec = \langle 0, 0, 1\rangle$ the upward vertical unit vector, $\Omega(t)$ the fluid region at time $t \geq 0$ and $\Sigma(t)$ the interface at time $t \geq 0$.  Then if surface tension is neglected, the motion of the fluid is described by
\begin{align}\label{EulerVector}
\mathbf{v}_t + (\mathbf{v} \cdot \nabla)\mathbf{v} = -\kvec - \nabla \mathbf{p} & \qquad \text{on }\Omega(t), \, t \geq 0 \notag \\
\nabla \cdot \mathbf{v} = 0, \quad \nabla \times \mathbf{v} = 0 & \qquad \text{on }\Omega(t), \, t \geq 0 \\
\mathbf{p} = 0 & \qquad \text{on }\Sigma(t), \, t \geq 0 \notag \\
(1, \mathbf{v})\text{ is tangent to }(t, \Sigma(t)). & \qquad \notag
\end{align}
where $\mathbf{v}$ is the fluid velocity and $\mathbf{p}$ is the fluid pressure.  In \cite{WuLocal3D}, the evolution of this system was shown to be equivalent to a system for the motion of the evolution of the free surface $\Sigma(t)$.  Specifically, if one parametrizes $\Sigma(t)$ by $\Xi(\alpha, \beta, t) \in \mathbb{R}^3$ with Lagrangian coordinates $\alpha, \beta$, the main evolution equation for the interface take the compact form
\begin{equation}\label{WaterWaveLagrangeFirstPass}
\Xi_{tt} + \kvec = \mathfrak{a}(\Xi_\alpha \times\Xi_\beta)
\end{equation}
\begin{equation}\label{XiTAnalyticFirstPass}
(I - \oht)\Xi_t = 0
\end{equation}
where $\mathfrak{a}(\alpha, \beta, t) = -\frac{1}{|\Xi_\alpha \times \Xi_\beta|}\frac{\partial p}{\partial \mathbf{n}}$ and $\mathbf{n}$ is the outward-pointing unit normal of $\Sigma(t)$.  Just as in the 2D problem, \eqref{XiTAnalyticFirstPass} is a condition defined entirely on $\Sigma(t)$ that is equivalent to the fact that the fluid is incompressible and irrotational in $\Omega(t)$.  The operator $\oht$ is called the Hilbert transform associated to $\Xi$; it serves the same purpose and has many of the same properties as the ordinary Hilbert transform associated to a curve in the complex plane, except that it requires Clifford analysis to define (see Proposition \ref{PotentialTheoryFacts}  for a precise definition).

Our goal is to study a special class of solutions to \eqref{WaterWaveLagrangeFirstPass}-\eqref{XiTAnalyticFirstPass} that are close to a wave packet propagating in the $\ivec = \langle 1, 0, 0\rangle$ direction with a special scaling, i.e., a solution that when written in coordinates is of the form
\begin{equation}\label{WavePacketVector}
\Xi(\alpha, \beta, t) \sim \left\langle \alpha, \beta, 0 \right\rangle + \epsilon \left\langle \Re(Ae^{i(k\alpha + \omega t)}), 0, \Im(Ae^{i(k\alpha + \omega t)})\right\rangle + o(\epsilon),
\end{equation} where $k > 0$ is the fixed wave number of the wave packet, $\omega$ is the wave frequency related to $k$ through the dispersion relation $\omega^2 = k$, and $A = A(\epsilon\alpha, \epsilon\beta, \epsilon t, \epsilon^2 t)$ is complex-valued.  The special scaling relationships in the approximate solution are natural to study when applying the methods of multiscale analysis.

Many authors (e.g., \cite{ZakharovInfiniteDepth}, \cite{AblowitzSegur}, \cite{CraigSchantzSulem}) have sought an approximate solution to the water wave problem of this form along with higher order correctors chosen so that when the approximate solution is substituted into the water wave equations the residual terms are physically of size $O(\epsilon^4)$.  If, in the case of infinite depth, one performs this process on the Euler equations, one finds that the amplitude $A = A(\epsilon(\alpha + \omega^\prime t), \epsilon\beta, \epsilon^2t) := A(X, Y, T)$ of the wave packet is a traveling-wave profile that travels at group velocity $\omega^\prime(k)$ for times on the order $O(\epsilon^{-1})$, and $A = A(X, Y, T)$ satisfies the so-called ``hyperbolic'' cubic nonlinear Schr\"odinger equation (HNLS) for times on the order $O(\epsilon^{-2})$:
\begin{equation}\label{HNLSVector}
i A_T + a A_{XX} - b A_{YY} + c A|A|^2 = 0
\end{equation}
where $a, b, c$ are positive constants depending on $k$ and $\omega$.  However, this formal calculation assumes that a solution can be developed in an asymptotic series in $\epsilon$, a fact that needs justification.  This sort of justification should not be taken for granted: there are examples of modulation approximations derived by seemingly reasonable formal arguments which do not give the correct dynamics (c.f. \cite{SchneiderValidityNW}, \cite{GallaySchneiderKP}).  Notice that, because of the slow time dependence of $A$, the HNLS dynamics are not apparent in the full solution unless we consider solutions on time scales of the order $O(\epsilon^{-2})$.  Typically, a rigorous justification of this approximation to the water wave problem would entail showing the following steps:
\begin{itemize}
\item[(i)]{The HNLS equation \eqref{HNLSVector} is locally well-posed in a suitable function space.}
\item[(ii)]{An approximate solution $\tilde{\Xi}$ of the form \eqref{WavePacketVector} can be found which formally satisfies the equation for $\Xi$ up to residual terms of physical size at most $o(\epsilon^3)$.}
\item[(iii)]{The system \eqref{EulerVector} is well posed on a space containing the approximate solutions $\tilde{\Xi}$, and solutions $\Xi$ initially close to wave packet-like solutions exist for times on the order $O(\epsilon^{-2})$.}
\item[(iv)]{The remainder $\Xi - \tilde{\Xi}$ is of size at most $o(\epsilon)$ in a suitable function space.}
\end{itemize}

Step (i) can be shown to hold in a variety of function spaces; standard results are collected in \cite{CazenaveSemilinearSchrodingerEquations}.  In this vein, we mention the global well-posedness results of Ghidaglia and Saut for small data in \cite{GhidagliaSautDaveyStewartson}.  Still, for large data, little is known about the well-posedness of HNLS beyond local well-posedness in $H^s$ for $s \geq 2$.  As mentioned earlier, many authors have performed Step (ii), and we mention in particular the more rigorous work \cite{CraigSchantzSulem} that gives suitable estimates of the residual in $L^q(\mathbb{R}^2)$ Sobolev spaces for $2 < q < \infty$ in the more general finite depth case.

Step (iii) and Step (iv) have not been performed for the 3D problem to date; the purpose of this paper to perform them along with appropriate versions of Steps (i) and (ii).  As in the 2D problem, the main difficulty in completing this part of the program is showing the existence of wave packet-like solutions to the water wave problem on $O(\epsilon^{-2})$ time scales.  Since the $L^2$ norm of a wave packet is even larger in 3D than in 2D, this difficulty is correspondingly magnified.  Indeed, the $L^2$ norm of such wave packets are $O(1)$ in $L^2$ and so do not even vanish as $\epsilon \to 0$.  In \cite{TotzWu2DNLS}, the difficulties were resolved in the 2D setting by finding a formulation of the water wave equations having no quadratic nonlinearities.  As recognized in \cite{KSMCubicNonlinearityLongtimeRemainder}, justifying modulation approximations is made much simpler for such equations.  This was accomplished by means of a fully nonlinear change of variables.

In \cite{WuGlobal3D}, Wu developed an analogue of this change of variables in 3D (denoted in this paper by $\kappa$) depending on the unknown $\Xi$ and used it along with the method of invariant vector fields to prove the global well-posedness of the system \eqref{WaterWaveLagrangeFirstPass}-\eqref{XiTAnalyticFirstPass} by constructing a system of equations in $\Xi \circ \kappa^{-1} =: \zeta = \xnew\ivec + \ynew\jvec + \znew\kvec$ and $\chi := (I - \nht)\znew\kvec$ which is equivalent to the system \eqref{WaterWaveLagrangeFirstPass}-\eqref{XiTAnalyticFirstPass} and is of the form
\begin{equation}\label{NewWaterWaveFirstPass}
(\partial_t + (\kappa_t \circ \kappa^{-1} \cdot \nabla))^2\chi - \zeta_\beta \times \chi_\alpha + \zeta_\alpha \times \chi_\beta = G
\end{equation}
\begin{equation}\label{ZetaTAnalyticFirstPass}
(I - \nht)(\partial_t + (\kappa_t \circ \kappa^{-1} \cdot \nabla))\zeta = 0
\end{equation} where $G$ consists of terms of third and higher order terms, and $\nht$ now denotes the Hilbert transform associated to $\zeta$.  Note that we have abused notation slightly here by reusing $\alpha, \beta$ to denote the independent variables of the transformed problem.

The outline of our strategy is the same as that used in \cite{TotzWu2DNLS}: rather than find a formal approximation of $\Xi$ and justify it directly, we instead find a formal approximation $\tilde{\zeta}$ of the form \eqref{WavePacketVector} for $\zeta$, and then use energy estimates to construct a priori bounds for the remainder $\zeta - \tilde{\zeta}$.  We find in the course of the calculation that this formal approximation $\tilde{\zeta}$ is only in the usual $L^2$-Sobolev space provided we take $A$ in an $L^2$-Sobolev space with some mild decay.  We therefore show that \eqref{HNLSVector} is well-posed in this space, completing Step (i).  Then, after showing that we have suitable control over the change of variables $\kappa$, one can use the a priori estimates of the transformed remainder along with the $O(\epsilon^{-2})$ existence of the approximate solution $\tilde{\zeta}$ to give a priori bounds of the original solution $\Xi$ for $O(\epsilon^{-2})$ times.  Step (iii) then follows by a bootstrapping argument, and so Step (iv) follows immediately as well.

There are two main difficulties that arise using this approach in the 3D problem that were not present in the 2D problem.  First, there were no quadratic terms in the nonlinearity of the governing equations in the 2D problem, whereas in 3D ``null form'' quadratic terms of the form $f_\beta g_\alpha - f_\alpha g_\beta$ appear in the equations for the derivatives of $\zeta$.  In the paper \cite{WuGlobal3D}, these terms are controlled using Klainerman-Sobolev norms constructed from the invariant vector fields associated to the water wave equations.  However, the wave packet-like solutions are large with respect to these norms since such solutions do not possess the symmetries associated to the invariant vector fields.  Therefore we cannot use Klainerman-Sobolev norms to gain effective control of the null form terms here.

The second difficulty arises from the slow spatial scaling of the modulation, namely that the residual of the approximate solution in 3D is too large.  If a function $S(X, Y)$ is of size $O(1)$ in $L^2_{X, Y}$, then the function $S(\epsilon\alpha, \epsilon \beta)$ is of size $O(\epsilon^{-1})$ in $L^2_{\alpha, \beta}$.  Thus if one constructed an approximate solution with residual of physical size $O(\epsilon^4)$, then the residual would merely have size $O(\epsilon^3)$ in $L^2(\mathbb{R}^2)$.  One can then read off from the energy inequality that we could justify at most that $\zeta - \tilde{\zeta} = O(\epsilon)$ in Sobolev space.  Besides failing to satisfy (iv), this weak control of the remainder would present the more serious obstacle that we could not even guarantee that $\kappa$ is invertible for $O(\epsilon^{-2})$ times!  (See Remark \ref{BigHorizontalError}.)  The aforementioned work \cite{CraigSchantzSulem} does not circumvent this difficulty in our setting, since it gives $o(\epsilon^3)$ estimates of the residual in $L^q$ for $2 < q < \infty$, and our energy method requires us to have $o(\epsilon^3)$ bounds on the residual in $L^2$.

These difficulties are resolved by refining the methods used in \cite{TotzWu2DNLS} in two respects.  To resolve the latter problem above, we attempt to develop the approximate solution to a higher order.  In doing so, we find terms in the higher order correctors which are not in $L^2(\mathbb{R}^2)$ unless mild decay restrictions are placed on low derivatives of $A$.  Moreover, we find that there are terms of physical size $O(\epsilon^4)$ that cannot be accounted for by choosing appropriate correctors for the approximate solution; that is, approximating the system \eqref{NewWaterWaveFirstPass}-\eqref{ZetaTAnalyticFirstPass} with solutions of the leading term \eqref{WavePacketVector} is only formally consistent to terms of the order $O(\epsilon^4)$ (see Remark \ref{ResidualFormallyEpsilon4}).  Despite this, all of these terms appear in the energy estimates in such a way so that they can be regarded as an order smaller than they first appear.  This allows us to regard the residual of the approximate solution as being of size $O(\epsilon^4)$ in $L^2$.

Second, in order to overcome the former difficulty of the quadratic null-form nonlinearities, we introduce third order corrections to the energy and use the method of normal forms.  This involves perturbing the remainder by a quadratic expression which is explicitly constructed in frequency space to cancel the quadratic contributions.  A fair question to ask is why one would not use this method directly from the outset to eliminate quadratic nonlinearities from the system \eqref{WaterWaveLagrangeFirstPass}-\eqref{XiTAnalyticFirstPass}.  Indeed, this approach has been used in the context of global well-posedness of this system by \cite{GermainMasmoudiShatah} as well as in modulation justification by \cite{SchneiderWayneNLSJustify}.  However, doing so forces one to work in more restrictive classes of solutions than is done here.\footnote{see the discussion in Appendix C of \cite{WuGlobal3D}.}  Wu's transform instead eliminates all quadratic nonlinearities except for those of null-form type, and the special structure of these nonlinearities avoids the singular behavior that necessitates restricting one's class of solutions (to see the cancellation explicitly see Lemma \ref{DenominatorEstimate}).

We also note that arbitrary wave packet-like initial data need not satisfy the compatibility conditions required by solutions to the system \eqref{NewWaterWaveFirstPass}-\eqref{ZetaTAnalyticFirstPass}, and so we show that one can always construct initial data satisfying these constraints that differs by at most $O(\epsilon^\frac52)$ from any given wave-packet like candidate for the initial data.

We now state the main result of this paper.  Let $\dot{H}^s(d)$ be the homogeneous Sobolev space of functions $f(x, y)$ for which the weak derivatives of $(1 + x^2 + y^2)^\frac{d}{2}f$ of order exactly $s$ are in $L^2$, let $\dot{H}^s = \dot{H}^s(0)$, let $H^s(d) = \dot{H}^0(d) \cap \dot{H}^s(d)$, and let $H^s = H^s(0)$.  The contributions of this paper in completing the steps (i)-(iv) above in the transformed unknown $\zeta$ are summarized in
\begin{theorem}\label{MainTheorem}
Let $k > 0$, $s \geq 9$, $\delta > 0$, and $A_0 \in H^{s + 13} \cap H^3(\delta)$ be given.  Then there exists a $\mathscr{T} > 0$ depending on $s$ and $\|A_0\|_{H^{s + 13} \cap H^3(\delta)}$ and a solution $A \in C([0, \mathscr{T}], H^{s + 13} \cap H^3(\delta))$ to the initial value problem \eqref{HNLSVector} for constants $a, b, c$ depending on $k$ with $A(0) = A_0$.  Moreover, for any $\mathscr{T} > 0$ such that the above holds, there exists an $\epsilon_0 > 0$ depending on $k$, $s$, $\mathscr{T}$, $\|A_0\|_{H^{s + 13} \cap H^3(\delta)}$, and $\delta$ so that the following hold:
\begin{itemize}
\item[(a)]{There is an approximate solution $\tilde{\zeta}$ of \eqref{NewWaterWaveFirstPass}-\eqref{ZetaTAnalyticFirstPass} having the form \eqref{WavePacketVector} in $C([0, \mathscr{T}\epsilon^{-2}], H^{s + 9})$ satisfying \eqref{NewWaterWaveFirstPass} up to a residual of size $O(\epsilon^4)$ when measured in $H^s$.}
\item[(b)]{There exists initial data $\Xi_0$ constructed using $A_0$ which satisfies the compatibility conditions of the system \eqref{WaterWaveLagrangeFirstPass}-\eqref{XiTAnalyticFirstPass}.  Moreover, if $\kappa_0$ is the change of variables constructed through $\Xi_0$ and $\zeta_0 = \Xi_0 \circ \kappa_0^{-1}$, then for $\eta = \frac12$ this data satisfies 
\begin{equation}\label{InitialDataBounds}
\| \, |\mathcal{D}|^\frac12(\zeta_0 - \tilde{\zeta}(0))\|_{H^{s + \frac12}} + \|\partial_t(\zeta_0 - \tilde{\zeta}(0))\|_{H^{s + \frac12}} + \|\partial_t^2(\zeta_0 - \tilde{\zeta}(0))\|_{H^s} \leq C\epsilon^{2 + \eta}
\end{equation} where $C$ depends on $k$, $s$, $\mathscr{T}$, $\|A_0\|_{H^{s + 13} \cap H^3(\delta)}$ and $\delta$.}
\item[(c)]{For any initial data of \eqref{WaterWaveLagrangeFirstPass}-\eqref{XiTAnalyticFirstPass} satisfying \eqref{InitialDataBounds} with $\eta = 0$, there is a solution $\Xi$ of \eqref{WaterWaveLagrangeFirstPass}-\eqref{XiTAnalyticFirstPass} satisfying $$\left(|\mathcal{D}|^\frac12\Bigl(\Xi - (\alpha, \beta, 0)\Bigr), \Xi_t, \Xi_{tt}\right) \in C\left([0, \mathscr{T}\epsilon^{-2}], H^{s + \frac12} \times H^{s + \frac12} \times H^s\right).$$
Moreover for all $0 \leq t \leq \mathscr{T} \epsilon^{-2}$ and $0 < \iota < 1$, $$\| \, |\mathcal{D}|^\frac12(\zeta(t) - \tilde{\zeta}(t))\|_{H^{s + \frac12}} + \|\Xi_t(t) - (\tilde{\zeta} \circ \kappa)_t(t))\|_{H^{s + \frac12}} + \|\Xi_{tt}(t) - \partial_t^2(\tilde{\zeta} \circ \kappa)(t))\|_{H^s} \leq C\epsilon^{2 - \iota}$$ where $C$ depends only on $k$, $s$, $\mathscr{T}$, $\|A_0\|_{H^{s + 13} \cap H^3(\delta)}$, $\delta$ and $\iota$.}
\end{itemize}
\end{theorem}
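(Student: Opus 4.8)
The plan is to establish (i)--(iv) through the transformed unknown $\zeta$ following the architecture of \cite{TotzWu2DNLS}, incorporating the two refinements announced above: a higher-order multiscale expansion and a cubic normal-form correction to the energy. For the well-posedness of \eqref{HNLSVector} (step (i)) I would run the standard contraction/Duhamel iteration on the unitary group generated by $a\partial_X^2 - b\partial_Y^2$ --- the indefinite symbol being irrelevant for the $L^2$-based estimates --- using that $H^{s+13}(\mathbb{R}^2)$ is a Banach algebra (as $s+13 > 1$) to handle the cubic term, and continuing the local solution via conservation of mass and the $H^{s+13}$ energy inequality. To propagate the mild decay I would commute the weight $(1+X^2+Y^2)^{\delta/2}$ through the equation: its commutator with $a\partial_X^2 - b\partial_Y^2$ is a first-order operator with coefficients of size $O((1+X^2+Y^2)^{(\delta-1)/2})$, so the commutator terms are controlled by $\|A\|_{H^{s+13}} + \|A\|_{H^3(\delta)}$, closing a Gronwall inequality for $\|A\|_{H^{s+13}\cap H^3(\delta)}$ on $[0,\mathscr{T}]$.

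For (a) I would substitute the ansatz $\tilde{\zeta} = \langle\alpha,\beta,0\rangle + \sum_{j=1}^{N}\epsilon^{j}\zeta^{(j)}$, with $\zeta^{(1)}$ the leading wave packet in \eqref{WavePacketVector} and each $\zeta^{(j)}$ a finite harmonic sum $\sum_{n}\zeta^{(j)}_{n}(X,Y,T)\,e^{in(k\alpha+\omega t)}$ in the slow variables $X=\epsilon(\alpha+\omega' t)$, $Y=\epsilon\beta$, $T=\epsilon^2 t$, into \eqref{NewWaterWaveFirstPass}--\eqref{ZetaTAnalyticFirstPass} after expanding in $\epsilon$ both the Hilbert transform $\nht$ and the change of variables $\kappa$ associated to $\tilde{\zeta}$. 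Collecting powers of $\epsilon$ and harmonics, the $O(\epsilon)$, $n=\pm1$ relation fixes $\omega^2=k$; non-resonant harmonics are solved algebraically at each order; the secular $O(\epsilon^2)$, $n=\pm1$ equation forces $A$ to be transported at group velocity $\omega'(k)$; and the secular $O(\epsilon^3)$, $n=\pm1$ equation is precisely \eqref{HNLSVector}, with $a,b,c$ read off from the symbol. I would expand far enough that $\tilde{\zeta}\in H^{s+9}$ (hence the requirement $A_0\in H^{s+13}$) and the residual of \eqref{NewWaterWaveFirstPass} is measured in $H^s$. Two features require care: certain correctors contain inverse powers of frequency or antiderivatives in the slow variables, so they lie in $L^2(\mathbb{R}^2)$ only under the hypothesis $A_0\in H^3(\delta)$ (here one invokes step (i) to know the HNLS flow keeps enough of that decay), and there remain $O(\epsilon^4)$ contributions to the residual that no corrector removes (Remark \ref{ResidualFormallyEpsilon4}). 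I would show each leftover term is either an exact $t$-derivative of an $H^s$-bounded quantity or carries a spare slow derivative, so that in the energy estimate of step (c) it integrates by parts or is absorbed and behaves like an honest $O(\epsilon^4)$ residual in $H^s$; this is what makes truncation at the leading order \eqref{WavePacketVector} legitimate.

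For (b) I would begin from the wave-packet configuration at $t=0$ and correct it to satisfy the compatibility conditions of \eqref{WaterWaveLagrangeFirstPass}--\eqref{XiTAnalyticFirstPass} --- the Clifford-analyticity $(I-\oht)\Xi_t=0$ and the requirement that $\Xi_0$ bound a genuine irrotational incompressible fluid --- by solving the attendant elliptic (potential-theoretic) problem in the wave-packet fluid domain and reading off boundary data genuinely arising from such a flow; a direct estimate then shows that, with $\zeta_0 = \Xi_0\circ\kappa_0^{-1}$, the quantity $\zeta_0 - \tilde{\zeta}(0)$ and its first two time derivatives (which are fixed by the equations) obey \eqref{InitialDataBounds} with $\eta=\tfrac12$, i.e., are $O(\epsilon^{5/2})$ in the indicated $|\mathcal{D}|^{1/2}$-weighted norms. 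For (c) put $R:=\zeta-\tilde{\zeta}$; subtracting the $\tilde{\zeta}$-equations from \eqref{NewWaterWaveFirstPass}--\eqref{ZetaTAnalyticFirstPass} gives a quasilinear wave-type system for $R$ whose inhomogeneity is the residual of $\tilde{\zeta}$ (size $O(\epsilon^4)$ in $H^s$), whose linear coefficients are built from $\tilde{\zeta}$ and hence vary only on the slow scale, which contains the null-form quadratic terms $f_\beta g_\alpha - f_\alpha g_\beta$, and whose remaining terms are genuinely cubic or higher in $R$. I would build the Wu-type energy $E^{(s)}(t)$ controlling $\||\mathcal{D}|^{1/2}R\|_{H^{s+1/2}}^2 + \|\partial_t R\|_{H^{s+1/2}}^2 + \|\partial_t^2 R\|_{H^s}^2$ and add a cubic correction: a bilinear-in-$R$, quadratic-in-$\zeta^{(1)}$ expression constructed in frequency space with the null-form resonance denominator downstairs, which Lemma \ref{DenominatorEstimate} keeps bounded below precisely because of the null structure, so that the correction is bounded (leaving the size of $E^{(s)}$ unchanged) and its time derivative cancels the dangerous $O(\epsilon)$ contribution of the quadratic terms, demoting them to $O(\epsilon^2)E^{(s)}$. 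After this every slow-coefficient term is $\lesssim\epsilon^2 E^{(s)}$ (each undifferentiated $O(1)$ modulation factor is paired so a slow derivative falls on it, and the leading symmetric pieces are absorbed into a time-dependent but uniformly equivalent modified energy), the residual contributes $\lesssim\epsilon^4 (E^{(s)})^{1/2}$, and the nonlinearity contributes $\lesssim (E^{(s)})^{3/2}$; Gronwall on $[0,\mathscr{T}\epsilon^{-2}]$ yields $(E^{(s)})^{1/2}\lesssim (E^{(s)}(0))^{1/2}+\epsilon^2$, and relaxing the target exponent to $2-\iota$ --- so that the cubic self-interaction obeys $\int_0^{\mathscr{T}\epsilon^{-2}}(E^{(s)})^{3/2}\lesssim\epsilon^{4-3\iota}=o(\epsilon^{2-\iota})$ for $\iota<1$ --- lets a continuity/bootstrap argument against the local well-posedness of Wu's system upgrade this a priori bound to genuine $O(\epsilon^{-2})$-time existence. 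Finally, because $R$ is then controlled at size $O(\epsilon^2)$, not merely $O(\epsilon)$, in a high norm, one verifies that $\kappa$ stays a bi-Lipschitz diffeomorphism with controlled norms on the whole interval (cf. Remark \ref{BigHorizontalError}), so $\Xi=\zeta\circ\kappa$ together with $\Xi_t,\Xi_{tt}$ inherits the stated estimates, proving (iii) and (iv) simultaneously.

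The step I expect to be the main obstacle is (c): designing the cubic normal-form correction and proving the non-vanishing of its denominator through the null structure (Lemma \ref{DenominatorEstimate}), and then checking that after this correction every surviving term --- the slow-varying linear coefficients and, above all, the irreducible $O(\epsilon^4)$ pieces of the residual identified in (a) --- enters $\tfrac{d}{dt}E^{(s)}$ at order $\epsilon^2 E^{(s)}$ or $\epsilon^4 (E^{(s)})^{1/2}$ and not one power of $\epsilon$ worse, since a single stray $O(\epsilon)E^{(s)}$ term would produce $e^{c\epsilon^{-1}}$-type amplification and wreck the estimate over the $O(\epsilon^{-2})$ interval. The fact that this bookkeeping is entangled with keeping $\kappa$ invertible for such times is what forces the higher-order construction in (a) to be genuine rather than cosmetic.
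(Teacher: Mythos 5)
Your architecture matches the paper's almost exactly: Duhamel plus weight-commutation for HNLS, a fourth-order multiscale expansion with Fourier-localized expansion of the Hilbert transform, wave-packet-adapted compatible data, and an energy for the transformed remainder corrected by frequency-space normal forms whose denominators are controlled by the null structure via Lemma \ref{DenominatorEstimate}, followed by a continuity/bootstrap against local well-posedness. Two mechanisms, however, differ from what actually closes the argument.

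First, your proposed treatment of the irreducible $O(\epsilon^4)$ residual is not the one that works. Lemma \ref{AbstractEpsilon4} shows the leftover terms have the form $(I - \nht_0)F_0$ with $F_0$ scalar-valued, $\tfrac12(I + \nht_0)F$, and oscillatory pieces $S e^{m\jvec\phi}$ with $m \neq 0$; a term like $S'_1 e^{\jvec\phi}$ with $S'_1$ a generic quartic expression in $A, B$ is neither an exact $t$-derivative of an $O(\epsilon^4)$-bounded quantity (integrating $e^{\jvec\omega t}$ in time gains nothing) nor does it carry a spare slow derivative. The paper instead exploits almost-orthogonality: the test function $\mathcal{A}^{-1}D_t\mathbf{R}^{[j]}$ in the energy identity is almost holomorphic (almost $(I-\nht)$-invariant), so its pairing against scalar-valued quantities, against the range of $(I+\nht_0)$, and against almost-antiholomorphic quantities each gains an extra order; the resonant piece $S'_1 e^{\jvec\phi}$ is handled by adding and subtracting its conjugate to split it into a real part plus an antiholomorphic part. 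Without this projection argument the residual genuinely enters at $O(\epsilon^3)$ in $L^2$ and the estimate fails.

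Second, your closing arithmetic does not balance: the Gronwall input is $\tfrac{d}{dt}E \leq C(E^{1/2} + \epsilon^2)^3$, so the integrated cubic self-interaction over $[0, \mathscr{T}\epsilon^{-2}]$ must be compared to $E \sim \epsilon^{4 - 2\iota}$, not to $E^{1/2}$; your bound $\epsilon^{4 - 3\iota} = o(\epsilon^{2-\iota})$ is the wrong comparison and the correct one forces $\iota \leq 0$. The paper resolves this by first closing the continuity argument with $\iota = 0$ on a possibly shorter interval $[0, \mathscr{T}'\epsilon^{-2}]$ (with $\mathscr{T}'$ determined by the constants) and then iterating that estimate roughly $\iota|\log \epsilon|$ times to cover the full $[0, \mathscr{T}\epsilon^{-2}]$ at the cost of the factor $\epsilon^{-\iota}$; this is where the loss $\iota > 0$ in part (c) actually comes from.
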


\begin{remark}
In fact the error in the stability estimate (c) can be improved to $\iota = 0$.  However, then the estimate is only valid on an interval of time $[0, \mathscr{T}^\prime\epsilon^{-2}]$ where in general $\mathscr{T}^\prime \leq \mathscr{T}$.
\end{remark}

In particular, this theorem implies that solutions to the water wave problem in Lagrangian coordinates with wave packet-like initial data exists for $O(\epsilon^{-2})$ times.  Unlike in \cite{TotzWu2DNLS}, half-derivative control of the remainder along with the embedding $L^\infty(\mathbb{R}^2) \supset \dot{H}^\frac12(\mathbb{R}^2) \cap \dot{H}^2(\mathbb{R}^2)$ is sufficient to give us immediate control on the $L^\infty$ norm of the solution and its derivatives in the transformed coordinates.  

\begin{corollary}\label{MainTheoremLInftyBounds}
Under the same hypotheses as Theorem \ref{MainTheorem}, we have furthermore that $$\|\zeta(t) - \tilde{\zeta}(t)\|_{W^{s - 1^+, \infty}} \leq C\epsilon^{2 - \iota}$$ for all $0 \leq t \leq \mathscr{T}\epsilon^{-2}$, where $C$ depends on the same quantities as in Theorem \ref{MainTheorem}.
\end{corollary}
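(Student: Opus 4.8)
The plan is to deduce these pointwise bounds directly from the Sobolev estimate already furnished by Theorem \ref{MainTheorem}(c), together with the critical Sobolev embedding in two dimensions; no further use of the water wave structure is needed. Write $f = \zeta(t) - \tilde{\zeta}(t)$, so that Theorem \ref{MainTheorem}(c) gives $\| \, |\mathcal{D}|^\frac12 f\|_{H^{s + \frac12}} \leq C\epsilon^{2 - \iota}$ uniformly for $0 \leq t \leq \mathscr{T}\epsilon^{-2}$. Since $H^{s + \frac12}$ has norm equivalent to $\|\cdot\|_{\dot{H}^0} + \|\cdot\|_{\dot{H}^{s + \frac12}}$, this is the same as the pair of homogeneous bounds $\|f\|_{\dot{H}^\frac12} \leq C\epsilon^{2 - \iota}$ and $\|f\|_{\dot{H}^{s + 1}} \leq C\epsilon^{2 - \iota}$.

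First I would interpolate between these two endpoints: by Plancherel's theorem and H\"older's inequality on the Fourier side one has $\|f\|_{\dot{H}^\sigma} \lesssim \|f\|_{\dot{H}^\frac12}^{1 - \lambda}\|f\|_{\dot{H}^{s + 1}}^{\lambda}$ whenever $\sigma = (1 - \lambda)\frac12 + \lambda(s + 1)$ with $\lambda \in [0, 1]$, and hence $\|f\|_{\dot{H}^\sigma} \leq C\epsilon^{2 - \iota}$ for every $\sigma \in [\frac12, s + 1]$. Next I would invoke a sharpened form of the embedding quoted in the text: for any $\theta^\prime \in (0, 1)$ there is a constant $C_{\theta^\prime}$ with $\|g\|_{L^\infty(\mathbb{R}^2)} \leq C_{\theta^\prime}\bigl(\|g\|_{\dot{H}^\frac12} + \|g\|_{\dot{H}^{1 + \theta^\prime}}\bigr)$, which follows from Bernstein's inequality $\|P_j g\|_{L^\infty} \lesssim 2^j\|P_j g\|_{L^2}$ in two dimensions summed over low and high Littlewood--Paley frequencies against the two homogeneous norms. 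Applying this with $g = |\mathcal{D}|^\mu \partial^\gamma f$ for a multi-index $\gamma$ and $\mu \geq 0$ gives $\| \, |\mathcal{D}|^\mu\partial^\gamma f\|_{L^\infty} \lesssim \|f\|_{\dot{H}^{|\gamma| + \mu + \frac12}} + \|f\|_{\dot{H}^{|\gamma| + \mu + 1 + \theta^\prime}}$, and as long as $|\gamma| + \mu \leq s - 1 + \theta$ with $\theta + \theta^\prime \leq 1$ both Sobolev indices lie in $[\frac12, s + 1]$, so the interpolated bound yields $\| \, |\mathcal{D}|^\mu\partial^\gamma f\|_{L^\infty} \leq C\epsilon^{2 - \iota}$. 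Summing over the finitely many admissible $\gamma$ (and the relevant $\mu$, for the scale-invariant reading of $W^{s - 1^+, \infty}$) produces $\|f\|_{W^{s - 1^+, \infty}} \leq C\epsilon^{2 - \iota}$, as desired.

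There is no genuine obstacle in this argument; the only point demanding care is the bookkeeping of the homogeneous Sobolev indices. The constraint $|\gamma| + \mu + 1 + \theta^\prime \leq s + 1$ is exactly what costs one derivative relative to the $\dot{H}^{s + 1}$ control on $f$, and thereby produces the exponent $s - 1^+$ rather than $s^+$ in the statement; the surplus fractional derivative is available precisely because the true critical exponent for $L^\infty(\mathbb{R}^2)$ is $1$, strictly below the $2$ that appears in the cruder embedding $\dot{H}^\frac12(\mathbb{R}^2) \cap \dot{H}^2(\mathbb{R}^2) \hookrightarrow L^\infty(\mathbb{R}^2)$ quoted in the text. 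Finally, since Theorem \ref{MainTheorem}(c) already holds for every $0 < \iota < 1$ with $C$ depending additionally on $\iota$, the same dependence is inherited by the conclusion here without further comment.
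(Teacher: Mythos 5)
Your argument is correct and is essentially the route the paper intends: the corollary is stated without a written proof, the text merely pointing to the embedding $L^\infty(\mathbb{R}^2) \supset \dot{H}^\frac12(\mathbb{R}^2) \cap \dot{H}^2(\mathbb{R}^2)$ applied to derivatives of $\zeta - \tilde{\zeta}$ using the bound $\|\,|\mathcal{D}|^\frac12(\zeta - \tilde{\zeta})\|_{H^{s + \frac12}} \leq C\epsilon^{2 - \iota}$ from Theorem \ref{MainTheorem}(c), exactly as you do. Your Littlewood--Paley sharpening of the upper endpoint from $\dot{H}^2$ to $\dot{H}^{1 + \theta^\prime}$ is a sensible touch that genuinely accounts for the fractional surplus in the exponent $s - 1^+$ (the cruder embedding as quoted would only reach total order $s - 1$), so no gaps remain.
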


There is still the question of how to derive justification of the asymptotics in more physically meaningful coordinates.  Theorem \ref{MainTheorem} justifies the modulation approximation for the velocity and acceleration fields of $\Xi$ in Lagrangian coordinates by straightforwardly changing variables by $\kappa$.  However, changing variables in the spatial derivatives contributes an error of the order $O(\epsilon)$ to the horizontal component of $\Xi$ and its derivatives (see again Remark \ref{BigHorizontalError}.)  Therefore, while we can justify the asymptotics for the vertical component $z$ of $\Xi$, we cannot do so for the horizontal component.

To rectify this, we give an Eulerian version of the justification.  Since $\zeta(t)$ parametrizes $\Sigma(t)$, Corollary \ref{MainTheoremLInftyBounds} guarantees that $\Sigma(t)$ is a graph for $O(\epsilon^{-2})$ times provided $\epsilon_0 > 0$ is chosen sufficiently small.  Parametrize this graph by $\Sigma(t) = \{ \langle \alpha, \beta, h(\alpha, \beta, t)\rangle : (\alpha, \beta) \in \mathbb{R}^2\}$.  If we decompose $\zeta = \tau + \znew\kvec$ into horizontal and vertical components, then $h = \znew \circ \tau^{-1}$, where $\tau^{-1}$ denotes the inverse of $\tau$ as a map $\mathbb{R}^2 \to \mathbb{R}^2$.  In the same way, the Eulerian velocity field is defined by $\mathfrak{v} = (\partial_t + (\kappa_t \circ \kappa^{-1}))\zeta \circ \tau^{-1}$.  We analogously set $\tilde{\zeta} = \tilde{\tau} + \tilde{\znew}\kvec$ and define the approximate Eulerian position $\tilde{h} = \tilde{\znew} \circ \tilde{\tau}^{-1}$, with the approximate Eulerian velocity $\tilde{\mathfrak{v}}$ defined similarly.  Then we can use Theorem \ref{MainTheorem} to obtain

\begin{theorem}\label{EulerianTheorem}
Let $k > 0$, $s \geq 9$, $\delta > 0$, $\iota > 0$ and $A_0, A, \mathscr{T}$ be as in the hypothesis of Theorem \ref{MainTheorem}, and let $h$, $\tilde{h}$ be constructed as above.  Then there exists an $\epsilon_0 > 0$ depending on $k$, $s$, $\mathscr{T}$, $\|A_0\|_{H^{s + 13} \cap H^3(\delta)}$, and $\delta$ so that for all $0 < \epsilon < \epsilon_0$, there exists initial data $h_0, \mathfrak{v}_0$ which for $\eta = \frac12$ satisfies:
$$\|\,|\mathcal{D}|^\frac12(h_0 - \tilde{h}(0))\|_{H^{s + \frac12}} + \|\mathfrak{v}_0 - \tilde{\mathfrak{v}}(0)\|_{H^{s + \frac12}} \leq C\epsilon^{2 + \eta}$$ and moreover for all such initial data satisfying this bound with $\eta = 0$, the quantity $h$ exists for times $[0, \mathscr{T}\epsilon^{-2}]$ and moreover for all $0 \leq t \leq \mathscr{T}\epsilon^{-2}$ satisfies
$$\|\,|\mathcal{D}|^\frac12(h(t) - \tilde{h}(t))\|_{H^{s + \frac12}} + \|\mathfrak{v}(t) - \tilde{\mathfrak{v}}(t)\|_{H^{s + \frac12}} \leq C\epsilon^{2 - \iota}$$ where $C$ depends on the same quantities as in Theorem \ref{MainTheorem}.
\end{theorem}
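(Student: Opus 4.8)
The plan is to deduce Theorem~\ref{EulerianTheorem} directly from Theorem~\ref{MainTheorem} and Corollary~\ref{MainTheoremLInftyBounds} by changing variables between the parametrization $\zeta$ and the Eulerian graph, with essentially all the analytic work located in composition estimates. Write $D_t := \partial_t + (\kappa_t\circ\kappa^{-1})\cdot\nabla$ for the material derivative. The structural point is that both $h = \znew\circ\tau^{-1}$ and $\mathfrak{v} = (D_t\zeta)\circ\tau^{-1}$ are assembled only from the two components $\tau,\znew$ of $\zeta$ together with $D_t\zeta$, and that the chain-rule identities $D_t(\Xi\circ\kappa^{-1}) = \Xi_t\circ\kappa^{-1}$ and $(\tilde\zeta\circ\kappa)_t = (D_t\tilde\zeta)\circ\kappa$ give
\[
\Xi_t - (\tilde\zeta\circ\kappa)_t \;=\; \bigl(D_t(\zeta - \tilde\zeta)\bigr)\circ\kappa .
\]
Consequently the three quantities controlled in Theorem~\ref{MainTheorem}(c) --- namely $|\mathcal{D}|^{1/2}(\zeta-\tilde\zeta)$, $\Xi_t - (\tilde\zeta\circ\kappa)_t$ and $\Xi_{tt} - \partial_t^2(\tilde\zeta\circ\kappa)$ --- translate, after composing with the near-identity maps $\tau^{-1}$ and $\kappa^{\pm1}$, into bounds for $h-\tilde h$ and $\mathfrak{v}-\tilde{\mathfrak{v}}$, with no appeal to the possibly large horizontal displacement of $\kappa$ itself. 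This is precisely what lets the Eulerian formulation evade the $O(\epsilon)$ loss flagged in Remark~\ref{BigHorizontalError}.

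To construct the Eulerian initial data, start from the Lagrangian data $\Xi_0$, $\kappa_0$, $\zeta_0 = \Xi_0\circ\kappa_0^{-1}$ of Theorem~\ref{MainTheorem}(b), which obeys \eqref{InitialDataBounds} with $\eta = \tfrac12$, and decompose $\zeta_0 = \tau_0 + \znew_0\kvec$. Since $\tilde\tau(0) - \mathrm{id} = O(\epsilon)$ in $L^\infty$ by \eqref{WavePacketVector} and $\zeta_0 - \tilde\zeta(0)$ is small in $L^\infty$ by Corollary~\ref{MainTheoremLInftyBounds}, the map $\tau_0$ is a global diffeomorphism of $\mathbb{R}^2$ for $\epsilon_0$ small, so $h_0 := \znew_0\circ\tau_0^{-1}$ and $\mathfrak{v}_0 := (D_t\zeta)(0)\circ\tau_0^{-1}$ are well defined. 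Writing
\[
h_0 - \tilde h(0) \;=\; (\znew_0 - \tilde\znew(0))\circ\tau_0^{-1} \;+\; \Bigl(\tilde\znew(0)\circ\tau_0^{-1} - \tilde\znew(0)\circ\tilde\tau(0)^{-1}\Bigr),
\]
the first term is estimated in the $|\mathcal{D}|^{1/2}H^{s+1/2}$ norm by combining \eqref{InitialDataBounds} (with $\eta = \tfrac12$) with Moser-type composition bounds and the bi-Lipschitz control on $\tau_0^{-1}$, while the second is handled by the mean value theorem together with $\|\tau_0^{-1} - \tilde\tau(0)^{-1}\| \lesssim \|\tau_0 - \tilde\tau(0)\| \lesssim \epsilon^{5/2}$; the analogous decomposition of $\mathfrak{v}_0 - \tilde{\mathfrak{v}}(0)$ uses the $\partial_t$ bound in \eqref{InitialDataBounds}, the smallness of $\kappa_t$, and the spare derivative coming from the factor $|\mathcal{D}|^{1/2}$. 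This yields the claimed $O(\epsilon^{2+\eta})$ bound on the Eulerian data.

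Conversely, given Eulerian data $h_0,\mathfrak{v}_0$ satisfying that bound with $\eta = 0$, form the surface $\Sigma(0) = \{\langle\alpha,\beta,h_0(\alpha,\beta)\rangle\}$, take the Lagrangian parametrization $\Xi_0 = \langle\alpha,\beta,h_0\rangle$ with $(\alpha,\beta)$ as labels, build $\kappa_0$ via Wu's change of variables and set $\zeta_0 = \Xi_0\circ\kappa_0^{-1}$. Reversing the composition estimates above --- now using the quantitative control on the change of variables $\kappa$ that underlies Theorem~\ref{MainTheorem}, together with the fact that $\tilde\tau(0) + \tilde\znew(0)\kvec = \tilde\zeta(0)$ is the self-consistent pair associated to $\tilde h(0)$ --- produces \eqref{InitialDataBounds} with $\eta = 0$. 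Theorem~\ref{MainTheorem}(c) then furnishes a solution $\Xi$ on $[0,\mathscr{T}\epsilon^{-2}]$ with $|\mathcal{D}|^{1/2}(\zeta-\tilde\zeta)$, $\Xi_t - (\tilde\zeta\circ\kappa)_t$ and $\Xi_{tt} - \partial_t^2(\tilde\zeta\circ\kappa)$ all $O(\epsilon^{2-\iota})$. Since $\tau(t) - \mathrm{id} = O(\epsilon)$ uniformly on this interval (again by \eqref{WavePacketVector} and the smallness of $\zeta - \tilde\zeta$), each $\tau(t)$ is a diffeomorphism, so $h(t) = \znew(t)\circ\tau(t)^{-1}$ and $\mathfrak{v}(t) = (D_t\zeta)(t)\circ\tau(t)^{-1}$ exist for all $t \le \mathscr{T}\epsilon^{-2}$. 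Applying the decomposition of the first display in the form $h - \tilde h = (\znew-\tilde\znew)\circ\tau^{-1} + (\tilde\znew\circ\tau^{-1} - \tilde\znew\circ\tilde\tau^{-1})$, and the analogous one for $\mathfrak{v} - \tilde{\mathfrak{v}}$ --- in which $D_t(\zeta-\tilde\zeta)$ is recovered as $(\Xi_t - (\tilde\zeta\circ\kappa)_t)\circ\kappa^{-1}$ and is therefore $O(\epsilon^{2-\iota})$ in $H^{s+1/2}$ --- at each $t$ gives the claimed stability estimate.

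The main obstacle is the interaction between composition with the near-identity maps $\tau^{-1}$ and $\kappa_0^{\pm1}$ and the fractional, $|\mathcal{D}|^{1/2}$-weighted Sobolev norms: one needs Moser/Kato--Ponce-type composition and commutator estimates valid at the level of $\dot H^{1/2}\cap\dot H^{s+1}$, and one must check that the non-decaying wave-packet parts of $\tilde\znew$ and $\tilde\tau$ do not spoil the homogeneous $\dot H^{1/2}$ bound --- this is exactly where the decay hypothesis $A_0 \in H^3(\delta)$ enters, ensuring that $\znew - \tilde\znew$, $\tau - \tilde\tau$ and their images under composition genuinely lie in $L^2$. A secondary point is the careful bookkeeping of the derivative loss in inverting $\kappa_0$ on the initial-data side; since Theorem~\ref{MainTheorem} has already been established, the requisite control on $\kappa$ is in hand, and no analytic input beyond these composition lemmas is needed.
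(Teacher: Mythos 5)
Your treatment of the stability direction is essentially the paper's: Theorem \ref{MainTheorem}, Corollary \ref{MainTheoremLInftyBounds}, the definitions of $h,\tau,\tilde h,\tilde\tau$, and routine composition/mean-value estimates (using that $\tau-\tilde\tau$ is small in $L^\infty$ so that $\tau$ is invertible and $\tilde\znew\circ\tau^{-1}-\tilde\znew\circ\tilde\tau^{-1}$ is harmless). That part is fine.

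The gap is in the converse direction, where you must turn prescribed Eulerian data $(h_0,\mathfrak{v}_0)$ into \emph{admissible} Lagrangian data before Theorem \ref{MainTheorem}(c) can be invoked. First, your recipe ``take $\Xi_0=\langle\alpha,\beta,h_0\rangle$, build $\kappa_0$, set $\zeta_0=\Xi_0\circ\kappa_0^{-1}$'' ignores that the whole framework fixes the initial labels so that $\kappa(\cdot,0)=P$; with your labels $\kappa_0\neq P$ and none of the $t=0$ formulas apply without a relabeling argument you do not supply. The paper avoids this by solving the fixed-point relation $\lambda_0=(I+\nht_{\zeta_0}-\mathcal K_{\zeta_0})(h_0\circ\tilde\tau)\kvec$ directly in the transformed coordinates (note the composition with the \emph{known} map $\tilde\tau$, not with the yet-to-be-constructed $\tau_0$, which is what makes the construction non-circular), and then reads off $\|\,|\mathcal D|^{1/2}(\lambda_0-\tilde\lambda(0))\|_{H^{s+1/2}}\le C\epsilon^{2+\eta}$ from the resulting identity. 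Second, and more seriously, Theorem \ref{MainTheorem}(c) applies only to data satisfying the compatibility conditions of \eqref{WaterWaveLagrangeFirstPass}--\eqref{XiTAnalyticFirstPass}: an arbitrary $\mathfrak v_0$ within $O(\epsilon^2)$ of $\tilde{\mathfrak v}(0)$ is not the trace of a divergence- and curl-free field, so one cannot simply ``reverse the composition estimates.'' One must \emph{construct} a velocity $v_0$ with $(I-\nht_{\zeta_0})v_0=0$ whose $\kvec$-component matches the prescribed data --- the paper does this via $v_0=(I+\nht_{\zeta_0})\bigl(\mathbf n_0(I+\{\mathbf n_0-\kvec\}_3+\{\nht_{\zeta_0}\mathbf n_0\}_3)^{-1}(\mathfrak v_0\circ\tilde\tau)\bigr)$ with a Neumann-series inverse --- and then define $w_0,\mathfrak a_0$ through the compatibility relations and verify $\mathfrak E(0)^{1/2}\le C\epsilon^{2+\eta}$ as in Proposition \ref{InitialDataExists}. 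None of this appears in your proposal, and without it the appeal to Theorem \ref{MainTheorem}(c) is not justified.
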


This paper is organized as follows: in Section 2 we review Clifford Analysis and introduce the main evolution equations, as well as relations between the associated quantities involved.  In Section 3 we formally expand the Hilbert Transform $\nht$, compute the correctors to the approximate solution \eqref{WavePacketVector}, as well as address some analytic issues brought about by the formula for the correctors.  In Section 4 we derive evolution equations for the remainders between the true and approximate solution and associated quantities, and use them to derive a priori bounds on the remainders via energy estimates.  In particular, we construct normal form transformations and third-order corrections to the energy to eliminate the quadratic terms that arise in the energy inequality.  Finally, since arbitrarily chosen wave packet-like initial data need not satisfy the compatibility conditions for the water wave system, we show in Section 5 how to construct such admissible data suitably close to given wave packet-like initial data, as well as use a bootstrapping argument to show existence of the water wave problem on $O(\epsilon^{-2})$ times.

\section{The Governing Equations}

	\subsection{Notation and Clifford-Algebraic Preliminaries}

For a 2-vector $(x, y)$, denote $|(x, y)| = \sqrt{x^2 + y^2}$ and $\langle(x, y)\rangle = \sqrt{1 + x^2 + y^2}$.  We also write $$|(x, y)|_\leq = \begin{cases} |(x, y)| & |(x, y)| \leq 1 \\ 0 & |(x, y)| \geq 1\end{cases}$$ along with $|(x, y)|_\geq = |(x, y)| - |(x, y)|_\leq$.  We write the Jacobian of a map $\phi : \mathbb{R}^2 \to \mathbb{R}^2$ by $J(\phi)$, and sometimes denote $f \circ \phi =: U_\phi f$.  The commutator is written as $[X, Y] = XY - YX$.

The algebra of quaternions $\mathbb{H}$ consist of a vector space spanned by the elements $1, \ivec, \jvec, \kvec$ along with a product called quaternion multiplication characterized by W. R. Hamilton's celebrated formulas $$\ivec^2 = \jvec^2 = \kvec^2 = \ivec\jvec\kvec = -1$$  Observe that the above laws imply that multiplication restricted to distinct units of $\ivec, \jvec, \kvec$ agrees with the usual cross product multiplication on $\mathbb{R}^3$.

For a quaternion $q = q_0 + q_1 \ivec + q_2 \jvec + q_3 \kvec$, we sometimes denote components of a large quaternion-valued expression by $q_i := \left\{q_0 + q_1 \ivec + q_2 \jvec + q_3 \kvec\right\}_i$.  Define $\overline{q} = q_0 - q_1 \ivec - q_2 \jvec - q_3 \kvec$, as well as the \textbf{scalar part} $\Re(q) = \frac{1}{2}(q + \overline{q}) = q_0$ and the \textbf{vector part} $\vecpart(q) = \frac12(q - \overline{q}) = \sum_{j = 1}^3 q_i e_i$.  

We call a quaternion \textbf{real-valued} if $\vecpart(f) = 0$, \textbf{vector-valued} if $\Re(f) = 0$, \textbf{$1, \jvec$-valued} if $f = f_0 + \jvec f_2$ for two real-valued functions $f_0, f_2$, etc.  Many of the quantities in this paper are $1, \jvec$-valued quaternions, and \textit{for these quaternions only} we define the scalar $\Im(a + b\jvec) = b$ in analogy with the complex numbers.  We define the inner product $p \cdot q = \sum_{i = 0}^3 p_i q_i$.  If $p$ and $q$ have no scalar parts, we identify $p$, $q$ with vectors in $\mathbb{R}^3$ and define their cross product $p \times q$ in the usual sense of $\mathbb{R}^3$.  In cases of ambiguous multiplication, the order of operations in this paper will be to first perform cross products followed by quaternion multiplications.  Finally, 
we denote by $q^\dagger = \kvec q \kvec$, which for vector-valued quaternions corresponds to reflection across the $\ivec, \jvec$-plane.\footnote{The reader is cautioned that in \cite{WuGlobal3D}, this operation $\cdot^\dagger$ is instead denoted using the overbar $\overline{\,\cdot\,}$, which we reserve instead for ordinary quaternion conjugation.}

For vector quantities $p, q$ we have $pq = p \times q - p \cdot q$.  A quaternion generalization of the ordinary scalar triple product of three vectors is given in the following

\begin{proposition}\label{SkewedTripleProduct} 
For quaternions $f, g, v$ with $v$ vector-valued we have $f \cdot (vg) = -g \cdot (vf)$.  In particular, $f \cdot (vf) = 0$.
\end{proposition}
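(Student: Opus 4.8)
The plan is to reduce the identity $f \cdot (vg) = -g \cdot (vf)$ to a purely algebraic computation in the quaternions, using the product formula $pq = p \times q - p \cdot q$ for vector-valued $p, q$ recalled just before the statement. First I would decompose $g = g_0 + \vecpart(g)$ and $f = f_0 + \vecpart(f)$ into scalar and vector parts, so that $vg = g_0 v + v\,\vecpart(g)$ and similarly for $vf$. Since $v$ is vector-valued, $v\,\vecpart(g) = v \times \vecpart(g) - v \cdot \vecpart(g)$, where the second term is a scalar and the first is vector-valued. Thus $vg$ has scalar part $-\,v \cdot \vecpart(g)$ and vector part $g_0 v + v \times \vecpart(g)$. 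Taking the inner product with $f$ (recall $p \cdot q = \sum_{i=0}^3 p_i q_i$ pairs scalar part with scalar part and vector part with vector part), I get
\begin{equation}\label{eq:expand}
f \cdot (vg) = -\,f_0\,(v \cdot \vecpart(g)) + g_0\,(\vecpart(f) \cdot v) + \vecpart(f) \cdot (v \times \vecpart(g)).
\end{equation}

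Next I would compute $g \cdot (vf)$ by the same expansion, simply swapping the roles of $f$ and $g$:
\begin{equation}\label{eq:expand2}
g \cdot (vf) = -\,g_0\,(v \cdot \vecpart(f)) + f_0\,(\vecpart(g) \cdot v) + \vecpart(g) \cdot (v \times \vecpart(f)).
\end{equation}
Comparing \eqref{eq:expand} and \eqref{eq:expand2} term by term: the first term of \eqref{eq:expand} cancels against the second of \eqref{eq:expand2} (after negation), the second of \eqref{eq:expand} against the first of \eqref{eq:expand2}, and for the remaining terms I invoke the standard scalar triple product antisymmetry in $\mathbb{R}^3$, namely $\vecpart(f) \cdot (v \times \vecpart(g)) = -\,\vecpart(g) \cdot (v \times \vecpart(f))$ (both equal $\pm\det[\vecpart(f), v, \vecpart(g)]$ up to the usual cyclic sign conventions). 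This yields $f \cdot (vg) = -\,g \cdot (vf)$. Setting $g = f$ gives $f \cdot (vf) = -\,f \cdot (vf)$, hence $f \cdot (vf) = 0$.

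The only point requiring any care is the bookkeeping of signs in the Clifford/quaternion product versus the $\mathbb{R}^3$ cross product — in particular the sign in $pq = p \times q - p \cdot q$ and the sign convention in $q^\dagger$ and in $\Re, \vecpart$ — so I would double-check that the scalar triple product antisymmetry is applied with the orientation consistent with the paper's stated order of operations (cross products before quaternion multiplications). That is the main (and essentially the only) obstacle; everything else is a routine bilinear expansion. An alternative, slightly slicker route would be to note that $f \cdot (vg) = \Re(\overline{f}\,vg)$ (since $p \cdot q = \Re(\overline{p}q)$) and use $\overline{v} = -v$ together with the trace-like cyclicity $\Re(ab) = \Re(ba)$ and $\Re(\overline{q}) = \Re(q)$ to write $\Re(\overline{f}vg) = \Re(\overline{g}\,\overline{v}\,f) = -\Re(\overline{g}vf) = -\,g\cdot(vf)$; I would likely present this version as it avoids the component-chase entirely, but I would still verify the identity $\overline{\overline{f}vg} = \overline{g}\,\overline{v}\,f = -\overline{g}vf$ against the paper's conjugation convention before committing to it.
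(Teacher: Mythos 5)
Your proof is correct and follows essentially the same route as the paper: both expand $f\cdot(vg)$ into $\vecpart(f)\cdot(v\times\vecpart(g)) - \Re(f)(v\cdot\vecpart(g)) + \Re(g)(v\cdot\vecpart(f))$ and conclude by the antisymmetry of this expression under interchanging $f$ and $g$ (via the scalar triple product). The alternative you sketch using $p\cdot q = \Re(\overline{p}q)$ and cyclicity of the real part is also fine, but not needed.
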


\begin{proof}
Expanding $f$ and $g$ into scalar and vector parts, we have
\begin{align*}
f \cdot (vg) & = \vecpart(f) \cdot (v \times \vecpart(g)) - \Re(f)(v \cdot \vecpart(g)) + \Re(g)(v \cdot \vecpart(f))
\end{align*}
and now observe that this expression is antisymmetric under interchanging $f$ and $g$.
\end{proof}

For $\theta \in \mathbb{R}$, define 
\begin{equation}\label{EulerIdentityQuaternion}
e^{\jvec\theta} := \cos(\theta) + \jvec\sin(\theta)
\end{equation}
We define the left $\jvec$-Fourier transform of a function $f : \mathbb{R}^2 \to \mathbb{H}\,$ by
\begin{equation}\label{JFourierTransformLeft}
(\mathcal{F}_\jvec^L f)(\xi) = (\mathcal{F}_\jvec^L f)(\xi_1, \xi_2) := \frac{1}{(2\pi)^2}\iint_{\mathbb{R}^2} e^{-\jvec(\alpha \xi_1 + \beta \xi_2)} f(\alpha, \beta) \, d\alpha d\beta 
\end{equation}
Since we have the natural identification $\mathbb{C} \cong \mathbb{R} + \mathbb{R}\jvec$, all of the usual formulas for the ordinary Fourier transform still hold with $\jvec$ in place of $i$ provided the $\jvec$-Fourier transform acts on a $1, \jvec$ valued function.  To extend calculation to $\mathbb{H}\,$-valued functions, we use the identities $\mathcal{F}_\jvec \ivec \cdot = \ivec \overline{\mathcal{F}}_\jvec \cdot$ and $\mathcal{F}_\jvec \kvec \cdot = \kvec \overline{\mathcal{F}}_\jvec \cdot$ where we denote $$(\overline{\mathcal{F}}_\jvec f)(\xi) :=  \frac{1}{(2\pi)^2}\iint_{\mathbb{R}^2} e^{\jvec(\alpha \xi_1 + \beta \xi_2)} f \, d\alpha d\beta$$  It is clear from the definition that $(\overline{\mathcal{F}}f)(\xi, \eta) = (\mathcal{F}f)(-\xi, -\eta)$.  The Plancherel Identity for the $\jvec$-Fourier transform continues to hold even in the presence of non-commutativity, since we can write for two $\mathbb{H}$-valued functions $f, g$:
\begin{align*}
\iint f \cdot g \, d\alpha \, d\beta & = \iint (f_0 + f_2\jvec) \cdot (g_0 + \jvec g_2) + (f_1 - f_3\jvec)\ivec \cdot (g_1 - \jvec g_3)\ivec \, d\alpha \, d\beta \\
& = \iint (f_0 + f_2\jvec) \cdot (g_0 + \jvec g_2) + (f_1 - f_3\jvec) \cdot (g_1 - \jvec g_3) \, d\alpha \, d\beta \\
& = \iint \mathcal{F}^L_\jvec[(f_0 + f_2\jvec)] \cdot \mathcal{F}^L_\jvec[(g_0 + \jvec g_2)] + \mathcal{F}^L_\jvec[(f_1 - f_3\jvec)\ivec] \cdot \mathcal{F}^L_\jvec[(g_1 - \jvec g_3)\ivec] \, d\alpha \, d\beta \\
& = \iint \mathcal{F}^L_\jvec[f] \cdot \mathcal{F}^L_\jvec[g] \, d\alpha \, d\beta
\end{align*}
The $\jvec$-Fourier transform of a quaternion product is less well-behaved.  We denote the convolution product of two functions by $$(f \star g)(x) = \frac{1}{(2\pi)^2}\iint_{\mathbb{R}^2} f(x - y)g(y)\, dy$$ Observe also that the convolution property $\mathcal{F}_\jvec(fg) = \mathcal{F}_\jvec(f) \star \mathcal{F}_\jvec(g)$ still holds provided $f$ is $1, \jvec$-valued.  Similarly, the right $\mathcal{F}_\jvec$ Fourier Transform is defined by
\begin{equation}\label{JFourierTransformRight}
(\mathcal{F}_\jvec^R f)(\xi) := \frac{1}{(2\pi)^2}\iint_{\mathbb{R}^2}  f(\alpha, \beta) e^{-\jvec(\alpha \xi_1 + \beta \xi_2)}\, d\alpha d\beta 
\end{equation}
As above, $\mathcal{F}_\jvec^R$ enjoys the Plancherel Identity, and the convolution property provided the \textit{right} factor is $1, \jvec$-valued.

Fractional derivative control will be crucial in the sequel.  It is convenient to work with fractional solid derivatives.  If $f \in C_0^\infty$, these can be defined using either of the above versions of the Fourier transform as multipliers $$\mathcal{F}(|\mathcal{D}|^q f) = |\xi|^q \mathcal{F}(f)$$  If in addition $-2 < q < 0$, we can formally write $|\mathcal{D}|^q$ as a principal value convolution operator in physical space:
\begin{equation}\label{FracSolidDeriv}
|\mathcal{D}|^q f = \frac{1}{C_q}(f \star \pv |(\alpha, \beta)|^{-2 - q})
\end{equation}
where the constant $C_q = 4\pi\Gamma(q/2)\Gamma((2 - q)/2)^{-1}$; here $\Gamma$ is the Euler Gamma function (c.f. \cite{SteinFourierAnalysis}).  We will continue using \eqref{FracSolidDeriv} formally when $q$ is not a nonpositive even integer.\footnote{Indeed this can be made rigorous by analytically continuing \eqref{FracSolidDeriv} as an analytic function of $q$.}

We will use multi-index notation for denoting derivatives.  Let $j = (j_1, j_2) \in \mathbb{N}^2$ be given, and suppose $f = f(\alpha, \beta)$.  Then denote $\partial^j := \frac{\partial^{j_1}}{\partial \alpha^{j_1}} \frac{\partial^{j_2}}{\partial \beta^{j_2}}$.  Addition, subtraction, and ordering of multi-indices is componentwise.  Define the length $|j| = j_1 + j_2$.

For $s \in \mathbb{N}$, define the Sobolev spaces $W^{s, \infty}$ to be the space of functions $f \in L^1_{\text{loc}}(\mathbb{R}^n)$ for which the norm $\|f\|_{W^{s, \infty}} = \sum_{|j| \leq s} \|\partial^j f\|_{L^\infty}$ is finite.  For $s \in \mathbb{R}$, define $\dot{H}^s$ as the completion of $C_0^\infty(\mathbb{R}^2)$ with respect to the norm $\|f\|_{\dot{H}^s} = \|\,|\mathcal{D}|^s f\|_{L^2}$, and let $H^s$ be the completion of $C_0^\infty(\mathbb{R}^2)$ with respect to the norm $\|f\|_{H^s}^2 = \|f\|_{L^2}^2 + \|f\|_{\dot{H}^s}^2$.  For an interval $I \subset \mathbb{R}$ and a Banach space $X$, let $C^j(I, X)$ denote the functions $f(\alpha, \beta, t)$ for which $\sup_I \|\partial^i f/\partial t^j\|_X < \infty$ for all $0 \leq i \leq j$.

To further control half derivatives, we will use the following version of complex interpolation between Sobolev spaces.

\begin{proposition}\label{ComplexInterpolation}
(c.f. Theorems 4.4.1 and 6.4.5 of \cite{InterpolationSpaces}) 
\begin{itemize}
\item[(a)]{Let $X_1^0, \ldots, X_n^0, X_1^1, \ldots, X_n^1$ be Banach spaces, and suppose that $X_j^\theta$ is the $\theta$-complex interpolation space between $X_j^0$ and $X_j^1$.  Suppose further that $T$ is an $n$-multilinear operator so that $T : X_1^0 \times \cdots \times X_n^0 \to Y^0$ is continuous with operator norm $\|T\|_0$ and $T : X_1^1 \times \cdots \times X_n^1 \to Y^1$ is continuous with operator norm $\|T\|_1$.  Then for all $0 < \theta < 1$, $T : X_1^\theta \times \cdots \times X_n^\theta \to Y^\theta$ is continuous with operator norm at most $\|T\|_0^{(1 - \theta)}\|T\|_1^\theta$.}
\item[(b)]{The $\theta$-complex interpolation space between $\dot{H}^{s_0}$ and $\dot{H}^{s_1}$ is $\dot{H}^{(1 - \theta) s_0 + \theta s_1}$.}
\end{itemize}
\end{proposition}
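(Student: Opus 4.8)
These are two classical facts about Calder\'on's complex interpolation method, and the plan is to prove each in the textbook manner, the common tool being that the norm of $[Y^0,Y^1]_\theta$ is an infimum over functions holomorphic on the strip $0 \le \Re z \le 1$ that are bounded into $Y^0 + Y^1$, bounded into $Y^0$ on $\Re z = 0$, and bounded into $Y^1$ on $\Re z = 1$. For part (a), fix $x_j \in X_j^\theta$ for $1 \le j \le n$ and $\varepsilon > 0$. By definition of $[X_j^0,X_j^1]_\theta$ one may choose, for each $j$, a function $f_j$ holomorphic on the open strip and bounded and continuous up to its closure with values in $X_j^0 + X_j^1$, such that $f_j(\theta) = x_j$, $\sup_t \|f_j(it)\|_{X_j^0} \le \|x_j\|_{X_j^\theta} + \varepsilon$, and $\sup_t \|f_j(1+it)\|_{X_j^1} \le \|x_j\|_{X_j^\theta} + \varepsilon$; after multiplying by $e^{\delta(z^2 - \theta^2)}$ (and later letting $\delta \to 0$) we may also assume $f_j(x+it) \to 0$ as $|t| \to \infty$, uniformly in $x$. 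Arguing first on the dense subclass of representatives of the form $\sum_k a_{j,k} e^{\lambda_{j,k} z}$ with $a_{j,k} \in X_j^0 \cap X_j^1$, one checks that $F(z) := T(f_1(z), \dots, f_n(z))$ is a well-defined, decaying, $(Y^0 + Y^1)$-valued holomorphic function on the strip, the general case following by approximation. Multilinearity of $T$ and its endpoint bounds then give $\|F(it)\|_{Y^0} \le \|T\|_0 \prod_j (\|x_j\|_{X_j^\theta} + \varepsilon)$ and $\|F(1+it)\|_{Y^1} \le \|T\|_1 \prod_j (\|x_j\|_{X_j^\theta} + \varepsilon)$. Replacing $F(z)$ by $c^{z - \theta}F(z)$ with $c = \|T\|_0/\|T\|_1$ leaves $F(\theta)$ unchanged while making both endpoint bounds equal to $\|T\|_0^{1-\theta}\|T\|_1^\theta \prod_j (\|x_j\|_{X_j^\theta} + \varepsilon)$; using this rescaled function as a representative in the definition of the $[Y^0,Y^1]_\theta$ norm yields $\|T(x_1, \dots, x_n)\|_{Y^\theta} = \|F(\theta)\|_{Y^\theta} \le \|T\|_0^{1-\theta}\|T\|_1^\theta \prod_j (\|x_j\|_{X_j^\theta} + \varepsilon)$, and letting $\varepsilon \to 0$ is the assertion.

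For part (b), by Plancherel the Fourier transform $\mathcal{F}$ is an isometric isomorphism of $\dot{H}^\sigma$ onto the weighted space $L^2(\mathbb{R}^2, |\xi|^{2\sigma}\, d\xi)$ for every $\sigma$, and it is a morphism of Banach couples in both directions between $(\dot{H}^{s_0}, \dot{H}^{s_1})$ and $(L^2(|\xi|^{2 s_0}\, d\xi), L^2(|\xi|^{2 s_1}\, d\xi))$. Since the complex interpolation functor carries such an isomorphism of couples to an isomorphism of the corresponding interpolation spaces, it suffices to identify $[L^2(w_0), L^2(w_1)]_\theta$ for $w_j = |\xi|^{2 s_j}$. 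The Stein--Weiss theorem on complex interpolation of weighted $L^p$ spaces --- again a three-lines argument, now applied to the analytic family obtained by re-weighting the measure --- gives $[L^2(w_0), L^2(w_1)]_\theta = L^2(w_0^{1-\theta} w_1^\theta)$, and $w_0^{1-\theta} w_1^\theta = |\xi|^{2((1-\theta)s_0 + \theta s_1)}$; transporting back through $\mathcal{F}$ identifies the interpolation space isometrically with $\dot{H}^{(1-\theta)s_0 + \theta s_1}$. One works here with $\dot{H}^s$ realized inside the tempered distributions, modulo polynomials where necessary; in the range of indices used in this paper $C_0^\infty$, and hence $\dot{H}^{s_0} \cap \dot{H}^{s_1}$, is dense in each endpoint, so no technicality intervenes.

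The two three-lines (or rescaling-to-the-definition) estimates are routine; the point that genuinely needs care is the verification in part (a) that $z \mapsto F(z)$ is honestly holomorphic into $Y^0 + Y^1$ with the claimed endpoint and decay behavior, which is exactly why one first argues on the dense class of finite exponential-sum representatives and then passes to the limit, together with the parallel check in part (b) that $\mathcal{F}$ genuinely intertwines the two Banach couples --- in particular that it maps $\dot{H}^{s_0} \cap \dot{H}^{s_1}$ onto $L^2(w_0) \cap L^2(w_1) = L^2(\max(w_0, w_1)\, d\xi)$. Beyond this bookkeeping I anticipate no essential obstacle, both halves being standard; cf.\ Theorems 4.4.1 and 6.4.5 of \cite{InterpolationSpaces}.
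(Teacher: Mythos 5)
Your argument is correct and is essentially the same as what the paper relies on: the paper offers no proof of its own but cites Theorems 4.4.1 and 6.4.5 of \cite{InterpolationSpaces}, and your three-lines construction for the multilinear bound together with the Fourier-transform reduction to weighted $L^2$ spaces is precisely how those textbook results are established. No gaps.
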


In order to generalize complex analysis to 3D we give a brief overview of Clifford analysis in the quaternion context.  For more information see \cite{WuGlobal3D}, and for a full development see Chapter 2 of \cite{CliffordAlgebras}.

For a $C^2$ open set $\Omega \subset \mathbb{R}^3$, let $F : \Omega \subset \mathbb{R}^3 \to \mathbb{H}$ be given.  We define $F$ to be \textbf{analytic on $\Omega$} if $\mathbf{D}F = 0$, where we have introduced the Dirac operator $\mathbf{D} = \ivec \frac{\partial}{\partial x} + \jvec \frac{\partial}{\partial y} + \kvec \frac{\partial}{\partial z}$.  This implies that each component of $F$ is harmonic; conversely if $\varphi$ is a real-valued harmonic function on $\Omega$, $\mathbf{D}\varphi$ is analytic on $\Omega$.  For vector-valued $F$, observe that the scalar and vector parts of $\mathbf{D}F = 0$ reduce to the div-curl system $\nabla \cdot F = 0, \nabla \times F = 0$.

Denote the fundamental solution of the Laplacian in 3D by $$\Gamma(\vec{x}) = \Gamma(|\vec{x}|) = -\frac{1}{4\pi|\vec{x}|}$$ and denote the Clifford analogue of the Cauchy kernel by $$K(\vec{x}) = -2\mathbf{D}\Gamma(\vec{x}) = -\frac{1}{2\pi}\frac{\vec{x}}{|\vec{x}|^3} \quad \vec{x} \neq \vec{0}$$  One can construct a Hilbert transform associated with a $C^2$ boundary $\Sigma = \partial \Omega$ in parallel with the classical construction of the Hilbert transform associated to a curve in the complex plane.  We collect the properties of the Hilbert transform that we will use in this paper in the

\begin{proposition}\label{PotentialTheoryFacts}
(c.f. Chapter 2 of \cite{CliffordAlgebras})  Let $\Omega$ be a $C^2$ domain in $\mathbb{R}^3$ with boundary $\partial \Omega = \Sigma$.  Let $\mathbf{n}(\vec{x})$ be the outward unit normal to $\Sigma$ at $\vec{x}$, and let $dS(\vec{x})$ be the surface measure of $\Sigma$.  Then
\begin{itemize}
\item[(a)]{If $F$ is analytic on $\Omega$ and decays at infinity, then we have the Cauchy integral formula $$F(\vec{x}) = \frac{1}{2}\iint_\Sigma K(\vec{y} - \vec{x})\mathbf{n}(\vec{y})F(\vec{y}) \, dS(\vec{y})$$ for all $\vec{x} \in \Omega$.}
\item[(b)]{For an $\mathbb{H}$-valued function $f$ defined on $\Sigma$ that decays at infinity, define the Hilbert transform associated to $\Sigma$ by $$H_\Sigma f(\vec{x}) = \pv \iint K(\vec{y} - \vec{x})\mathbf{n}(\vec{y})f(\vec{y})\, dS(\vec{y})$$  Then the Cauchy integral $$Cf(\vec{x}) = \frac{1}{2}\iint_\Sigma K(\vec{y} - \vec{x})\mathbf{n}(\vec{y})f(\vec{y}) \, dS(\vec{y})$$ is analytic on $\Omega$ and extends continuously to the closure $\overline{\Omega}$.  Moreover on $\Sigma$ we have the Plemelj relation $$Cf = \frac12 (I + H_\Sigma)f$$}
\item[(c)]{Suppose that $F$ is a continuous $\mathbb{H}$-valued function defined on $\overline{\Omega}$ that decays at infinity.  Then $F$ is analytic on $\Omega$ if and only if $F(\vec{x}) = H_\Sigma F(\vec{x})$ for all $\vec{x} \in \Sigma$.}
\item[(d)]{The Hilbert transform $H_\Sigma$ satisfies $H_\Sigma^2 = I$, $H_\Sigma[H_\Sigma, T] = -[H_\Sigma, T]H_\Sigma$ on $L^2$.} 
\end{itemize}
\end{proposition}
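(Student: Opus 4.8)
Since Proposition~\ref{PotentialTheoryFacts} is quoted from the literature I will only sketch the route one takes to establish it. The whole proposition rests on two ingredients: a Clifford-algebraic divergence theorem for the Dirac operator $\mathbf D$, and the Calder\'on--Zygmund mapping properties of Cauchy-type singular integrals on a $C^2$ hypersurface. The first is elementary --- writing Stokes' theorem componentwise in the quaternion algebra, for $\mathbb H$-valued $f,g$ that are $C^1$ up to $\overline\Omega$ with suitable decay one has
\[
\iint_\Sigma f(\vec y)\,\mathbf n(\vec y)\,g(\vec y)\,dS(\vec y)\;=\;\iiint_\Omega \big((f\mathbf D)(\vec y)\big)g(\vec y)\,+\,f(\vec y)\big((\mathbf D g)(\vec y)\big)\,dV(\vec y),
\]
where $f\mathbf D$ denotes $\mathbf D$ acting on $f$ from the right. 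The second ingredient --- the existence of the principal value defining $H_\Sigma$ and its boundedness on $L^2$ --- is the genuinely hard input; it is exactly the content of Chapter~2 of \cite{CliffordAlgebras}, and I take it as given.

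Granting these, part (a) is proved by applying the divergence identity on $\Omega\setminus B_\epsilon(\vec x)$ with $g=F$ (analytic, so $\mathbf D g=0$) and $f(\vec y)=K(\vec y-\vec x)$, which satisfies $K\mathbf D=0$ away from $\vec x$ since $K=-2\mathbf D\Gamma$ with $\Gamma$ the radial fundamental solution: the volume terms vanish, and as $\epsilon\to 0$ the integral over $\partial B_\epsilon$ converges --- after a short computation using $\omega^2=-1$ for unit vectors $\omega$ --- to $F(\vec x)$ with the displayed normalization. The same excision argument with $g\equiv 1$ records the ``solid angle'' identity for $\iint_\Sigma K(\vec y-\vec x)\,\mathbf n(\vec y)\,dS(\vec y)$, which takes distinct constant values according as $\vec x$ lies in $\Omega$, on $\Sigma$, or in the exterior; the value on $\Sigma$ is interpreted as a principal value and its computation uses the $C^2$ regularity of $\Sigma$, so that near $\vec x$ the surface is flat to leading order.

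For (b), analyticity of $Cf$ on $\Omega$ is immediate from $\mathbf D_{\vec x}K(\vec y-\vec x)=0$ and differentiation under the integral sign. Continuity up to $\overline\Omega$ and the Plemelj relation come from the splitting $f(\vec y)=\big(f(\vec y)-f(\vec x)\big)+f(\vec x)$: the first piece produces an integral with an integrable kernel, hence continuous across $\Sigma$, while the second is governed by the solid-angle identity and produces exactly the jump of size $\tfrac12 f(\vec x)$ between the interior limit of $Cf$ and the principal value $H_\Sigma f$. Part (c) forward is then immediate: if $F$ is analytic and continuous up to $\overline\Omega$ with decay, (a) gives $F=CF|_\Sigma$ on $\Omega$, and taking boundary values and using (b) gives $F=\tfrac12(I+H_\Sigma)F$, i.e.\ $F=H_\Sigma F$ on $\Sigma$. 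For the converse one forms the Cauchy integrals of $f:=F|_\Sigma$ on both the interior domain $\Omega$ and the exterior domain $\mathbb R^3\setminus\overline\Omega$; the two Plemelj relations give their traces as $\tfrac12(I+H_\Sigma)f$ and $\tfrac12(H_\Sigma-I)f$, so $H_\Sigma f=f$ forces the exterior Cauchy integral to have zero boundary data --- whence it vanishes, by the maximum principle for its harmonic components together with decay at infinity --- and the interior one to reproduce $f$ on $\Sigma$; by uniqueness of analytic functions with prescribed boundary data and decay, $F$ then coincides with the interior Cauchy integral of its own trace and is therefore analytic. Finally, for (d): applying (c) to the analytic function $Cf$, whose trace is $\tfrac12(I+H_\Sigma)f$, shows that trace is fixed by $H_\Sigma$, so $\tfrac12(H_\Sigma+H_\Sigma^2)f=\tfrac12(I+H_\Sigma)f$ and hence $H_\Sigma^2=I$; the commutator identity is then purely algebraic, since $H_\Sigma[H_\Sigma,T]$ and $-[H_\Sigma,T]H_\Sigma$ both equal $T-H_\Sigma T H_\Sigma$ once $H_\Sigma^2=I$ is invoked.

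The main obstacle is entirely contained in the second ingredient above: the boundedness on $L^2$ of the singular integral $H_\Sigma$ --- equivalently, the existence of the principal-value Cauchy integral --- on a merely $C^2$ surface. This is the Clifford-algebra incarnation of the Coifman--McIntosh--Meyer theorem and is the reason the proposition is cited rather than reproved here; everything else --- the divergence theorem, the jump relations, the uniqueness argument for the converse of (c), and the algebra in (d) --- is routine modulo careful bookkeeping of quaternion non-commutativity and of the orientation conventions for $\mathbf n$.
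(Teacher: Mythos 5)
The paper offers no proof of this proposition at all — it is quoted verbatim from Chapter 2 of \cite{CliffordAlgebras} — so there is nothing in the text to compare your argument against except the citation itself. Your sketch is the standard and correct route: the Clifford--Stokes identity plus excision of a small ball gives (a) and the solid-angle/jump computation gives (b), the two-sided (interior/exterior) Plemelj relations with a uniqueness argument give (c), (d) follows from (c) applied to $Cf$ together with the purely algebraic identity $H[H,T] = T - HTH = -[H,T]H$ once $H^2 = I$ is known, and you rightly isolate the $L^2$-boundedness of the principal-value singular integral on a non-smooth surface (Coifman--McIntosh--Meyer, restated here as Theorem \ref{SingularIntegralL2Estimate}) as the one genuinely hard input that justifies citing rather than reproving.
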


In the sequel we will denote the Hilbert transform of a surface $\Sigma$ parametrized by a function $\gamma$ by $\nht_\gamma$, reserving the symbols $\oht, \nht, \nht_0$ for the special cases $\gamma = \Xi, \zeta, \alpha \ivec + \beta \jvec$.  In fact the results of Proposition \ref{PotentialTheoryFacts} continue to hold provided the parametrization $\gamma$ satisfies the \textbf{chord-arc condition}: There exist constants $\nu, N > 0$ so that $$\nu \leq \sup_{x \neq y} \frac{|\gamma(x) - \gamma(y)|}{|x - y|} \leq N$$ We denote the double layer potential operator $\mathcal{K}_\gamma = \Re(\nht_\gamma)$ associated to $\nht_\gamma$ by
$$\mathcal{K}_\gamma f(\vec{x}) = \iint_\Sigma \left(K(\vec{y} - \vec{x}) \cdot \mathbf{n}(\vec{y})\right) f(\vec{y}) \, dS(\vec{y})$$ and we denote $\mathcal{K}_\gamma = \mathfrak{K}, \mathcal{K}$ when $\gamma = \Xi, \zeta$ respectively.

Define the (real) adjoint of the Hilbert transform $\nht_\gamma$ through the usual $L^2(\mathbb{R}^2)$ inner product, and denote it by $\nht_\gamma^*$.  Then we have the formula $$\nht_\gamma^* f = -\iint \mathbf{n}(\vec{x}) K(\vec{y} - \vec{x}) f(\vec{y}) dS(\vec{y})$$

	\subsection{Reformulation of Euler's Equation without Quadratic Nonlinearities}

Here we record the evolution equations for $\Xi$ and $\zeta$.  We will often write $\Xi_\beta \partial_\alpha - \Xi_\alpha \partial_\beta = (N \times \nabla)$, where $N = \Xi_\alpha \times \Xi_\beta$.  For brevity we refer to the literature for proofs whenever possible, especially \cite{WuLocal3D} and \cite{WuGlobal3D}.

Key to the cubic nature of the water wave problem in new coordinates are the following explicit commutator identities:

\begin{proposition}\label{CommutatorIdentities}
(c.f. Lemma 3.1 in \cite{WuLocal3D} and Lemma 1.2 of \cite{WuGlobal3D}.)  Let $\gamma : \mathbb{R}^2 \to \mathbb{H}$ be vector-valued and satisfy the chord-arc condition.  Then the following identities hold:
\begin{itemize}
 \item[(a)]{Let $f = f(\alpha, \beta, t, s)$.  For $\partial = \partial_\alpha, \partial_\beta, \partial_t, \partial_s$, denote $\partial^\prime = \partial_{\alpha^\prime}, \partial_{\beta^\prime}, \partial_t, \partial_s$ respectively.  Then we have $$[\partial, \nht_\gamma]f = \iint K(\gamma^\prime - \gamma)(\partial\gamma - \partial^\prime \gamma^\prime) \times (\gamma^\prime_{\beta^\prime}\partial_{\alpha^\prime} - \gamma^\prime_{\alpha^\prime}\partial_{\beta^\prime})f^\prime \, d\alpha^\prime d\beta^\prime$$}
 \item[(b)]{Let $\Gamma = \gamma_\alpha \times \gamma_\beta$.  Then for any scalar-valued function $g$, $$[g(\gamma_\beta \partial_\alpha - \gamma_\alpha \partial_\beta), \nht_\gamma] = \iint K(\gamma^\prime - \gamma)\left(g\Gamma - g^\prime \Gamma^\prime\right) \times (\gamma^\prime_{\beta^\prime}\partial_{\alpha^\prime} - \gamma^\prime_{\alpha^\prime}\partial_{\beta^\prime})f^\prime \, d\alpha^\prime d\beta^\prime$$}
 \item[(c)]{
\begin{align*}
[\partial_t^2, \nht_\gamma] & = \iint K(\gamma^\prime - \gamma)(\gamma_{tt} - \gamma_{tt}^\prime) \times (\gamma^\prime_{\beta^\prime}\partial_{\alpha^\prime} - \gamma^\prime_{\alpha^\prime}\partial_{\beta^\prime})f^\prime \, d\alpha^\prime d\beta^\prime \\
& \quad + \iint \partial_t K(\gamma^\prime - \gamma)(\gamma_t - \gamma_t^\prime) \times (\gamma^\prime_{\beta^\prime}\partial_{\alpha^\prime} - \gamma^\prime_{\alpha^\prime}\partial_{\beta^\prime})f^\prime \, d\alpha^\prime d\beta^\prime \\
& \quad + \iint K(\gamma^\prime - \gamma)(\gamma_t - \gamma_t^\prime) \times (\gamma^\prime_{t\beta^\prime}\partial_{\alpha^\prime} - \gamma^\prime_{t\alpha^\prime}\partial_{\beta^\prime})f^\prime \, d\alpha^\prime d\beta^\prime \\
& \quad + 2 \iint K(\gamma^\prime - \gamma)(\gamma_t - \gamma_t^\prime) \times (\gamma^\prime_{\beta^\prime}\partial_{\alpha^\prime} - \gamma^\prime_{\alpha^\prime}\partial_{\beta^\prime})f_t^\prime \, d\alpha^\prime d\beta^\prime
\end{align*}
}
\end{itemize}
\end{proposition}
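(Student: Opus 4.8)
The plan is to prove (a) by a direct computation, to deduce (c) from it by differentiating once more in $t$, and to obtain (b) by an argument entirely parallel to that for (a). Throughout, write the Hilbert transform in parametrized form, using that the vector area element of $\Sigma=\gamma(\mathbb R^2)$ is $\mathbf n(\vec y)\,dS(\vec y)=(\gamma_{\alpha'}\times\gamma_{\beta'})\,d\alpha'\,d\beta'$, so that
\[
\nht_\gamma f(\alpha,\beta)=\pv\iint K\bigl(\gamma(\alpha',\beta')-\gamma(\alpha,\beta)\bigr)\,(\gamma_{\alpha'}\times\gamma_{\beta'})\,f(\alpha',\beta')\,d\alpha'\,d\beta'.
\]
The only structural fact about the kernel we use is that $K=-2\mathbf D\Gamma$ where $\Gamma$ is the scalar fundamental solution of the Laplacian, so that $K$ is the gradient of a harmonic function and hence $\mathbf D K=0$ away from the origin; equivalently $K$ is both divergence-free and curl-free there. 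Contracting these two conditions against a cross product yields the pointwise identity
\[
\bigl[(\vec a\cdot\nabla)K\bigr]\,(\vec c\times\vec b)-\bigl[(\vec b\cdot\nabla)K\bigr]\,(\vec c\times\vec a)=\bigl[(\vec c\cdot\nabla)K\bigr]\,(\vec a\times\vec b)
\]
for arbitrary constant vectors $\vec a,\vec b,\vec c$, the products on the left being quaternion products of the vector $(\vec v\cdot\nabla)K$ with a vector. This is the 3D replacement for the elementary cancellation among the $\partial_t\bigl(z_\beta/(z(\alpha)-z(\beta))\bigr)$ terms that drives the 2D commutator identities.

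For (a): differentiate $\nht_\gamma f$ under the principal-value sign. For $\partial=\partial_\alpha,\partial_\beta$ only $K(\gamma'-\gamma)$ carries the unprimed variables, so $\partial[K(\gamma'-\gamma)]=-(\partial\gamma\cdot\nabla)K$; for $\partial=\partial_t,\partial_s$ one also differentiates the area element and $f'$, the latter term producing $\nht_\gamma(\partial f)$, which is subtracted off to form the commutator. There remains a sum of directional derivatives of $K$ against $\gamma'_{\alpha'}\times\gamma'_{\beta'}$ and, in the $t,s$ cases, $K$ against $\partial$ of the area element. Now integrate by parts in $\alpha'$ and $\beta'$, both in $\nht_\gamma(\partial f)$ and in the claimed right-hand side $\iint K(\gamma'-\gamma)(\partial\gamma-\partial'\gamma')\times(\gamma'_{\beta'}\partial_{\alpha'}-\gamma'_{\alpha'}\partial_{\beta'})f'$; the derivatives fall on $K$, on the difference $\partial\gamma-\partial'\gamma'$, and on $\gamma'_{\alpha'},\gamma'_{\beta'}$, all terms carrying second derivatives of $\gamma$ cancelling in pairs or against the area-element term above, and what survives reduces — pointwise at fixed $(\alpha',\beta')$ — to the displayed algebraic identity with $\vec a=\gamma'_{\alpha'}$, $\vec b=\gamma'_{\beta'}$, and $\vec c$ the relevant velocity or tangent vector (the cross terms in $\gamma'_{\alpha'}\times\gamma'_{\beta'}$ generated when $\partial\gamma-\partial'\gamma'$ is expanded cancelling a like term on the other side). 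This gives (a).

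For (c): by (a), $[\partial_t^2,\nht_\gamma]=[\partial_t,\nht_\gamma]\partial_t+\partial_t[\partial_t,\nht_\gamma]$; substituting the formula of (a) for $[\partial_t,\nht_\gamma]$ and differentiating the resulting integral by the Leibniz rule — $\partial_t$ falling on $K(\gamma'-\gamma)$, on $\gamma_t-\gamma'_t$ (giving $\gamma_{tt}-\gamma'_{tt}$), on the coefficients $\gamma'_{\alpha'},\gamma'_{\beta'}$ of the tangential operator (giving the $\gamma'_{t\alpha'},\gamma'_{t\beta'}$ term), and on $f'$ — reproduces the four listed terms, the factor $2$ on the last one appearing because the $\partial_t f'$ contribution shows up once here and once in $[\partial_t,\nht_\gamma]\partial_t f$. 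For (b): run the computation of (a) with $\partial$ replaced by the tangential operator $g(\gamma_\beta\partial_\alpha-\gamma_\alpha\partial_\beta)$; the cross-product structure of this operator makes its coefficients $\gamma_\alpha,\gamma_\beta$ assemble into $\Gamma=\gamma_\alpha\times\gamma_\beta$, so that the role of $\partial\gamma-\partial'\gamma'$ is played by $g\Gamma-g'\Gamma'$, the surface integration by parts is identical, and the same divergence/curl-free identity closes the argument.

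I expect the main obstacle to be not the algebra but the rigorous justification of these differentiations under the integral and of the surface integrations by parts. By the chord-arc lower bound one has $K(\gamma'-\gamma)\sim|(\alpha',\beta')-(\alpha,\beta)|^{-2}$, which is borderline nonintegrable in $\mathbb R^2$, while $\nabla K$ is genuinely worse; so each step must be performed on the $\varepsilon$-truncated kernel with the near-diagonal boundary contributions shown to vanish as $\varepsilon\to0$, using the oddness of $K$ together with the $C^2$ (chord-arc) regularity of $\gamma$. The structural payoff — and the reason these identities are the engine of the cubic-nonlinearity estimates — is that the final right-hand sides contain only $K$ times a factor $\partial\gamma-\partial'\gamma'=O(|(\alpha',\beta')-(\alpha,\beta)|)$ (respectively $g\Gamma-g'\Gamma'$), hence absolutely convergent integrals behaving an order more smoothly than $\partial\,\nht_\gamma$ and $\nht_\gamma\,\partial$ separately.
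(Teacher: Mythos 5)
The paper offers no proof of this proposition --- it is quoted from \cite{WuLocal3D} and \cite{WuGlobal3D} with the proofs deferred to those references --- so the comparison is with the argument there, which is exactly the one you reconstruct. Your key algebraic identity is correct and is indeed the engine of the proof: writing $M$ for the Jacobian matrix of $K$, the hypothesis $\mathbf{D}K=0$ makes $M$ symmetric and trace-free; the scalar part of your identity reduces to $\det(M\vec a,\vec b,\vec c)+\det(\vec a,M\vec b,\vec c)+\det(\vec a,\vec b,M\vec c)=\mathrm{tr}(M)\det(\vec a,\vec b,\vec c)=0$, and the vector part reduces via $u\times(v\times w)=v(u\cdot w)-w(u\cdot v)$ to the symmetry of $M$. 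Parts (a) and (c) then go through exactly as you describe (I checked the bookkeeping for $\partial=\partial_\alpha$ and $\partial=\partial_t$, including the second-derivative cancellations $\gamma'_{\alpha'\beta'}=\gamma'_{\beta'\alpha'}$ and the factor $2$ in (c)). The one place your sketch is thinner than it admits is (b): the coefficients $g\gamma_\beta,\,g\gamma_\alpha$ are vector-valued and multiply $K$ on the left, so the advertised ``assembly into $\Gamma$'' is itself a second, distinct consequence of $\mathbf{D}K=0$, namely $\vec a(M\vec b)-\vec b(M\vec a)=-M(\vec a\times\vec b)$ for $M$ symmetric and trace-free (polarize $Au\times Av=\mathrm{cof}(A)(u\times v)$ and use $\mathrm{tr}(M)=0$); and in the term $\nht_\gamma\bigl(g(\gamma_\beta\partial_\alpha-\gamma_\alpha\partial_\beta)f\bigr)$ one must first convert the quaternion products $(\gamma'_{\alpha'}\times\gamma'_{\beta'})\gamma'_{\beta'}$ and $(\gamma'_{\alpha'}\times\gamma'_{\beta'})\gamma'_{\alpha'}$ into cross products (using that the normal is orthogonal to the tangents) before the surface integration by parts closes. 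These are routine but they are genuinely additional identities, not a literal rerun of (a). Your remarks on the principal-value truncation and on why the resulting kernels are an order less singular than $\partial\,\nht_\gamma$ and $\nht_\gamma\,\partial$ separately are accurate.
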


Let $\Xi - (\alpha \ivec + \beta \jvec) = x\ivec + y\jvec + z\kvec$; using \eqref{WaterWaveLagrangeFirstPass} along with Proposition \ref{CommutatorIdentities} we can derive the evolution equation for $(I - \oht)z\kvec$ as in Proposition 1.3 of \cite{WuGlobal3D}:

\begin{align}\label{OldEvolutionEquations}
& \quad (\partial_t^2 - \mathfrak{a}(\Xi_\beta \partial_\alpha - \Xi_\alpha \partial_\beta))(I - \oht)z\kvec \notag \\
& \qquad\qquad = \iint K(\Xi^\prime - \Xi)(\Xi_t - \Xi_t^\prime) \times (\Xi^\prime_{\beta^\prime}\partial_{\alpha^\prime} - \Xi^\prime_{\alpha^\prime}\partial_{\beta^\prime})(\Xi^\dagger_t)^\prime\kvec \, d\alpha^\prime, d\beta^\prime \notag \\
& \qquad\qquad - \iint K(\Xi^\prime - \Xi)(\Xi_t - \Xi_t^\prime) \times (\Xi^\prime_{t\beta^\prime}\partial_{\alpha^\prime} - \Xi^\prime_{t\alpha^\prime}\partial_{\beta^\prime})z^\prime\kvec \, d\alpha^\prime d\beta^\prime \\
& \qquad\qquad - \iint \partial_t K(\Xi^\prime - \Xi)(\Xi_t - \Xi_t^\prime) \times (\Xi^\prime_{\beta^\prime}\partial_{\alpha^\prime} - \Xi^\prime_{\alpha^\prime}\partial_{\beta^\prime})z^\prime\kvec \, d\alpha^\prime d\beta^\prime \notag
\end{align}

The first term in the nonlinearity above can be rewritten as cubic since we will see that $\Xi_t$ and $\Xi_t^\dagger$ are orthogonal in $L^2$ up to higher order terms.  However, we must consider derivatives of this equation in order to close the energy, and then the quantity $\mathfrak{a} - 1$ appears and is only linearly small, spoiling the cubic structure.  This obstacle is overcome by introducing Wu's transform (c.f. (1.28) of \cite{WuGlobal3D}):

\begin{equation}\label{KappaDefinition}
\kappa = \Xi - (I + \oht - \mathfrak{K})z\kvec
\end{equation}

While it is immediate from the definition that $\kappa$ has no $\kvec$ component, $\kappa$ is in fact $\ivec, \jvec$-valued as a consequence of the general result:

\begin{proposition}\label{ChangeOfVariablesIJValued}
Let $\gamma = \gamma_1\ivec + \gamma_2 \jvec + \gamma_3 \kvec$ satisfy the chord-arc condition.  Then the quantity $(\nht_\gamma - \mathcal{K}_\gamma)\gamma_3\kvec$ is $\ivec, \jvec$-valued.
\end{proposition}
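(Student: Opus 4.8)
The plan is to reduce the claim to two elementary facts: that the product of any vector with $\kvec$ has no $\kvec$-component, and that the flux of a curl through a boundaryless surface vanishes. Write $\Sigma = \gamma(\mathbb{R}^2)$. Since $\mathcal{K}_\gamma = \Re(\nht_\gamma)$, the operator $\nht_\gamma - \mathcal{K}_\gamma$ is integration against the vector part of the Cauchy kernel of $\nht_\gamma$: up to a fixed nonzero constant $c$ that will play no role,
\[
(\nht_\gamma - \mathcal{K}_\gamma)f(\vec{x}) = c\,\pv \iint_\Sigma \bigl(K(\vec{y} - \vec{x}) \times \mathbf{n}(\vec{y})\bigr)\, f(\vec{y})\, dS(\vec{y}).
\]
A quaternion is $\ivec,\jvec$-valued exactly when both its scalar part and its $\kvec$-component vanish, so I would check these two things for $f = \gamma_3\kvec$. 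Here I regard the scalar function $\gamma_3$ on $\Sigma$ as the restriction of the linear function $\vec{y}\mapsto\vec{y}\cdot\kvec$ on $\mathbb{R}^3$, which has gradient $\kvec$. For the $\kvec$-component: the integrand is $\gamma_3(\vec{y})\,\vec{v}(\vec{y})\kvec$ with $\vec{v} = K(\vec{y}-\vec{x})\times\mathbf{n}(\vec{y})$ a vector, and since $\vec{v}\kvec = \vec{v}\times\kvec - \vec{v}\cdot\kvec$, its vector part $\vec{v}\times\kvec$ lies in the $\ivec,\jvec$-plane and its scalar part is $-\vec{v}\cdot\kvec$; in particular the integrand has no $\kvec$-component, hence neither does $(\nht_\gamma - \mathcal{K}_\gamma)(\gamma_3\kvec)$.

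The substance is the scalar part. By the above it equals $-c\,\pv\iint_\Sigma \gamma_3\,\bigl(K(\vec{y}-\vec{x})\times\mathbf{n}(\vec{y})\bigr)\cdot\kvec\, dS$, and I would rewrite $(K\times\mathbf{n})\cdot\kvec = (\kvec\times K)\cdot\mathbf{n}$ and use $K(\vec{z}) = \nabla_{\vec{z}}\Phi(\vec{z})$ with $\Phi(\vec{z}) = (2\pi|\vec{z}|)^{-1} = -2\Gamma(\vec{z})$, so that $\kvec\times K(\vec{y}-\vec{x}) = -\nabla_{\vec{y}}\times\bigl(\Phi(\vec{y}-\vec{x})\kvec\bigr)$. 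Since $\nabla\gamma_3\times(\Phi\kvec) = \kvec\times(\Phi\kvec) = 0$, the Leibniz rule for the curl gives $\gamma_3(\vec{y})\,\kvec\times K(\vec{y}-\vec{x}) = -\nabla_{\vec{y}}\times\vec{W}(\vec{y})$ with $\vec{W}(\vec{y}) = \gamma_3(\vec{y})\,\Phi(\vec{y}-\vec{x})\,\kvec$, so the scalar part of $(\nht_\gamma - \mathcal{K}_\gamma)(\gamma_3\kvec)(\vec{x})$ is a multiple of $\pv\iint_\Sigma(\nabla_{\vec{y}}\times\vec{W})\cdot\mathbf{n}\, dS$, the flux of a curl through $\Sigma$. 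Since $\Sigma$ has no boundary — its behavior at infinity being controlled by the decay of $\gamma$ and of $\Phi(\cdot-\vec{x})$ — Stokes' theorem forces this to vanish.

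The one point requiring care, and the step I expect to be the main obstacle, is the singularity of $\vec{W}$ at $\vec{y} = \vec{x}\in\Sigma$, which is precisely what the principal value handles. I would excise a small geodesic disk $D_\epsilon(\vec{x})\subset\Sigma$, apply Stokes' theorem on $\Sigma\setminus D_\epsilon(\vec{x})$, and show that the resulting boundary term $\oint_{\partial D_\epsilon(\vec{x})}\vec{W}\cdot d\vec{\ell}$ tends to $0$ as $\epsilon\to 0$, despite $|\vec{W}|\sim\epsilon^{-1}$ on $\partial D_\epsilon$. This is where it matters that $\vec{W}$ points in the fixed direction $\kvec$: writing $\vec{W} = (\gamma_3\Phi)\kvec$ and subtracting off the constant $\gamma_3(\vec{x})/(2\pi\epsilon)$, the constant-coefficient part contributes a multiple of $\oint_{\partial D_\epsilon}\kvec\cdot d\vec{\ell} = 0$, while the remaining coefficient is $O(1)$ on $\partial D_\epsilon$ — here one uses the chord-arc condition to see $|\vec{y}-\vec{x}| = \epsilon(1+O(\epsilon))$ there — so it contributes only $O(\epsilon)$. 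Hence the scalar part vanishes as well, and together with the first paragraph this shows $(\nht_\gamma - \mathcal{K}_\gamma)\gamma_3\kvec$ is $\ivec,\jvec$-valued.
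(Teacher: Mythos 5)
Your proof is correct, and it supplies in full the argument that the paper outsources to identity (1.30) of Wu's paper: the decomposition into "the vector part of the kernel times $\kvec$ has no $\kvec$-component pointwise" plus "the scalar part is the integral of an exact $2$-form over a boundaryless surface" is exactly the mechanism behind Wu's identity, which she carries out as an integration by parts in the parameters $(\alpha',\beta')$ rather than intrinsically via Stokes' theorem; the two are the same computation. One small correction: the asymptotic $|\vec{y}-\vec{x}| = \epsilon(1+O(\epsilon))$ on the geodesic circle is a consequence of the $C^2$ (or $C^{1,1}$) regularity of $\Sigma$, not of the chord-arc condition, which only gives two-sided comparability and would leave your boundary term merely $O(1)$. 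You can sidestep this entirely by excising $\Sigma \cap B_\epsilon(\vec{x})$ instead of a geodesic disk: then $\Phi(\vec{y}-\vec{x}) = (2\pi\epsilon)^{-1}$ is exactly constant on the cut curve, so after subtracting $\gamma_3(\vec{x})(2\pi\epsilon)^{-1}\kvec$ the remaining coefficient is $|\gamma_3(\vec{y})-\gamma_3(\vec{x})|(2\pi\epsilon)^{-1} \leq (2\pi)^{-1}$, and the chord-arc condition is then needed only to bound the length of the cut curve by $C\epsilon$.
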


\begin{proof}
The proof in the case $\gamma = \Xi$ is given on p. 9 of \cite{WuGlobal3D} culminating in identity (1.30); we need only note that the derivation of (1.30) does not depend on any properties of $\Xi$ except that it parametrizes a surface.
\end{proof}

Using the identification $\mathbb{R}^2 \cong \mathbb{R}\ivec + \mathbb{R}\jvec \subset \mathbb{H}$, it makes sense to regard $\kappa$ as a change of variables on $\mathbb{R}^2$ and so consider compositions $f \circ \kappa$ for functions $f : \mathbb{R}^2 \to \mathbb{H}$.  In fact we will show that $\kappa$ is a diffeomorphism in Proposition \ref{KappaControlledByZeta}.  Since we have not yet specified the initial parametrization of the original Lagrangian coordinates, choose $\Xi(\alpha, \beta, 0)$ so that $\kappa(\alpha, \beta, 0) = \alpha \ivec + \beta \jvec$.

Denote $\mathcal{D} = \ivec\partial_\alpha + \jvec\partial_\beta$.  We change variables in \eqref{OldEvolutionEquations} by writing $$\lambda := \zeta - (\alpha \ivec + \beta \jvec) := \xnew\ivec + \ynew\jvec + \znew\kvec := \Xi \circ \kappa^{-1}, \qquad \qquad \nht = \nht_\zeta$$ along with the notation
$$D_t = \partial_t + (\kappa_t \circ \kappa^{-1}) \cdot \mathcal{D}, \qquad b = \kappa_t \circ \kappa^{-1}, \qquad \mathcal{A} = (\mathfrak{a}J(\kappa)) \circ \kappa^{-1}$$ and so have (c.f. (1.35) of \cite{WuGlobal3D}):

\begin{align}\label{NewEvolutionEquations}
& \quad (D_t^2 - \mathcal{A}(\zeta_\beta \partial_\alpha - \zeta_\alpha \partial_\beta))(I - \nht)\znew\kvec \notag \\
& \qquad\qquad = \iint K(\zeta^\prime - \zeta)(D_t \zeta - D_t^\prime \zeta_t^\prime) \times (\zeta^\prime_{\beta^\prime}\partial_{\alpha^\prime} - \zeta^\prime_{\alpha^\prime}\partial_{\beta^\prime})D_t ^\prime (\zeta^\dagger)^\prime\kvec \, d\alpha^\prime d\beta^\prime \notag \\
& \qquad\qquad - \iint K(\zeta^\prime - \zeta)(D_t \zeta - D_t^\prime \zeta_t^\prime) \times (\partial_\beta^\prime D_t^\prime \zeta^\prime \partial_{\alpha^\prime} - \partial_\alpha^\prime D_t^\prime \zeta^\prime\partial_{\beta^\prime})\znew^\prime\kvec \, d\alpha^\prime d\beta^\prime \\
& \qquad\qquad - \iint D_t K(\zeta^\prime - \zeta)(D_t\zeta - D_t^\prime\zeta^\prime) \times (\zeta^\prime_{\beta^\prime}\partial_{\alpha^\prime} - \zeta^\prime_{\alpha^\prime}\partial_{\beta^\prime})\znew^\prime\kvec \, d\alpha^\prime d\beta^\prime \notag
\end{align}

Set $\mathcal{P} := D_t^2 - \mathcal{A}(\zeta_\beta \partial_\alpha - \zeta_\alpha \partial_\beta)$, and denote \eqref{NewEvolutionEquations} by $\mathcal{P}(I - \nht)\znew\kvec = G$.  By taking a derivative $D_t$ to \eqref{NewEvolutionEquations}, we arrive at the following evolution equation for $D_t(I - \nht)\znew\kvec$:

\begin{equation}\label{NewEvolutionEquationsDt}
\mathcal{P}D_t(I - \nht)\znew\kvec = [\mathcal{P}, D_t](I - \nht)\znew\kvec + D_t G
\end{equation}

Moreover, we can write the commutator $[D_t, \mathcal{P}]$ in two different ways: first by straightforwardly distributing the $D_t$, and second by changing variables with respect to $\kappa$:

\begin{align}\label{CommutatorDtWithP}
[D_t, \mathcal{P}] & = (D_t\mathcal{A})(\mathcal{N} \times \nabla) + \mathcal{A}((D_t\zeta_\beta)\partial_\alpha - (D_t\zeta_\alpha)\partial_\beta) \notag \\
& = \mathcal{A}U_\kappa^{-1}\left(\frac{\mathfrak{a}_t}{\mathfrak{a}}\right)(\mathcal{N} \times \nabla) + (\partial_\beta D_t\zeta \partial_\alpha - \partial_\alpha D_t\zeta \partial_\beta)
\end{align}

In order to close these equations we need formulas expressing $b$, $\mathcal{A} - 1$ and $D_t\mathcal{A}$ as quadratic functions of $\zeta$ and its derivatives.  Denote $P := \alpha \ivec + \beta \jvec$.  An immediate yet key consequence of \eqref{KappaDefinition} is the relation

\begin{equation}\label{LambdaIdentity}
\lambda = (I + \nht - \mathcal{K})\znew\kvec
\end{equation}

\begin{proposition}\label{BADtAFormulas}
The following identities hold:
\begin{itemize}
\item[(a)]{$$(I - \nht)b = -[D_t, \nht](I + \nht)\znew\kvec + (I - \nht)D_t\mathcal{K}\znew\kvec$$}
\item[(b)]{\begin{align*}
(I - \mathcal{K})\mathcal{A} & = \Bigl\{\kvec + [D_t, \nht]D_t\zeta + [\mathcal{A}(\mathcal{N} \times \nabla), \nht] (I + \nht)\znew\kvec  \\
& \qquad + (I - \nht)\left(-\mathcal{A}\zeta_\beta \times (\partial_\alpha\mathcal{K}\znew\kvec) + \mathcal{A}\zeta_\alpha \times (\partial_\beta\mathcal{K}\znew\kvec) + \mathcal{A}(\lambda_\alpha \times \lambda_\beta)\right)\Bigr\}_3 \notag
\end{align*} }
\item[(c)]{\begin{align*}
(I - \nht)U_\kappa^{-1}(\mathfrak{a}_t(\Xi_\alpha \times \Xi_\beta)) & = [D_t^2 + \mathcal{A}(\mathcal{N} \times \nabla), \nht]D_t\zeta \\
\end{align*}
\begin{align*}
& = 2 \iint K(\zeta^\prime - \zeta)(D_t^2\zeta - (D_t^2\zeta)^\prime) \times (\zeta_{\beta^\prime}\partial_{\alpha^\prime} - \zeta_{\alpha^\prime}\partial_{\beta^\prime}) D_t^\prime\zeta^\prime \, d\alpha \, d\beta \\
& \quad + 2 \iint K(\zeta^\prime - \zeta)(D_t\zeta - D_t^\prime\zeta^\prime) \times (\zeta_{\beta^\prime}\partial_{\alpha^\prime} - \zeta_{\alpha^\prime}\partial_{\beta^\prime}) (D_t^2\zeta)^\prime \, d\alpha \, d\beta \\
& \quad + \iint ((D_t^\prime\zeta^\prime - D_t\zeta) \cdot \nabla)K(\zeta^\prime - \zeta)((D_t\zeta - D_t^\prime\zeta^\prime) \times (\zeta_{\beta^\prime}\partial_{\alpha^\prime} - \zeta_{\alpha^\prime}\partial_{\beta^\prime}) D_t^\prime\zeta^\prime \, d\alpha \, d\beta \\
& \quad + \iint K(\zeta^\prime - \zeta)\Bigl((D_t\zeta - D_t^\prime\zeta^\prime) \times \partial_{\beta^\prime}D_t^\prime \zeta^\prime)\partial_{\alpha^\prime}D_t^\prime \zeta^\prime - (D_t\zeta - D_t^\prime\zeta^\prime) \times \partial_{\alpha^\prime}D_t^\prime \zeta^\prime)\partial_{\beta^\prime}D_t^\prime \zeta^\prime \Bigr) d\alpha \, d\beta
\end{align*}  }
\end{itemize}
\end{proposition}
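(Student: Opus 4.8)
The plan is to derive each of the three identities by applying the operator $(I - \nht)$ (or $(I - \mathcal{K})$) to a suitable known evolution equation and then exploiting the structure of the analyticity conditions together with the commutator identities of Proposition \ref{CommutatorIdentities}. The guiding principle, as in \cite{WuLocal3D} and \cite{WuGlobal3D}, is that the ``bad'' quantities $b = \kappa_t \circ \kappa^{-1}$, $\mathcal{A} - 1$, and $U_\kappa^{-1}(\mathfrak{a}_t(\Xi_\alpha\times\Xi_\beta))$ all satisfy equations of the schematic form (analyticity operator applied to the bad quantity) $=$ (explicit commutator term), and since the commutator terms carry an extra factor of the difference $\zeta - \zeta^\prime$ of interface positions or velocities, they are automatically of quadratic or higher order. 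For part (a), I would start from the definition $b = \kappa_t \circ \kappa^{-1}$ together with \eqref{KappaDefinition}, which after changing variables by $\kappa$ gives $b = \lambda_t \cdot \text{(something)} - D_t\bigl((I + \nht - \mathcal{K})\znew\kvec\bigr)$ via \eqref{LambdaIdentity}; more precisely, differentiate \eqref{LambdaIdentity} in $D_t$, use that $\zeta_t = \Xi_t \circ \kappa^{-1}$ is analytic (i.e. $(I - \nht)D_t\zeta = 0$, the transformed version of \eqref{XiTAnalyticFirstPass}), and collect terms. The key move is that $D_t$ does not commute with $\nht$, producing the commutator $[D_t, \nht]$; applying $(I - \nht)$ then annihilates the analytic pieces and isolates $(I-\nht)b$ as stated. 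One should be careful that $\nht$ here means $\nht_\zeta$ and that $\mathcal{K}$-terms do not get annihilated since $\mathcal{K} = \Re(\nht)$ is not the full Hilbert transform.

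For part (b), the strategy is to take $D_t^2$ of the identity \eqref{LambdaIdentity} (equivalently work directly from \eqref{NewEvolutionEquations}, which is $\mathcal{P}(I - \nht)\znew\kvec = G$). Writing $\mathcal{P} = D_t^2 - \mathcal{A}(\mathcal{N}\times\nabla)$ and using $\mathcal{P}(I-\nht)\znew\kvec = G$ together with $\lambda = (I + \nht - \mathcal{K})\znew\kvec = 2\znew\kvec - (I-\nht)\znew\kvec - \mathcal{K}\znew\kvec$, one expresses $D_t^2\lambda$ — whose vertical component encodes the balance $\Xi_{tt} + \kvec = \mathfrak{a}(\Xi_\alpha\times\Xi_\beta)$, hence $\mathcal{A}$ — in terms of $G$, the commutators $[D_t,\nht]$, $[\mathcal{A}(\mathcal{N}\times\nabla),\nht]$, and lower-order cross-product terms $\lambda_\alpha\times\lambda_\beta$ coming from the chain rule applied to the $(\mathcal{N}\times\nabla)$ operator under the change of variables. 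Applying $(I-\mathcal{K})$ and taking the $\kvec$-component (the $\{\cdot\}_3$ in the statement) isolates $(I-\mathcal{K})\mathcal{A}$; the appearance of $(I-\mathcal{K})$ rather than $(I-\nht)$ on the left is forced because $\mathcal{A}$ is scalar-valued while $\nht$ mixes components, so only the scalar projection $\mathcal{K}$ of $\nht$ is the correct operator to invert.

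For part (c), differentiate \eqref{NewEvolutionEquations} once more, or equivalently apply $D_t$ to the equation defining $\mathcal{A}$ and use \eqref{CommutatorDtWithP}; the left side produces $[D_t^2 + \mathcal{A}(\mathcal{N}\times\nabla), \nht]D_t\zeta$ after again using analyticity $(I-\nht)D_t\zeta = 0$ to kill the principal part, and one recognizes $U_\kappa^{-1}(\mathfrak{a}_t(\Xi_\alpha\times\Xi_\beta))$ as the resulting coefficient via the identity $\mathcal{A} U_\kappa^{-1}(\mathfrak{a}_t/\mathfrak{a}) = U_\kappa^{-1}(\mathfrak{a}_t J(\kappa))$. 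The second displayed formula is then just the explicit expansion of this commutator using Proposition \ref{CommutatorIdentities}(a) and (c): writing out $[D_t^2, \nht]$ gives the four integral terms of Proposition \ref{CommutatorIdentities}(c) with $\gamma = \zeta$ and $D_t$ in place of $\partial_t$ (up to the usual $b \cdot \nabla$ corrections which, being of the form $g(\zeta_\beta\partial_\alpha - \zeta_\alpha\partial_\beta)$ commuted with $\nht$ via part (b) of that proposition, contribute similar cross-product integrands), and $[\mathcal{A}(\mathcal{N}\times\nabla),\nht]$ supplies the $(\zeta_{\beta^\prime}\partial_{\alpha^\prime} - \zeta_{\alpha^\prime}\partial_{\beta^\prime})$-weighted term; grouping and relabeling yields exactly the stated four integrals.

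The main obstacle I expect is bookkeeping in part (b): correctly tracking how the operator $\mathcal{A}(\mathcal{N}\times\nabla) = \mathcal{A}(\zeta_\beta\partial_\alpha - \zeta_\alpha\partial_\beta)$ interacts with the decomposition $\lambda = (I + \nht - \mathcal{K})\znew\kvec$ under $D_t^2$, and ensuring every term that is not manifestly a commutator or a genuine cross-product $\lambda_\alpha\times\lambda_\beta$ gets absorbed into one of the listed pieces — in particular verifying that the non-analytic remainder, after projecting onto the $\kvec$-component and applying $(I-\mathcal{K})$, reproduces $(I-\mathcal{K})\mathcal{A}$ with no stray terms. The subtlety is that $\nht$ is not scalar, so one must repeatedly use that $\mathcal{K} = \Re(\nht)$ and Proposition \ref{ChangeOfVariablesIJValued} to control which components survive each projection. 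Parts (a) and (c) are comparatively mechanical once (b)'s identity is in hand, being essentially $D_t$-derivatives of the same relations combined with the explicit commutator formulas.
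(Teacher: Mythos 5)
Your proposal follows essentially the same route as the paper, which itself only sketches the argument and defers the computations to Proposition 1.4 and identity (2.39) of \cite{WuGlobal3D}: each identity is obtained by applying $(I - \nht)$ to a relation $T\theta = Q$ with $\theta$ (almost) analytic, so that $(I - \nht)Q = [T, \nht]\theta + T(I - \nht)\theta$, taking $(\theta, T) = (\lambda, D_t)$, $(\lambda, \mathcal{P})$, $(D_t\zeta, D_t^2 + \mathcal{A}(\mathcal{N}\times\nabla))$ for (a), (b), (c) respectively and, for (b), extracting the $\kvec$-component — exactly the scheme recorded in the Remark following the proposition. No substantive issues.
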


\begin{proof}
The first two formulas are shown as in footnote 6 of Proposition 1.4 of \cite{WuGlobal3D}, where for the second we have taken the $\kvec$-component.  The third formula is (2.39) of \cite{WuGlobal3D}.  
\end{proof}

\begin{remark}
The idea behind the formulas of Proposition \ref{BADtAFormulas} is to use the fact that $D_t\zeta$ is the trace of an analytic function in $\Omega(t)$ and that $\lambda$ is, up to terms of higher order, the trace of an analytic function on $\Omega(t)$ as well.  One generates formulas from this fact by recognizing that, if an almost-analytic function $\theta$ satisfies a formula of the form $T\theta = Q$ for some operator $T$, then $$(I - \nht)Q = (I - \nht)T\theta = [T, \nht]\theta + T(I - \nht)\theta,$$ and these two terms are typically of second order by hypothesis and thanks to the commutator formulas of Proposition \ref{CommutatorIdentities}.  Parts (a), (b), and (c) follow in essence for $Q = b, \mathcal{A} - 1, U_\kappa^{-1}(\mathfrak{a}_t(\Xi_\alpha \times \Xi_\beta))$ by respectively applying the above method to $\theta = \lambda, \lambda, D_t \zeta$ and $T = D_t, (D_t^2 - \mathcal{A}(\mathcal{N} \times \nabla)), (D_t^2 + \mathcal{A}(\mathcal{N} \times \nabla))$.
\end{remark}

	\subsection{Analytic Estimates}

We first record some preliminary estimates, which we will need in order to close our energy estimates in $H^s$.

\begin{proposition}\label{SobolevEmbeddings}
\begin{itemize}
\item[(a)]{If $f \in H^2$, then $f \in L^\infty$ and $\|f\|_{L^\infty} \leq C\|f\|_{H^2}$, where $C$ is a universal constant.}
\item[(b)]{If $|\mathcal{D}|^\frac12 f \in H^\frac32$, then $f \in L^\infty$ and $\|f\|_{L^\infty} \leq C\||\mathcal{D}|^\frac12 f\|_{H^\frac32}$, where $C$ is a universal constant.}
\item[(c)]{Let $f, g : \mathbb{R}^2 \to \mathbb{H}\,$.  Then for any $0 < q < 1$ we have 
\begin{align*}
\||\mathcal{D}|^{-1}(fg)\|_{L^2} & \leq C_q(\|g\|_{L^\infty} + \|g\|_{L^{2 - q}})\|f\|_{L^2} \\
& \leq C_q(\|g\|_{L^\infty} + \|g\|_{L^2(\frac{2q}{2 - q})})\|f\|_{L^2}
\end{align*}}
\end{itemize}
\end{proposition}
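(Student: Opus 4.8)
Parts (a) and (b) are short Fourier-side computations. For (a), Fourier inversion gives $|f(x)|\le\|\mathcal{F}f\|_{L^1}$; writing $\mathcal{F}f=\langle\xi\rangle^{-2}\cdot\langle\xi\rangle^{2}\mathcal{F}f$ and applying Cauchy--Schwarz yields $\|\mathcal{F}f\|_{L^1}\le\|\langle\xi\rangle^{-2}\|_{L^2(\mathbb{R}^2)}\|\langle\xi\rangle^{2}\mathcal{F}f\|_{L^2}$, where $\langle\xi\rangle^{-2}\in L^2(\mathbb{R}^2)$ precisely because $n=2$ and the second factor is comparable to $\|f\|_{H^2}$. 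For (b), note that $\||\mathcal{D}|^{1/2}f\|_{H^{3/2}}^2=\|f\|_{\dot{H}^{1/2}}^2+\|f\|_{\dot{H}^2}^2$; bound $|f(x)|\le\|\mathcal{F}f\|_{L^1}$ once more, but now split the frequency integral at $|\xi|=1$: on $\{|\xi|\le1\}$ write $\mathcal{F}f=|\xi|^{-1/2}\cdot|\xi|^{1/2}\mathcal{F}f$ and use $|\xi|^{-1/2}\mathbbm{1}_{|\xi|\le1}\in L^2(\mathbb{R}^2)$, and on $\{|\xi|>1\}$ write $\mathcal{F}f=|\xi|^{-2}\cdot|\xi|^{2}\mathcal{F}f$ and use $|\xi|^{-2}\mathbbm{1}_{|\xi|>1}\in L^2(\mathbb{R}^2)$; Cauchy--Schwarz on each piece gives $\|f\|_{L^\infty}\lesssim\|f\|_{\dot{H}^{1/2}}+\|f\|_{\dot{H}^2}$.

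For part (c), the second inequality is elementary: H\"older with exponents $\tfrac{2}{2-q}$ and $\tfrac{2}{q}$, applied to $|g|^{2-q}=\bigl(\langle(x,y)\rangle^{\frac{2q}{2-q}}|g|\bigr)^{2-q}\langle(x,y)\rangle^{-2q}$, gives $\int|g|^{2-q}\le\|\langle(x,y)\rangle^{\frac{2q}{2-q}}g\|_{L^2}^{2-q}\bigl(\int_{\mathbb{R}^2}\langle(x,y)\rangle^{-4}\bigr)^{q/2}$, and the last integral converges since $4>2$; hence $\|g\|_{L^{2-q}}\le C_q\|g\|_{L^2(\frac{2q}{2-q})}$, so the second inequality follows from the first. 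For the first inequality I would use duality. Since $\||\mathcal{D}|^{-1}(fg)\|_{L^2}=\|fg\|_{\dot{H}^{-1}}=\sup\{|\langle fg,|\mathcal{D}|^{-1}\psi\rangle|:\|\psi\|_{L^2}\le1\}$ and the quaternion norm is multiplicative, $|\langle fg,|\mathcal{D}|^{-1}\psi\rangle|\le\|f\|_{L^2}\|g(|\mathcal{D}|^{-1}\psi)\|_{L^2}$, so it suffices to prove the weighted multiplier bound $\|g(|\mathcal{D}|^{-1}\psi)\|_{L^2}\le C_q(\|g\|_{L^\infty}+\|g\|_{L^{2-q}})\|\psi\|_{L^2}$ for all $\psi\in L^2$.

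To establish this, decompose $|\mathcal{D}|^{-1}\psi=u_{\mathrm{hi}}+u_{\mathrm{lo}}$ by a Littlewood--Paley cutoff at unit frequency. The high-frequency part has $\|u_{\mathrm{hi}}\|_{L^2}=\||\xi|^{-1}\mathbbm{1}_{|\xi|>1}\mathcal{F}\psi\|_{L^2}\le\|\psi\|_{L^2}$, so $\|g\,u_{\mathrm{hi}}\|_{L^2}\le\|g\|_{L^\infty}\|\psi\|_{L^2}$. For the low-frequency part, write $u_{\mathrm{lo}}=K\star\psi$ with $K=\mathcal{F}^{-1}(|\xi|^{-1}\chi(\xi))$, $\chi$ a cutoff to $\{|\xi|\le2\}$; then $K$ is smooth with $|K(x)|\lesssim\langle(x,y)\rangle^{-1}$ (the $|\xi|^{-1}$-singularity of the multiplier forces $|x|^{-1}$-decay of $K$, the compact support forces smoothness). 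Splitting $K=K\mathbbm{1}_{|x|\le1}+K\mathbbm{1}_{|x|>1}$: the first summand is in $L^1$, so convolution with it maps $L^2$ to $L^2$, contributing $\lesssim\|g\|_{L^\infty}\|\psi\|_{L^2}$; the second summand $K\mathbbm{1}_{|x|>1}$ lies in weak-$L^2$, and for this piece the decay forced by $g\in L^{2-q}\cap L^\infty$ (strictly faster than $|x|^{-1}$, with $g\in L^{a}$ for every $a\in[2-q,\infty]$) must be exploited quantitatively --- via O'Neil's convolution inequality in Lorentz spaces, or equivalently a weighted Hardy-type estimate trading the at-most-logarithmic growth of $u_{\mathrm{lo}}$ against the decay of $g$ --- to obtain the required bound $\lesssim_q(\|g\|_{L^\infty}+\|g\|_{L^{2-q}})\|\psi\|_{L^2}$.

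The main obstacle is precisely this far-field, low-frequency term: parts (a), (b), the passage from the first bound in (c) to the second, and the high-frequency and near-field contributions are all routine, but in $\mathbb{R}^2$ the operator $|\mathcal{D}|^{-1}$ gains no $L^2$ out of any $L^p$ (the embedding $\dot{H}^{1}\hookrightarrow L^2_{\mathrm{loc}}$ is borderline), so there is no slack and the precise decay hypothesis on $g$ cannot be avoided. One should also note that $\||\mathcal{D}|^{-1}(fg)\|_{L^2}$ need not be finite for general $f\in L^2$ and $g\in L^\infty\cap L^{2-q}$ (for instance when $\int_{\mathbb{R}^2}fg\ne0$), so the estimate is genuinely a statement about the $\dot{H}^{-1}$-norm of $fg$ and is used, as it will be in the energy estimates, as an a priori bound.
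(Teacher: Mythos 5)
Parts (a) and (b) of your argument are correct, and (b) is just the frequency-side mirror of what the paper does: the paper writes $|\mathcal{D}|^{-1/2}$ as convolution with $|(\alpha,\beta)|^{-3/2}$, splits that kernel at radius one, and applies Young's inequality ($L^1$ near part against $\|f\|_{L^\infty}$, $L^2$ far part against $\|f\|_{L^2}$), whereas you split at $|\xi|=1$ and use Cauchy--Schwarz; these are equivalent. Your derivation of the second inequality in (c) from the first is also correct and coincides with the paper's.

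The first inequality of (c) is where you have a genuine gap, and you have located it accurately but not closed it. After the duality reduction and the high/low, near/far decomposition, the term $\|g\,(K\mathbbm{1}_{|x|>1}\star\psi)\|_{L^2}$ is handled only by a gesture toward ``O'Neil or a weighted Hardy-type estimate''; no H\"older separation of $g$ from $K_{\mathrm{far}}\star\psi$ can work, since placing $g$ in $L^a$ forces $a\geq 2$ and hence $K_{\mathrm{far}}\star\psi\in L^b$ for some $2\leq b\leq\infty$, which by weak Young would require $\psi\in L^p$ for some $p<2$ (or $\psi\in L^{2,1}$ at the endpoint $b=\infty$) --- neither of which is available. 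The paper avoids the duality detour: it writes $|\mathcal{D}|^{-1}(fg)$ as $C^{-1}(fg)\star|(\alpha,\beta)|^{-1}$, splits the kernel at radius one, bounds the near part by Young with the kernel in $L^1$ and $fg\in L^2$ (using $g\in L^\infty$), and bounds the far part by Young with the kernel in $L^{\frac{2-q}{1-q}}$ (finite because $\frac{2-q}{1-q}>2$) against $\|fg\|_{L^p}\leq\|f\|_{L^2}\|g\|_{L^{2-q}}$ with $\frac1p=\frac12+\frac1{2-q}$.

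You should be aware, however, that this last exponent satisfies $\frac1p>1$, i.e.\ $p<1$, which lies outside the admissible range for Young's inequality, so the paper's far-field step is not legitimate as written either. Your own closing observation in fact refutes the literal statement: for $f=g$ a Gaussian one has $\widehat{fg}(0)\neq 0$, hence $\||\xi|^{-1}\widehat{fg}\|_{L^2}=\infty$ while the right-hand side is finite. The obstruction you ran into is therefore not an artifact of your method but is intrinsic: in $\mathbb{R}^2$ the kernel $|(\alpha,\beta)|^{-1}$ fails to be square-integrable at infinity, so $\|(fg)\star|(\alpha,\beta)|^{-1}\|_{L^2}$ can only be finite when $\int fg=0$ or $f$ itself carries extra integrability below $L^2$. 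One cannot, as your final sentence attempts, simultaneously grant that the left side may be infinite and assert the inequality; a correct formulation must build the required cancellation or decay into the hypotheses, and any use of the estimate downstream has to verify that the products it is applied to actually possess it.
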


\begin{proof}
(a) is the usual Sobolev embedding.  To prove (b), we use \eqref{FracSolidDeriv} and Young's Inequality to derive, for $f \in \mathscr{S}(\mathbb{R}^2)$, that
\begin{align*}
\||\mathcal{D}|^{-1/2}f\|_{L^\infty} & = \|f \star |(\alpha, \beta)|^{-3/2}\|_{L^\infty} \\
& \leq \|f \star |(\alpha, \beta)|^{-3/2}_{\leq}\|_{L^\infty} + \|f \star |(\alpha, \beta)|^{-3/2}_{\geq}\|_{L^\infty} \\
& \precsim \|f\|_{L^\infty} + \|f\|_{L^2} \\
& \precsim \|f\|_{H^\frac32}
\end{align*}
Finally, to prove (c) we have by Young's Inequality and H\"older's Inequality that
\begin{align*}
\||\mathcal{D}|^{-1}(fg)\|_{L^2} & \leq \|(fg) \star |(\alpha, \beta)|^{-1}_\leq\|_{L^2} + \|(fg) \star |(\alpha, \beta)|^{-1}_\geq\|_{L^2} \\
& \leq \||(\alpha, \beta)|^{-1}_\leq\|_{L^1} \|g\|_{L^\infty}\|f\|_{L^2} + \||(\alpha, \beta)|^{-1}_\geq\|_{L^\frac{2 - q}{1 - q}} \|g\|_{L^{2 - q}}\|f\|_{L^2} \\
& \leq C_q(\|g\|_{L^\infty} + \|g\|_{L^{2 - q}})\|f\|_{L^2}
\end{align*}
The second inequality of (c) now follows by H\"older's inequality applied to $$\|g\langle (\alpha, \beta) \rangle^p \langle (\alpha, \beta) \rangle^{-p}\|_{L^{2 - q}}$$ The estimate requires that we choose $p\frac{2(2 - q)}{q} > 2$.
\end{proof}

\begin{remark}\label{dPlusNotation}
From this point on in the paper, we use the notation $0^+$ and allow constants to depend on the fixed implicit parameter $\delta > 0$.  For example, the estimate (d) becomes $\||\mathcal{D}|^{-1}(fg)\|_{L^2} \leq C(\|g\|_{L^\infty} + \|g\|_{L^2(0^+)})\|f\|_{L^2}$.
\end{remark}

Finally, in order to estimate the singular integral terms appearing in our formulation, we use the celebrated

\begin{theorem}\label{SingularIntegralL2Estimate}
(Coifman-Meyer-McIntosh-David)  Let $J \in C^1(\mathbb{R}^d, \mathbb{R}^l)$, $A_i \in C^1(\mathbb{R}^d)$ for $i = 1, \ldots, m$, and $F \in C^\infty(\mathbb{R}^l)$.  For $x, y \in \mathbb{R}^d$, define 
\begin{align*}
S_1(A_1, A_2, \ldots, A_m, f) & = \iint F\left(\frac{J(x) - J(y)}{|x - y|}\right)\frac{A_1(x) - A_1(y)}{|x - y|} \cdots \frac{A_m(x) - A_m(y)}{|x - y|} \frac{f(y)}{|x - y|^d} \, dy \\
& := \iint k_1(x, y) f(y) \, dy
\end{align*}
and for $\partial = \partial_{y_k}$ for some $k = 1, \ldots, d$,
\begin{align*}
S_2(A_1, A_2, \ldots, A_m, f) & = \iint F\left(\frac{J(x) - J(y)}{|x - y|}\right)\frac{A_1(x) - A_1(y)}{|x - y|} \cdots \frac{A_m(x) - A_m(y)}{|x - y|} \frac{\partial f(y)}{|x - y|^{d - 1}} \, dy \\
& := \iint k_2(x, y) f(y) \, dy
\end{align*}
Suppose that $k_1(x, y) = -k_1(y, x)$ and $k_2(x, y) = k_2(y, x)$.  Then there exists a constant $C = C(F, \|\nabla J\|_{L^\infty})$ so that
\begin{itemize}
\item[(a)]{$\|S_1(A_1, A_2, \ldots, A_m, f)\|_{L^2} \leq C(1 + m^4)\|\nabla A_1\|_{X_1}\ldots \|\nabla A_m\|_{X_m}\|f\|_{X_0}$,}
\item[(b)]{$\|S_2(A_1, A_2, \ldots, A_m, f)\|_{L^2} \leq C(1 + m^4)\|\nabla A_1\|_{X_1}\ldots \|\nabla A_m\|_{X_m}\|f\|_{X_0}$,}
\end{itemize}
where in both cases one of the spaces $X_i$ for $i = 0, 1, \ldots, m$ is $L^2$ and the others are $L^\infty$.
\end{theorem}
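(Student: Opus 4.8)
This is the classical singular-integral estimate of Coifman, Meyer, McIntosh and David, and the plan is to reduce it to the $L^2$-boundedness of iterated Cauchy-type (Calder\'on commutator) kernels. First I would note that, by the mean value theorem applied componentwise, the argument $(J(x) - J(y))/|x - y|$ of $F$ ranges over a fixed bounded subset of $\mathbb{R}^l$ determined by $\|\nabla J\|_{L^\infty}$; hence only the values of $F$ on that set matter, and after modifying $F$ outside it we may expand it in an absolutely convergent Fourier series $F(\eta) = \sum_\nu c_\nu e^{i\nu \cdot \eta}$ with rapidly decaying coefficients $c_\nu$. Expanding each exponential $e^{i\nu \cdot (J(x) - J(y))/|x - y|}$ in its power series turns the kernel of $S_1$ into an absolutely convergent sum of kernels of the form
\[
\prod_{i = 1}^{N}\frac{B_i(x) - B_i(y)}{|x - y|}\cdot\frac{1}{|x - y|^d},
\]
where each $B_i$ is one of the $A_j$ or a component of $J$, $N = m + n$, and the $n$ factors coming from $J$ carry total weight at most $\|\nabla J\|_{L^\infty}^n/n!$ times a multinomial coefficient. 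The same reduction applies to $S_2$ with $|x - y|^d$ replaced by $|x - y|^{d - 1}$ acting on $\partial f$; here I would first integrate by parts in $y_k$, so that the derivative falls either on one of the difference quotients (giving an $S_1$-type kernel with $\nabla B_i$ now appearing explicitly, respecting the $L^\infty$ budget) or on the factor $|x - y|^{-(d - 1)}$ (again an $S_1$-type kernel). The hypotheses $k_1(x, y) = -k_1(y, x)$ and $k_2(x, y) = k_2(y, x)$ are exactly what make each reduced kernel antisymmetric after these manipulations.

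The heart of the matter is then the bound, for an antisymmetric kernel $K_N(x, y) = \prod_{i = 1}^{N}\frac{B_i(x) - B_i(y)}{|x - y|}\,|x - y|^{-d}$,
\[
\|T_{K_N}\|_{L^2 \to L^2} \leq C(1 + N^4)\,\|\nabla B_1\|_{L^\infty}\cdots\|\nabla B_N\|_{L^\infty},
\]
with \emph{polynomial} growth in the number of commutators. This is the classical iterated Calder\'on commutator estimate; I would obtain it either by citing the $L^2$-boundedness of the Cauchy integral on Lipschitz graphs and expanding in the Lipschitz constant, or directly via the $T(1)$ theorem: $K_N$ is a standard Calder\'on--Zygmund kernel with the displayed constant, and since it is antisymmetric its $L^2$-boundedness reduces to the weak boundedness property, which is checked by testing on normalized bump functions and using the antisymmetry to supply the necessary cancellation. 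The multilinear form of the conclusion --- exactly one of $\nabla A_1, \ldots, \nabla A_m, f$ taken in $L^2$ and the rest in $L^\infty$ --- then follows from the Calder\'on--Zygmund theory of $T_{K_N}$ (its $L^2 \to L^2$, $L^\infty \to \mathrm{BMO}$ and $H^1 \to L^1$ bounds), after distributing the single $L^2$ slot among the arguments.

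The step I expect to be the main obstacle is the \emph{uniform polynomial control} $1 + N^4$ in the number of commutators: this is both the technically delicate point in the $T(1)$ argument and precisely what is needed for the first-step reduction to close. Since $1 + (m + n)^4 \lesssim (1 + m^4)(1 + n^4)$, the $n$-summation coming from the power series and the $\nu$-summation coming from the Fourier series are absorbed, using the rapid decay of $c_\nu$, into a single constant $C = C(F, \|\nabla J\|_{L^\infty})$, leaving exactly the advertised factor $1 + m^4$. The remaining ingredients are the now-standard Calder\'on--Zygmund and $T(1)$ package together with routine bookkeeping.
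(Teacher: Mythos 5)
The paper does not prove this theorem at all: it is quoted as a known result of Coifman--Meyer--McIntosh--David, and the ``proof'' consists of citations to \cite{WuGlobal3D}, \cite{CoifmanMeyerMcintoshL2Bounds} and \cite{CoifmanMeyerMcintoshL2BoundsLip}. Your proposal is therefore not comparable to anything in the paper; what it does is reconstruct, correctly in outline, the standard argument from those references: truncate and periodize $F$ on the range of the bounded argument $(J(x)-J(y))/|x-y|$, expand in an absolutely convergent Fourier series, expand each exponential in a power series to reduce to iterated Calder\'on-commutator kernels, and sum using a bound on the $N$-fold commutator that grows only polynomially in $N$. That reduction scheme, the treatment of $S_2$ by integration by parts, and the observation that $1 + (m+n)^4 \lesssim (1+m^4)(1+n^4)$ closes the summation are all faithful to the classical proof. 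Two caveats are worth recording. First, the polynomial bound $C(1+N^4)\prod_i \|\nabla B_i\|_{L^\infty}$ is the entire content of the theorem and is not delivered by a routine application of the $T(1)$ theorem: the $T(1)$ constant depends on the Calder\'on--Zygmund constants of $K_N$ and on $\|T_{K_N}1\|_{\mathrm{BMO}}$, and controlling the latter polynomially in $N$ is precisely the hard induction of Coifman--McIntosh--Meyer (in $d=1$) and its higher-dimensional extension; your sketch names this as the obstacle but does not supply it, so at that point you are in effect citing the same literature the paper cites. Second, the assertion that the single $L^2$ slot can be ``distributed among the arguments'' (i.e.\ placed on some $\nabla A_i$ with $f \in L^\infty$) is a genuinely separate multilinear statement, not an immediate consequence of the $L^2 \to L^2$, $L^\infty \to \mathrm{BMO}$, $H^1 \to L^1$ package for the fixed operator $T_{K_N}$; it requires either a duality argument exploiting the symmetric roles of the $A_i$ and $f$ in the kernel, or a representation of the difference quotients as averages of $\nabla A_i$. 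Neither caveat is an error, but both mark places where your proposal is an outline of the cited theorem rather than a self-contained proof.
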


\begin{proof}
See \cite{WuGlobal3D} as well as \cite{CoifmanMeyerMcintoshL2Bounds} and \cite{CoifmanMeyerMcintoshL2BoundsLip}.
\end{proof}

To use this result in $H^s$, we have the

\begin{proposition}\label{SingularIntegralHsEstimate}
Let $s \geq 4$ be given.  Let $\gamma$ be a parametrization of a surface satisfying the chord-arc condition and such that $\|\nabla(\gamma - P)\|_{H^{s - 1}}$ is finite.  Then 
\begin{itemize}
\item[(a)]{For $\partial = \partial_\alpha,\, \partial_\beta$, if $$Tf(\alpha, \beta) = \iint K(\alpha, \beta, \alpha^\prime, \beta^\prime) f(\alpha^\prime, \beta^\prime) \, d\alpha^\prime \, d\beta^\prime,$$ then $$[\partial, T]f = \iint ((\partial + \partial^\prime) K(\alpha, \beta, \alpha^\prime, \beta^\prime))f(\alpha^\prime, \beta^\prime) \, d\alpha^\prime \, d\beta^\prime$$}
\item[(b)]{For a multiindex $j$ of length $n$, $$\|[\partial^j, \nht_\gamma]f\|_{L^2} \leq C\|\nabla(\gamma - P)\|_{H^{n - 1}}\|\nabla f\|_{H^{n - 1}}$$}
\item[(c)]{$$\|\nht_\gamma f\|_{H^s} \leq C(1 + \|\nabla(\gamma - P)\|_{H^{s - 1}})\|f\|_{H^s}$$}
\end{itemize}
\end{proposition}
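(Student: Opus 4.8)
The plan is to treat the three parts in order; parts (a) and (c) are essentially formal reductions, while part (b) carries the real work. For part (a), we would simply differentiate under the integral sign, $\partial(Tf)(\alpha,\beta) = \iint (\partial K)f' \, d\alpha' d\beta'$, and integrate by parts in the primed variables in $T(\partial f) = \iint K \, \partial' f' \, d\alpha' d\beta' = -\iint (\partial' K) f' \, d\alpha' d\beta'$; subtracting gives the identity. The only point requiring care is the integration by parts against the singular kernel: since $K(\vec x) = -\tfrac{1}{2\pi}\vec x/|\vec x|^3$ is odd and smooth away from the origin, one excises a small disk about the diagonal, integrates by parts on the complement, and lets its radius tend to zero, the boundary terms on the excised circle vanishing by oddness of $K$ and the contribution at spatial infinity vanishing by the decay of $f$ and $\gamma - P$.

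For part (b) we would induct on $n = |j|$. For $n = 1$, Proposition \ref{CommutatorIdentities}(a) writes $[\partial, \nht_\gamma]f$ as $\iint K(\gamma' - \gamma)(\partial\gamma - \partial'\gamma') \times (\gamma'_{\beta'}\partial_{\alpha'} - \gamma'_{\alpha'}\partial_{\beta'})f' \, d\alpha' d\beta'$; using that $K$ is homogeneous of degree $-2$ and that $\nu \le |\gamma(x) - \gamma(y)|/|x - y| \le N$ on the chord-arc annulus, we write $K(\gamma' - \gamma) = |x-y|^{-2}F_1\big((\gamma(x)-\gamma(y))/|x-y|\big)$ with $F_1$ smooth, factor one power of $|x-y|$ out of $\partial\gamma - \partial'\gamma'$ to exhibit a difference quotient, and note $\partial_\alpha\gamma - \partial_{\alpha'}\gamma' = \partial_\alpha(\gamma - P) - \partial_{\alpha'}(\gamma - P)'$ since $\nabla P$ is constant. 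This is precisely an operator of type $S_2$ in Theorem \ref{SingularIntegralL2Estimate} with the Lipschitz functions being components of $\gamma - P$, and the kernel symmetry holds because $K$ is odd and the difference quotients are antisymmetric; Theorem \ref{SingularIntegralL2Estimate}(b) then bounds it by $C\|\nabla(\gamma - P)\|_{H^0}\|\nabla f\|_{H^0}$, the highest-order factor placed in $L^2$ and the remaining factors in $L^\infty$, with $C$ depending on $F_1$ and the chord-arc data, hence (via $s \ge 4 \Rightarrow H^{s-1}\hookrightarrow W^{1,\infty}$) on $\|\nabla(\gamma - P)\|_{H^{s-1}}$. For the inductive step we would write $\partial^j = \partial\,\partial^{j'}$ with $|j'| = n - 1$ and split $[\partial^j, \nht_\gamma]f = [\partial, \nht_\gamma]\partial^{j'}f + \partial[\partial^{j'}, \nht_\gamma]f$; the first term is the $n=1$ case applied to $\partial^{j'}f$, and for the second, $[\partial^{j'}, \nht_\gamma]$ is by the inductive expansion an explicit integral operator, so part (a) carries the extra $\partial$ through, producing a finite sum of integral operators in which $\partial$ lands either on the Cauchy kernel (yielding a further difference-quotient or multiplicative factor of a derivative $\partial^b(\gamma - P)$) or on $f$. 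Each such term is again of type $S_1$ or $S_2$ with at most $n$ Lipschitz factors built from derivatives of $\gamma - P$ of order $\le n$, acting on $f$ or $\partial f$; applying Theorem \ref{SingularIntegralL2Estimate} with the single top-order factor in $L^2$ (giving $\|\nabla(\gamma - P)\|_{H^{n-1}}$ or $\|\nabla f\|_{H^{n-1}}$) and the rest in $L^\infty$ (controlled by $\|\nabla(\gamma-P)\|_{H^{s-1}}$ and absorbed into $C$), and summing, gives the claim.

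For part (c), for integer $s$ it suffices to bound $\|\partial^j \nht_\gamma f\|_{L^2}$ for $|j| \le s$. The case $j = 0$ is the $L^2$-boundedness $\|\nht_\gamma f\|_{L^2} \lesssim \|f\|_{L^2}$, which is Theorem \ref{SingularIntegralL2Estimate}(a) applied to $\nht_\gamma$ directly with a bounded constant; for $1 \le |j| \le s$ we write $\partial^j \nht_\gamma f = [\partial^j, \nht_\gamma]f + \nht_\gamma \partial^j f$, bound the first term by part (b) as $\lesssim \|\nabla(\gamma-P)\|_{H^{s-1}}\|\nabla f\|_{H^{s-1}} \lesssim \|\nabla(\gamma-P)\|_{H^{s-1}}\|f\|_{H^s}$ and the second by $L^2$-boundedness as $\lesssim \|f\|_{H^s}$, then sum over $|j| \le s$. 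For non-integer $s$, the linear map $f \mapsto \nht_\gamma f$ is bounded $H^{\lfloor s\rfloor} \to H^{\lfloor s\rfloor}$ and $H^{\lceil s\rceil} \to H^{\lceil s\rceil}$ by the integer case, so by complex interpolation (Proposition \ref{ComplexInterpolation}, with part (b) identifying the interpolation space of $H^{s_0}, H^{s_1}$) it is bounded $H^s \to H^s$ with operator norm at most the geometric mean of the endpoint norms, still $\lesssim 1 + \|\nabla(\gamma-P)\|_{H^{s-1}}$ after absorbing polynomial factors into $C$; alternatively one bounds $\||\mathcal{D}|^s \nht_\gamma f\|_{L^2}$ directly via a Kato--Ponce-type fractional Leibniz estimate to keep the derivative count sharp.

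The main obstacle is the inductive step of part (b): organizing every term produced by distributing $\partial^j$ over the Cauchy kernel and over $f$ into the Coifman--Meyer--McIntosh--David template, verifying in each the kernel symmetry hypotheses $k_1(x,y) = -k_1(y,x)$ and $k_2(x,y) = k_2(y,x)$ (which survive because each $\partial$ on the kernel preserves or flips a difference quotient in tandem with the oddness of $K$), and checking that exactly $n-1$ derivatives can be spared on each of the two leading factors so that the estimate closes at $\|\nabla(\gamma-P)\|_{H^{n-1}}\|\nabla f\|_{H^{n-1}}$ rather than one derivative worse. The analogous computations appear in Wu's work, so the difficulty here is organizational rather than conceptual.
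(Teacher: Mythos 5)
Your proposal follows essentially the same route as the paper: part (a) by differentiation under the integral plus integration by parts, part (b) by rewriting the commutator as explicit singular integrals, distributing the remaining derivatives via the $(\partial + \partial^\prime)$ identity of part (a), and classifying each resulting term as a Coifman--Meyer--McIntosh--David operator with the single top-order factor placed in $L^2$ and the rest in $L^\infty$, and part (c) as an immediate consequence of (b) and Theorem \ref{SingularIntegralL2Estimate}. The one step you defer --- the endpoint case in which all the derivatives land on a single difference of $\gamma - P$, which the paper resolves by splitting that difference into two separate singular integrals and estimating each via Theorem \ref{SingularIntegralL2Estimate}(a) with Sobolev embedding --- is exactly the short case analysis the paper supplies, so the argument closes as you anticipate.
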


\begin{proof}
(a) is immediate after an integration by parts.  Likewise, (c) follows immediately from (b) and Theorem \ref{SingularIntegralL2Estimate}.  To prove (b), suppose without loss of generality that $f$ is scalar-valued.  Write the multi index $j$ of length $n$ as a sum $j = j_1 + j_2 + \cdots + j_n$ with each $|j_m| = 1$.  We first write
$$[\partial^j, \nht_\gamma]f = \sum_{m = 1}^n \partial^{j_1 + \cdots + j_{m - 1}}[\partial^{j_m}, \nht_\gamma]\partial^{j_{m + 1} + \cdots + j_n}f$$  Motivated by the expression for $[\partial^{j_m}, \nht_\gamma]$, denote $$K_l^\alpha := \{K(\gamma - \gamma^\prime)(\partial^{j_m}(\gamma - P) - (\partial^{j_m})^\prime(\gamma^\prime - P)) \times \gamma_\beta\}_l$$ and $$K_l^\beta := \{K(\gamma - \gamma^\prime)(\partial^{j_m}(\gamma - P) - (\partial^{j_m})^\prime(\gamma^\prime - P)) \times \gamma_\alpha\}_l$$ Then we can write the $l$th component of each term in the above sum as
\begin{align*}
\{\partial^{j_1 + \cdots + j_{m - 1}}[\partial^{j_m}, \nht_\gamma]\partial^{j_{m + 1} + \cdots + j_n}f\}_l & = \iint ((\partial + \partial^\prime) - \partial^\prime)^{j_1 + \cdots + j_{m - 1}}\left(K_l^\beta\right)(\partial^\prime)^{j_{m + 1} + \cdots + j_n}\partial_{\alpha^\prime}f^\prime \\
& \qquad\qquad - ((\partial + \partial^\prime) - \partial^\prime)^{j_1 + \cdots + j_{m - 1}}\left(K_l^\alpha\right)(\partial^\prime)^{j_{m + 1} + \cdots + j_n}\partial_{\beta^\prime}f^\prime \, d\alpha \, d\beta \\
& = \sum_{p \leq j_1 + \cdots + j_{m - 1}} \biggl(\iint (\partial + \partial^\prime)^p(K_l^\beta) (\partial^\prime)^{j - p}\partial_{\alpha^\prime}f^\prime \\
& \qquad\qquad - (\partial + \partial^\prime)^p(K_l^\alpha) (\partial^\prime)^{j - p}\partial_{\beta^\prime}f^\prime\biggr) \, d\alpha \, d\beta
\end{align*}
Note that $K_l^\alpha$ and $K_l^\beta$ are kernels of the type of those in Theorem \ref{SingularIntegralL2Estimate}(b) .  Since the operator $(\partial + \partial^\prime)$ acts on functions of the form $g(\alpha, \beta) - g(\alpha^\prime, \beta^\prime)$ by $$(\partial + \partial^\prime)\left(g(\alpha, \beta) - g(\alpha^\prime, \beta^\prime)\right) = (\partial g)(\alpha, \beta) - (\partial g)(\alpha^\prime, \beta^\prime)$$ it follows that $(\partial + \partial^\prime)^j$ acting on these kernels is also a kernel of the same type.  

In order to achieve the optimal bounds we must estimate in cases.  If $j < n - 2$, then every term in the kernel has at most $n - 3$ derivatives, and so we may apply Theorem \ref{SingularIntegralL2Estimate} in any way we please along with Sobolev embedding.  In the other cases the dangerous terms are those with a large number of derivatives falling on one of the differences given by components of  $$\partial^k\gamma(\alpha, \beta) - \partial^k\gamma(\alpha^\prime, \beta^\prime) = \partial^k(\gamma - P)(\alpha, \beta) - \partial^k(\gamma - P)(\alpha^\prime, \beta^\prime)$$  If $j = n - 2, n - 1$ then since we assumed that $n \geq 4$ there will be such a difference having either $n - 2$ or $n - 1$ derivatives, and the other terms of the expression will have at most $2$ derivatives.  Therefore we can estimate the term with the highest number of derivatives in $L^2$ and the others in $L^\infty$.  Finally, if $j = n$ and all of the derivatives fall on a difference, then estimate by splitting that difference into two separate singular integrals and estimating each using Theorem \ref{SingularIntegralL2Estimate}(a) and Sobolev embedding.
\end{proof}

\begin{proposition}\label{DifferenceHilbertTransformHs}
Suppose $s \geq 4$ is given.  Let $\gamma_0, \gamma_1$ parametrize two surfaces both satisfying the chord-arc condition and such that $\|\nabla(\gamma_0 - P)\|_{H^{s - 1}}$ and $\|\nabla(\gamma_1 - P)\|_{H^{s - 1}}$ are finite.  Then for $f$ in $H^s$ and $W^{s, \infty}$ respectively we have the estimates
\begin{equation*}
\|(\nht_{\gamma_1} - \nht_{\gamma_0})f\|_{H^s} \leq C\|\nabla(\gamma_0 - \gamma_1)\|_{H^{s - 1}}\|f\|_{H^s} \text{  or  } C\|\nabla(\gamma_0 - \gamma_1)\|_{H^{s - 1}}\|f\|_{W^{s, \infty}}
\end{equation*}
where the constant $C$ depends on $\|\nabla (\gamma_0 - P)\|_{H^{s - 1}}$ and $\|\nabla (\gamma_1 - P)\|_{H^{s - 1}}$.
\end{proposition}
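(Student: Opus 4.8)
The plan is to extract the factor $\gamma_1 - \gamma_0$ algebraically from the two kernels and then feed everything into the singular-integral machinery already assembled (Theorem \ref{SingularIntegralL2Estimate}, Proposition \ref{SingularIntegralHsEstimate}, Proposition \ref{SobolevEmbeddings}), arguing by induction on $s$. Writing both Hilbert transforms in parametrized form, $\nht_{\gamma_i}f(x) = \pv\iint K(\gamma_i(y)-\gamma_i(x))(\gamma_{i,\alpha}\times\gamma_{i,\beta})(y)\,f(y)\,dy$ (using $\mathbf{n}\,dS = (\gamma_\alpha\times\gamma_\beta)\,d\alpha\,d\beta$), I would split the difference of their integrands into a kernel-difference part, with kernel $[K(\gamma_1(y)-\gamma_1(x)) - K(\gamma_0(y)-\gamma_0(x))](\gamma_{1,\alpha}\times\gamma_{1,\beta})(y)$, and a normal-difference part, with kernel $K(\gamma_0(y)-\gamma_0(x))[(\gamma_{1,\alpha}\times\gamma_{1,\beta}) - (\gamma_{0,\alpha}\times\gamma_{0,\beta})](y)$. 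The normal difference equals $\partial_\alpha(\gamma_1-\gamma_0)\times\gamma_{1,\beta} + \gamma_{0,\alpha}\times\partial_\beta(\gamma_1-\gamma_0)$, so it already carries one copy of $\nabla(\gamma_1-\gamma_0)$, and $K(\gamma_0(y)-\gamma_0(x))$ is a Calder\'on--Zygmund kernel of size $|x-y|^{-2}$ by the chord-arc bound on $\gamma_0$. For the kernel difference I would use the algebraic identity
$$K(a) - K(b) = -\frac1{2\pi}\left(\frac{a-b}{|a|^3} + b\,\frac{(|b|-|a|)(|a|^2 + |a||b| + |b|^2)}{|a|^3|b|^3}\right), \qquad |b| - |a| = \frac{(b-a)\cdot(b+a)}{|a| + |b|},$$
applied with $a = \gamma_1(y)-\gamma_1(x)$ and $b = \gamma_0(y)-\gamma_0(x)$. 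Since $a - b = (\gamma_1-\gamma_0)(y) - (\gamma_1-\gamma_0)(x)$ and $|a|,|b|\sim|x-y|$ by the two chord-arc conditions, each summand becomes $\frac{(\gamma_1-\gamma_0)(x) - (\gamma_1-\gamma_0)(y)}{|x-y|}$ times $|x-y|^{-2}$ times a bounded function depending smoothly on $\frac{\gamma_0(x)-\gamma_0(y)}{|x-y|}$, $\frac{\gamma_1(x)-\gamma_1(y)}{|x-y|}$ and $\frac{x-y}{|x-y|}$ -- quantities that all stay in a fixed compact annulus, off which one extends the (there homogeneous of degree $-3$) coefficients smoothly. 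This exhibits the kernel-difference part in exactly the form of Theorem \ref{SingularIntegralL2Estimate}, with $J$ assembled from $\gamma_0 - P$, $\gamma_1 - P$, $P$ and with $\gamma_1 - \gamma_0$ filling one $A$-slot; using the denominators $|a|$, $|b|$ rather than linearly interpolating between $\gamma_0$ and $\gamma_1$ avoids needing a chord-arc estimate on intermediate surfaces.

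The case $s = 0$ follows at once: for the kernel-difference part apply Theorem \ref{SingularIntegralL2Estimate}(a) with $\nabla(\gamma_1-\gamma_0)$ in $L^\infty$ and $(\gamma_{1,\alpha}\times\gamma_{1,\beta})f$ in $L^2$ (or the reverse for the $W^{s,\infty}$ version); for the normal-difference part, which has no $A$-slots, bound $\|(\text{normal difference})f\|_{L^2}$ by $\|\nabla(\gamma_1-\gamma_0)\|_{L^\infty}\|f\|_{L^2}$ (resp. $\|\nabla(\gamma_1-\gamma_0)\|_{L^2}\|f\|_{L^\infty}$) using $\|\nabla\gamma_i\|_{L^\infty}\le 1 + \|\nabla(\gamma_i - P)\|_{H^{s-1}}$; since $s - 1\ge 3$, Proposition \ref{SobolevEmbeddings}(a) converts $\|\nabla(\gamma_1-\gamma_0)\|_{L^\infty}$ into $\|\nabla(\gamma_1-\gamma_0)\|_{H^{s-1}}$. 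For the $H^s$ bound I would estimate $\|\partial^j[(\nht_{\gamma_1}-\nht_{\gamma_0})f]\|_{L^2}$ for $|j|\le s$ as $\|(\nht_{\gamma_1}-\nht_{\gamma_0})(\partial^j f)\|_{L^2} + \|[\partial^j, \nht_{\gamma_1}-\nht_{\gamma_0}]f\|_{L^2}$, the first term being the $s=0$ bound applied to $\partial^j f$. For the commutator I would use the telescoping expansion from the proof of Proposition \ref{SingularIntegralHsEstimate}(b): with $j = j_1 + \dots + j_{|j|}$, $|j_m| = 1$, write $[\partial^j, \nht_{\gamma_1}-\nht_{\gamma_0}]f = \sum_m \partial^{j_1 + \cdots + j_{m-1}}\bigl([\partial^{j_m},\nht_{\gamma_1}] - [\partial^{j_m},\nht_{\gamma_0}]\bigr)\partial^{j_{m+1} + \cdots + j_{|j|}}f$, evaluate each $[\partial^{j_m},\nht_{\gamma_i}]$ by Proposition \ref{CommutatorIdentities}(a), and decompose the difference of the two commutators just as above: a kernel-difference piece carrying $\frac{(\gamma_1-\gamma_0)(x)-(\gamma_1-\gamma_0)(y)}{|x-y|}$, plus two pieces in which the factor $\partial^{j_m}(\gamma_1-\gamma_0)(x) - \partial^{j_m}(\gamma_1-\gamma_0)(y)$, respectively the coefficient $\partial_\beta(\gamma_1-\gamma_0)$ (or $\partial_\alpha(\gamma_1-\gamma_0)$), carries the gain. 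One then distributes the remaining $m - 1 \le |j| - 1$ derivatives between kernel and $f$ using Proposition \ref{SingularIntegralHsEstimate}(a) and applies Theorem \ref{SingularIntegralL2Estimate}, putting the single highest-order factor in $L^2$ and the others in $L^\infty$ via Proposition \ref{SobolevEmbeddings}(a). In every term the factor built from $\gamma_1 - \gamma_0$ occurs exactly once; because one of the $|j|$ derivatives is always spent forming the commutator, this factor carries at most $|j| - 1 \le s - 1$ derivatives, so its surviving gradient is controlled by $\|\nabla(\gamma_1-\gamma_0)\|_{H^{s-1}}$, while the $\gamma_i$-factors are controlled by $\|\nabla(\gamma_i - P)\|_{H^{s-1}}$ and $f$ by $\|f\|_{H^s}$ (resp. $\|f\|_{W^{s,\infty}}$); that $s\ge 4$ is exactly what makes Sobolev embedding place the non-top factors in $L^\infty$.

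The step I expect to be the main obstacle is the treatment of the borderline terms in which all $|j|$ available derivatives land on one of the \emph{factored} differences $\partial^{j_m}\gamma_i(x) - \partial^{j_m}\gamma_i(y)$ or $\partial^{j_m}(\gamma_1-\gamma_0)(x) - \partial^{j_m}(\gamma_1-\gamma_0)(y)$ produced by Proposition \ref{CommutatorIdentities}(a), which would nominally call for one more derivative of $\gamma_i$ (resp. $\gamma_1 - \gamma_0$) than is available. As in the proof of Proposition \ref{SingularIntegralHsEstimate}(b), one splits such a difference into its two endpoint values: the value at $x$ becomes a pointwise multiplier, which stays in $L^2$ because $\|\nabla(\gamma_i - P)\|_{H^{s-1}}$ (resp. $\|\nabla(\gamma_1-\gamma_0)\|_{H^{s-1}}$) controls $\|\partial^{|j|}(\gamma_i - P)\|_{L^2}$ for $|j|\le s$, while the remaining operator -- whose kernel is only the $|x-y|^{-2}$ Cauchy kernel (or the kernel difference, which is no worse) and which already carries an $f$-derivative from Proposition \ref{CommutatorIdentities}(a) -- sends its argument from $H^2$ into $H^2\subset L^\infty$ by Theorem \ref{SingularIntegralL2Estimate} together with the inductive hypothesis at level $s-1$ for the difference operators; the value at $y$ is absorbed into the $f$-slot and estimated by Theorem \ref{SingularIntegralL2Estimate} with that slot in $L^2$. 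With the borderline terms disposed of in this way, the rest of the argument is a routine assembly of the estimates already in hand.
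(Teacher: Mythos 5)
Your proof is correct, but it takes a genuinely different route from the paper's. The paper disposes of this in three lines: it sets $\gamma_s = \gamma_0 + s(\gamma_1 - \gamma_0)$, writes $(\nht_{\gamma_1} - \nht_{\gamma_0})f = \int_0^1 [\partial_s, \nht_{\gamma_s}]f\,ds$, and reads off from Proposition \ref{CommutatorIdentities}(a) a single singular integral whose kernel already contains the factor $(\gamma_1 - \gamma_0) - (\gamma_1' - \gamma_0')$ and a derivative of $f$; Minkowski's inequality and the derivative-distribution argument of Proposition \ref{SingularIntegralHsEstimate} then finish. You instead difference the two kernels directly, splitting into a normal-difference piece and a kernel-difference piece, and extract the gain factor $\frac{(\gamma_1-\gamma_0)(x)-(\gamma_1-\gamma_0)(y)}{|x-y|}$ by the elementary identity for $K(a)-K(b)$ with $|a|$, $|b$| kept in the denominators. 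What your version buys is exactly what you point out: the paper's interpolation tacitly requires a uniform chord-arc lower bound for the intermediate surfaces $\gamma_s$ (needed both to define $\nht_{\gamma_s}$ and to apply Theorem \ref{SingularIntegralL2Estimate} with $\gamma_s$ in the $J$-slot), which does not follow from the chord-arc hypotheses on $\gamma_0$ and $\gamma_1$ alone — it is harmless in the paper's applications only because there $\|\nabla(\gamma_1-\gamma_0)\|_{L^\infty}$ is small. Your decomposition uses only the two stated chord-arc conditions, at the cost of a longer algebraic preparation and a more laborious bookkeeping of the derivative distribution and borderline terms, which you handle the same way the paper handles them in Proposition \ref{SingularIntegralHsEstimate}(b). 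One small caveat: your kernel-difference summands, like essentially all the kernels to which the paper applies Theorem \ref{SingularIntegralL2Estimate}, do not literally satisfy the stated (anti)symmetry hypotheses term by term; since the paper is equally cavalier about this throughout, it is not a defect specific to your argument.
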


\begin{proof}\label{DiffHilbertTransforms}
Set $\gamma_s = \gamma_0 + s(\gamma_1 - \gamma_0)$.  We express $\nht_{\gamma_1} - \nht_{\gamma_0}$ using Proposition \ref{CommutatorIdentities} as
\begin{align}
& \quad (\nht_{\gamma_1} - \nht_{\gamma_0})f = \int_0^1 \partial_s (\nht_{\gamma_s} f) \, ds = \int_0^1 [\partial_s, \nht_{\gamma_s}]f \, ds \\
& = \int_0^1 \iint K(\gamma_s^\prime - \gamma_s)\Bigl((\gamma_1 - \gamma_0) - (\gamma_1^\prime - \gamma_0^\prime))\Bigr) \times (\partial_\beta\gamma_s\,\partial_\alpha - \partial_\alpha\gamma_s\, \partial_\beta) f \, d\alpha^\prime\,d\beta^\prime \, ds \notag
\end{align}
After using Minkowski's Inequality, the bounds now follow as in Proposition \ref{SingularIntegralHsEstimate}.
\end{proof}

\begin{remark}\label{MoreHsEstimates}
One can always weaken estimates of Propositions \ref{SingularIntegralHsEstimate} and \ref{DifferenceHilbertTransformHs} if it is more convenient to estimate in $W^{s, \infty}$.  For instance, the bounds $$\|[\partial^j, \nht_\gamma]f\|_{H^n} \leq C\|\nabla(\gamma - P)\|_{W^{n - 1, \infty}}\|\nabla f\|_{H^{n - 1}} \text{  and  }  C\|\nabla(\gamma - P)\|_{H^{n - 1}}\|\nabla f\|_{W^{n - 1, \infty}}$$ both follow by less careful estimates.
\end{remark}

The next proposition will imply that the energy we construct in Section 4 controls half derivatives.

\begin{proposition}\label{HalfDerivativeEstimates}
\begin{itemize}
\item[(a)]{For $g$ vector-valued and $f$, $h$ quaternion valued, $$\left|\iint f \cdot (g_\beta h_\alpha - g_\alpha h_\beta) \, d\alpha \, d\beta\right| \leq \|\nabla g\|_{L^\infty} \|f\|_{\dot{H}^\frac12}\|h\|_{\dot{H}^\frac12}$$}
\item[(b)]{For $f$ a vector valued function satisfying $f = -\nht f$ and for sufficiently small $\|\nabla\lambda\|_{L^\infty}$, there is a constant $C(\|\nabla\lambda\|_{L^\infty}) > 0$ so that $$\frac{1}{C}\|f\|_{\dot{H}^\frac12}^2 \leq  -\iint f \cdot (\mathcal{N} \times \nabla)f \leq C\|f\|_{\dot{H}^\frac12}^2$$}
\end{itemize}
\end{proposition}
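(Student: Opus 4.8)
The plan is to prove both parts by reducing everything to the flat half-derivative form $\iint f \cdot (\mathcal{N}_0 \times \nabla) f$ where $\mathcal{N}_0 = \ivec \times \jvec = \kvec$ corresponds to $\lambda \equiv 0$, and then checking that this flat expression is equivalent to $\|f\|_{\dot{H}^{1/2}}^2$. For part (a), I would first reduce to the case where $g$ itself is compactly supported and smooth by density. The key observation is that the bilinear form $(f, h) \mapsto \iint f \cdot (g_\beta h_\alpha - g_\alpha h_\beta)$ is, after an integration by parts, antisymmetric-looking in $f$ and $h$, and more importantly it annihilates constants in $g$: replacing $g$ by $g + c$ changes nothing since only $\nabla g$ appears. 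I would write $g_\beta h_\alpha - g_\alpha h_\beta$ and use the Fourier transform (the $\jvec$-Fourier transform of the scalar pairing, using Plancherel as established in the excerpt): the symbol of the operator $h \mapsto g_\beta \partial_\alpha h - g_\alpha \partial_\beta h$ against $f$ produces, for a single Fourier mode of $g$ at frequency $\eta$, an expression of the form $\int \hat{f}(\xi) \cdot [\,(\eta_2 \xi_1 - \eta_1 \xi_2)\,] \hat{h}(\xi + \eta)$ type, and the crucial point is the elementary inequality $|\eta_2 \xi_1 - \eta_1 \xi_2| = |\eta \wedge \xi| \leq |\eta|\,|\xi|^{1/2}|\xi + \eta|^{1/2}$ — this last inequality (a consequence of $|\eta \wedge \xi| = |\eta \wedge (\xi+\eta)| \le |\eta|\,\min(|\xi|,|\xi+\eta|) \le |\eta|\,|\xi|^{1/2}|\xi+\eta|^{1/2}$) is exactly what converts the full derivative on $g$ into the loss of one derivative split as two half-derivatives on $f$ and $h$. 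Summing/integrating over the frequencies of $g$ and applying Cauchy–Schwarz then gives the bound with $\|\nabla g\|_{L^\infty}$ controlled through $\|\widehat{\nabla g}\|_{L^1}$; to land on $L^\infty$ rather than a Wiener-type norm one instead argues by a Coifman–Meyer / paraproduct decomposition, putting the low-frequency piece of $g$ in $L^\infty$ and treating the high-frequency piece of $g$ by absorbing a derivative onto it — but since we only need $\|\nabla g\|_{L^\infty}$, the cleanest route is Littlewood–Paley: decompose $g = \sum_j \Delta_j g$, and for each piece use that $\|\nabla \Delta_j g\|_{L^\infty} \lesssim \|\nabla g\|_{L^\infty}$ together with the frequency-localized version of the wedge inequality.

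For part (b), I would first handle the model case $\lambda = 0$, i.e.\ $\mathcal{N} = \kvec$ and $f = -\nht_0 f$. Here $\nht_0$ is (up to the Clifford bookkeeping) the Riesz-transform Hilbert transform, so the constraint $f = -\nht_0 f$ picks out a "half" of $L^2$ in the sense of Fourier support/sign conditions on the Riesz symbol, and on this subspace the quadratic form $-\iint f \cdot (\kvec \times \nabla) f = -\iint f \cdot (\partial_\alpha f \times \ivec \cdots)$ — more precisely $-\iint f \cdot (\jvec \partial_\alpha - \ivec\partial_\beta) f$ in the flat case — becomes a Fourier multiplier that on the constrained subspace equals exactly $c|\xi|$, giving the two-sided bound with $C = 1$. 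This is the computational heart of the model case and I expect it to be a short Fourier-side calculation once the Clifford structure is unwound; it is essentially the statement, familiar from the 2D theory, that the Dirichlet-to-Neumann operator at a flat interface is $|\mathcal{D}|$.

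The general case then follows by a perturbation argument. I would write $-\iint f \cdot (\mathcal{N} \times \nabla) f = -\iint f \cdot (\mathcal{N}_0 \times \nabla) f - \iint f \cdot ((\mathcal{N} - \mathcal{N}_0)\times\nabla) f$ where $\mathcal{N} - \mathcal{N}_0 = \lambda_\alpha \times \jvec + \ivec \times \lambda_\beta + \lambda_\alpha \times \lambda_\beta = O(\nabla\lambda)$; the second term is an error bounded, via part (a) with $g$ playing the role built from $\lambda$, by $C\|\nabla\lambda\|_{L^\infty}\|f\|_{\dot H^{1/2}}^2$, so it is absorbable once $\|\nabla\lambda\|_{L^\infty}$ is small. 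But there is a subtlety: the first term after this splitting is evaluated on $f$ satisfying $f = -\nht_\zeta f$, not $f = -\nht_0 f$, so I also need to compare the constraints. I would write $f = -\nht_0 f + (\nht_0 - \nht_\zeta) f$ and use Proposition \ref{DifferenceHilbertTransformHs} (or rather its $\dot H^{1/2}$ analogue via the commutator identities of Proposition \ref{CommutatorIdentities} and the half-derivative trilinear estimate of part (a)) to control $(\nht_0 - \nht_\zeta)f$ in $\dot H^{1/2}$ by $C\|\nabla\lambda\|_{L^\infty}\|f\|_{\dot H^{1/2}}$. The main obstacle I anticipate is exactly this bookkeeping: propagating the half-derivative estimates cleanly through the difference of Hilbert transforms at the $\dot H^{1/2}$ level — rather than the integer $H^s$ level stated in the excerpt — while keeping every error of the form (small constant)$\cdot\|f\|_{\dot H^{1/2}}^2$ so that the coercivity of the flat model survives the perturbation. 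Once all errors carry a factor $\|\nabla\lambda\|_{L^\infty}$, choosing this norm small enough yields both inequalities with a constant $C(\|\nabla\lambda\|_{L^\infty})$ that degenerates as $\|\nabla\lambda\|_{L^\infty}$ grows, as claimed.
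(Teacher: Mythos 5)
Your part (b) is essentially the paper's argument: split $\mathcal{N}\times\nabla$ into the flat piece $\kvec\mathcal{D}=\nht_0|\mathcal{D}|$ plus an $O(\nabla\lambda)$ error handled by part (a), trade the constraint $f=-\nht f$ for $f=-\nht_0 f$ at the cost of $(\nht-\nht_0)f$, and control that difference in $\dot H^{1/2}$ by interpolating the $L^2$ and $\dot H^1$ bounds from Proposition \ref{DifferenceHilbertTransformHs}; the paper does exactly this and absorbs all errors for small $\|\nabla\lambda\|_{L^\infty}$. For part (a), however, you take a genuinely different and considerably heavier route than the paper. The paper's proof is three lines: integrating by parts and using Proposition \ref{SkewedTripleProduct} shows $\iint f\cdot(g_\beta h_\alpha-g_\alpha h_\beta)=\iint h\cdot(g_\beta f_\alpha-g_\alpha f_\beta)$, so H\"older gives the two extreme bounds $\|\nabla g\|_{L^\infty}\|f\|_{L^2}\|h\|_{\dot H^1}$ and $\|\nabla g\|_{L^\infty}\|h\|_{L^2}\|f\|_{\dot H^1}$, and bilinear complex interpolation (Proposition \ref{ComplexInterpolation}(a)) immediately yields the $\dot H^{1/2}\times\dot H^{1/2}$ bound. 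You actually notice the key symmetry ("antisymmetric-looking in $f$ and $h$ after integration by parts") but then abandon it for a Fourier-side wedge-inequality argument; the symmetry plus interpolation is precisely what makes the half-derivative splitting free. Your route can be made to work, but be aware that the step you gloss over is the genuinely delicate one: the naive frequency computation gives $\|\widehat{\nabla g}\|_{L^1}$ rather than $\|\nabla g\|_{L^\infty}$, and upgrading this via Littlewood--Paley requires organizing the sum over the frequency of $g$ so that it converges with only an $L^\infty$ bound on $\nabla g$ (a Coifman--Meyer-type argument, not a term-by-term estimate, since $\sum_j\|\nabla\Delta_j g\|_{L^\infty}$ does not converge). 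Given that the multilinear interpolation lemma is already stated in the paper and you have already performed the integration by parts, I would strongly recommend the interpolation route.
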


\begin{proof}
To show (a), consider the bilinear mapping $$T(f, h) = \iint f \cdot (g_\beta h_\alpha - g_\alpha h_\beta) \, d\alpha \, d\beta$$ We have by an integration by parts and Proposition \ref{SkewedTripleProduct} that
\begin{align*}
\iint f \cdot (g_\beta h_\alpha - g_\alpha h_\beta) \, d\alpha \, d\beta & = \iint -f_\alpha \cdot (g_\beta h) + f_\beta \cdot (g_\alpha h) \, d\alpha \, d\beta \\
& = \iint h \cdot (g_\beta f_\alpha - g_\alpha f_\beta) \, d\alpha \, d\beta
\end{align*}  Now H\"older's inequality implies that the $L^2$ norm is bounded by both $$\|\nabla g\|_{L^\infty}\|f\|_{L^2}\|g\|_{\dot{H}^1} \qquad \text{and} \qquad \|\nabla g\|_{L^\infty}\|h\|_{L^2}\|f\|_{\dot{H}^1}.$$  Then (a) follows by these two estimates and applying Proposition \ref{ComplexInterpolation}.

The right-hand inequality of (b) follows immediately from (a).  To prove the left-hand inequality of (b), we manipulate the following integral using the identity $\kvec\mathcal{D} = \nht_0|\mathcal{D}|$:
\begin{align*}
& \quad \iint -f \cdot (\zeta_\beta f_\alpha - \zeta_\alpha f_\beta) \, d\alpha \, d\beta \\
& = \iint -f \cdot (\kvec\mathcal{D})f \, d\alpha \, d\beta + \iint -f \cdot (\lambda_\beta \partial_\alpha - \lambda_\alpha \partial_\beta)f \, d\alpha \, d\beta \\
& = \iint f \cdot (\nht_0|\mathcal{D}|\nht_0 f) \, d\alpha \, d\beta \\
& \qquad \qquad  + \iint f \cdot (\jvec \partial_\alpha - \ivec \partial_\beta)(\nht - \nht_0) f \, d\alpha \, d\beta + \iint -f \cdot (\lambda_\beta \partial_\alpha - \lambda_\alpha \partial_\beta)f \, d\alpha \, d\beta \\
& = \|f\|_{\dot{H}^\frac12}^2 + \iint f \cdot (\jvec \partial_\alpha - \ivec \partial_\beta)(\nht - \nht_0) f \, d\alpha \, d\beta + \iint -f \cdot (\lambda_\beta \partial_\alpha - \lambda_\alpha \partial_\beta)f \, d\alpha \, d\beta
\end{align*}
Note that by Proposition \ref{DifferenceHilbertTransformHs} we have both $\|(\nht - \nht_0)f\|_{L^2} \leq C\|\nabla\lambda\|_{L^\infty}\|f\|_{L^2}$ and for $\partial = \partial_\alpha, \partial_\beta$, $$\|\partial(\nht - \nht_0)f\|_{L^2} = \|(\nht - \nht_0)\partial f\|_{L^2} + \|[\partial, \nht]f\|_{L^2} \leq C\|\nabla\lambda\|_{L^\infty}\|f\|_{\dot{H}^1}$$  Thus, by Proposition \ref{ComplexInterpolation} applied to the operator $T = \nht - \nht_0$, we have $\|(\nht - \nht_0)f\|_{\dot{H}^\frac12} \leq C\|\nabla\lambda\|_{L^\infty}\|f\|_{\dot{H}^\frac12}$, and so by (a) the above becomes
\begin{align*}
\|f\|_{\dot{H}^\frac12}^2 & \leq \iint -f \cdot (\mathcal{N} \times \nabla)f \, d\alpha \, d\beta + \|f\|_{\dot{H}^\frac12}\|(\nht - \nht_0)f\|_{\dot{H}^\frac12} + \|\nabla \lambda\|_{L^\infty}\|f\|_{\dot{H}^\frac12}^2 \\
& \leq \iint -f \cdot (\mathcal{N} \times \nabla)f \, d\alpha \, d\beta + C\|\nabla \lambda\|_{L^\infty}\|f\|_{\dot{H}^\frac12}^2
\end{align*}
Now (b) follows provided $\|\nabla\lambda\|_{L^\infty}$ is sufficiently small.
\end{proof}

We close this section with half-derivative estimates of operators formed by commuting singular integrals and derivative operators.

\begin{proposition}\label{CommutatorEstimates}
Let $s \geq 4$, let $\mathfrak{d}, \mathfrak{d}_1, \mathfrak{d}_2 = \partial_\alpha, \partial_\beta, \partial_t, \partial_s, D_t$, and let $\gamma$ parametrize a Lipschitz surface and satisfy the chord-arc condition.  Then the following estimates hold for all $0 \leq \nu \leq 1$:
\begin{itemize}
\item[(a)]{There is a constant $C$ depending on $\|\nabla(\gamma - P)\|_{H^s}$ so that $$\|[\mathfrak{d}, \nht_\gamma]f\|_{H^s} \leq C\|\,|\mathcal{D}|^{1 - \nu}\mathfrak{d}(\gamma - P)\|_{H^s} \|\,|\mathcal{D}|^\nu f\|_{H^s}$$}
\item[(b)]{There is a constant $C$ depending on $\|\nabla(\gamma - P)\|_{H^s}$ so that
$$\|[\mathfrak{d}_1, [\mathfrak{d}_2, \nht_\gamma]]f\|_{H^s} \leq C(\|\nabla \mathfrak{d}_1\mathfrak{d}_2 (\gamma - P)\|_{H^{s - 1}} + \|\nabla \mathfrak{d}_1(\gamma - P)\|_{H^s}\|\nabla \mathfrak{d}_2 (\gamma - P)\|_{H^{s - 1}})\|f\|_{H^s}$$}
\end{itemize}
\end{proposition}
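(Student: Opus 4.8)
The plan is to reduce both estimates to the $L^2$ singular‑integral bounds of Theorem \ref{SingularIntegralL2Estimate} by way of the explicit commutator identities of Proposition \ref{CommutatorIdentities}, and then — for the fractional gain in (a) — to recover the half‑derivative scaling by a \emph{bilinear} complex interpolation. For part (a), first I would use Proposition \ref{CommutatorIdentities}(a) (and, for $\mathfrak{d} = D_t$, the analogue obtained by combining parts (a) and (b), with the transport contribution $b\cdot\mathcal{D}$ split off and handled by the same argument) to write
\[
[\mathfrak{d},\nht_\gamma]f = \iint K(\gamma'-\gamma)\,(\mathfrak{d}\gamma - \mathfrak{d}'\gamma')\times(\gamma'_{\beta'}\partial_{\alpha'} - \gamma'_{\alpha'}\partial_{\beta'})f'\,d\alpha'\,d\beta'.
\]
Since $\mathfrak{d}P$ is constant in $(\alpha,\beta)$ for $\mathfrak{d} = \partial_\alpha,\partial_\beta,\partial_t,\partial_s$, the difference equals $\mathfrak{d}(\gamma-P) - (\mathfrak{d}(\gamma-P))'$. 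Freezing $\gamma$ in the kernel $K(\gamma'-\gamma)$ and in the coefficients $\gamma'_{\alpha'},\gamma'_{\beta'}$, the right‑hand side becomes a bilinear operator $B(h,f)$ in $h := \mathfrak{d}(\gamma-P)$ and $f$, whose endpoint behavior I would establish as follows. At $\nu = 1$ the expression is already in the Coifman--Meyer--McIntosh--David form of Proposition \ref{SingularIntegralHsEstimate}: differentiating $s$ times and distributing, every resulting term carries the difference $h-h'$ linearly, and placing the top‑order derivatives on it (in $L^2$) while estimating the coefficients and $\nabla f$ in $L^\infty$ via Sobolev embedding gives $\|B(h,f)\|_{H^s}\le C\|h\|_{H^s}\,\||\mathcal{D}|f\|_{H^s}$, with $C$ absorbing $\|\nabla(\gamma-P)\|_{H^s}$. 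At $\nu = 0$ I would first integrate by parts in $\alpha',\beta'$ to remove the derivative from $f'$; the derivative then lands on $K$ or on $h-h'$ (the coefficient terms producing only second derivatives of $\gamma$, harmless), the resulting integrals are again of Coifman--Meyer type, and the same bookkeeping yields $\|B(h,f)\|_{H^s}\le C\||\mathcal{D}|h\|_{H^s}\,\|f\|_{H^s}$.

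With the two endpoints in hand, I would view $B$ as a bounded bilinear map $(|\mathcal{D}|^{-1}H^s)\times H^s \to H^s$ (the $\nu=0$ estimate) and $H^s\times(|\mathcal{D}|^{-1}H^s)\to H^s$ (the $\nu=1$ estimate), and apply Proposition \ref{ComplexInterpolation}(a) with $\theta = \nu$. Identifying the complex interpolation spaces — by Proposition \ref{ComplexInterpolation}(b) on the Fourier side, equivalently by complex interpolation of weighted $L^2$ spaces, one has $[|\mathcal{D}|^{-1}H^s, H^s]_\nu = |\mathcal{D}|^{\nu-1}H^s$ and $[H^s, |\mathcal{D}|^{-1}H^s]_\nu = |\mathcal{D}|^{-\nu}H^s$ with equivalent norms — produces exactly $\|[\mathfrak{d},\nht_\gamma]f\|_{H^s}\le C\,\||\mathcal{D}|^{1-\nu}\mathfrak{d}(\gamma-P)\|_{H^s}\,\||\mathcal{D}|^\nu f\|_{H^s}$. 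I want to stress the point that makes this work: one cannot simply interpolate the \emph{linear} operator $f\mapsto[\mathfrak{d},\nht_\gamma]f$ for a fixed $\gamma$, since that would give the constant $\||\mathcal{D}|\mathfrak{d}(\gamma-P)\|_{H^s}^{1-\nu}\|\mathfrak{d}(\gamma-P)\|_{H^s}^{\nu}$, the geometric mean of the two $\gamma$‑seminorms, which is in general strictly larger than $\||\mathcal{D}|^{1-\nu}\mathfrak{d}(\gamma-P)\|_{H^s}$; interpolating bilinearly in the pair $(h,f)$ is precisely what forces the correct norm on $\gamma$.

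For part (b), I would apply $\mathfrak{d}_1$ to the commutator formula for $[\mathfrak{d}_2,\nht_\gamma]f$ using Proposition \ref{SingularIntegralHsEstimate}(a) (and, for $\mathfrak{d}_1 = D_t$, the material‑derivative version modeled on Proposition \ref{CommutatorIdentities}(c)), expressing $[\mathfrak{d}_1,[\mathfrak{d}_2,\nht_\gamma]]f$ as a finite sum of singular integrals of three types: a principal term in which $\mathfrak{d}_1$ falls on the difference $\mathfrak{d}_2\gamma - \mathfrak{d}_2'\gamma'$, producing the second‑order difference $\mathfrak{d}_1\mathfrak{d}_2\gamma - \mathfrak{d}_1'\mathfrak{d}_2'\gamma'$ and hence the factor $\|\nabla\mathfrak{d}_1\mathfrak{d}_2(\gamma-P)\|_{H^{s-1}}$ after $s$ further derivatives; terms in which $\mathfrak{d}_1$ falls on $K(\gamma'-\gamma)$ or on a coefficient $\gamma'_{\alpha'},\gamma'_{\beta'}$, each carrying a first‑order difference together with one more $\gamma$‑factor and therefore contributing $\|\nabla\mathfrak{d}_1(\gamma-P)\|_{H^s}\|\nabla\mathfrak{d}_2(\gamma-P)\|_{H^{s-1}}$; and terms in which $\mathfrak{d}_1$ falls on $f'$, which I would remove by a further integration by parts so that $f$ is undifferentiated in the final estimate. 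Each piece is then estimated in $H^s$ exactly as in Proposition \ref{SingularIntegralHsEstimate}. The main obstacle — and really the only delicate point — is the derivative accounting: in the terms where $\mathfrak{d}_1$ lands on a frozen coefficient such as $\gamma'_{\beta'}$, a naive distribution of the remaining $s$ derivatives would demand more than $\|\nabla(\gamma-P)\|_{H^s}$ of $\gamma$, so one must keep the operator $\gamma'_{\beta'}\partial_{\alpha'} - \gamma'_{\alpha'}\partial_{\beta'}$ intact, split $\gamma'_{\beta'} = \jvec + \partial_{\beta'}(\gamma'-P)$ to peel off the constant piece (whose contribution is handled by the flat‑transform identities and integration by parts), and route excess derivatives onto $K$ rather than onto the coefficients; tracking which single factor is allowed to reach $H^s$ versus $H^{s-1}$ is what produces the asymmetry in the stated bound. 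The cases $\mathfrak{d}_1,\mathfrak{d}_2 = D_t$ follow by writing $D_t = \partial_t + b\cdot\mathcal{D}$ and combining the above with part (a) for the transport terms.
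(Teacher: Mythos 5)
Your proposal is correct and follows essentially the same route as the paper: writing the commutators via the explicit singular-integral identities, distributing derivatives as in Proposition \ref{SingularIntegralHsEstimate}, invoking Theorem \ref{SingularIntegralL2Estimate} on each piece, and using \emph{bilinear} complex interpolation (Proposition \ref{ComplexInterpolation}(a)) in the pair $(\mathfrak{d}(\gamma-P), f)$ to obtain the $\nu$-dependent bound in (a) — your remark that linear interpolation in $f$ alone would give the wrong constant is exactly why the paper interpolates the bilinear operator $T(g,h)$. The only cosmetic difference is in (b): the term where $\mathfrak{d}_1$ falls on $f'$ cancels identically against $[\mathfrak{d}_2,\nht_\gamma]\mathfrak{d}_1 f$ in forming the double commutator (yielding the paper's three-term formula), so no further integration by parts is needed there.
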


\begin{proof}
Write
$$[\mathfrak{d}, \nht_\gamma]f = \iint K(\gamma^\prime - \gamma)(\mathfrak{d}\gamma - \mathfrak{d}^\prime\gamma^\prime) \times (\gamma_\beta^\prime f_\alpha^\prime - \gamma_\alpha^\prime f_\beta^\prime) \, d\alpha \, d\beta$$
and distribute $\partial^j$ derivatives as in Proposition \ref{SingularIntegralHsEstimate}.  By Proposition \ref{ComplexInterpolation} and Theorem \ref{SingularIntegralL2Estimate}, we can estimate the operator
$$T(g, h) = \iint K(\gamma^\prime - \gamma)(g - g^\prime) \times (\gamma_\beta^\prime h_\alpha^\prime - \gamma_\alpha^\prime h_\beta^\prime) \, d\alpha \, d\beta$$ by $$\|T(g, h)\|_{L^2} \leq C\|g\|_{H^{3 - \nu}}\|h\|_{\dot{H}^\nu} \quad \text{and} \quad C\|g\|_{\dot{H}^{1 - \nu}}\|\, |\mathcal{D}|^\nu h\|_{H^\frac32}$$  Part (a) follows by applying these estimates to the term $T(\partial^j\mathfrak{d}(\gamma - P), h)$ and any of the terms where no derivatives fall on $f$.

Similarly, (b) follows by applying Proposition \ref{SingularIntegralHsEstimate} to the following explicit commutator formula:
\begin{align*}
[\mathfrak{d}_1, [\mathfrak{d}_2, \nht_\gamma]]f & = \iint K(\gamma - \gamma^\prime) (\mathfrak{d}_1 \mathfrak{d}_2 \gamma - \mathfrak{d}_1^\prime \mathfrak{d}_2^\prime \gamma^\prime) \times (\gamma_\beta^\prime f_\alpha^\prime - \gamma_\alpha^\prime f_\beta^\prime) \, d\alpha \, d\beta \\
& \quad + \iint K(\gamma - \gamma^\prime) (\mathfrak{d}_2 \gamma - \mathfrak{d}_2^\prime \gamma^\prime) \times ((\mathfrak{d}_1^\prime \gamma_\beta^\prime) f_\alpha^\prime - (\mathfrak{d}_1^\prime \gamma_\alpha^\prime) f_\beta^\prime) \, d\alpha \, d\beta \\
& \quad + \iint (\mathfrak{d}_1 + \mathfrak{d}_1^\prime)K(\gamma - \gamma^\prime) (\mathfrak{d}_2 \gamma - \mathfrak{d}_2^\prime \gamma^\prime) \times (\gamma_\beta^\prime f_\alpha^\prime - \gamma_\alpha^\prime f_\beta^\prime) \, d\alpha \, d\beta
\end{align*}
\end{proof}

\begin{remark}
As in Proposition \ref{SingularIntegralHsEstimate}, we are always free to relax to $L^\infty$ estimates if it is convenient, provided one of the factors is still estimated in $L^2$.  If $f = \tilde{f} + (f - \tilde{f})$, we will often need to use the above estimates in such a way that we replace $\||\mathcal{D}|^\nu f\|_{H^s}$ in the above proposition by $\||\mathcal{D}|^\nu(f - \tilde{f})\|_{H^s} + \|\tilde{f}\|_{W^{s + 1, \infty}}$, with similar modifications for the other quantities in the proposition.
\end{remark}

\begin{remark}
Since we can write the difference of two Hilbert transforms as in Proposition \ref{DiffHilbertTransforms}, part (a) of the above Lemma along with Proposition \ref{ComplexInterpolation} implies the estimates
\begin{align*}
\|(\nht_{\gamma_1} - \nht_{\gamma_0})f\|_{H^s} & \leq C\|\,|\mathcal{D}|^{1 - \nu}(\gamma_1 - \gamma_0)\|_{H^s} \|\,|\mathcal{D}|^\nu f\|_{H^s} \quad \text{for}\quad  0 \leq \nu \leq 1
\end{align*}
where $C$ now depends on $\|\nabla(\gamma_i - P)\|_{H^s}$, $i = 0, 1$.
\end{remark}

\section{The Formal Calculation of the Approximate Solution}

In this section we determine the correctors to the wave packet-like approximate solution so that the residual to \eqref{NewEvolutionEquations} is physically of order $O(\epsilon^5)$.  Our first task is to write the wave packet in \eqref{WavePacketVector} in terms of quaternions.  In analogy to our choice of wave packet in \cite{TotzWu2DNLS}, we can use \eqref{EulerIdentityQuaternion} in order to take the following as the leading term of our approximate solution:
\begin{align}\label{WavePacketQuaternion}
\tilde{\lambda} & = \epsilon \ivec A(\epsilon\alpha, \epsilon\beta, \epsilon t, \epsilon^2 t) e^{\jvec(k\alpha + \omega t)} + O(\epsilon^2) \notag \\
& = \epsilon \left( \Re(Ae^{\jvec(k\alpha + \omega t)})\ivec + \Im(Ae^{\jvec(k\alpha + \omega t)})\kvec\right) + O(\epsilon^2) \\
& := \epsilon(\xnew^{(1)}\ivec + \ynew^{(1)}\jvec + \znew^{(1)}\kvec) + O(\epsilon^2), \notag
\end{align}
where the function $A$ is $1, \jvec$-valued.  In order to systematically develop the correctors of $\tilde{\lambda}$, we adopt a multiscale ansatz for $\tilde{\lambda}$.  If we let $\alpha_0 = \alpha$, $\alpha_1 = \epsilon\alpha$, $\beta_1 = \epsilon\beta$, $t_0 = t$, $t_1 = \epsilon t$, $t_2 = \epsilon^2 t$, we write 
\begin{align}\label{WavePacketMultiscale}
\tilde{\lambda} & = \epsilon \ivec A(\alpha_1, \beta_1, t_1, t_2) e^{\jvec\phi} + O(\epsilon^2) \notag \\
& = \epsilon\left(\Re(Ae^{\jvec\phi})\ivec + \Im(Ae^{\jvec\phi})\kvec\right) + O(\epsilon^2)
\end{align}
where we have introduced the phase 
\begin{equation}\label{Phase}
\phi := k\alpha_0 + \omega t_0
\end{equation}
If we are to seek such an ansatz, we must interpret the action of the operators in \eqref{NewEvolutionEquations} and Proposition \ref{BADtAFormulas} on multiscale functions and interpret the result as multiscale functions.  Interpreting derivatives in this way is straightforward by the Chain rule: $$\partial_\alpha = \partial_{\alpha_0} + \epsilon \partial_{\alpha_1}, \qquad \partial_\beta = \epsilon \partial_{\beta_1}, \qquad \partial_t = \partial_{t_0} + \epsilon\partial_{t_1} + \epsilon^2 \partial_{t_2}$$  However it is not immediately clear how to interpret $\nht$ acting on a multiscale function.  One can formally expand the kernel of $\nht$ into a power series of homogeneous terms, and each of these terms yields an operator that can be written in terms of iterates of commutators with known quantities and the flat Hilbert transform 
\begin{equation}\label{FlatHilbertTransform}
\nht_0 f(\alpha, \beta) = \frac{1}{2\pi^2} \iint_{\mathbb{R}^2} \frac{(\alpha - \alpha^\prime)\ivec + (\beta - \beta^\prime)\jvec}{|(\alpha, \beta) - (\alpha^\prime, \beta^\prime)|^3} \kvec f(\alpha^\prime, \beta^\prime) \, d\alpha^\prime d\beta^\prime
\end{equation}
This reduces the problem to understanding how $\nht_0$ acts on multiscale functions.  Since the only multiscale functions that arise in our formal calculation are in essence of the form $F(\alpha_1, \beta_1, t_1, t_2)e^{n\jvec\phi}$ for $k \in \mathbb{R}$, $n \in \mathbb{Z}$, and $F$ a $1, \jvec$-valued function, we begin by understanding how $\nht_0$ acts on these types of functions.

	\subsection{The Action of the Flat Hilbert Transform on Wave Packets}

We first observe that because our wave packets are concentrated in frequency space about the fixed frequency $(k, 0)$, we can always localize a smooth wave packet about its wave number in Fourier space at the expense of a small error.  In this section we use only the left $\jvec$-Fourier transform with frequency variable $\xi = (\xi_1, \xi_2)$, and denote $\mathcal{F}_\jvec^L[f] = \mathcal{F}[f] = \hat{f}$ for brevity.

\begin{lemma}\label{WavePacketCutoff}
Let $s, m \geq 0$, $k \neq 0$, and $\epsilon > 0$ be given.  Let $\mathcal{B}_k$ be the Fourier multiplier with symbol $$\hat{\mathcal{B}}_k(\xi) := \begin{cases} 1 & |(\xi_1 - k, \xi_2)| \leq \frac{1}{2}k \cr 0 & \text{otherwise}\end{cases}$$ Then for any function $A \in H^{s + m}$, there is a constant $C$ depending only on $k, s, m$ so that $$\|A(\epsilon\alpha, \epsilon\beta)e^{\jvec k\alpha} - \mathcal{B}_k A(\epsilon\alpha, \epsilon\beta)e^{\jvec k\alpha}\|_{H^s} \leq C \epsilon^{m - 1}\|A\|_{H^{s + m}}$$
\end{lemma}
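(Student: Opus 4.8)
The plan is to work entirely on the Fourier side, where the whole statement reduces to the elementary fact that a high-regularity function localized near frequency $0$ at scale $\epsilon$ has negligible mass outside a ball of radius $\sim k$. First I would compute $\mathcal{F}_\jvec^L[A(\epsilon\alpha,\epsilon\beta)e^{\jvec k\alpha}](\xi)$. Writing $B(\alpha,\beta) = A(\epsilon\alpha,\epsilon\beta)$, the modulation by $e^{\jvec k\alpha}$ translates the transform: since $A$ is $1,\jvec$-valued all the usual complex Fourier identities hold with $\jvec$ in place of $i$, so $\mathcal{F}_\jvec^L[Be^{\jvec k\alpha}](\xi) = \hat{B}(\xi_1 - k, \xi_2)$, and the $\epsilon$-dilation gives $\hat{B}(\eta) = \epsilon^{-2}\hat{A}(\eta/\epsilon)$. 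Hence the object whose $H^s$ norm we must control, namely $(I - \mathcal{B}_k)[Be^{\jvec k\alpha}]$, has $\jvec$-Fourier transform $\bigl(1 - \hat{\mathcal{B}}_k(\xi)\bigr)\epsilon^{-2}\hat{A}\bigl((\xi_1 - k,\xi_2)/\epsilon\bigr)$, which is supported in the region $|(\xi_1 - k,\xi_2)| \geq \tfrac12 k$.

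Next I would use the Plancherel identity for $\mathcal{F}_\jvec^L$ — which the excerpt establishes holds even for $\mathbb{H}$-valued functions — to write, after the substitution $\eta = (\xi_1-k,\xi_2)/\epsilon$,
\begin{align*}
\|(I - \mathcal{B}_k)[Be^{\jvec k\alpha}]\|_{H^s}^2
&\leq C\iint_{|(\xi_1-k,\xi_2)|\geq \frac12 k} (1 + |\xi|^{2s})\,\epsilon^{-4}\bigl|\hat{A}\bigl(\tfrac{(\xi_1-k,\xi_2)}{\epsilon}\bigr)\bigr|^2\,d\xi \\
&= C\iint_{|\eta|\geq \frac{k}{2\epsilon}} \bigl(1 + |\epsilon\eta + (k,0)|^{2s}\bigr)\,\epsilon^{-2}\,|\hat{A}(\eta)|^2\,d\eta.
\end{align*}
On the domain of integration $|\eta| \geq k/(2\epsilon)$, so $1 \leq (2\epsilon/k)^{2m}|\eta|^{2m}$ and likewise $|\epsilon\eta + (k,0)|^{2s} \leq C(1 + |\eta|^{2s})$ can be absorbed (for $\epsilon \leq 1$, say; the final constant depends on $k$), and one gains the extra factor $\epsilon^{2m}$ at the cost of $|\eta|^{2m}$. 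The upshot is a bound by $C\epsilon^{2m-2}\iint (1 + |\eta|^{2(s+m)})|\hat{A}(\eta)|^2\,d\eta = C\epsilon^{2m-2}\|A\|_{H^{s+m}}^2$, and taking square roots gives the claimed $C\epsilon^{m-1}\|A\|_{H^{s+m}}$.

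The only genuinely delicate bookkeeping is handling the $1,\jvec$-valued versus $\mathbb{H}$-valued issue and the direction of the Fourier transform: since $A$ is $1,\jvec$-valued, the modulation identity $\mathcal{F}_\jvec^L[fe^{\jvec k\alpha}] = (\mathcal{F}_\jvec^L f)(\cdot - (k,0))$ and the convolution/dilation identities apply cleanly, and the function $Ae^{\jvec k\alpha}$ is again $1,\jvec$-valued so there is no issue applying the multiplier $\mathcal{B}_k$ or Plancherel. I expect this — verifying that the commutative Fourier calculus genuinely applies on the nose to these particular quaternion-valued wave packets — to be the main obstacle, but it is minor: it is exactly the situation the paper set up the $\jvec$-Fourier transform to handle. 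Everything else is the routine frequency-localization estimate sketched above, and the restriction $m \geq 0$, $s \geq 0$ is exactly what makes the weight manipulations legitimate.
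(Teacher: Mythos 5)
Your proof is correct and follows essentially the same route as the paper's: Plancherel for the $\jvec$-Fourier transform, the observation that the residual is supported where $|(\xi_1-k,\xi_2)|\geq \frac12 k$, and trading $m$ derivatives of $A$ for the factor $\epsilon^m$ there, with the loss of $\epsilon^{-1}$ coming from the $L^2$ normalization of the dilation. The only cosmetic difference is that you carry the $H^s$ weight $(1+|\xi|^{2s})$ through the Fourier-side estimate directly, whereas the paper proves the $L^2$ case and then applies it to $\partial^j A$, $|j|\leq s$.
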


\begin{proof}
We calculate by Plancherel's Identity that for any $m \geq 0$ that
\begin{align*}
& \quad\; \|A(\epsilon\alpha, \epsilon \beta)e^{\jvec k\alpha} - \mathcal{B}_k A(\epsilon\alpha, \epsilon \beta)e^{\jvec k\alpha}\|_{L^2_{\alpha, \beta}} \\
& = \left(\iint_{|(\xi_1 - k, \xi_2)| > \frac{1}{2}k} \left|\frac{1}{\epsilon^2}\hat{A}\left(\frac{\xi_1 - k}{\epsilon}, \frac{\xi_2}{\epsilon}\right)\right|^2 d\xi_1\,d\xi_2\right)^{1/2} \\
& \leq \left(\iint_{|(\xi_1 - k, \xi_2)| > \frac{1}{2}k} \left|\frac{\epsilon^m}{\langle(\xi_1 - k, \xi_2)\rangle^m}\frac{1}{\epsilon^2}\widehat{\langle\mathcal{D}\rangle^m A}\left(\frac{\xi_1 - k}{\epsilon}, \frac{\xi_2}{\epsilon}\right)\right|^2 d\xi_1\,d\xi_2\right)^{1/2} \\
& \leq C \epsilon^{m - 1}\|A\|_{H^m}
\end{align*} where the constant $C$ depends only on $k$ and $s$.  Note that we have lost a power of $\epsilon$ by measuring $A$ in the slow variable $\epsilon\alpha$.  Since $\mathcal{B}_k$ commutes with differentiation, the result now follows upon applying the above to $\partial^j A$ for $|j| \leq s$.
\end{proof}

This result allows us to interpret the action of $\nht_0$ on a wave-packet by expanding the symbol of $\nht_0$ in a Taylor series about the frequency $(k, 0)$.  The effect of this is to write the action of $\nht_0$ on a wave packet as a series of differential operators, which are then easily interpreted as operators on multiscale functions.

\begin{proposition}\label{FlatHilbertTransformExpansion}
Let $F \in H^{s + 4}$ be a $1, \jvec$-valued function, and denote $f(\alpha, \beta) = F(\epsilon\alpha, \epsilon\beta)e^{\jvec k \alpha}$ for $k \in \mathbb{R}$.  When $k = 0$ we interpret $\nht^{(0)}(F(\alpha_1, \beta_1)) = (\nht_0F)(\alpha_1, \beta_1)$ with no correctors.  When $k \neq 0$ we have the following estimate $$\|
(\nht_0 - \nht_0^{(0)} + \epsilon \nht_0^{(1)} + \epsilon^2\nht_0^{(2)} + \epsilon^3\nht_0^{(3)})f\|_{H^s} \leq C\epsilon^3\|F\|_{H^{s + 4}}$$ where the operators $\mathcal{H}_0^{(j)}$ for $j = 0, 1, 2, 3$ are
\begin{equation*}
\nht_0^{(0)}f = -\sgn(k)f
\end{equation*}
\begin{equation*}
\nht_0^{(1)}f = -\frac{1}{|k|} \ivec\partial_{\beta_1}f
\end{equation*}
\begin{equation*}
\nht_0^{(2)}f = -\frac{1}{2k|k|} \partial_{\beta_1}^2f + \frac{1}{k|k|} \kvec \partial_{\alpha_1 \beta_1}f
\end{equation*}
\begin{equation*}
\nht_0^{(3)}f = -\frac{1}{|k|^3}\jvec\partial_{\alpha_1 \beta_1 \beta_1}f + \frac{1}{|k|^3}\ivec\partial_{\alpha_1\alpha_1\beta_1}f - \frac{1}{2|k|^3}\ivec \partial_{\beta_1\beta_1\beta_1}f
\end{equation*}
\end{proposition}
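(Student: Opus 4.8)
The plan is to treat $\nht_0$ as a constant-coefficient Fourier multiplier and to expand its symbol in a Taylor series about the carrier frequency $(k,0)$. Concretely, from \eqref{FlatHilbertTransform} the operator $\nht_0$ is a principal-value convolution whose kernel is built from the two-dimensional Riesz kernels $\pv\,\alpha|(\alpha,\beta)|^{-3}$ and $\pv\,\beta|(\alpha,\beta)|^{-3}$ composed with the fixed Clifford factors $\ivec\kvec$, $\jvec\kvec$; hence on $1,\jvec$-valued inputs $\nht_0$ acts, on the $\jvec$-Fourier side, as a multiplier with symbol $m(\xi)$ that is homogeneous of degree zero and $C^\infty$ away from $\xi=0$, with $m(k,0)=-\sgn(k)$ (which is exactly $\nht_0^{(0)}$). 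Since the $\jvec$-Fourier transform of $f(\alpha,\beta)=F(\epsilon\alpha,\epsilon\beta)e^{\jvec k\alpha}$ is the $\epsilon$-dilate of $\hat F$ re-centered at $(k,0)$, only the behavior of $m$ near $(k,0)$ matters, and this is where $k\neq 0$ enters: the ball $B_k=\{\,|(\xi_1-k,\xi_2)|\le\tfrac12|k|\,\}$ stays a fixed distance from the singular point of $m$. When $k=0$ the relevant frequency \emph{is} that singular point, which is why no correctors arise and one simply declares $\nht_0$ acting on a slow function to be itself.

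First I would apply Lemma \ref{WavePacketCutoff} with $m=4$ to replace $f$ by $\mathcal{B}_k f$, whose transform is supported in $B_k$ and equals $\epsilon^{-2}\hat F\big(\epsilon^{-1}(\xi-(k,0))\big)$ there; this costs at most $C\epsilon^3\|F\|_{H^{s+4}}$ in $H^s$. On $B_k$ I would then Taylor-expand $m$ about $(k,0)$ to third order with integral remainder, $m(\xi)=\sum_{|a|\le 3}c_a\,(\xi-(k,0))^a+R_4(\xi)$, with quaternion constants $c_a$ and $R_4$ smooth on $B_k$ vanishing to order four at $(k,0)$ together with its derivatives. The key translation step is that multiplying $\widehat{\mathcal{B}_k f}$ by a monomial $(\xi_1-k)^{a_1}\xi_2^{a_2}$ of degree $j=a_1+a_2$ reproduces exactly $\epsilon^j$ times the $\jvec$-Fourier transform of a $j$-th order constant-coefficient differential operator in the slow variables $\alpha_1,\beta_1$ applied to $F$ (still modulated by $e^{\jvec k\alpha}$); collecting the degree-$j$ part of the Taylor polynomial yields $\epsilon^j\nht_0^{(j)}$, and the displayed formulas then follow by computing the first few Taylor coefficients of $m$ and carrying out the quaternion multiplications.

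It remains to estimate the remainder. On the Fourier side its contribution is $R_4(\xi)\,\widehat{\mathcal{B}_k f}(\xi)$; writing $R_4(\xi)=\sum_{|a|=4}(\xi-(k,0))^a g_a(\xi)$ with each $g_a$ smooth and bounded, together with its derivatives, on $B_k$ lets me extract four factors of $\xi-(k,0)$ — i.e. four slow derivatives of $F$ and four powers of $\epsilon$ — while the bounded symbols $g_a$ act as $H^s\to H^s$ multipliers on the localized piece. By Plancherel this gives a bound $C\epsilon^4\|F\|_{H^{s+4}}$, and combining with the localization error produces the claimed $C\epsilon^3\|F\|_{H^{s+4}}$ (the $\epsilon^4$ term being absorbed for $\epsilon$ bounded, the estimate being trivial otherwise). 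Note this is precisely why $F\in H^{s+4}$ is required: four derivatives are needed both by Lemma \ref{WavePacketCutoff} with $m=4$ and by the Taylor remainder.

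The step I expect to be the main obstacle is not analytic but algebraic: determining the symbol $m$ of $\nht_0$ as a \emph{left} multiplier on $1,\jvec$-valued functions, and then propagating the non-commuting factors $\ivec,\jvec,\kvec$ correctly through the Taylor terms. The subtlety is that passing an $\ivec$ or $\kvec$ through the $\jvec$-Fourier transform introduces the reflections $\mathcal{F}_\jvec\ivec\cdot=\ivec\overline{\mathcal{F}}_\jvec\cdot$ and $\mathcal{F}_\jvec\kvec\cdot=\kvec\overline{\mathcal{F}}_\jvec\cdot$, which flip the sign of the frequency variable; these must be tracked carefully so that the differential operators one reads off act on $F$ in the slow variables with exactly the Clifford coefficients appearing in $\nht_0^{(1)},\nht_0^{(2)},\nht_0^{(3)}$ (already reconciling $m(k,0)=-\sgn(k)$ with the kernel in \eqref{FlatHilbertTransform} requires this bookkeeping). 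Once the symbol is in hand, the localization, the Taylor remainder estimate, and the Plancherel bound are routine.
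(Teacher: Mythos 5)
Your proposal follows essentially the same route as the paper's proof: write $\nht_0 = -\jvec\mathcal{R}_1 + \ivec\mathcal{R}_2$ as a Fourier multiplier (tracking the reflections $\mathcal{F}_\jvec\ivec\cdot=\ivec\overline{\mathcal{F}}_\jvec\cdot$, $\mathcal{F}_\jvec\kvec\cdot=\kvec\overline{\mathcal{F}}_\jvec\cdot$ so the symbol becomes $-\tfrac{\xi_1}{|\xi|}\mathcal{F} + \tfrac{\xi_2}{|\xi|}\mathcal{F}\kvec$), localize with Lemma \ref{WavePacketCutoff} at $m=4$, Taylor-expand the symbol about $(k,0)$, and identify the degree-$j$ monomials in $\xi-(k,0)$ with $\epsilon^j$ times slow-variable differential operators. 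The one quibble is that your fourth-order Taylor remainder actually contributes $C\epsilon^3\|F\|_{H^{s+4}}$ rather than $C\epsilon^4$, because measuring a function of the slow variables in $L^2$ of the fast variables costs a factor $\epsilon^{-1}$ (the same loss the paper notes inside Lemma \ref{WavePacketCutoff}); this does not affect the claimed final bound.
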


\begin{proof}
The case $k = 0$ is immediate since $\nht_0$ is invariant under dilations.  Hence it suffices to consider the case where $k \neq 0$.  Applying \eqref{FracSolidDeriv} to the components of \eqref{FlatHilbertTransform} yields formulas for the symbols of the Riesz transforms:
\begin{equation}\label{RieszFormulas}
\mathcal{F}\left(\frac{1}{2\pi}\pv\frac{\alpha}{|(\alpha, \beta)|^3}\right) = -\jvec \frac{\xi}{|(\xi, \eta)|} \qquad \mathcal{F}\left(\frac{1}{2\pi}\pv\frac{\beta}{|(\alpha, \beta)|^3}\right) = -\jvec \frac{\eta}{|(\xi, \eta)|}
\end{equation} we can write the symbol of $\nht_0 = -\jvec\mathcal{R}_1 + \ivec\mathcal{R}_2$ in coordinates  as follows:
\begin{align*}
\mathcal{F}\nht_0 & = \mathcal{F}(\ivec \mathcal{R}_1 \kvec + \jvec \mathcal{R}_2 \kvec) \\
& = \mathcal{F}(-\jvec\mathcal{R}_1 + \jvec\mathcal{R}_2\kvec) \\
& = -\jvec\mathcal{F}\mathcal{R}_1 + \jvec \mathcal{F}\mathcal{R}_2\kvec \\
& = -\jvec\left(-\jvec\frac{\xi_1}{|\xi|}\right)\mathcal{F} + \jvec\left(-\jvec \frac{\xi_2}{|\xi|}\right)\mathcal{F}\kvec \\
& = -\frac{\xi_1}{|\xi|}\mathcal{F} + \frac{\xi_2}{|\xi|}\mathcal{F}\kvec
\end{align*}  We will expand these symbols in a formal power series $(1 + (\xi_2/\xi_1)^2)^{-1/2}$ about $(\xi_1 - k, \xi_2)$ to third order using the power series expansions $$(1 + (\xi_2/\xi_1)^2)^{-1/2} = 1 - \frac{\xi_2^2}{2\xi_1^2} + O((\xi_1 - k)^4 + \xi_2^4), \qquad \frac{1}{|\xi_1|} = \frac{1}{|k|}\sum_{i = 0}^\infty(-1)^i\frac{(\xi_1 - k)^i}{k^i},$$ which are absolutely convergent in the support of $\hat{\mathcal{B}}_k$.  We calculate that
\begin{align*}
\frac{\xi_1}{|\xi|} & = \sgn(\xi_1)(1 + (\xi_2/\xi_1)^2)^{-1/2} \\
& = \sgn(k) - \frac{1}{2}\frac{\xi_2^2}{\xi_1|\xi_1|} + O\left(\frac{\xi_2^4}{\xi_1^4}\right) \\
& = \sgn(k) - \frac{1}{2}\frac{1}{k|k|}\xi_2^2 + \frac{1}{|k|^3}\xi_2^2(\xi_1 - k) + O\left(|\xi_1 - k|^4 + |\xi_2|^4\right)
\end{align*}  Note also that since $\mathcal{B}_k$ has a scalar-valued symbol, it commutes with the Riesz transforms.  If we use the notation $f = g + \mathcal{O}(\epsilon^n)$ to abbreviate $\|f - g\|_{H^s} \leq C\epsilon^n$, we have for $A \in H^{s + 4}$ the following asymptotics in $H^s$:
\begin{align*}
\mathcal{F}\ivec\mathcal{R}_1\kvec Ae^{\jvec k \alpha} & = -\frac{\xi_1}{|\xi|}\mathcal{F}\mathcal{B}_kAe^{\jvec k\alpha} + \mathcal{O}(\epsilon^3) \\
& = \left(-\sgn(k) + \frac{1}{2}\frac{1}{k|k|}\xi_2^2 - \frac{1}{|k|^3}\xi_2^2(\xi_1 - k)\right)\hat{\mathcal{B}}_k\frac{1}{\epsilon^2}\hat{A}\left(\frac{\xi_1 - k}{\epsilon},\frac{\xi_2}{\epsilon}\right) + \mathcal{O}(\epsilon^3) \\
& = \mathcal{F}\mathcal{B}_k\left(\left(-\sgn(k) - \frac{\epsilon^2}{2k|k|}\partial_{\beta_1}^2 - \jvec\frac{\epsilon^3}{|k|^3}\partial_{\beta_1}^2\partial_{\alpha_1}\right)Ae^{\jvec k\alpha}\right) + \mathcal{O}(\epsilon^3) \\
& = \mathcal{F}\left(\left(-\sgn(k) - \frac{\epsilon^2}{2k|k|}\partial_{\beta_1}^2 - \jvec\frac{\epsilon^3}{|k|^3}\partial_{\beta_1}^2\partial_{\alpha_1}\right)Ae^{\jvec k\alpha}\right) + \mathcal{O}(\epsilon^3)
\end{align*}
where in the last step we use Lemma \ref{WavePacketCutoff} so that the number of derivatives lost exactly balances the powers of $\epsilon$ needed to achieve an error of $\mathcal{O}(\epsilon^3)$.  Similarly, using the following expansion about $(k, 0)$:
\begin{equation*}
\frac{\xi_2}{|\xi|} = \frac{\xi_2}{|k|} - \frac{\xi_2(\xi_1 - k)}{k|k|} + \frac{1}{k^2|k|}\xi_2(\xi_1 - k)^2 - \frac{1}{2}\frac{\xi_2^3}{k^2|k|} + O(|\xi_1 - k|^4 + |\xi_2|^4)
\end{equation*} we calculate in the same way that
\begin{align*}
& \mathcal{F}\jvec\mathcal{R}_2\kvec Ae^{\jvec k \alpha} \\
& = \frac{\xi_2}{|\xi|}\mathcal{F}[\mathcal{B}_k\kvec Ae^{\jvec k\alpha}]_{(\xi_1 - k, \xi_2)} + \mathcal{O}(\epsilon^3) \\
& = \left(\frac{\xi_2}{|k|} - \frac{\xi_2(\xi_1 - k)}{k|k|} + \frac{1}{k^2|k|}\xi_2(\xi_1 - k)^2 - \frac{1}{2}\frac{\xi_2^3}{k^2|k|}\right)\mathcal{F}[\mathcal{B}_k\kvec Ae^{\jvec k\alpha}]_{(\xi_1 - k, \xi_2)} + \mathcal{O}(\epsilon^3)
\end{align*}
which is equal to
\begin{equation*}
\mathcal{F}\left(\left(-\frac{\epsilon}{|k|}\ivec\partial_{\beta_1} + \frac{\epsilon^2}{k|k|}\kvec\partial_{\alpha_1}\partial_{\beta_1} + \frac{\epsilon^3}{|k|^3}\ivec\partial_{\alpha_1\alpha_1\beta_1} - \frac{\epsilon^3}{2|k|^3}\ivec \partial_{\beta_1\beta_1\beta_1}\right) Ae^{\jvec k\alpha}\right)
\end{equation*} up to an error $\mathcal{O}(\epsilon^3)$.  Summing these gives the full expansion for $\nht^{(0)}$.
\end{proof}

	\subsection{Expansion of the Full Hilbert Transform}

We now consider the expansion of the full Hilbert transform $$\nht f = \frac{1}{2\pi^2}\iint \frac{\zeta(\alpha, \beta) - \zeta(\alpha^\prime, \beta^\prime)}{|\zeta(\alpha, \beta) - \zeta(\alpha^\prime, \beta^\prime)|^3} (\zeta_\alpha(\alpha^\prime, \beta^\prime) \times \zeta_\beta(\alpha^\prime, \beta^\prime)) f(\alpha^\prime, \beta^\prime) \, d\alpha^\prime \, d\beta^\prime$$  We seek to expand the various parts of the above kernel in a perturbation from the flat Hilbert transform.  For the functions $f$ to follow, we abbreviate $f = f(\alpha, \beta)$ and $f^\prime = f(\alpha^\prime, \beta^\prime)$.  In order to anticipate formulas for our operators, we proceed formally to find the third order expansion of the difference quotient in the kernel in functions homogeneous in $\lambda - \lambda^\prime$: 
\begin{align}\label{CauchyKernelExpansion}
\frac{\zeta - \zeta^\prime}{|\zeta - \zeta^\prime|^3} & = \frac{(P - P^\prime) + (\lambda - \lambda^\prime)}{(|(P - P^\prime) + (\lambda - \lambda^\prime)|^2)^{3/2}} \notag \\
& = \frac{(P - P^\prime) + (\lambda - \lambda^\prime)}{|P - P^\prime|^3}\left(1 + 2\frac{(P - P^\prime)\cdot(\lambda - \lambda^\prime)}{|P - P^\prime|^2} + \frac{|\lambda - \lambda^\prime|^2}{|P - P^\prime|^2}\right)^{-3/2} \notag \\
& = \frac{(P - P^\prime) + (\lambda - \lambda^\prime)}{|P - P^\prime|^3}\Biggl(1 - 3\frac{(P - P^\prime)\cdot(\lambda - \lambda^\prime)}{|P - P^\prime|^2} \\
& \qquad - \frac{3}{2}\frac{|\lambda - \lambda^\prime|^2}{|P - P^\prime|^2} + \frac{15}{2}\left(\frac{(P - P^\prime)\cdot(\lambda - \lambda^\prime)}{|P - P^\prime|^2}\right)^2 \notag \\
& \qquad + \frac{15}{2}\frac{((P - P^\prime) \cdot (\lambda - \lambda^\prime))|\lambda - \lambda^\prime|^2}{|P - P^\prime|^4} - \frac{35}{2}\frac{((P - P^\prime) \cdot ((\lambda - \lambda^\prime))^3}{|P - P^\prime|^6} \notag \\
& \qquad + O\biggl(\left|\frac{(P - P^\prime) \cdot (\lambda - \lambda^\prime)}{|P - P^\prime|^2}\right|^4 + \left|\frac{|\lambda - \lambda^\prime|^2}{|P - P^\prime|^2}\right|^2\biggr)\Biggr) \notag
\end{align}  The normal vector term can be expanded exactly as follows:
\begin{equation}\label{NormalVectorExpansion}
\zeta_\alpha \times \zeta_\beta = \kvec + (\lambda_\alpha \times \jvec) + (\ivec \times \lambda_\beta) + (\lambda_\alpha \times \lambda_\beta)
\end{equation}  This expansion of the full Hilbert transform above motivates us to find the Fourier transform of the kernels of the form $-\frac{1}{2\pi^2}|P|^n$ for $n = -3, -5, -7$.  To do so, we use \eqref{FracSolidDeriv} to obtain
\begin{equation*}
\mathcal{F}\left(\frac{1}{2\pi^2}|P|^{-1}\right) = \frac{1}{|(\xi, \eta)|}
\end{equation*}  and then appeal to the identity $\Delta|P|^{-n} = n^2|P|^{-(n + 2)}$ to find that
\begin{equation}\label{RieszPotentialFormulas}
\mathcal{F}\left(\frac{1}{2\pi^2}|P|^{-3}\right) = -|\xi|, \quad \mathcal{F}\left(\frac{1}{2\pi^2}|P|^{-5}\right) = \frac{1}{9}|\xi|^{3}, \quad \mathcal{F}\left(\frac{1}{2\pi^2}|P|^{-7}\right) = -\frac{1}{225}|\xi|^{5}
\end{equation}

In order to express $|\mathcal{D}|$ in terms of ordinary differentiation and the flat Hilbert Transform, we recall the identity 
\begin{equation}\label{|D|Identity}
|\mathcal{D}| = \nht_0 \kvec \mathcal{D}
\end{equation}
which is easily verified on the Fourier side.

This allows us to further develop the expansions of $\nht_1$, $\nht_2$ and $\nht_3$ unambiguously into powers of $\epsilon$:  $\nht_1 = \epsilon \nht_1^{(1)} + \epsilon^2 \nht_1^{(2)} + \epsilon^3 \nht_1^{(3)} + O(\epsilon^4)$,  $\nht_2 = \epsilon^2 \nht_2^{(2)} + \epsilon^3 \nht_2^{(3)} + O(\epsilon^4)$, and $\nht_3 = \epsilon^3 \nht_3^{(3)} + O(\epsilon^4)$.  Then we define
\begin{align*}
\nht & = \nht_0 + \nht_1 + \nht_2 + \nht_3 + \cdots \\
& := (\nht_0^{(0)}) + \epsilon(\nht_0^{(1)} + \nht_1^{(1)}) + \epsilon^2(\nht_0^{(2)} + \nht_1^{(2)} + \nht_2^{(2)}) \\
& \quad + \epsilon^3(\nht_0^{(3)} + \nht_1^{(3)} + \nht_2^{(3)} + \nht_3^{(3)}) + O(\epsilon^4)\\
& := \nht^{(0)} + \epsilon \nht^{(1)} + \epsilon^2\nht^{(2)} + \epsilon^3\nht^{(3)} + O(\epsilon^4)
\end{align*}

We will see in the formal calculation that we need to develop the approximate solution to the fourth order, and so we set
\begin{equation*}
\tilde{\lambda} = \sum_{j = 1}^4 \epsilon^j\lambda^{(j)} = \sum_{j = 1}^4 \epsilon^j\left(\xnew^{(j)}\ivec + \ynew^{(j)}\jvec + \znew^{(j)}\kvec\right)
\end{equation*}
We defer the calculation of the multiscale operators $\nht^{(j)}$ to Appendix A and record the results that we will explicitly use in the calculation in the following

\begin{proposition}\label{HilbertTransformExpansionFormulas}
Assume that $f = f(\alpha_0, \alpha_1, \beta_1)$, and denote $p_1 = \xnew + \znew\jvec$, and $p_1^{(j)} = \xnew^{(j)} + \znew^{(j)}\jvec$.  Then we have the formulas
\begin{equation*}
\nht_1^{(1)}f = [(\xnew^{(1)} + \jvec\znew^{(1)}), \nht^{(0)}]\partial_{\alpha_0}f
\end{equation*}
\begin{align*}
\nht_1^{(2)}f & = [(\xnew^{(2)} + \jvec\znew^{(2)}), \nht^{(0)}]\partial_{\alpha_0}f + [(\xnew^{(1)} + \jvec\znew^{(1)}), \nht_0^{(1)}]\partial_{\alpha_0}f \\
& + [(\xnew^{(1)} + \jvec\znew^{(1)}), \nht^{(0)}]\partial_{\alpha_1}f + [\ynew^{(1)} - \ivec\znew^{(1)}, \nht^{(0)}]\partial_{\beta_1}f
\end{align*}
\begin{equation*}
\nht_2^{(2)} f = -[p_1^{(1)}, \nht^{(0)}](\partial_{\alpha_0} p_1^{(1)})(\partial_{\alpha_0} f) + \frac{1}{2}[p_1^{(1)}, [p_1^{(1)}, \nht^{(0)}]]\partial_{\alpha_0 \alpha_0} f
\end{equation*}
\end{proposition}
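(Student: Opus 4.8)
The plan is to derive all three formulas directly from the formal kernel expansions \eqref{CauchyKernelExpansion} and \eqref{NormalVectorExpansion}, organizing the computation first by homogeneity degree in $\lambda$ and then by powers of $\epsilon$. First I would form the product of \eqref{CauchyKernelExpansion} with \eqref{NormalVectorExpansion} and collect the terms homogeneous of degree $n$ in $\lambda - \lambda'$; these define the operators $\nht_1, \nht_2, \nht_3$. Concretely, $\nht_1$ is the sum of the integral operator whose kernel is the linear‑in‑$\lambda$ part of the Cauchy kernel times $\kvec$ (the degree‑$0$ part of the normal vector), and the operator whose kernel is $\frac{P - P'}{|P - P'|^3}$ times the linear part $\lambda_\alpha' \times \jvec + \ivec \times \lambda_\beta'$ of the normal vector; $\nht_2$ collects the three analogous degree‑two contributions. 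These principal‑value operators are then rewritten as commutators — possibly iterated — of the flat Hilbert transform $\nht_0$ with multiplication operators, composed with first‑order derivatives: for scalar $g$ one has $[g, \nht_0]f = \frac{1}{2\pi^2}\iint \frac{P-P'}{|P-P'|^3}\kvec\,(g - g')f'\,d\alpha'd\beta'$, which already exhibits the difference‑quotient structure of the degree‑one terms, while the terms carrying $\lambda_\alpha'$ or $\lambda_\beta'$ are brought into this form by an integration by parts that trades a derivative on $\lambda'$ for a derivative on $f'$ together with a difference $\lambda - \lambda'$, using \eqref{|D|Identity} to keep track of the $\nht_0$ factors. The combinatorial coefficients (such as the $\tfrac12$ in $\nht_2^{(2)}$) track back to the binomial coefficients in \eqref{CauchyKernelExpansion} together with the Taylor expansion of $\lambda'$ about the base point.

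After this reduction each $\nht_n$ is a finite sum of expressions $[g_1, [g_2, \ldots [g_n, \nht_0]\ldots]]\,\mathfrak{d}$ with the $g_i$ among the components of $\lambda$ and $\mathfrak{d}$ a spatial derivative. One then substitutes the multiscale ansatz $\tilde\lambda = \sum_j \epsilon^j\lambda^{(j)}$ with $\lambda^{(1)} = \xnew^{(1)}\ivec + \znew^{(1)}\kvec$ (the leading wave packet has no $\jvec$‑component), applies the chain rule $\partial_\alpha = \partial_{\alpha_0} + \epsilon\partial_{\alpha_1}$, $\partial_\beta = \epsilon\partial_{\beta_1}$, and uses Proposition \ref{FlatHilbertTransformExpansion} to replace $\nht_0$ acting on each wave‑packet mode by $\nht^{(0)} + \epsilon\nht_0^{(1)} + \cdots$. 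Collecting the $\epsilon^1$ contribution of $\nht_1$ leaves a single commutator $[\,p_1^{(1)}, \nht^{(0)}]\partial_{\alpha_0}$ — the $\beta$‑derivative and $\alpha_1$‑derivative terms are an order smaller because $\partial_\beta = \epsilon\partial_{\beta_1}$ and because $\nht_0^{(1)}$ carries its own $\epsilon$ — which is the stated $\nht_1^{(1)}$. The $\epsilon^2$ level of $\nht_1$ then picks up exactly four terms: the analogous $\lambda^{(2)}$ contribution, the slow‑derivative term $[\,p_1^{(1)}, \nht^{(0)}]\partial_{\alpha_1}$, the $\beta$‑term $[\,\ynew^{(1)} - \ivec\znew^{(1)}, \nht^{(0)}]\partial_{\beta_1}$ coming from the $\ivec\times\lambda_\beta'$ piece of the normal vector, and the term in which the $\epsilon^1$ corrector $\nht_0^{(1)}$ of the flat transform is inserted; this is the four‑term formula for $\nht_1^{(2)}$. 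Finally $\nht_2^{(2)}$ is simply the leading ($\epsilon^2$) term of the degree‑two operator, which is the iterated‑commutator expression $-[\,p_1^{(1)}, \nht^{(0)}](\partial_{\alpha_0}p_1^{(1)})\partial_{\alpha_0} + \tfrac12[\,p_1^{(1)}, [\,p_1^{(1)}, \nht^{(0)}]]\partial_{\alpha_0\alpha_0}$.

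The main obstacle — and the reason the full derivation of the $\nht^{(j)}$ is deferred to Appendix A — is the $\epsilon$‑bookkeeping: every power of $\epsilon$ can originate from a factor of $\lambda$, from a slow derivative $\partial_{\alpha_1}$ or $\partial_{\beta_1}$, or from the symbol expansion of $\nht_0$ on wave packets, and one must show that each surviving term has the order claimed and that the truncation error is genuinely $o(\epsilon^3)$. This is handled by Lemma \ref{WavePacketCutoff}, which localizes each wave‑packet mode in frequency at the cost of $\epsilon^{m-1}\|A\|_{H^{s+m}}$, so the derivatives lost in truncating the Taylor expansion of the symbols are exactly balanced by the powers of $\epsilon$ gained — the symbols being analytic on the support of $\hat{\mathcal{B}}_k$, the Taylor remainders are legitimate. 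A secondary, purely algebraic point is verifying that after the integrations by parts the degree‑one contributions from the Cauchy‑kernel correction and from the normal‑vector correction really do combine into the single commutator $[\,p_1^{(1)}, \nht^{(0)}]\partial_{\alpha_0}$, and in particular that the components of $\lambda$ reorganize into the $1,\jvec$‑valued quantities $p_1^{(j)} = \xnew^{(j)} + \jvec\znew^{(j)}$ and $\ynew^{(1)} - \ivec\znew^{(1)}$ — this is the 3D manifestation of the analyticity structure that underlies the whole expansion.
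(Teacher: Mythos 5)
Your plan is correct and follows essentially the same route as the paper: Appendix A derives $\nht_1 f = \sum_{i=1}^2 [p_i, \nht_0]\partial_i f$ and $\nht_2 f = \sum_{i,j}\bigl(-[p_i,\nht_0](\partial_i p_j)(\partial_j f) + \tfrac12[p_i,[p_j,\nht_0]]\partial_i\partial_j f\bigr)$ by exactly the homogeneity-degree decomposition and kernel-to-commutator reorganization you describe, and the stated formulas then follow by substituting $\tilde\lambda = \sum_j\epsilon^j\lambda^{(j)}$, $\partial_\alpha = \partial_{\alpha_0} + \epsilon\partial_{\alpha_1}$, $\partial_\beta = \epsilon\partial_{\beta_1}$, $\nht_0 = \nht^{(0)} + \epsilon\nht_0^{(1)} + \cdots$ and collecting powers of $\epsilon$ as you indicate. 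The only cosmetic difference is that the paper converts each singular kernel to a multiplier/commutator via the Riesz-potential formulas \eqref{RieszPotentialFormulas} and \eqref{|D|Identity} on the Fourier side rather than by integration by parts, but the resulting algebra (and the emergence of the $1,\jvec$-valued combinations $p_1$, $p_2$) is the same.
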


Using the formulas of Propositions \ref{FlatHilbertTransformExpansion} and \ref{HilbertTransformExpansionFormulas}, we now define
\begin{equation}\label{TildeHDefinition}
\tilde{\nht} = \nht^{(0)} + \epsilon \nht^{(1)} + \epsilon^2 \nht^{(2)} + \epsilon^3 \nht^{(3)}
\end{equation}
In Section 3.4 we will give estimates that justifies the use of power expansions to develop this approximation of the Hilbert Transform.

	\subsection{The Multiscale Calculation}

We take as our system the equations 
\begin{align}\label{WaterWaveEvolutionMultiscale}
& \quad (D_t^2 - \mathcal{A}(\zeta_\beta \partial_\alpha - \zeta_\alpha \partial_\beta))(I - \nht)\znew\kvec = [D_t, \nht]D_t\zeta^\dagger \notag \\
& \qquad\qquad - \iint K(\zeta^\prime - \zeta)(D_t \zeta - D_t^\prime \zeta^\prime) \times (\partial_\beta^\prime D_t^\prime \zeta^\prime \partial_{\alpha^\prime} - \partial_\alpha^\prime D_t^\prime \zeta^\prime\partial_{\beta^\prime})\znew^\prime\kvec \, d\alpha^\prime d\beta^\prime \\
& \qquad\qquad - \iint D_t K(\zeta^\prime - \zeta)(D_t\zeta - D_t^\prime\zeta^\prime) \times (\zeta^\prime_{\beta^\prime}\partial_{\alpha^\prime} - \zeta^\prime_{\alpha^\prime}\partial_{\beta^\prime})\znew^\prime\kvec \, d\alpha^\prime d\beta^\prime \notag
\end{align}
\begin{equation}\label{LambdaFromZMultiscale}
\lambda = (I + \nht)\znew\kvec - \mathcal{K}\znew\kvec
\end{equation}
\begin{equation}\label{BFormulaMultiscale}
(I - \nht)b = -[D_t, \nht](I + \nht)\znew\kvec + (I - \nht)D_t\mathcal{K}\znew\kvec
\end{equation}
\begin{align}\label{AFormulaMultiscale}
(I - \mathcal{K})\mathcal{A} & = \Bigl\{\kvec + [D_t, \nht]D_t\zeta + [\mathcal{A}(\mathcal{N} \times \nabla), \nht] (I + \nht)\znew\kvec  \\
& \qquad + (I - \nht)\left(-\mathcal{A}\zeta_\beta \times (\partial_\alpha\mathcal{K}\znew\kvec) + \mathcal{A}\zeta_\alpha \times (\partial_\beta\mathcal{K}\znew\kvec) + \mathcal{A}(\lambda_\alpha \times \lambda_\beta)\right)\Bigr\}_3 \notag
\end{align} 
which will allow us to successively solve for the asymptotic expansions of the quantities $\znew$, $\lambda$, $b$ and $\mathcal{A}$, which we denote by
\begin{equation*}
\lambda \sim \sum_{j = 1}^\infty \epsilon^j \lambda^{(j)} \qquad b \sim \sum_{j = 1}^\infty \epsilon^j b^{(j)} \qquad \mathcal{A} \sim \sum_{j = 0}^\infty \epsilon^j \mathcal{A}^{(j)}
\end{equation*}  We also express the multiscale expansion of the operator $\mathcal{P} = D_t^2 - \mathcal{A}(\zeta_\beta \partial_\alpha - \zeta_\alpha \partial_\beta)$ by
\begin{equation*}
\mathcal{P} \sim \sum_{j = 0}^\infty \epsilon^j \mathcal{P}^{(j)}
\end{equation*}

First, observe that it follows immediately from \eqref{AFormulaMultiscale} that $\mathcal{A}^{(0)} = 1$.  We also have $\mathcal{P}^{(0)} = \partial_{t_0}^2 - \jvec\partial_{\alpha_0}$, since there is no dependence on $\beta_0$.  Hence, the $O(\epsilon)$ terms of \eqref{WaterWaveEvolutionMultiscale} give the equation $$\mathcal{P}^{(0)}(I - \nht^{(0)})\znew^{(1)}\kvec = 0$$ This equation admits the solution $\znew^{(1)} = \Im(Ae^{\jvec\phi})$ for $\phi = k\alpha_0 + \omega t_0$ satisfying the dispersion relation $\omega^2 = k$.  Since the kernel of $\nht_0$ has no scalar part, $\mathcal{K}^{(0)} = 0$, and so we find from the $O(\epsilon)$ terms of \eqref{LambdaFromZMultiscale} that 
\begin{equation}
\lambda^{(1)} = (I + \nht^{(0)})\znew^{(1)}\kvec = \ivec Ae^{\jvec\phi}
\end{equation}
as we expected.  Notice that the right hand sides of \eqref{BFormulaMultiscale} and \eqref{AFormulaMultiscale} have no first order terms, and so $b^{(1)} = 0$ and $\mathcal{A}^{(1)} = 0$.  Using the identity $\mathcal{P}^{(1)} = 2\partial_{t_0}\partial_{t_1} - \jvec\partial_{\alpha_1} + \ivec\partial_{\beta_1}$, the $O(\epsilon^2)$ terms of \eqref{WaterWaveEvolutionMultiscale} give the following equation:
\begin{align*}
\mathcal{P}^{(0)}(I - \nht^{(0)})\znew^{(2)}\kvec & = \mathcal{P}^{(0)}\nht^{(1)}\znew^{(1)}\kvec + [\partial_{t_0}, \nht^{(1)}_1]\partial_{t_0}(\zeta^{(1)})^\dagger - \mathcal{P}^{(1)}(I - \nht^{(0)})\znew^{(1)}\kvec \\
& = A_{\beta_1}e^{\jvec\phi} + \left(2\jvec\omega(A_{t_1} - \omega^\prime A_{\alpha_1})e^{\jvec\phi}\ivec - A_{\beta_1}e^{\jvec\phi} \right) \\
& = 2\jvec\omega(A_{t_1} - \omega^\prime A_{\alpha_1})e^{\jvec\phi}\ivec
\end{align*}  To suppress secular terms, we choose $A = A(\alpha_1 + \omega^\prime t_1, \beta_1, t_2) = A(X, Y, T)$.  Then we solve \eqref{WaterWaveEvolutionMultiscale} to the order $O(\epsilon^2)$ by taking $\znew^{(2)} = \Im(Be^{\jvec\phi}) + M^{(2)}$, where $B$ and $M^{(2)}$ are $1, \jvec$-valued and scalar-valued functions of $X, Y, T$, respectively, to be determined later.  From the formula $$\nht^{(1)}\znew^{(1)}\kvec = \frac{1}{2}k(I - \nht_0)|A|^2\kvec + \frac{\jvec}{k}\Im(A_Ye^{\jvec\phi})$$ and the fact that $(\nht^{(1)} - \mathcal{K}^{(1)})\znew^{(1)}\kvec$ is just $\nht^{(1)}\znew^{(1)}\kvec$ without its $\kvec$-component, we calculate that
\begin{align*}
\lambda^{(2)} & = (I + \nht^{(0)})\znew^{(2)}\kvec + \nht^{(1)}\znew^{(1)}\kvec - \mathcal{K}^{(1)}\znew^{(1)}\kvec \\
& = \ivec Be^{\jvec\phi} + (I + \nht_0)M^{(2)}\kvec - \frac{1}{2}k\nht_0(|A|^2\kvec) + \frac{\jvec}{k}\Im(A_Ye^{\jvec\phi})
\end{align*}  Next, we find that
the $O(\epsilon^2)$ terms of \eqref{BFormulaMultiscale} yield the condition
\begin{align*}
(I - \nht^{(0)})b^{(2)} & = -[\partial_{t_0}, \nht_1^{(1)}](I + \nht^{(0)})\znew^{(1)}\kvec \\
& \quad + (I - \nht^{(0)})\partial_{t_0}\mathcal{K}^{(1)}\znew^{(1)}\kvec \\
& = (I - \nht_0)(-k\omega|A|^2\ivec)
\end{align*}  The fact that $b^{(2)}$ is $\ivec, \jvec$-valued now forces the choice $b^{(2)} = -k\omega|A|^2\ivec$.

Finally we calculate $\mathcal{A}^{(2)}$ from \eqref{AFormulaMultiscale}:
\begin{align*}
\mathcal{A}^{(2)} & = \Bigl\{[\partial_{t_0}, \nht^{(1)}_1]\partial_{t_0}\lambda^{(1)} + [\jvec\partial_{\alpha_0}, \nht^{(1)}](I + \nht^{(0)})\znew^{(1)}\kvec \\
& \quad + [\jvec\partial_{\alpha_1} - \ivec\partial_{\beta_1}, \nht^{(0)}](I + \nht^{(0)})\znew^{(1)}\kvec + (I - \nht^{(0)})(-\jvec \times \partial_{\alpha_0}\mathcal{K}^{(1)}\znew^{(1)}\kvec)\Bigr\}_3 \\
& = \bigl\{ k^2(I - \nht_0)|A|^2\kvec  - k^2(I - \nht_0)|A|^2\kvec - 2A_Y e^{\jvec\phi} \\
& \quad + 2A_Y e^{\jvec\phi} + 0 \Bigr\}_3 \\
& = 0.
\end{align*}

We now collect the $O(\epsilon^3)$ terms, beginning with those contributed from \eqref{WaterWaveEvolutionMultiscale}.  We record the following useful formulas derived through Proposition \ref{HilbertTransformExpansionFormulas} for calculating terms below involving $\nht^{(2)}$:

\begin{align}\label{H2FormulaWaveNumber-1}
(\nht^{(2)}_1 + \nht^{(2)}_2) \overline{F}e^{-\jvec\phi} & = -k^2 A^2 \overline{F} e^{\jvec\phi} + (I - \nht_0)\left(A\overline{F}_X + \frac{1}{2}\overline{A}F_Y\kvec - kB\overline{F}\jvec\right)
\end{align}
\begin{align}\label{H2FormulaWaveNumber1}
(\nht^{(2)}_1 + \nht^{(2)}_2) Fe^{\jvec\phi} & = -\frac{1}{2}(I - \nht_0)A\overline{F}_Y\kvec + \frac{1}{2}\jvec(\overline{A}\,\overline{F}_Y - \overline{A}_Y\overline{F})e^{-2\jvec\phi}\ivec\end{align}

Note that $$\mathcal{P}^{(2)} = 2\partial_{t_0}\partial_{t_2} + \partial_{t_1}^2 + 2b_1^{(2)}\partial_{\alpha_0}\partial_{t_0} - \lambda^{(1)}_{\beta_1}\partial_{\alpha_0} + \lambda^{(1)}_{\alpha_0}\partial_{\beta_1}$$ The $O(\epsilon^3)$ terms contributed from \eqref{WaterWaveEvolutionMultiscale} now give
\begin{align*}
\mathcal{P}^{(0)}(I - \nht^{(0)})\znew^{(3)}\kvec & = \mathcal{P}^{(0)}\nht^{(1)}\znew^{(2)}\kvec \\
& \quad + \mathcal{P}^{(0)}\nht^{(2)}\znew^{(1)} \kvec \\
& \quad - \mathcal{P}^{(1)}(I - \nht^{(0)})\znew^{(2)} \kvec \\
& \quad + \mathcal{P}^{(1)}\nht^{(1)}\znew^{(1)} \kvec \\
& \quad - \mathcal{P}^{(2)}(I - \nht^{(0)})\znew^{(1)}\kvec \\
& \quad + [\partial_{t_0}, \nht^{(1)}_1]\partial_{t_0}(\lambda^{(2)})^\dagger \\
& \quad + [\partial_{t_0}, \nht^{(1)}_1]\partial_{t_1}(\lambda^{(1)})^\dagger \\
& \quad + [\partial_{t_0}, \nht^{(1)}_1]b^{(2)} \\
& \quad + [\partial_{t_0}, \nht^{(2)}_1 + \nht^{(2)}_2]\partial_{t_0}(\lambda^{(1)})^\dagger \\
& \quad + [\partial_{t_1}, \nht^{(1)}_1]\partial_{t_0}(\lambda^{(1)})^\dagger \\
& \quad + [b_1^{(2)} \partial_{\alpha_0}, \nht_0]\partial_{t_0}(\lambda^{(1)})^\dagger
\end{align*}
\begin{align*}
& - \frac{1}{2\pi^2} \iint  \left(\frac{\lambda^{(1)}_{t_0} - (\lambda^{(1)}_{t_0})^\prime}{|P - P^\prime|^3}
  - 3 \frac{P - P^\prime}{|P - P^\prime|^3}  \frac{(P - P^\prime) \cdot (\lambda^{(1)}_{t_0} - (\lambda^{(1)}_{t_0})^\prime)}{|P - P^\prime|^2}\right) \\
  & \hspace{9cm} (\lambda^{(1)}_{t_0} - (\lambda^{(1)}_{t_0})^\prime) \times (\jvec (\znew^{(1)}_{\alpha_0})^\prime)\kvec \, dP^\prime
\end{align*}
\begin{equation*}
= I_1 + I_2 + I_3 + \cdots + I_{12}
\end{equation*}

First, it is quick to see that $I_7 = I_8 = I_{10} = I_{11} = 0$.  For the rest of the terms we calculate:
\begin{align*}
I_1 & = \overline{B}_Y e^{-\jvec\phi}
\end{align*}
\begin{align*}
I_2 & = -\frac{1}{2k}\overline{A}_{YY}e^{-\jvec\phi}\ivec - \frac{1}{k}\overline{A}_{XY}e^{-\jvec\phi}\jvec
\end{align*}
\begin{align*}
I_3 & = -\overline{B}_Ye^{-\jvec\phi} - (I - \nht_0)|\mathcal{D}|M^{(2)}\kvec
\end{align*}
\begin{align*}
I_4 & = -\frac{1}{2k}A_{YY}e^{\jvec\phi}\ivec + \frac{1}{2k}\overline{A}_{YY}e^{-\jvec\phi}\ivec + \frac{1}{k}\jvec\overline{A}_{XY}e^{-\jvec\phi} + \frac{k}{2}(I - \nht_0)|\mathcal{D}|(|A|^2\kvec)
\end{align*}
\begin{align*}
I_5 & = \omega\left(2\jvec A_T - \omega^{\prime\prime} A_{XX} + 2k^2\omega A|A|^2 \right)e^{\jvec\phi}\ivec
\end{align*}
\begin{align*}
I_6 & = \frac12 k (I - \nht_0)A\overline{A}_Y\jvec
\end{align*}
\begin{align*}
I_9 & = -\frac12 k (I - \nht_0)A\overline{A}_Y\jvec
\end{align*}
Simplifying $I_{12}$ by writing $(\lambda^{(1)}_{t_0} - (\lambda^{(1)}_{t_0})^\prime) \times (\jvec (\znew^{(1)}_{\alpha_0})^\prime)\kvec = (\lambda^{(1)}_{t_0} - (\lambda^{(1)}_{t_0})^\prime) (\znew^{(1)}_{\alpha_0})^\prime\ivec$ and calculating as in Appendix A gives
\begin{align*}
I_{12} & = [\lambda_{t_0}^{(1)}, [\lambda_{t_0}^{(1)}, |\mathcal{D}^{(0)}|]]\znew^{(1)}_{\alpha_0}\ivec + [\xnew^{(1)}_{t_0}, [\xnew^{(1)}_{t_0}, 2\ivec|\mathcal{D}^{(0)}|]]\znew_{\alpha_0}^{(1)} - [\znew^{(1)}_{t_0}, [\xnew^{(1)}_{t_0}, 2\ivec|\mathcal{D}^{(0)}|]]\znew_{\alpha_0}^{(1)}\jvec \\
& =[\lambda_{t_0}^{(1)}, [\lambda_{t_0}^{(1)}, |\mathcal{D}^{(0)}|]]\znew^{(1)}_{\alpha_0}\ivec + [\xnew^{(1)}_{t_0} + \jvec \znew^{(1)}_{t_0} , [\xnew^{(1)}_{t_0}, 2|\mathcal{D}^{(0)}|]]\znew_{\alpha_0}^{(1)}\ivec \\
& = -\frac12 k^2 [(\xnew^{(1)} + \jvec\xnew^{(1)}), [(\xnew^{(1)} + \jvec\xnew^{(1)}), |D^{(0)}|]](\xnew^{(1)} - \jvec\xnew^{(1)})\ivec \\
& = -k^3A|A|^2e^{\jvec\phi}\ivec
\end{align*}
and so
\begin{align*}
\mathcal{P}^{(0)}(I - \nht^{(0})\znew^{(3)}\kvec & = (I - \nht_0)\left(-|\mathcal{D}|M^{(2)}\kvec + \frac{k}{2} |\mathcal{D}|(|A|^2\kvec)\right) \\
& \quad + \omega\left(2\jvec A_T - \omega^{\prime\prime} A_{XX} + 2\omega^{\prime\prime}A_{YY} + k^2\omega A|A|^2 \right)e^{\jvec\phi}\ivec
\end{align*}

In order to suppress secular terms in $\znew^{(3)}$, we choose $M^{(2)} = \frac{k}{2}|A|^2$ and insist that $A$ satisfy the cubic hyperbolic nonlinear Schr\"odinger equation:
\begin{equation}\label{HNLSQuaternion}
2\jvec A_T - \omega^{\prime\prime}A_{XX} + 2\omega^{\prime\prime}A_{YY} + k^2\omega A|A|^2 = 0
\end{equation}  With these choices, we update the formulas for $\znew^{(2)}$ and $\lambda^{(2)}$:
\begin{equation}\label{Z2Formula}
\znew^{(2)} = \Im(Be^{\jvec\phi}) + \frac{1}{2}k|A|^2
\end{equation}
\begin{align}\label{Lambda2Formula}
\lambda^{(2)} & = \ivec Be^{\jvec\phi} + \frac{1}{2}k(I + \nht_0)(|A|^2\kvec) - \frac{1}{2}k\nht_0(|A|^2\kvec) + \frac{\jvec}{k}\Im(A_Ye^{\jvec\phi}) \notag \\
& = \ivec Be^{\jvec\phi} + \frac{1}{2}k|A|^2\kvec + \frac{\jvec}{k}\Im(A_Ye^{\jvec\phi})
\end{align} where at this stage in the calculation $B$ remains to be determined.  With these choices made, we can solve the equation for $\znew^{(3)}$ by choosing $\znew^{(3)} = M^{(3)}$ for some scalar-valued function $M^{(3)}$ of slow variables, again to be determined later.  With this choice, we can calculate $\lambda^{(3)}$ as follows:
\begin{align}\label{Lambda3Formula}
\lambda^{(3)} & = (I + \nht^{(0)})\znew^{(3)}\kvec + (\nht^{(1)} - \mathcal{K}^{(1)})\znew^{(2)}\kvec + (\nht^{(2)} - \mathcal{K}^{(2)})\znew^{(1)}\kvec \notag \\
& = (I + \nht_0)M^{(3)}\kvec  \\
& \quad + \frac{\jvec}{k}\Im(B_Ye^{\jvec\phi}) + \frac{1}{2}(I - \nht_0)k\nht_0(A\overline{B})\kvec - \left\{\frac{1}{2}(I - \nht_0)k\nht_0(A\overline{B})\kvec\right\}_3\kvec \notag \\
& \quad + \frac{1}{2k^2}\Re(A_{YY}e^{\jvec\phi})\ivec - \frac{1}{k^2}\Im(\jvec A_{XY}e^{\jvec\phi})\jvec \notag \\
& \quad + \left(-\frac{k^2}{2}A|A|^2e^{\jvec\phi}\ivec + (I - \nht_0)\left(\frac{1}{2}A\overline{A}_X\ivec + \frac{1}{4}(A\overline{A}_Y + \overline{A}A_Y)\jvec + \frac{1}{2}kB\overline{A}\kvec \right)\right) \notag \\
& \quad - \left\{-\frac{k^2}{2}A|A|^2e^{\jvec\phi}\ivec + (I - \nht_0)\left(\frac{1}{2}A\overline{A}_X\ivec + \frac{1}{4}(A\overline{A}_Y + \overline{A}A_Y)\jvec + \frac{1}{2}kB\overline{A}\kvec \right)\right\}_3\kvec \notag
\end{align}  

Note that the scalar part of the right hand side of \eqref{Lambda3Formula} is $\frac12\mathcal{R}_2\partial_X|A|^2 - \frac12\mathcal{R}_1\partial_Y|A|^2 = 0$.  We continue by collecting the $O(\epsilon^3)$ terms from \eqref{BFormulaMultiscale}, which yields the condition
\begin{align*}
(I - \nht_0)b^{(3)} & = \nht^{(1)}b^{(2)}  - [b^{(2)}\partial_{\alpha_0}, \nht^{(0)}](I + \nht^{(0)})\znew^{(1)}\kvec \\
& \quad - [\partial_{t_1}, \nht^{(1)}_1](I + \nht^{(0)})\znew^{(1)}\kvec  - [\partial_{t_0}, \nht^{(1)}_1](I + \nht^{(0)})\znew^{(2)}\kvec \\
& \quad - [\partial_{t_0}, \nht^{(1)}_1]\nht^{(1)}\znew^{(1)}\kvec  - [\partial_{t_0}, \nht^{(2)}_1 + \nht^{(2)}_2](I + \nht^{(0)})\znew^{(1)}\kvec \\
& \quad + (I - \nht^{(0)})\partial_{t_0}\mathcal{K}^{(2)}\znew^{(1)}\kvec  + (I - \nht^{(0)})\partial_{t_0}\mathcal{K}^{(1)}\znew^{(2)}\kvec \\
& \quad + (I - \nht^{(0)})\partial_{t_1}\mathcal{K}^{(1)}\znew^{(1)}\kvec - \nht^{(1)}\partial_{t_0}\mathcal{K}^{(1)}\znew^{(1)}\kvec
\end{align*}
Using the formula $$(I - \nht_0)(S_0 + S_3\kvec) = (I - \nht_0)(\ivec(\mathcal{R}_1S_3 - \mathcal{R}_2S_0) + \jvec(\mathcal{R}_1S_0 + \mathcal{R}_2S_3))$$ for scalar-valued $S_0 S_3$, we can solve for $b^{(3)}$ as a $\ivec, \jvec$-valued function.  This gives us consistency for the formula for $b$ up to residual terms of physical size $O(\epsilon^4)$.
Similarly, since the formula for $\mathcal{A}$ requires extracting the $\kvec$-component of some expression, we may always find a formula for $\mathcal{A}^{(3)}$ that makes \eqref{AFormulaMultiscale} consistent to terms of size $O(\epsilon^4)$.  

We can now consider the $O(\epsilon^4)$ contributions from \eqref{WaterWaveEvolutionMultiscale}.  An explicit calculation of $\znew^{(4)}$ would be taxing on the author and reader alike.  Luckily, the precise form of $\znew^{(4)}$ is irrelevant: we need only show that we can find some choice of correctors that yields a residual of physical size $O(\epsilon^5)$.  Thus we instead give a general argument that the structure of $G^{(4)}$ is such that we can always solve for $\znew^{(4)}$.

In order to further analyze the terms of size $O(\epsilon^4)$, we appeal to the idea of \textbf{total order} and \textbf{total phase}.  The idea is that the power of $\epsilon$ and the multiple of $\jvec\phi$ in the phase of any term in the formal calculation can be read off from the function of slow variables in each term.  We would like to formally describe maps $\mathfrak{o}$ and $\mathfrak{p}$ from the set of terms that can appear in the formal calculation into the integers with the property that all terms appearing in the formal calculation are of the form
\begin{equation}\label{OrderPhaseProperty}
\epsilon^{\mathfrak{o}(F)}Fe^{\mathfrak{p}(F)\jvec\phi}
\end{equation}

To describe this formally, consider the monoid\footnote{A \textbf{monoid} is an algebraic structure with an associative multiplication and an identity element.} of functions $\mathfrak{A}$ with pointwise multiplication, and generated by the functions $T_1 T_2 \ldots T_n 1$ where each $T_j$ is either the operator of multiplication by $A$, $B$, $\overline{A}$, $\overline{B}$ or one of the operators $\partial_X, \partial_Y, \partial_T, \mathcal{R}_1, \mathcal{R}_2$.  Every term appearing in the formal calculation can be expressed as a member of this monoid.  The total order $\mathfrak{o} : \mathfrak{A} \to (\mathbb{N}, +)$ and the total phase $\mathfrak{p} : \mathfrak{A} \to (\mathbb{Z}, +)$ are respectively monoid homomorphisms\footnote{That is $\mathfrak{o}$ and $\mathfrak{p}$ send pointwise products of functions to the sums of the images of their factors.} satisfying $\mathfrak{o}(T_1 T_2 \ldots T_n 1) = \prod_{j = 1}^n \mathfrak{o}(T_j)$ and $\mathfrak{p}(T_1 T_2 \ldots T_n 1) = \prod_{j = 1}^n \mathfrak{p}(T_j)$, where $$\mathfrak{o}(T) = \begin{cases} 0, & T = \mathcal{R}_1, \mathcal{R}_2 \\ 1, & T = A, \overline{A}, \partial_X, \partial_Y \\ 2, & T = B, \overline{B}, \partial_T \end{cases} \qquad \qquad \mathfrak{p}(T) = \begin{cases} -1, & T = \overline{A}, \overline{B} \\ 0, & T = \mathcal{R}_1, \mathcal{R}_2, \partial_X, \partial_Y, \partial_T \\ 1, & T = A, B \end{cases}$$  It is tedious but straightforward to verify that \eqref{OrderPhaseProperty} holds by construction for all terms up to $O(\epsilon^4)$; we leave the proof to the reader.

We are now ready to analyze the $O(\epsilon^4)$ terms.

\begin{lemma}\label{AbstractEpsilon4}
Let $\mathcal{P}^{(0)}(I - \nht^{(0)})\znew^{(4)}\kvec = G^{(4)}$ be as above.  Then $G^{(4)}$ is of the form
\begin{align}\label{Epsilon4TermsSummary}
& \left(2\jvec B_T - \omega^{\prime\prime}B_{XX} + 2\omega^{\prime\prime}B_{YY}\right)e^{\jvec\phi}\ivec + (I - \nht_0)\Bigl((F_3 - |\mathcal{D}|M^{(3)})\kvec + F_0\Bigr) \notag \\
& + \sum_{0 < |n| \leq 3} S_ne^{n\jvec\phi}\ivec + \sum_{0 < |m| \leq 3} S^\prime_m e^{m\jvec\phi} + \frac12 ( I + \nht_0)F
\end{align}
where the $S_m, S^\prime_n, F_j$ $1, \jvec$-valued are sums of members of $\mathfrak{A}$ with total order at most four and further satisfying:
\begin{itemize}
\item[(a)]{The $F_j$ are scalar-valued for $j = 0, 3$.}
\item[(b)]{$F_3$ is independent of $M^{(3)}$ and is a sum of terms in $\mathfrak{A}$ having the form either (i) a pure derivative in $\partial_X$, $\partial_Y$ or (ii) a product of two or more factors of $A$, $B$, $\overline{A}$, $\overline{B}$ and their derivatives in $X, Y$ and Riesz transforms of such products.}
\item[(c)]{$S_1$ does not depend on derivatives of $B$.}
\end{itemize}
Hence we may always choose $M = |\mathcal{D}|^{-1}F_3$ and $B$ so that
\begin{equation}\label{AuxiliaryEquationVector}
2\jvec B_T - \omega^{\prime\prime}B_{XX} + 2\omega^{\prime\prime}B_{YY} = -S_1
\end{equation}
and $\znew^{(4)}$ so that only terms of the form
\begin{equation}\label{BigButOKResidualPart}
(I - \nht_0)F_0 + \frac12 (I + \nht_0)F + \sum_{-3 \leq n < 0} S_n e^{n\jvec\phi}\ivec + \sum_{0 < |m| \leq 3} S^\prime_m e^{m\jvec\phi}
\end{equation}
remain.
\end{lemma}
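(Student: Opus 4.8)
The strategy is to write out $G^{(4)}$ as a finite sum of explicit terms and then sort them by total phase and quaternion direction, exactly as the order-$\epsilon^3$ contributions were sorted into $I_1,\dots,I_{12}$ above. First I would assemble $G^{(4)}$ as the sum of: the order-$\epsilon^4$ pieces of $\mathcal{P}^{(i)}(I-\nht^{(0)})\znew^{(l)}\kvec$ and of $\mathcal{P}^{(i)}\nht^{(j)}\znew^{(l)}\kvec$ over $i+j+l=4$ with $l\le 3$, where the multiscale operators $\nht^{(j)}$ come from Propositions \ref{FlatHilbertTransformExpansion}--\ref{HilbertTransformExpansionFormulas} and Appendix A and $\mathcal{P}^{(i)},b^{(j)},\mathcal{A}^{(j)}$ from the multiscale forms of \eqref{BFormulaMultiscale}--\eqref{AFormulaMultiscale}; together with the order-$\epsilon^4$ part of $[D_t,\nht]D_t\zeta^\dagger$ and of the two cubic integrals in \eqref{WaterWaveEvolutionMultiscale}, expanded via \eqref{CauchyKernelExpansion}, \eqref{NormalVectorExpansion}, \eqref{RieszPotentialFormulas} and \eqref{|D|Identity}. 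By \eqref{OrderPhaseProperty} each resulting term is $\epsilon^4 Fe^{\mathfrak{p}(F)\jvec\phi}$ with $F\in\mathfrak{A}$ of total order exactly $4$; since such an $F$ carries at most four factors from $\{A,\overline A,B,\overline B\}$ (with $B,\overline B$ of weight two) and each changes the phase by at most $\pm1$, one has $\mathfrak{p}(F)\in\{-4,\dots,4\}$. Grouping by $n=\mathfrak{p}(F)$ and by scalar/$\ivec$/$\jvec$/$\kvec$ part produces the skeleton \eqref{Epsilon4TermsSummary}.

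For the resonant sector ($n=+1$, $\ivec$-direction) the key point is that the linear part of the system is unchanged and that the oscillatory part of $\znew^{(2)}$ obeys the same leading equation as $\znew^{(1)}$, so the action of $\mathcal{P}^{(0)},\mathcal{P}^{(1)},\mathcal{P}^{(2)}$ on the $\ivec Be^{\jvec\phi}$-part of $\lambda^{(2)}$ assembles, verbatim as the derivation of $I_1,\dots,I_{12}$ assembled the operator $2\jvec A_T-\omega^{\prime\prime}A_{XX}+2\omega^{\prime\prime}A_{YY}$ for $A$, into $2\jvec B_T-\omega^{\prime\prime}B_{XX}+2\omega^{\prime\prime}B_{YY}$ applied to $B$; everything else in this sector is $S_1$. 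Claim (c) follows by tracking where $B$ can enter: undifferentiated in $\lambda^{(2)},b^{(2)},\mathcal{A}^{(2)}$, with at most one $X$- or $Y$-derivative in $\lambda^{(3)},b^{(3)},\mathcal{A}^{(3)}$, while a second derivative of $B$ can only be produced by $\mathcal{P}^{(\le 2)}$ hitting the $B$-oscillation, and all such contributions have already been absorbed into the Schr\"odinger operator above. Setting the resonant part to zero is exactly \eqref{AuxiliaryEquationVector}, a forced linear Schr\"odinger equation for $B$ with $A$-dependent source, hence solvable by the theory of Step (i). In parallel, the phase-zero $\kvec$-directed scalar part contains a distinguished term $-(I-\nht_0)|\mathcal{D}|M^{(3)}\kvec$, produced by $\mathcal{P}^{(0)}\nht^{(2)}\znew^{(3)}\kvec$ and $\mathcal{P}^{(1)}\nht^{(1)}\znew^{(3)}\kvec$ exactly as $I_3+I_4$ produced $-(I-\nht_0)|\mathcal{D}|M^{(2)}\kvec$ at order $\epsilon^3$; the remainder of this part is $(I-\nht_0)F_3\kvec$.

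Property (b) is the heart of the lemma, and the one place where the analytic hypotheses are used. A phase-zero, total-order-$4$ element of $\mathfrak{A}$ necessarily carries one factor from $\{A,B\}$ and one from $\{\overline A,\overline B\}$, hence is automatically a product of two or more amplitude factors and their $X,Y$-derivatives, possibly under Riesz transforms; inspecting the finitely many contributing terms, the only ones not already displayed in that product form are images of the $(I-\nht_0)$-structure, which carry an overall $\partial_X$ or $\partial_Y$. This is precisely what makes $M^{(3)}=|\mathcal{D}|^{-1}F_3$ (denoted $M$ in the statement) a legitimate element of the $L^2$-Sobolev scale: $|\mathcal{D}|^{-1}$ of a pure $X$/$Y$-derivative is a Riesz transform, and $|\mathcal{D}|^{-1}$ of a product of two or more amplitude factors lies in $L^2$ by Proposition \ref{SobolevEmbeddings}(c), which is the reason for imposing $A_0\in H^3(\delta)$. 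With this choice $(I-\nht_0)(F_3-|\mathcal{D}|M^{(3)})\kvec$ is annihilated.

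Finally I would read off what survives. The operator $\znew^{(4)}\kvec\mapsto\mathcal{P}^{(0)}(I-\nht^{(0)})\znew^{(4)}\kvec$ kills negative-phase contributions (because $I-\nht^{(0)}$ does), kills phase-zero contributions (because $\mathcal{P}^{(0)}$ does), and on nonzero positive phases its range misses the $1,\jvec$-directed components; every unreachable piece is retained, and these are precisely $(I-\nht_0)F_0$, the analytic piece $\tfrac12(I+\nht_0)F$ (arising from the $(I+\nht)$-analytic structure carried by $\lambda$ and by $D_t\zeta$ in the expansions of $[D_t,\nht]D_t\zeta^\dagger$ and the cubic integrals), the negative-phase $\ivec$-terms $\sum_{-3\le n<0}S_ne^{n\jvec\phi}\ivec$, and the nonzero-phase $1,\jvec$-terms $\sum_{0<|m|\le 3}S'_me^{m\jvec\phi}$ of \eqref{BigButOKResidualPart}; the reachable pieces are removed by the matching choice of $\znew^{(4)}$. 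The bound $|n|\le 3$ (no phase $\pm4$) reduces to a finite check that the maximally co-propagating resonances cancel: a phase-$\pm4$ term would have to be a pure product $A^4$ or $\overline A^4$ (up to Riesz transforms) coming only from the leading cubic contributions, and there the double-commutator structure $p^2L-2pLp+Lp^2$ with $L=|\mathcal{D}^{(0)}|$ annihilates such products since the symbol coefficients $|k|-2|2k|+|3k|$ vanish --- the same transparency that rendered $I_{12}$ purely of phase $+1$. The main obstacle is property (b): one must carry the exact algebraic form of every contribution through the full multiscale expansion of the cubic nonlinearity and of $\nht,b,\mathcal{A}$ and verify that each surviving $M^{(3)}$-independent phase-zero $\kvec$-scalar term factors through either an $X$/$Y$-derivative or a genuine quadratic-or-higher amplitude product; without this one cannot even make sense of the corrector $M^{(3)}$ in the space in which the whole construction takes place. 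The resonant and residual bookkeeping is routine once \eqref{OrderPhaseProperty} is granted.
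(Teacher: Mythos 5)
Your overall strategy is the paper's: assemble $G^{(4)}$ from the order-$\epsilon^4$ pieces of the multiscale expansion, sort by total order and total phase via \eqref{OrderPhaseProperty}, isolate the Schr\"odinger operator acting on $B$ (which indeed arises only from $-\mathcal{P}^{(2)}(I-\nht^{(0)})\znew^{(2)}\kvec$) and the $-(I-\nht_0)|\mathcal{D}|M^{(3)}\kvec$ term, obtain (a) from the identity $(I-\nht_0)(F\ivec+G\jvec)=(I-\nht_0)\bigl((-\mathcal{R}_1F-\mathcal{R}_2G)\kvec+(\mathcal{R}_1F-\mathcal{R}_2G)\bigr)$, obtain (b) by observing that a single amplitude factor under Riesz transforms has total order at most $2<4$, and absorb the reachable non-resonant pieces into $\znew^{(4)}$, leaving \eqref{BigButOKResidualPart}. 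Two corrections. First, a bookkeeping slip: you attribute the $|\mathcal{D}|M^{(3)}$ term to $\mathcal{P}^{(0)}\nht^{(2)}\znew^{(3)}\kvec$ and $\mathcal{P}^{(1)}\nht^{(1)}\znew^{(3)}\kvec$, but those combinations have $j_1+j_2+j_3=5$; since $M^{(3)}$ is a function of slow variables alone, the only order-$4$ term in which it appears is $-\mathcal{P}^{(1)}(I-\nht^{(0)})\znew^{(3)}\kvec$, and the operator $-\jvec\partial_{\alpha_1}+\ivec\partial_{\beta_1}=-\kvec\mathcal{D}$ together with $\kvec\mathcal{D}=\nht_0|\mathcal{D}|$ and $\nht_0(I-\nht_0)=-(I-\nht_0)$ produces $-(I-\nht_0)|\mathcal{D}|M^{(3)}\kvec$ directly.

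The genuine gap is in (c). Claim (c) asserts that $S_1$ contains \emph{no} derivatives of $B$, first derivatives included; this is what makes \eqref{AuxiliaryEquationVector} a semilinear equation solvable by the Duhamel argument of Proposition \ref{HNLSLocallyWellPosed} without derivative loss. Your tracking argument only excludes second derivatives of $B$ (those hit by $\mathcal{P}^{(\leq 2)}$, absorbed into the Schr\"odinger operator), and you explicitly concede that $B_X$, $B_Y$ enter $\lambda^{(3)}$, $b^{(3)}$, $\mathcal{A}^{(3)}$ — which feed into $G^{(4)}$ — without explaining why such terms cannot land in the phase-$(+1)$ sector. The missing step is a parity-of-phase count using the homomorphisms $\mathfrak{o}$ and $\mathfrak{p}$: a term of total order $4$ containing a factor $\partial_X B$ or $\partial_Y B$ (which has $\mathfrak{o}=3$) leaves an order budget of $1$ for its cofactor, which up to Riesz transforms must therefore be $A$ or $\overline{A}$ undifferentiated; but then $\mathfrak{p}(F)\in\{0,2\}$ is even, so the term cannot contribute to $S_1$. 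Without this step the lemma's conclusion that $B$ solves a well-posed forced Schr\"odinger equation is not established.
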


\begin{proof}
Consider the equation $$\mathcal{P}^{(0)}(I - \nht^{(0)})\znew^{(4)}\kvec = \sum_{j_1 + j_2 + j_3 = 4}^{j_3 \neq 4} -\mathcal{P}^{(j_1)}(I - \nht)^{(j_2)}\znew^{(j_3)}\kvec := G^{(4)}$$ where in particular $G^{(4)}$ is calculated through $b^{(3)}$ and $\mathcal{A}^{(3)}$.  Notice that we can always replace $A_T$ everywhere with only $A$ and its derivatives in $X$ and $Y$ using \eqref{HNLSQuaternion}.  Observe first that the only place that $\partial_T$ and $\partial_X^2$, $\partial_Y^2$ act on $B$ is through the term $- \mathcal{P}^{(2)}(I - \nht^{(0)})\znew^{(2)}\kvec$.  Simplifying this term contributes $\left(2\jvec B_T - \omega^{\prime\prime}B_{XX} + 2\omega^{\prime\prime}B_{YY}\right)e^{\jvec\phi}\ivec$ which we have isolated.  Similarly, since $M^{(3)}$ appears only in $\znew^{(3)}$ and is a function of slow variables alone, the only place it appears is through the term $-\mathcal{P}^{(1)}(I - \nht^{(0)})\znew^{(3)}\kvec$, which when calculated simplifies to $-(I - \nht_0)|\mathcal{D}|M^{(3)}\kvec$.  Denote the rest of $G^{(4)}$ by
$$G^{(4)}_0 = G^{(4)} - \left(2\jvec B_T - \omega^{\prime\prime}B_{XX} + 2\omega^{\prime\prime}B_{YY}\right)e^{\jvec\phi}\ivec + (I - \nht_0)|\mathcal{D}|M\kvec$$

Next, since the operators appearing in $G^{(4)}_0$ are concatenations of either multiplications by wave packets of the form $Fe^{n\jvec\phi}$ for $n = -1, 0, 1$, multiplication by $\ivec$, or operating by the flat Hilbert transform $\nht_0 = -\jvec\mathcal{R}_1 + \ivec\mathcal{R}_2$, it follows that $G^{(4)}_0$ is of the form
\begin{equation*}
F  + \sum_{0 < |n| \leq 3} S_ne^{n\jvec\phi}\ivec + \sum_{0 < |m| \leq 3} S^\prime_m e^{m\jvec\phi}
\end{equation*}
where the $F$, $S_n$, $S_m^\prime$ are functions of the slow variables alone.  Observe that for scalar-valued functions $F, G$ we have the identity $$(I - \nht_0)(F\ivec + G\jvec) = (I - \nht_0)\Bigl((-\mathcal{R}_1F - \mathcal{R}_2G)\kvec + (\mathcal{R}_1F - \mathcal{R}_2G)\Bigr)$$  This allows us to write $(I - \nht_0)F = (I - \nht_0)(F_0 + F_3\kvec)$ as above, which gives us (a).  In particular, the terms $\sum_{n = 2}^3 S_n e^{n\jvec\phi}\ivec$ can be accounted for by an appropriate choice of $\znew^{(4)}$.

Now all terms of the form $Fe^{m\jvec\phi}$ appearing in $G^{(4)}_0$ satisfy $\mathfrak{o}(F) = 4$; those terms comprising $S_1$ also satisfy $\mathfrak{p}(F) = 1$.  Showing (b) reduces to ruling out the existence of a term in $F_3$ having only one occurrence of $A$, $B$, $\overline{A}$, or $\overline{B}$.  But the only such members of $\mathfrak{A}$ possible are $A$, $B$, $\overline{A}$, or $\overline{B}$ operated on by finitely many Riesz transforms $\mathcal{R}_1, \mathcal{R}_2$, and all such terms have $\mathfrak{o}(F) \leq 2$.  Similarly, to show (c), note that (up to Riesz transforms) if a term $F$ has a derivative of $B$ as one of its factors, its other factors could have total order at most 1.  However, up to Riesz transforms this forces the other factor to be $A$ or $\overline{A}$.  But then this term cannot appear in $S_1$ since then $\mathfrak{p}(F)$ would be even.
\end{proof}

\begin{remark}\label{ResidualFormallyEpsilon4}
No possible choice of $M^{(3)}$, $B$, and $\znew^{(4)}$ can account for the terms \eqref{BigButOKResidualPart}.  In this sense, \eqref{WaterWaveEvolutionMultiscale} is only formally consistent up to a residual of physical size $O(\epsilon^4)$.  However, in the process of deriving the energy inequality, we will show using an almost-orthogonality argument that these terms do not spoil the energy estimates.
\end{remark}

In summary, there exists some choice of $M^{(3)}$, $B$ and $\znew^{(4)}$ as in Lemma \ref{AbstractEpsilon4} so that if we take as our approximate solution to the system \eqref{WaterWaveEvolutionMultiscale}-\eqref{AFormulaMultiscale} as the following:
\begin{align}\label{TildeLambdaFormula}
\tilde{\lambda} & = \epsilon \ivec Ae^{\jvec\phi} + \epsilon^2\left(\ivec Be^{\jvec\phi} + \frac{1}{2}k|A|^2\kvec - \frac{\jvec}{k}\Im(A_Ye^{\jvec\phi})\right) + \epsilon^3\lambda^{(3)} + \epsilon^4\znew^{(4)}\kvec
\end{align}
where $\lambda^{(3)}$ is given by \eqref{Lambda3Formula} and $\znew^{(4)}$ is given as in Lemma \ref{AbstractEpsilon4},
\begin{align}\label{TildeBFormula}
\tilde{b} & = \epsilon^2(-k\omega|A|^2\ivec) + \epsilon^3 b^{(3)} + O(\epsilon^4)
\end{align}
where $b^{(3)} = (\sum_{|i| \leq 3} S_i e^{i\jvec\phi}) + (\sum_{|i| \leq 3} S_i e^{i\jvec\phi})^\dagger$ for some functions $S_i$ of slow variables alone,
\begin{align}\label{TildeAFormula}
\tilde{\mathcal{A}} & = 1 + \epsilon^3\mathcal{A}^{(3)} + O(\epsilon^4)
\end{align}
where $\mathcal{A}^{(3)} = \Re \sum_{|i| \leq 3} F_i e^{i\jvec\phi}$ for functions $F_i$ of slow variables alone.  Further define
\begin{equation}
\tilde{D}_t = \partial_t + (\tilde{b} \cdot \mathcal{D}), \qquad \tilde{\mathcal{P}} = \tilde{D}_t^2 - \tilde{\zeta}_\beta\partial_\alpha + \tilde{\zeta}_\alpha\partial_\beta
\end{equation}
then the equations \eqref{WaterWaveEvolutionMultiscale}-\eqref{AFormulaMultiscale} are satisfied up to the following residuals:
\begin{align}\label{WaterWaveEvolutionApprox}
\tilde{\mathcal{P}}(I - \tilde{\nht})\tilde{\znew}\kvec & = [\tilde{D}_t, \tilde{\nht}]\tilde{D}_t\tilde{\zeta}^\dagger \notag \\
& + \iint K(\tilde{\zeta} - \tilde{\zeta}^\prime)(\tilde{D}_t \tilde{\zeta} - \tilde{D}_t^\prime \tilde{\zeta}^\prime) \times (\partial_\beta^\prime \tilde{D}_t^\prime \tilde{\zeta}^\prime \partial_{\alpha^\prime} - \partial_\alpha^\prime \tilde{D}_t^\prime \tilde{\zeta}^\prime\partial_{\beta^\prime})\tilde{\znew}^\prime\kvec \, d\alpha^\prime, d\beta^\prime \\
& + \iint \tilde{D}_t K(\tilde{\zeta} - \tilde{\zeta}^\prime)(\tilde{D}_t\tilde{\zeta} - \tilde{D}_t^\prime\tilde{\zeta}^\prime) \times (\tilde{\zeta}^\prime_{\beta^\prime}\partial_{\alpha^\prime} - \tilde{\zeta}^\prime_{\alpha^\prime}\partial_{\beta^\prime})\tilde{\znew}^\prime\kvec \, d\alpha^\prime, d\beta^\prime \notag \\
& + \epsilon^4\left((I - \nht_0)F_0 + \sum_{m = 1}^2 S^\prime_m e^{m\jvec\phi}\right) + O(\epsilon^5) \notag
\end{align}
where $F_0$ is a scalar-valued function of slow variables alone, and the $S_m^\prime$ are $1, \jvec$-valued functions of slow variables alone,
\begin{equation}\label{LambdaFromZApprox}
\tilde{\lambda} = (I + \tilde{\nht})\tilde{\znew}\kvec - \tilde{\mathcal{K}}\tilde{\znew}\kvec + O(\epsilon^4)
\end{equation}
\begin{equation}\label{BFormulaApprox}
(I - \tilde{\nht})\tilde{b} = -[\tilde{D}_t, \tilde{\nht}](I + \tilde{\nht})\tilde{\znew}\kvec + (I - \tilde{\nht})\tilde{D}_t\tilde{\mathcal{K}}\tilde{\znew}\kvec + O(\epsilon^4)
\end{equation}
\begin{align}\label{AFormulaApprox}
(I - \tilde{\mathcal{K}})\tilde{\mathcal{A}} & = \Bigl\{1 + [\tilde{D}_t, \tilde{\nht}]\tilde{D}_t\tilde{\zeta} + [\tilde{\mathcal{A}}(\tilde{\mathcal{N}} \times \nabla), \tilde{\nht}] (I + \tilde{\nht})\tilde{\znew}\kvec  \notag \\
& \qquad + (I - \tilde{\nht})\left(-\tilde{\mathcal{A}}\tilde{\zeta}_\beta \times (\partial_\alpha\tilde{\mathcal{K}}\tilde{\znew}\kvec) + \tilde{\mathcal{A}}\tilde{\zeta}_\alpha \times (\partial_\beta\tilde{\mathcal{K}}\tilde{\znew}\kvec) + \tilde{\mathcal{A}}(\tilde{\lambda}_\alpha \times \tilde{\lambda}_\beta)\right)\Bigr\}_3 \\
& \qquad\qquad + O(\epsilon^4)\notag
\end{align}
The largest number of derivatives falling on $A$ and $B$ in the above residuals occurs in the residual of \eqref{WaterWaveEvolutionApprox} and is $9$ and $8$, respectively.  Denote the right hand side of \eqref{WaterWaveEvolutionApprox} by $\tilde{G}$.  If we operate on \eqref{WaterWaveEvolutionApprox} by $\tilde{D}_t$ then we also have
\begin{equation}\label{DtWaterWaveEvolutionApprox}
\tilde{\mathcal{P}}\tilde{D}_t(I -\tilde{\nht})\tilde{\znew}\kvec =
[\tilde{\mathcal{P}}, \tilde{D}_t](I - \tilde{\nht})\tilde{\znew}\kvec + \tilde{D}_t\tilde{\mathcal{G}} + O(\epsilon^5)
\end{equation}
The largest number of derivatives falling on $A$ or $B$ in the residual of \eqref{DtWaterWaveEvolutionApprox} is hence $11$ and $10$, respectively.  As in \eqref{NewEvolutionEquationsDt}, we rewrite the commutator $[\tilde{D}_t, \tilde{\mathcal{P}}]$ by changing variables.  Introduce the approximate change of variables $\tilde{\kappa}$ as the solution to the following ODE:
$$\begin{cases} \tilde{\kappa}_t = \tilde{b} \circ \tilde{\kappa} \\ \tilde{\kappa}(0) = \alpha \ivec + \beta \jvec \end{cases}$$  Then by construction we have $\tilde{D}_tU_{\tilde{\kappa}}^{-1} = U_{\tilde{\kappa}}^{-1}\partial_t$ and hence

\begin{equation}\label{CommutatorDtWithP}
[\tilde{D}_t, \tilde{\mathcal{P}}] = \tilde{\mathcal{A}}U_{\tilde{\kappa}}^{-1}\left(\frac{\tilde{\mathfrak{a}}_t}{\tilde{\mathfrak{a}}}\right)(\tilde{\mathcal{N}} \times \nabla) + (\partial_\beta \tilde{D}_t\tilde{\zeta} \partial_\alpha - \partial_\alpha \tilde{D}_t\tilde{\zeta} \partial_\beta)
\end{equation}

Finally, we record the formula

\begin{align}\label{TildeatOveraFormula}
U_{\tilde{\kappa}}^{-1}\left(\frac{\tilde{\mathfrak{a}}_t}{\tilde{\mathfrak{a}}}\right)\kvec & = \frac{\tilde{D}_t\tilde{\mathcal{A}}}{\tilde{\mathcal{A}}}\kvec - U_{\tilde{\kappa}}^{-1}\left(\frac{\partial_t J(\tilde{\kappa})\kvec}{J(\tilde{\kappa})}\right) \notag \\
& = \frac{\tilde{D}_t\tilde{\mathcal{A}}}{\tilde{\mathcal{A}}}\kvec - U_{\tilde{\kappa}}^{-1}\left(\frac{-\tilde{\kappa}_\beta \times \tilde{\kappa}_{t\alpha} + \tilde{\kappa}_\alpha \times \tilde{\kappa}_{t\beta}}{J(\tilde{\kappa})}\right)  \\
& = \frac{\tilde{D}_t\tilde{\mathcal{A}}}{\tilde{\mathcal{A}}}\kvec + \left(\jvec \times \tilde{b}_\alpha - \ivec \times \tilde{b}_\beta\right) \notag
\end{align}
from which it is clear by \eqref{TildeBFormula} and \eqref{TildeAFormula} that $U_{\tilde{\kappa}}^{-1}\left(\tilde{\mathfrak{a}}_t/\tilde{\mathfrak{a}}\right)$ consists of terms of size at most $O(\epsilon^3)$.

	\subsection{Analysis of the Approximate Solution and Hilbert Transform}

Most of the terms in the higher order corrections in \eqref{TildeLambdaFormula} can be estimated trivially in Sobolev space, given that $A$ and $B$ are in a suitably regular Sobolev space.  However, there are terms in the correctors of the form $$\epsilon^3|\mathcal{D}|^{-1}S$$ where $S$ depends on $A$, $B$, and their derivatives.  Such terms are not in general in $L^2$ unless extra conditions are put on $A$ and $B$.  The most obvious condition is to restrict $A \in \dot{H}^{-1} \cap H^s$, but this implies an undesirable mean-zero condition on $A$.  Instead, since Lemma \ref{AbstractEpsilon4} shows that $S$ is either a pure derivative or a product of two or more instances of $A$, $B$ and their derivatives, we opt to insist that some of the lower-order derivatives of $A$ and $B$ decay at infinity at a mild algebraic rate.  This, along with Proposition \ref{SobolevEmbeddings} and the following well-posedness result, gives us the required control in $L^2$.

\begin{proposition}\label{HNLSLocallyWellPosed}
Let $s \geq 6$ and $0 \leq \delta \leq 1$ be given.  Suppose that $A_0 \in H^s \cap H^3(\delta)$ and $B_0 \in H^{s - 3} \cap L^2(\delta)$.  Then there exists a $\mathscr{T} > 0$ depending on $\|A_0\|_{H^s \cap H^3(\delta)}$ and $\|B_0\|_{H^{s - 3} \cap L^2(\delta)}$ so that both the initial value problem consisting of the HNLS equation \eqref{HNLSVector} with initial data $A(0) = A_0$ and the initial value problem consisting of \eqref{AuxiliaryEquationVector} with $B(0) = B_0$ have unique solutions $A \in C([0, \mathscr{T}], H^s \cap H^3(\delta))$ and $B \in H^{s - 3} \cap L^2(\delta)$ respectively.  In particular, if the solution $A$ cannot be continued past a time $\mathscr{T}$, then $\|A(T)\|_{W^{\lceil s/2 \rceil, \infty}} \not\in L^\infty([0, \mathscr{T}))$.
\end{proposition}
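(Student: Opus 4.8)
The plan is to prove local existence for $A$ in $H^s\cap H^3(\delta)$, existence of $B$ in $H^{s-3}\cap L^2(\delta)$, and the continuation criterion by entirely standard means, the key point being that at regularity $s\ge6$ everything is handled by the energy/algebra method, so the \emph{hyperbolic} sign in \eqref{HNLSVector} plays no role (no dispersive estimate is used). Write \eqref{HNLSVector} (equivalently \eqref{HNLSQuaternion}) as $A_T = \jvec L_0 A + \jvec c\, A|A|^2$ with $L_0 = -a\partial_X^2 + b\partial_Y^2$ self-adjoint, so $e^{\jvec T L_0}$ is unitary on every $H^\sigma(\mathbb{R}^2)$. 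First I would run Picard iteration for the Duhamel map $\Phi(A)(T) = e^{\jvec T L_0}A_0 + \jvec c\int_0^T e^{\jvec(T-\sigma)L_0}(A|A|^2)(\sigma)\,d\sigma$ on $\{A\in C([0,\tau],H^s):\|A\|_{L^\infty_T H^s}\le 2\|A_0\|_{H^s}\}$; since $s\ge6>1$, $H^s$ is a Banach algebra, $\|A|A|^2\|_{H^s}\le C\|A\|_{H^s}^3$, and for $\tau=\tau_0$ small depending only on $\|A_0\|_{H^s}$ the map is a contraction, producing a unique $A\in C([0,\tau_0],H^s)$. Uniqueness in this class and continuity in time follow from a standard $L^2$ difference estimate (using $H^s\hookrightarrow W^{1,\infty}$, a consequence of Proposition \ref{SobolevEmbeddings}(a)).

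Next I would propagate the weighted norm. Put $w=\langle(X,Y)\rangle^\delta$ and $v=wA$, so $\|v\|_{H^3}$ is comparable to $\|A\|_{H^3(\delta)}$. Because $0\le\delta\le1$, the function $w^{-1}$ and all derivatives $\partial^j w$, $\partial^j w^{-1}$ with $|j|\ge1$ are bounded with bounded derivatives of every order (for instance $|w^{-1}\partial_X w|=\delta|X|\langle(X,Y)\rangle^{-2}\le\delta$), so a short computation shows that $v$ satisfies $v_T=\jvec L_0 v + \beta_1\!\cdot\!\nabla v + \beta_0 v + \jvec c\, v|A|^2$ with fixed bounded smooth coefficients $\beta_0,\beta_1$. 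A standard $H^3$ energy estimate applies: the part $\jvec L_0$ contributes nothing by skew-adjointness, the term $\beta_1\!\cdot\!\nabla v$ is controlled after one integration by parts, and $\|v|A|^2\|_{H^3}\le C\|A\|_{W^{3,\infty}}^2\|v\|_{H^3}\le C\|A\|_{H^s}^2\|v\|_{H^3}$ since $s\ge6>4$. Hence $\tfrac{d}{dT}\|v\|_{H^3}^2\le C(1+\|A\|_{H^s}^2)\|v\|_{H^3}^2$, and since $\|A\|_{H^s}$ is already bounded on $[0,\tau_0]$, Gronwall's inequality keeps $\|v\|_{H^3}$ finite there; thus $A\in C([0,\tau_0],H^s\cap H^3(\delta))$, and we take $\mathscr{T}=\tau_0$, depending only on $\|A_0\|_{H^s\cap H^3(\delta)}$.

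For $B$: by Lemma \ref{AbstractEpsilon4}(c) the source $S_1$ in \eqref{AuxiliaryEquationVector} involves no derivatives of $B$, and is a polynomial expression in $A$ and its $X,Y$-derivatives of order at most three (possibly with a single undifferentiated $B$ carrying an $A,\overline{A}$ coefficient). Hence \eqref{AuxiliaryEquationVector} is a \emph{linear} Schr\"odinger-type equation for $B$ (with $\jvec$ playing the role of $i$) whose forcing and coefficients lie in $C([0,\mathscr{T}],H^{s-3}\cap L^2(\delta))$ — using that $A\in C([0,\mathscr{T}],H^s\cap H^3(\delta))$, that $wS_1\in L^2$ (move $w$ onto the least-differentiated factor and commute it through, as in the previous step), and that $s-3\ge3>1$ so $H^{s-3}$ is an algebra. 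Duhamel's formula, unitarity of the propagator on $H^{s-3}$, and the same commutator-with-$w$ estimate for the weighted piece produce a unique $B\in C([0,\mathscr{T}],H^{s-3}\cap L^2(\delta))$; linearity precludes blow-up, so $B$ exists on the entire interval on which $A$ does.

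Finally, for the continuation alternative I would establish the tame inequality $\tfrac{d}{dT}\|A\|_{H^s}^2\le C\|A\|_{W^{\lceil s/2\rceil,\infty}}^2\|A\|_{H^s}^2$: expanding $\partial^j(A|A|^2)$ by Leibniz into products $\partial^{j_1}A\,\partial^{j_2}A\,\partial^{j_3}A$ with $|j_1|+|j_2|+|j_3|=|j|\le s$, at most one of the $|j_i|$ can exceed $s/2$, so estimating the factor of largest order in $L^2$ (order $\le s$, hence bounded by $\|A\|_{H^s}$) and the other two in $L^\infty$ (order $\le\lceil s/2\rceil$) gives $\|\partial^j(A|A|^2)\|_{L^2}\le C\|A\|_{W^{\lceil s/2\rceil,\infty}}^2\|A\|_{H^s}$, while the linear part again drops by skew-adjointness (and $\|A\|_{L^2}$ is conserved). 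If the maximal existence time $\mathscr{T}^*$ were finite and $\|A(T)\|_{W^{\lceil s/2\rceil,\infty}}$ stayed bounded on $[0,\mathscr{T}^*)$, Gronwall would bound $\|A(T)\|_{H^s}$ there, and then the weighted estimate of the second step and the linearity of the $B$-equation would bound $\|A\|_{H^3(\delta)}$ and $\|B\|_{H^{s-3}\cap L^2(\delta)}$ too; invoking the local existence statement at a time close to $\mathscr{T}^*$ would continue the solution past $\mathscr{T}^*$, a contradiction. I expect the only genuinely delicate point to be the second step — verifying that, precisely because $\delta\le1$, the commutators of the weight with the Schr\"odinger operator are truly lower order, so that the weighted $H^3(\delta)$ norm is slaved to the already-controlled unweighted $H^s$ norm instead of requiring any weighted high-derivative control.
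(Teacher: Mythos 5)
Your proposal is correct and follows essentially the same route as the paper's proof: a Duhamel/contraction argument in $H^s$ using the algebra property, a weighted $H^3$ energy estimate for $\langle(X,Y)\rangle^\delta A$ exploiting that the commutators with the weight are lower order precisely because $\delta\le 1$, and treatment of the $B$-equation as linear since no derivatives of $B$ appear in its nonlinearity. Your continuation criterion via the tame Leibniz estimate is spelled out in more detail than in the paper, but it is the intended argument.
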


\begin{proof}
To avoid cluttering the proof, we observe that by rescaling we may without loss of generality ignore positive constants depending on $k$.  Denote the linear propagator of HNLS by $e^{LT}$.  Then by a routine energy estimate $e^{LT}$ is unitary in every $H^s$ for any real $s$.  Using Duhamel's formula on HNLS with $s \geq 2$ gives
\begin{align*}
A(T) & = e^{LT} A(0) + \int_0^T e^{L(T - t)} A(t)|A(t)|^2 \, dt
\end{align*}
from which by a routine contraction mapping argument in $H^s$ we have local well posedness of $A$ in $H^s$.  We gain decay by rewriting HNLS as
$$(\langle (X, Y) \rangle^\delta A)_T - \jvec (\langle (X, Y) \rangle^\delta A)_{XX} + \jvec (\langle (X, Y) \rangle^\delta A)_{YY} = \jvec [\partial_X^2, \langle (X, Y) \rangle^\delta]A - \jvec [\partial_Y^2, \langle (X, Y) \rangle^\delta]A$$
Since $0 \leq \delta \leq 1$, the right hand side of this equation is a linear combination of $A$ and its first derivatives with bounded coefficients.  Therefore the usual energy estimate applied to the above equation along with its derivatives up to the third order yields
$$\frac{d}{dt} \|\langle (X, Y) \rangle^\delta A(T)\|_{H^3}^2 \leq C \|\langle (X, Y) \rangle^\delta A(T)\|_{H^3}^2 \|A\|_{C([0, \mathscr{T}], W^{4, \infty})}$$ and so Gr\"onwall's inequality implies $$\|\langle (X, Y) \rangle^\delta A(T)\|_{H^3} \leq C\|\langle (X, Y) \rangle^\delta A(0)\|_{H^3} e^{\|A\|_{C([0, \mathscr{T}], W^{4, \infty})}T}$$

Recall that by Lemma \ref{Epsilon4TermsSummary}, $B$ satisfies an equation of the form (again dropping positive constants) 
\begin{align}\label{BEquation}
B_T = \jvec B_{XX} - \jvec B_{YY} + F_1(A)B + F_2(A) \overline{B} + F_3(A)
\end{align} where $F_1$ and $F_2$ are polynomial functions of $A$ and its first and second derivatives, and $F_3$ is a polynomial function of $A$ and its first through third derivatives.  Because no derivatives of $B$ appear in the nonlinearity, we may use the same Duhamel argument to show local well posedness of $B$ in $H^{s - 3}$ given $A_0$ is as in (a).  Finally, the decay in $B$ follows in the same way as the decay in $A$; three derivatives of $A$ are needed to decay in order to control $\|B\|_{L^2(\delta)}$ since the coefficients and source terms in \eqref{BEquation} contain up to three derivatives of $A$.
\end{proof}

\begin{remark}\label{HowManyDerivatives}
Since the largest number of derivatives needed in $A$ and $B$ in order to guarantee that the residual of \eqref{WaterWaveEvolutionApprox} be in $H^s$ is $s + 11$ and $s + 10$, respectively, we must require that $A \in C([0, \mathscr{T}], H^{s + 13} \cap H^3(\delta))$ and $B \in C([0, \mathscr{T}], H^{s + 10} \cap L^2(\delta))$.  We will always take $B_0 = 0$ in the sequel.
\end{remark}

\begin{lemma}\label{TildeHDifferenceEstimates}
Suppose $A_0 \in H^{s + 13} \cap H^3(0^+)$.  Then for $\epsilon_0 > 0$ chosen sufficiently small,
$$\|(\nht_{\tilde{\zeta}} - \tilde{\nht})f\|_{H^s} \leq C\epsilon^4\|f\|_{H^s} \qquad \qquad \|(\nht_{\tilde{\zeta}} - \tilde{\nht})f\|_{H^s} \leq C\epsilon^3\|f\|_{W^{s, \infty}}$$
\end{lemma}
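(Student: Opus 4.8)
The plan is to account separately for the two approximations built into the definition \eqref{TildeHDefinition} of $\tilde{\nht}$: first, the truncation at homogeneous degree three in $\tilde{\lambda}$ of the expansions \eqref{CauchyKernelExpansion} and \eqref{NormalVectorExpansion} of the Cauchy kernel of $\nht_{\tilde{\zeta}}$; and second, the Taylor expansion in $\epsilon$ of each of the resulting finite-degree singular integrals acting on the wave packet. Write $\mathcal{E} := \nht_0 + \nht_1 + \nht_2 + \nht_3$ for the degree-$\le 3$ part of $\nht_{\tilde{\zeta}}$ produced by \eqref{CauchyKernelExpansion}--\eqref{NormalVectorExpansion}, so that $\nht_{\tilde{\zeta}} - \tilde{\nht} = (\nht_{\tilde{\zeta}} - \mathcal{E}) + (\mathcal{E} - \tilde{\nht})$; the two pieces will be estimated by quite different means, and $f$ is understood among the wave-packet-like functions $\sum_{|n|\le N}F_n(\epsilon\alpha,\epsilon\beta)e^{\jvec nk\alpha}$ on which $\tilde{\nht}$ is applied in the sequel. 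Throughout, \eqref{TildeLambdaFormula} together with $A_0 \in H^{s+13}\cap H^3(0^+)$ and $B_0 = 0$ (see Remark \ref{HowManyDerivatives}) gives $\|\partial^j\tilde{\lambda}\|_{L^\infty} \le C\epsilon$ for all $|j|\ge 1$ and makes $\|\nabla\tilde{\lambda}\|_{H^{s+1}}$ finite; taking $\epsilon_0$ small makes $\|\nabla\tilde{\lambda}\|_{L^\infty}$ as small as we like, so that $\tilde{\zeta}=P+\tilde{\lambda}$ satisfies the chord-arc condition and the geometric series implicit in \eqref{CauchyKernelExpansion} converges.

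For $\nht_{\tilde{\zeta}} - \mathcal{E}$: substituting \eqref{CauchyKernelExpansion} and \eqref{NormalVectorExpansion} and collecting everything of total homogeneous degree $\ge 4$ in $\tilde{\lambda}$, this operator is a convergent (for $\epsilon_0$ small) sum of operators whose kernels are $K(P-P^\prime)$ times the factor \eqref{NormalVectorExpansion} times at least four difference factors of the form $\frac{(P-P^\prime)\cdot(\tilde{\lambda}-\tilde{\lambda}^\prime)}{|P-P^\prime|^2}$ or $\frac{|\tilde{\lambda}-\tilde{\lambda}^\prime|^2}{|P-P^\prime|^2}$ (with, from multiplying the degree-$2$ and degree-$3$ kernel terms by the non-constant part of \eqref{NormalVectorExpansion}, extra explicit factors of $\nabla\tilde{\lambda}$). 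These kernels are of the type handled by Theorem \ref{SingularIntegralL2Estimate}, and distributing the $\partial^j$ derivatives as in Proposition \ref{SingularIntegralHsEstimate} bounds each contribution by $C\prod_i\|\nabla\tilde{\lambda}\|_{X_i}\,\|f\|_{X_0}$, with exactly one of these norms an $L^2$ (respectively $H^s$) norm and the rest $L^\infty$ (respectively $W^{s,\infty}$) norms. Placing all four-or-more $\nabla\tilde{\lambda}$ factors in $L^\infty$, each $O(\epsilon)$, and $f$ in $H^s$, gives the $C\epsilon^4\|f\|_{H^s}$ bound; moving the single $L^2$ slot onto one $\nabla\tilde{\lambda}$ factor instead — only $O(1)$, since measuring a wave-packet amplitude in the slow variable costs a power of $\epsilon^{-1}$ — keeping the remaining (at least three) $\nabla\tilde{\lambda}$ factors in $L^\infty$ and putting $f$ in $W^{s,\infty}$, gives the $C\epsilon^3\|f\|_{W^{s,\infty}}$ bound. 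Summing the convergent series completes this piece; note the operators $\nht_j$ with $j\ge 4$ are already $O(\epsilon^4)$ by the same mechanism and need not be expanded further.

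For $\mathcal{E} - \tilde{\nht}$: by the definition of the $\nht^{(m)}$ and of $\tilde{\nht}$ in \eqref{TildeHDefinition}, together with the expansions $\nht_j = \sum_{m\ge j}\epsilon^m\nht_j^{(m)}$, this difference equals $\sum_{j=0}^3\left(\nht_j - \sum_{m=j}^3\epsilon^m\nht_j^{(m)}\right)$, that is, precisely the sum of the order-$\ge 4$ Taylor remainders in the wave-packet expansion of each $\nht_j$. The $j=0$ remainder is controlled by Proposition \ref{FlatHilbertTransformExpansion}; the $j=1,2,3$ remainders are handled by the same argument — Taylor expanding the smooth wave-packet amplitudes about the base frequencies, using the Riesz-potential symbol formulas \eqref{RieszFormulas}, \eqref{RieszPotentialFormulas} and Lemma \ref{WavePacketCutoff}, with the explicit operators supplied by Proposition \ref{HilbertTransformExpansionFormulas} and Appendix A — and each such remainder is of size $O(\epsilon^4)$ relative to the wave-packet norms $\|f\|_{H^s}$, $\|f\|_{W^{s,\infty}}$, the loss of derivatives being exactly balanced against the gain in powers of $\epsilon$ via Lemma \ref{WavePacketCutoff}. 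The worst number of derivatives falling on the amplitude here is the one recorded in Remark \ref{HowManyDerivatives}, which is why $A_0 \in H^{s+13}$ is required.

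The main obstacle is the bookkeeping in the first step: one must check that, after the homogeneous-degree-$3$ truncation of \eqref{CauchyKernelExpansion}--\eqref{NormalVectorExpansion} and the order-$\epsilon^3$ truncation of each finite-degree operator, nothing of size larger than $O(\epsilon^4)$ is left behind — this is exactly what defining $\tilde{\nht}$ as the order-$\le 3$ part of the full double expansion is meant to guarantee — and one must keep track of the worst derivative count, which is what pins down the hypothesis $A_0 \in H^{s+13}\cap H^3(0^+)$. A secondary point, resolved by taking $\epsilon_0$ small, is the uniform-in-$\epsilon$ chord-arc property of $\tilde{\zeta}$ and the convergence of the Cauchy-kernel geometric series, both of which follow from smallness of $\|\nabla\tilde{\lambda}\|_{L^\infty}$.
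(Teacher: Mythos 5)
Your proof is correct and follows essentially the same route as the paper: the paper splits the error into $\mathcal{I}_1$ (cross terms $K_i\mathcal{N}_j$ of total degree $\geq 4$), $\mathcal{I}_2$ (the kernel Taylor remainder $R_4$), and $\mathcal{I}_3$ (the $O(\epsilon^4)$ residue of the degree-$\leq 3$ part after the multiscale substitution), which is exactly your $(\nht_{\tilde{\zeta}} - \mathcal{E}) + (\mathcal{E} - \tilde{\nht})$ decomposition estimated with the same tools (Theorem \ref{SingularIntegralL2Estimate} with one $L^2$ slot and the rest $L^\infty$, plus Lemma \ref{WavePacketCutoff}-type frequency localization). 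The only place you are lighter than the paper is the remainder $R_4$: since it is in integral form, the paper must integrate by parts and re-expand it into a convergent series of CMM-type kernels before distributing the $s$ derivatives, whereas you implicitly invoke the full convergent power series from the outset — same smallness condition on $\|\nabla\tilde{\lambda}\|_{L^\infty}$, same conclusion.
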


\begin{proof}
This amounts to analyzing in more detail the expansion of the factors \eqref{CauchyKernelExpansion}, \eqref{NormalVectorExpansion} of the kernel of $\nht_{\tilde{\zeta}}$.  Abbreviate the expansion \eqref{NormalVectorExpansion} by $\tilde{\mathcal{N}} = \tilde{\zeta}_\alpha \times \tilde{\zeta}_\beta = \mathcal{N}_0 + \mathcal{N}_1 + \mathcal{N}_2$, where $\mathcal{N}_j$ is homogeneous of degree $j$ in $\lambda - \lambda^\prime$.  Similarly, abbreviate the expansion \eqref{CauchyKernelExpansion} by
\begin{equation*}
\frac{\tilde{\zeta} - \tilde{\zeta}^\prime}{|\tilde{\zeta} - \tilde{\zeta}^\prime|^3} = K_0 + K_1 + K_2 + K_3 + R_4
\end{equation*} where $K_j$ is homogeneous of degree $j$ in $\lambda - \lambda^\prime$, and $R_4$ is the remainder term arising from the power series expansion in \eqref{CauchyKernelExpansion}.  By construction, we have that the kernel $\tilde{\nht}$ consists of those terms of the operator
\begin{equation}\label{TildeHFirstTry}
\iint \sum_{i + j \leq 3} K_i \mathcal{N}_j^\prime f^\prime \, d\alpha^\prime\, d\beta^\prime,
\end{equation} with formal powers of $\epsilon$ of size $O(\epsilon^3)$ and lower orders when the substitution $\tilde{\lambda} = \sum_{j = 1}^4 \epsilon^j\lambda^{(j)}$ is made and expanded.  Therefore our task is to estimate the following singular integrals:
\begin{equation*}
\mathcal{I}_1 = \iint \sum_{\stackrel{i \leq 3, \, j \leq 2}{i + j \geq 4}} K_i \mathcal{N}_j^\prime f^\prime \, d\alpha^\prime\, d\beta^\prime \qquad \text{and} \qquad \mathcal{I}_2 = \iint R_4 \mathcal{N}^\prime f^\prime \, d\alpha^\prime\, d\beta^\prime
\end{equation*} along with $\mathcal{I}_3$, which are the contributions of formal size $O(\epsilon^4)$ and higher order terms of \eqref{TildeHFirstTry}.

First we estimate $\mathcal{I}_1$.  If we write $\lambda - \lambda^\prime = -(P - P^\prime)\frac{(P - P^\prime)(\lambda - \lambda^\prime)}{|P - P^\prime|^2}$, we can express the general term $K_i \mathcal{N}_j^\prime$ as a product of $\frac{P - P^\prime}{|P - P^\prime|^3}$ along with the factors
\begin{equation}\label{DifferenceQuotientFactors}
-\frac{(P - P^\prime)(\tilde{\lambda} - \tilde{\lambda}^\prime)}{|P - P^\prime|^2} \qquad\qquad \frac{(P - P^\prime) \cdot (\tilde{\lambda} - \tilde{\lambda}^\prime)}{|P - P^\prime|^2} \qquad\qquad \frac{|\tilde{\lambda} - \tilde{\lambda}^\prime|^2}{|P - P^\prime|^2}
\end{equation}
\begin{equation*}
\kvec \qquad\qquad (\tilde{\lambda}_\alpha \times \jvec) + (\ivec \times \tilde{\lambda}_\beta) \qquad\qquad (\tilde{\lambda}_\alpha \times \tilde{\lambda}_\beta)
\end{equation*} in appropriate combinations so that the degree of homogeneity in $\tilde{\lambda}$ is $i + j$.  Each of the difference quotient expressions is of degree zero, and so the resulting kernels are of the type in Theorem \ref{SingularIntegralL2Estimate}(a). We can therefore estimate
\begin{align*}
\|\mathcal{I}_1\|_{H^s} & \leq C\sum_{i + j \geq 4}\|\nabla\tilde{\lambda}\|_{W^{s, \infty}}^{i + j}\|f\|_{H^s} \quad \text{or} \quad C\sum_{i + j \geq 4}\|\nabla\tilde{\lambda}\|_{W^{s, \infty}}^{i + j - 1}\|\nabla\tilde{\lambda}\|_{H^s}\|f\|_{W^{s, \infty}} \\
& \leq C\epsilon^4\|f\|_{H^s} \quad \text{or} \quad C\epsilon^3\|f\|_{W^{s, \infty}}
\end{align*} where the constant $C$ depends on $\|A_0\|_{H^{s + 13} \cap H^3(0^+)}$.

To estimate $\mathcal{I}_3$, make the substitution $\tilde{\lambda} = \sum_{j = 1}^4 \epsilon^j \lambda^{(j)}$ in each of the expressions in \eqref{DifferenceQuotientFactors} and expand.  This yields a set of factors of the same form as in \eqref{DifferenceQuotientFactors}, but with $\epsilon^j \lambda^{(j)}$ replacing $\tilde{\lambda}$.  Estimating the operators that result from the finite number of terms that contribute $O(\epsilon^4)$ or higher as we did when estimating $\mathcal{I}_1$ yields the same bounds.  We omit the details.

To estimate $\mathcal{I}_2$, denote the difference quotients $$2\frac{(P - P^\prime) \cdot (\tilde{\lambda} - \tilde{\lambda}^\prime)}{|P - P^\prime|^2} + \frac{|\tilde{\lambda} - \tilde{\lambda}^\prime|^2}{|P - P^\prime|^2} =: Q$$  If we choose $\epsilon_0 > 0$ so small so that $\|\nabla\tilde{\lambda}\|_{L^\infty} < \frac14$, we have by the Lipschitz bound $|\tilde{\lambda} - \tilde{\lambda}^\prime| \leq \|\nabla\tilde{\lambda}\|_{L^\infty}|P - P^\prime|$ that $|Q| < \frac23$.  In this case the Taylor expansion of $(1 + Q)^{-\frac32}$ is valid.  Observe that, for some universal constant $C_3$, we have the following integral expression for the remainder $R_4$: $$R_4 = \frac{\tilde{\zeta} - \tilde{\zeta}^\prime}{|\tilde{\zeta} - \tilde{\zeta}^\prime|^3}\int_0^Q C_3(1 + \tau)^{-9/2}(Q - t)^3 \, d\tau$$  Following the proof of Proposition \ref{SingularIntegralHsEstimate}, we take up to $j$ derivatives of the integral factor with respect to $(\partial_\alpha + \partial_{\alpha^\prime})$ or $(\partial_\beta + \partial_{\beta^\prime})$ for $0 \leq j \leq s$.  When we do so, we get by the Chain Rule and by Differentiation under the Integral Sign a sum of terms of the form
\begin{equation}\label{RemainderKernel}
\left(C_j^\prime\prod_{i = 1}^n (\partial_\alpha + \partial_{\alpha^\prime})^{j_i}(\partial_\beta + \partial_{\beta^\prime})^{l_i}Q\right)\int_0^Q C_m(1 + \tau)^{-\frac92}(Q - \tau)^m \, d\tau
\end{equation}
where $m = 0, 1, 2, 3$, $C_0 = 0$, $m + n = 3$, and $(3 - m) + \sum_{i = 1}^n (|j_i| + |l_i|) \leq s$.  Terms of this form with $m = 0$ can be estimated directly using Theorem \ref{SingularIntegralL2Estimate}.  Note that we can always estimate these integral terms in \eqref{RemainderKernel} coarsely by 
\begin{equation}\label{CoarseRemainderKernelBound}
\left|\int_0^Q C_m(1 + \tau)^{-\frac92}(Q - \tau)^m \, d\tau\right| \leq C|Q|^{m + 1}
\end{equation}
To write the terms with $m \geq 1$ in the correct form to apply Theorem \ref{SingularIntegralL2Estimate}, we repeatedly integrate by parts to arrive at the following formula for $N \geq 1$:\footnote{Here we use the generalized binomial coefficient $\binom{r}{n} = \frac{1}{n!}r(r - 1)(r - 2)\cdots(r - n + 1)$.  By convention we stipulate that $\binom{r}{0} = 1$.}
\begin{align*}
\int_0^Q C_m(1 + \tau)^{-\frac92}(Q - \tau)^m \, d\tau & = C_m \sum_{n = 1}^N (-1)^{n - 1} \binom{-\frac92}{n - 1}\binom{m + n - 1}{n - 1}^{-1} \frac{Q^{m + n}}{m + n} \\
& \quad + (-1)^{N} C_m \int_0^Q \binom{-\frac92}{N}\binom{m + N}{N}^{-1} (1 + \tau)^{-\frac92 - N}(Q - \tau)^{m + N} \, d\tau
\end{align*}
Using the bound $|\binom{r}{n}| \leq C_r n^{-1}$ along with \eqref{CoarseRemainderKernelBound}, we see that upon choosing $\epsilon_0 > 0$ sufficiently small, the above series converges pointwise in $Q$, giving the exact series representation
\begin{align*}
\int_0^Q C_m(1 + \tau)^{-\frac92}(Q - \tau)^m \, d\tau & = C_m \sum_{n = 0}^N (-1)^n\binom{-\frac92}{n}\binom{m + n}{n}^{-1} \frac{Q^{m + n + 1}}{m + n + 1}
\end{align*}
This yields an infinite series of singular integrals.  Using Theorem \ref{SingularIntegralL2Estimate}, we have upon choosing $\epsilon_0 > 0$ to be sufficiently small that the remainder is bounded by
\begin{align*}
\|\mathcal{I}_2\|_{H^s} & \leq C \sum_{n = 1}^\infty 2^{n + m}(1 + n^8)\|\nabla\tilde{\lambda}\|_{W^{s, \infty}}^{4 + n} \|f\|_{H^s} \\
& \leq C\epsilon^4\|f\|_{H^s}
\end{align*}
Similarly, one has $\|\mathcal{I}_2\|_{H^s} \leq C\|\nabla\tilde{\lambda}\|_{W{s, \infty}}^3 \|\nabla\tilde{\lambda}\|_{H^s}\|f\|_{W^{s, \infty}} \leq C\epsilon^3\|f\|_{W^{s, \infty}}$.
\end{proof}

\begin{lemma}\label{HMinusHTildeEstimates}
The following estimates hold:
\begin{itemize}
\item[(a)]{
$
\|(\nht_{\tilde{\zeta}} - \tilde{\nht})f\|_{H^s} \leq \quad C\epsilon^4\|f\|_{H^s} \quad \text{and} \quad C\epsilon^3\|f\|_{W^{s, \infty}}
$
}
\item[(b)]{
$
\|[D_t, \nht_{\tilde{\zeta}} - \tilde{\nht}]f\|_{H^s} \leq \quad C\epsilon^4\|f\|_{H^s} \quad \text{and} \quad C\epsilon^3\|f\|_{W^{s, \infty}}
$
}
\item[(c)]{
$
\|[D_t, [D_t, \nht_{\tilde{\zeta}} - \tilde{\nht}]]f\|_{H^s} \leq \quad C\epsilon^4\|f\|_{H^s} \quad \text{and} \quad C\epsilon^3 \|f\|_{W^{s, \infty}}
$
}
\end{itemize}
\end{lemma}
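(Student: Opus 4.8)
The plan is to reduce all three estimates (a), (b), (c) to the single estimate already proved in Lemma \ref{TildeHDifferenceEstimates}, namely $\|(\nht_{\tilde\zeta}-\tilde\nht)f\|_{H^s}\leq C\epsilon^4\|f\|_{H^s}$ and $\leq C\epsilon^3\|f\|_{W^{s,\infty}}$, together with the explicit commutator formulas of Proposition \ref{CommutatorIdentities}(a) and the structure of the approximate solution from \eqref{TildeLambdaFormula}, \eqref{TildeBFormula}. Part (a) is literally the conclusion of Lemma \ref{TildeHDifferenceEstimates}, so there is nothing to do but restate it. The real content is parts (b) and (c), where we must commute $D_t = \partial_t + (b\cdot\mathcal{D})$ (the \emph{true} material derivative, built from the true $b$) past the operator difference $\nht_{\tilde\zeta}-\tilde\nht$.

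For part (b), I would first write $D_t = \tilde D_t + ((b-\tilde b)\cdot\mathcal{D})$, so that $[D_t,\nht_{\tilde\zeta}-\tilde\nht] = [\tilde D_t,\nht_{\tilde\zeta}-\tilde\nht] + [(b-\tilde b)\cdot\mathcal{D},\,\nht_{\tilde\zeta}-\tilde\nht]$. The second piece is harmless: $\nht_{\tilde\zeta}-\tilde\nht$ is $O(\epsilon^4)$ as an operator on $H^s$ (part (a)), and although $b-\tilde b$ is not itself controlled at this stage, one only uses it multiplied against the already-$O(\epsilon^4)$-small difference, and one can throw the $\mathcal{D}$ onto the argument — more precisely, $[(b-\tilde b)\cdot\mathcal D,\nht_{\tilde\zeta}-\tilde\nht]f$ splits into $(b-\tilde b)\cdot\mathcal D(\nht_{\tilde\zeta}-\tilde\nht)f$ minus $(\nht_{\tilde\zeta}-\tilde\nht)((b-\tilde b)\cdot\mathcal D f)$, and in the energy estimates this term will be absorbed into the remainder bookkeeping (this is the same phenomenon flagged in the Remark after Proposition \ref{CommutatorEstimates}); alternatively, at the level of stating the lemma, one works modulo the remainder $\zeta-\tilde\zeta$ as is done throughout. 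For the main term $[\tilde D_t,\nht_{\tilde\zeta}-\tilde\nht]$, I would use Proposition \ref{CommutatorIdentities}(a) to write $[\tilde D_t,\nht_{\tilde\zeta}]$ and then note that $\tilde\nht$ is, by its very construction in \eqref{TildeHDefinition} and Propositions \ref{FlatHilbertTransformExpansion}--\ref{HilbertTransformExpansionFormulas}, precisely the truncation to order $O(\epsilon^3)$ of the expansion of $\nht_{\tilde\zeta}$; hence $[\tilde D_t,\tilde\nht]$ is the truncation to order $O(\epsilon^3)$ of $[\tilde D_t,\nht_{\tilde\zeta}]$, and the difference consists of exactly the same singular-integral tails $\mathcal{I}_1,\mathcal{I}_2,\mathcal{I}_3$ that appear in the proof of Lemma \ref{TildeHDifferenceEstimates}, now with an extra factor $\tilde D_t\tilde\zeta - (\tilde D_t\tilde\zeta)^\prime$ (homogeneity degree $0$) inserted, together with kernels where $\tilde D_t$ has landed on the Cauchy kernel or on $\tilde\zeta_\alpha\times\tilde\zeta_\beta$. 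Each such kernel is still of Coifman--Meyer--McIntosh--David type as in Theorem \ref{SingularIntegralL2Estimate}, the extra difference factor contributes $\|\nabla\tilde D_t\tilde\zeta\|_{L^\infty} = O(1)$ (it does not add a power of $\epsilon$, but it does not cost one either), and the counting of powers of $\epsilon$ from the homogeneity in $\tilde\lambda$ is unchanged from the proof of Lemma \ref{TildeHDifferenceEstimates}. So the same bound $C\epsilon^4\|f\|_{H^s}$ or $C\epsilon^3\|f\|_{W^{s,\infty}}$ comes out, using Proposition \ref{SingularIntegralHsEstimate} and Minkowski's inequality exactly as before.

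Part (c) is the same argument applied once more: expand $[D_t,[D_t,\nht_{\tilde\zeta}-\tilde\nht]]$ using Proposition \ref{CommutatorIdentities}(c) for $[\partial_t^2,\nht_\gamma]$ (adapted to $\tilde D_t$ via the change of variables $\tilde\kappa$, as in the derivation of \eqref{CommutatorDtWithP}), writing the double commutator of $\nht_{\tilde\zeta}$ explicitly as a sum of singular integrals in which $\tilde D_t$ has landed in up to two places on the kernel and on the normal vector, and with up to two inserted difference factors of the form $\tilde D_t\tilde\zeta-(\tilde D_t\tilde\zeta)^\prime$ and $\tilde D_t^2\tilde\zeta-(\tilde D_t^2\tilde\zeta)^\prime$; then $\tilde\nht$ contributes the order-$\leq O(\epsilon^3)$ truncation of all of these, and the remaining tails are estimated by Theorem \ref{SingularIntegralL2Estimate} and Proposition \ref{SingularIntegralHsEstimate} just as in Lemma \ref{TildeHDifferenceEstimates}. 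The bookkeeping for the discrepancy $D_t$ versus $\tilde D_t$ is handled as in part (b). The main obstacle — and the only place genuine care is required — is the first of these two points: verifying that commuting $D_t$ (or $D_t^2$) past the operator difference genuinely produces only kernels of the admissible Coifman--Meyer type and that every new factor introduced is either a difference quotient of homogeneity degree $0$ (hence harmless for Theorem \ref{SingularIntegralL2Estimate}) or carries its own factor of $\epsilon$ via the expansion $\tilde\lambda=\sum_{j\geq1}\epsilon^j\lambda^{(j)}$, so that the $O(\epsilon^4)$ (resp.\ $O(\epsilon^3)$ in $W^{s,\infty}$) accounting survives the extra derivatives; once that structural observation is in place, everything reduces to the already-executed computation in the proof of Lemma \ref{TildeHDifferenceEstimates}, and I would simply remark that the estimates "follow as in the proof of Lemma \ref{TildeHDifferenceEstimates}" after exhibiting the relevant commutator formulas.
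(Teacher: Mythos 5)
Your overall strategy is the paper's: represent $\nht_{\tilde{\zeta}} - \tilde{\nht}$ by the explicit kernels $\mathcal{I}_1, \mathcal{I}_2, \mathcal{I}_3$ from Lemma \ref{TildeHDifferenceEstimates}, distribute the material derivative through those kernels, observe that the resulting kernels are still of Coifman--Meyer--McIntosh--David type with the same homogeneity count in $\tilde{\lambda}$, and re-run Theorem \ref{SingularIntegralL2Estimate} and Proposition \ref{SingularIntegralHsEstimate}. Part (a) is indeed just Lemma \ref{TildeHDifferenceEstimates} restated, and your structural observation for (b) and (c) --- that each new factor is either a degree-zero difference quotient or carries its own power of $\epsilon$ from $\tilde{\lambda} = \sum_j \epsilon^j \lambda^{(j)}$ --- is exactly the point the paper leaves to the reader.

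The one genuine misstep is the opening decomposition $D_t = \tilde{D}_t + \bigl((b - \tilde{b})\cdot\mathcal{D}\bigr)$. First, it is circular: the only quantitative control on $b - \tilde{b}$ in the paper is Proposition \ref{DiffQuantitiesEstimates}, whose proof invokes the present Lemma \ref{HMinusHTildeEstimates} (the terms $I_1, I_2, I_5$ there are estimated precisely by splitting $\tilde{\nht} - \nht$ through $\nht_{\tilde{\zeta}}$ and applying this lemma). Second, it is unnecessary, and your fallback of "absorbing the term into the remainder bookkeeping" or "working modulo the remainder" amounts to proving a weaker statement than the clean operator bounds claimed. The paper avoids all of this by keeping the full $D_t$: after distributing it through the kernel one only ever meets $D_t\tilde{f}$ and $D_t^2\tilde{f}$ for $\tilde{f} = \partial\znew^{(j)}, \partial\lambda^{(j)}$, which are handled by writing $D_t\tilde{f} = \tilde{f}_t + (b\cdot\mathcal{D})\tilde{f}$ and
\begin{equation*}
D_t^2\tilde{f} = \partial_t^2\tilde{f} + 2(b\cdot\mathcal{D})\partial_t\tilde{f} + ((D_t b)\cdot\mathcal{D})\tilde{f} + (b\cdot(b\cdot\mathcal{D})\mathcal{D})\tilde{f},
\end{equation*}
and then using only the coarse bounds $\|b\|_{H^s}, \|D_t b\|_{H^s} \leq C\epsilon^2$, which come directly from the formulas of Proposition \ref{BADtAFormulas} and do not require any comparison of $b$ with $\tilde{b}$. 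Replace your splitting with this coarse bound on the full $b$ and the rest of your argument goes through as written.
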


\begin{proof}
We successively distribute the operator $D_t$ through the kernel of $\nht_{\tilde{\zeta}} - \tilde{\nht}$ as derived in Lemma \ref{TildeHDifferenceEstimates}.  In the resulting singular integrals we will have to estimate quantities of the form $D_t \tilde{f}$ and $D_t^2 \tilde{f}$, where $\tilde{f} = \partial \znew^{(j)}, \partial \lambda^{(j)}$ for $j = 1, 2, 3, 4$ and $\partial = \partial_\alpha, \partial_\beta$.  By rewriting $D_t\tilde{f} = \tilde{f}_t + (b \cdot \mathcal{D})\tilde{f}$ and 
$$D_t^2 \tilde{f} = \partial_t^2 \tilde{f} + 2(b \cdot \mathcal{D})\partial_t\tilde{f} + ((D_tb) \cdot \mathcal{D})\tilde{f} + (b \cdot (b \cdot \mathcal{D})\mathcal{D})\tilde{f}$$ we can estimate these terms using the coarse bound $b$ and $D_t b$ through Proposition \ref{BADtAFormulas} by $\|b\|_{H^s}, \|D_t b\|_{H^s} \leq C\epsilon^2$.  As a consequence, the constant $C$ depends on $\|A_0\|_{H^{s + 13} \cap H^3(0^+)}$.  The details are left to the reader.
\end{proof}

\section{The Remainder Quantities and Their Estimates}

In this section we construct equations for quantities related to the remainder
\begin{equation}\label{rDefinition}
r = \lambda - \tilde{\lambda}
\end{equation}
and its derivative from the equation \eqref{NewEvolutionEquations} and \eqref{WaterWaveEvolutionApprox}-\eqref{DtWaterWaveEvolutionApprox}.  In order to derive equations with nonlinearities of cubic and higher orders, we first work with the related quantities
\begin{equation}\label{RhoDefinition}
\rho = \frac{1}{2}(I - \nht)\left((I - \nht)\znew - (I - \nht_{\tilde{\zeta}})\tilde{\znew}\right)\kvec
\end{equation}
as a proxy for the quantity $r$ and
\begin{equation}\label{SigmaDefinition}
\sigma = \frac{1}{2}(I - \nht)\left(D_t(I - \nht)\znew - \tilde{D}_t(I - \nht_{\tilde{\zeta}})\tilde{\znew}\right)\kvec
\end{equation}
as a proxy for the quantity $D_t r$.  Indeed, in parallel with the derivation of \eqref{NewEvolutionEquations} we can write
\begin{align}\label{RhoEquation}
\mathcal{P}\rho & = -\frac{1}{2}[\mathcal{P}, \nht]\left((I - \nht)\znew\kvec - (I - \nht_{\tilde{\zeta}})\tilde{\znew}\kvec\right) + \frac{1}{2}(I - \nht)\mathcal{P}\left((I - \nht)\znew\kvec - (I - \nht_{\tilde{\zeta}})\tilde{\znew}\kvec\right) \notag \\
& = -\frac{1}{2}[\mathcal{P}, \nht]\left((I - \nht)\znew\kvec - (I - \nht_{\tilde{\zeta}})\tilde{\znew}\kvec\right) \notag \\
& \quad - \frac{1}{2}(I - \nht)(\mathcal{P} - \tilde{\mathcal{P}})(I - \nht_{\tilde{\zeta}})\tilde{\znew}\kvec \notag \\
& \quad + \frac12(I - \nht)\left(G - \tilde{\mathcal{P}}(I - \nht_{\tilde{\zeta}})\tilde{\znew}\kvec\right) \notag \\
& := G_\rho
\end{align}
and
\begin{align}\label{SigmaEquation}
\mathcal{P}\sigma & = -\frac12[\mathcal{P}, \nht]\left(D_t(I - \nht)\znew\kvec - \tilde{D}_t(I - \nht_{\tilde{\zeta}})\tilde{\znew}\kvec\right) \notag \\
& \quad - \frac12(I - \nht)\left((\mathcal{P} - \tilde{\mathcal{P}})\tilde{D}_t(I - \nht_{\tilde{\zeta}})\tilde{\znew}\kvec\right) \notag \\
& \quad + \frac12(I - \nht)\left([\mathcal{P}, D_t](I - \nht)\znew\kvec - [\tilde{\mathcal{P}}, \tilde{D}_t](I - \nht_{\tilde{\zeta}})\tilde{\znew}\kvec\right) \notag \\
& \quad + \frac12(I - \nht)\left(D_t\mathcal{P}(I - \nht)\znew\kvec - \tilde{D}_t\tilde{\mathcal{P}}(I - \nht_{\tilde{\zeta}})\tilde{\znew}\kvec\right) \notag \\
& \quad := G_\sigma
\end{align}
We spend the first part of this section controlling the above nonlinearities in terms of the quantity
\begin{equation}\label{EDefinition}
E(t) := \|\,|\mathcal{D}|^\frac12r\|_{H^{s + \frac12}}^2 + \|D_t r\|_{H^{s + \frac12}} + \|D_t^2 r\|_{H^s}^2 
\end{equation}

While the energy constructed directly from \eqref{RhoEquation}, \eqref{SigmaEquation} controls $E$, it does not provide a priori bounds for long enough times to provide a justification for HNLS.  This is due to third-order terms appearing in the energy inequality arising from the quadratic null-form nonlinearities.  Rather than state the full energy that we use immediately, we will derive it as a small perturbation of the energy derived directly from \eqref{RhoEquation}, \eqref{SigmaEquation} in the course of eliminating the null-form nonlinearities using a combination of the method of normal forms and third-order corrections to the energy itself.

So that we are certain of the conditions under which the above quantities and operators are well-defined, we explicitly state and, in this section explicitly assume the following

\begin{assumption}\label{APrioriBound}
Let $s \geq 6$.  For some $A_0 \in H^{s + 13} \cap H^3(0^+)$, let $A, B, \tilde{\zeta}$ be as in Proposition \ref{HNLSLocallyWellPosed}.  Then we suppose that there is an interval $[0, T_0]$ on which there exists a solution $\zeta$ to the equations \eqref{NewEvolutionEquations} satisfying $$\sup_{0 \leq t \leq T_0} E(t)^\frac12 \leq C\epsilon^2$$  As a consequence this implies that $$\sup_{0 \leq t \leq T_0} \|\,|\mathcal{D}|^\frac12(\zeta - P)\|_{W^{s - 1, \infty}} + \|D_t\zeta\|_{W^{s - 1, \infty}} + \|D_t^2 \zeta\|_{W^{s - 2, \infty}} \leq C\epsilon$$ and in particular that the chord-arc condition $$\frac{1}{C} \leq \frac{|(\alpha, \beta) - (\alpha^\prime, \beta^\prime)|}{|\zeta(\alpha, \beta) - \zeta(\alpha^\prime, \beta^\prime)|} \leq C \hspace{.75cm} \text{for all } (\alpha, \beta, t) \neq (\alpha^\prime, \beta^\prime, t)$$ holds for all $0 \leq t \leq T_0$.
\end{assumption}

	\subsection{Relations and Estimates between Remainders of Quantities}

This section is devoted to deriving relations between remainder quantities.  In particular we will show that auxiliary remainder quantities such as $b - \tilde{b}$, $\mathcal{A} - \tilde{\mathcal{A}}$, etc., are suitably bounded in terms of $E$.

\begin{lemma}\label{IsolateIJValued}
Let $f$ be an $\ivec, \jvec$-valued function, and let $g$ be a $\kvec$-valued function.  Then for sufficiently small $\epsilon_0 > 0$ we have $$\|f\|_{H^s} \leq C\|(I - \nht)f\|_{H^s}$$ and $$\|g\|_{H^s} \leq C\|(I - \nht)g\|_{H^s}$$
\end{lemma}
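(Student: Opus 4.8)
The plan is to exploit the fact that an $\ivec,\jvec$-valued function $f$ (resp.\ a $\kvec$-valued function $g$) is "almost anti-analytic" with respect to $\nht$, so that $(I-\nht)$ is invertible on the relevant subspace up to a small error. Recall that $\nht = \nht_\zeta$ and that $\nht^2 = I$ by Proposition~\ref{PotentialTheoryFacts}(d). The key observation I would use is a decomposition of $\nht$ as a perturbation of the flat Hilbert transform $\nht_0$, together with the algebraic action of $\nht_0$ on $\ivec,\jvec$-valued and $\kvec$-valued functions. Indeed, writing $\nht = \nht_0 + (\nht - \nht_0)$, Proposition~\ref{DifferenceHilbertTransformHs} (or the remark following Proposition~\ref{CommutatorEstimates}) gives $\|(\nht-\nht_0)h\|_{H^s}\le C\|\nabla\lambda\|_{H^{s-1}}\|h\|_{H^s}\le C\epsilon\|h\|_{H^s}$, where the $O(\epsilon)$ bound comes from the a priori control on $\lambda = \zeta - P$ furnished by \ref{APrioriBound}. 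So modulo an $O(\epsilon)$ operator, it suffices to analyze $(I-\nht_0)$ on these two subspaces.

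The main step is therefore to check that $(I - \nht_0)$ is bounded below on $\ivec,\jvec$-valued functions and on $\kvec$-valued functions. On the Fourier side, the symbol of $\nht_0$ is $-\frac{\xi_1}{|\xi|}\,(\cdot) + \frac{\xi_2}{|\xi|}\,(\cdot)\,\kvec$ acting after the $\jvec$-Fourier transform (as computed in the proof of Proposition~\ref{FlatHilbertTransformExpansion}), i.e.\ $\nht_0 = -\jvec\mathcal{R}_1 + \ivec\mathcal{R}_2$ where $\mathcal{R}_1,\mathcal{R}_2$ are the Riesz transforms and the left factors $-\jvec,\ivec$ act by quaternion multiplication together with a reflection $\overline{\mathcal{F}}_\jvec$ on the $\ivec,\kvec$-components. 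Concretely: for an $\ivec,\jvec$-valued $f = f_1\ivec + f_2\jvec$, one computes $\nht_0 f$ and finds that $(I-\nht_0)f$ has an invertible symbol on the $\ivec,\jvec$-block bounded away from $0$ and $\infty$ (the relevant $2\times2$ matrix multiplier is $I$ plus a skew-type piece built from $\mathcal{R}_1,\mathcal{R}_2$, whose eigenvalues have modulus $\sqrt{2}$ since $\mathcal{R}_1^2+\mathcal{R}_2^2 = -I$ on the Fourier side up to sign conventions); similarly for a $\kvec$-valued $g = g_3\kvec$ the symbol of $(I-\nht_0)$ restricted to the span of $\kvec, \ivec$ (or $\kvec,\jvec$) is again bounded below. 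In either case one gets $\|h\|_{H^s}\le C\|(I-\nht_0)h\|_{H^s}$ for $h$ in the respective subspace, with $C$ a universal constant.

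Combining the two steps: for $\ivec,\jvec$-valued $f$ we write $(I-\nht_0)f = (I-\nht)f + (\nht - \nht_0)f$, whence $\|f\|_{H^s}\le C\|(I-\nht_0)f\|_{H^s}\le C\|(I-\nht)f\|_{H^s} + C\epsilon\|f\|_{H^s}$; choosing $\epsilon_0$ small enough to absorb the last term gives the first inequality, and the $\kvec$-valued case is identical. The step I expect to be the main obstacle is the purely algebraic lower bound for $(I-\nht_0)$ on each subspace: one must be careful that $\nht_0$ does not preserve the $\ivec,\jvec$-block or the $\kvec$-line individually (it mixes $\ivec,\jvec$ with $\kvec$ via the conjugation-twisted Fourier symbol), so the claim is really that $(I-\nht_0)$ is injective with closed range on the $H^s$-closure of each subspace, which requires writing out the $4\times4$ symbol matrix explicitly and checking its determinant is bounded away from zero uniformly in $\xi$. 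An alternative, and perhaps cleaner, route to this lower bound is to use $\nht^2 = I$ directly: if $(I-\nht)f$ were small, then applying $(I+\nht)$ and using $(I+\nht)(I-\nht)=0$ forces $f$ to lie (approximately) in the range of $(I+\nht)$; but a nonzero $\ivec,\jvec$-valued or $\kvec$-valued function cannot be a trace of an anti-analytic function modulo $O(\epsilon)$, by the same Clifford-analytic reasoning used in Proposition~\ref{ChangeOfVariablesIJValued}. I would present whichever of these two arguments turns out to be shorter, most likely the symbol computation since it gives the explicit constant $C$ painlessly.
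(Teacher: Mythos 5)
Your overall strategy is viable and genuinely different from the paper's. The paper never passes through $\nht_0$: it uses the reflection $f \mapsto f^\dagger = \kvec f \kvec$, notes that $f^\dagger = f$ for $\ivec,\jvec$-valued $f$ (and $g^\dagger = -g$ for $\kvec$-valued $g$), writes $f$ as a combination of $(I-\nht)f$, its reflection, and $(\nht - \nht^\dagger)f$, and then controls $\nht - \nht^\dagger = \nht_\zeta - \nht_{\zeta^\dagger}$ using Proposition \ref{CommutatorEstimates}, since the two surfaces differ only by $2\znew\kvec$. Your route instead perturbs off the flat transform; two remarks on it.

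First, the step you flag as the main obstacle is in fact immediate, and no $4\times4$ symbol determinant is needed. Writing $\nht_0 = -\jvec\mathcal{R}_1 + \ivec\mathcal{R}_2$ with the units multiplying on the left and the Riesz kernels scalar, one computes for $f = f_1\ivec + f_2\jvec$ that $\nht_0 f = (\mathcal{R}_1 f_2 - \mathcal{R}_2 f_1) + (\mathcal{R}_1 f_1 + \mathcal{R}_2 f_2)\kvec$ is $1,\kvec$-valued, and for $g = g_3\kvec$ that $\nht_0 g = -(\mathcal{R}_1 g_3)\ivec - (\mathcal{R}_2 g_3)\jvec$ is $\ivec,\jvec$-valued. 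So in either case $h$ and $\nht_0 h$ lie in complementary component subspaces, the $\ivec,\jvec$ (resp.\ $\kvec$) part of $(I-\nht_0)h$ is exactly $h$, and orthogonality of the quaternion components gives $\|h\|_{H^s} \leq \|(I-\nht_0)h\|_{H^s}$ with constant $1$. The mixing you were worried about is precisely what makes the bound trivial.

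Second, there is a genuine error in your perturbative step: the bound $\|(\nht - \nht_0)h\|_{H^s} \leq C\|\nabla\lambda\|_{H^{s-1}}\|h\|_{H^s}$ does not yield $C\epsilon\|h\|_{H^s}$, because $\|\nabla\lambda\|_{H^{s-1}}$ is $O(1)$, not $O(\epsilon)$: the leading term of $\tilde{\lambda}$ is $\epsilon\ivec A(\epsilon\alpha,\epsilon\beta,\cdot)e^{\jvec\phi}$ with $A$ a function of slow variables, so its derivatives cost a factor $\epsilon^{-1}$ in $L^2$ relative to their $L^\infty$ size --- exactly the ``wave packets are $O(1)$ in $L^2$'' difficulty stressed in the introduction. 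As written, your absorption step therefore fails. The fix is the one indicated in the remark following Proposition \ref{CommutatorEstimates}: split $\lambda = \tilde{\lambda} + r$ and arrange the singular-integral estimate so that when the top-order derivatives fall on $\tilde{\lambda}$ that factor is measured in $W^{s,\infty}$ (size $O(\epsilon)$) with $h$ in $L^2$, while the contribution of $r$ is measured in $H^s$ (size $O(\epsilon^2)$ by the a priori assumption on $E$) with $h$ in $L^\infty$. This is exactly the bookkeeping the paper's own proof performs: its constant is $C(\epsilon + \|\,|\mathcal{D}|(\znew - \tilde{\znew})\|_{H^s})$ rather than a norm of the full surface difference in $H^{s-1}$. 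With that correction your argument closes.
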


\begin{proof}
The proof of the estimates are similar, and so we will only show the first.  Since $f$ is $\ivec,\jvec$-valued, $f^\dagger = f$ and so we have
$$f = \frac12(\nht - \nht^\dagger)f + \frac12(I - \nht)f + \left(\frac12 (I - \nht)f\right)^\dagger$$ from which $\|f\|_{L^2} \leq \|(I - \nht)f\|_{L^2} + \|(\nht - \nht^\dagger)f\|_{L^2}$.
Now since $\nht^\dagger = \nht_{\zeta^\dagger}$, we can use Proposition \ref{CommutatorEstimates} to find that
\begin{align*}
\|f\|_{H^s} & \leq C(\epsilon + \||\mathcal{D}|(\znew - \tilde{\znew})\|_{H^s}\|f\|_{H^s} + \|(I - \nht)f\|_{H^s} \\
& \leq C \epsilon \|f\|_{H^s} + \|(I - \nht)f\|_{H^s}
\end{align*} and the bound now follows for $\epsilon_0 > 0$ chosen sufficiently small. 
\end{proof}

We record the operator differences:

\begin{equation}
D_t - \tilde{D}_t = (b - \tilde{b}) \cdot \mathcal{D}
\end{equation}

\begin{align}\label{DiffDt2Formula}
D_t^2 - \tilde{D}_t^2 & = D_t(D_t - \tilde{D}_t) + (D_t - \tilde{D}_t)\tilde{D}_t \notag \\
& = D_t(b - \tilde{b}) \cdot \mathcal{D} + (b - \tilde{b}) \cdot (D_t\mathcal{D} + \mathcal{D}\tilde{D}_t) 
\end{align}

\begin{align}\label{DiffANTimesNablaFormula}
\mathcal{A}(\mathcal{N} \times \nabla) - \tilde{\mathcal{A}}(\tilde{\mathcal{N}} \times \nabla) = (\mathcal{A} - \tilde{\mathcal{A}})(\zeta_\beta\partial_\alpha - \zeta_\alpha \partial_\beta) + \tilde{\mathcal{A}}(r_\beta\partial_\alpha - r_\alpha\partial_\beta)
\end{align}

\begin{align}\label{DiffPFormula}
\mathcal{P} - \tilde{\mathcal{P}} & = D_t(b - \tilde{b}) \cdot \mathcal{D} + (b - \tilde{b}) \cdot (D_t\mathcal{D} + \mathcal{D}\tilde{D}_t) \notag \\
& \quad - (\mathcal{A} - \tilde{\mathcal{A}})(\zeta_\beta\partial_\alpha - \zeta_\alpha\partial_\beta) - \tilde{\mathcal{A}}(r_\beta \partial_\alpha - r_\alpha \partial_\beta)
\end{align}

as well as the commutator

\begin{align}\label{DtCommuteP}
[D_t, \mathcal{P}] & = -(D_t\mathcal{A})(\mathcal{N} \times \nabla) \notag \\
& \quad - \mathcal{A}(\zeta_\beta(b_\alpha \cdot \mathcal{D}) - \zeta_\alpha(b_\beta \cdot \mathcal{D})) \notag \\
& \quad - \mathcal{A}(D_t\zeta_\beta \partial_\alpha - D_t\zeta_\alpha \partial_\beta) 
\end{align}

\begin{proposition}\label{DiffQuantitiesEstimates}
Let $s \geq 6$.  Then for sufficiently small $\epsilon_0 > 0$ we have
\begin{itemize}
\item[(a)]{$\|b - \tilde{b}\|_{H^{s + \frac12}} \leq C(E + \epsilon E^\frac12 + \epsilon^3)$}
\item[(b)]{$\|D_t(b - \tilde{b})\|_{H^s} \leq C(E + \epsilon E^\frac12 + \epsilon^3)$}
\item[(c)]{$\|\mathcal{A} - \tilde{\mathcal{A}}\|_{H^s} \leq C(E + \epsilon E^\frac12 + \epsilon^3)$}
\end{itemize}
\end{proposition}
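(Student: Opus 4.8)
The plan is to subtract the exact identities of Proposition~\ref{BADtAFormulas} from their approximate counterparts \eqref{BFormulaApprox}--\eqref{AFormulaApprox} and then invert the operators appearing on the left. For (a), since $b$ and $\tilde{b}$ are both $\ivec,\jvec$-valued, the $H^{s+\frac12}$ analogue of Lemma~\ref{IsolateIJValued} --- valid by the same argument, using the half-derivative-precise estimates of Proposition~\ref{CommutatorEstimates} and the remark following it --- reduces the estimate to bounding $(I-\nht)(b-\tilde{b})$ in $H^{s+\frac12}$. Writing $(I-\nht)(b-\tilde{b}) = \bigl[(I-\nht)b\bigr] - \bigl[(I-\tilde{\nht})\tilde{b}\bigr] - (\tilde{\nht}-\nht)\tilde{b}$ and inserting Proposition~\ref{BADtAFormulas}(a) and \eqref{BFormulaApprox}, one is left with a difference of commutators $[D_t,\nht](I+\nht)\znew\kvec - [\tilde{D}_t,\tilde{\nht}](I+\tilde{\nht})\tilde{\znew}\kvec$, a difference of the two $D_t\mathcal{K}\znew\kvec$ terms, the auxiliary term $(\tilde{\nht}-\nht)\tilde{b} = (\tilde{\nht}-\nht_{\tilde{\zeta}})\tilde{b} + (\nht_{\tilde{\zeta}}-\nht_{\zeta})\tilde{b}$, and the residual of \eqref{BFormulaApprox}, which (being of physical size $O(\epsilon^4)$ concentrated in slow variables) contributes at most $C\epsilon^3$ in $H^{s+\frac12}$.

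Each commutator difference is then telescoped into pieces in which exactly one factor is a remainder difference --- $r = \lambda - \tilde{\lambda}$ (see \eqref{rDefinition}), $D_t r$, $b-\tilde{b}$, or a derivative of one of these --- while all the remaining factors are ``background'' quantities built from $\tilde{\lambda}$, $\tilde{b}$, $\zeta$, whose $W^{s,\infty}$-type norms are $O(\epsilon)$ by Assumption~\ref{APrioriBound} even though their $L^2$-type norms are merely $O(1)$ because of the $e^{\jvec\phi}$ oscillation. Applying the Coifman--Meyer bounds of Theorem~\ref{SingularIntegralL2Estimate} together with Propositions~\ref{SingularIntegralHsEstimate}, \ref{DifferenceHilbertTransformHs}, \ref{CommutatorEstimates} and their $W^{s,\infty}$-relaxations (Remark~\ref{MoreHsEstimates}), one always estimates the single remainder-difference factor in an $L^2$-based Sobolev norm --- which is controlled by $C E^{\frac12}$, since $E$ dominates $\|\nabla r\|_{H^{s-\frac12}}$, $\|D_t r\|_{H^{s+\frac12}}$, $\|D_t^2 r\|_{H^s}$ and, by interpolation (Proposition~\ref{ComplexInterpolation}), $\|r\|_{\dot{H}^\sigma}$ for $\frac12\le\sigma\le s+1$; or by $\|b-\tilde{b}\|_{H^{s+\frac12}}$ itself when $b-\tilde{b}$ is the isolated factor --- and all background factors in $L^\infty$, where each costs a power of $\epsilon$. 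This yields three kinds of contributions: genuinely quadratic terms of size $CE$; terms of the shape $\epsilon\times(\text{remainder difference})$, of size $C\epsilon E^{\frac12}$; and dangerous terms of the shape $C\epsilon\,\|b-\tilde{b}\|_{H^{s+\frac12}}$, which are absorbed into the left-hand side once $\epsilon_0$ is small. Together with Lemma~\ref{HMinusHTildeEstimates}(a) for $(\tilde{\nht}-\nht_{\tilde{\zeta}})\tilde{b}$ and the residual bound, this proves (a).

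For (c) I would run the same scheme on Proposition~\ref{BADtAFormulas}(b) versus \eqref{AFormulaApprox}. The scalar operator $I-\mathcal{K}$ is invertible on $H^s$ with uniformly bounded inverse, because $\mathcal{K}=\mathcal{K}-\mathcal{K}_0$ (as $\mathcal{K}_0=\Re\nht_0=0$) has operator norm $O(\epsilon)$ by Theorem~\ref{SingularIntegralL2Estimate} under the a priori bound; hence $\mathcal{A}-\tilde{\mathcal{A}} = (I-\mathcal{K})^{-1}\bigl[(\mathcal{K}-\tilde{\mathcal{K}})\tilde{\mathcal{A}} + (\text{difference of the braces}) + O(\epsilon^4)\bigr]$. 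The braces involve $[D_t,\nht]D_t\zeta$, the null-form commutator $[\mathcal{A}(\mathcal{N}\times\nabla),\nht](I+\nht)\znew\kvec$, and the terms with $\partial_\alpha\mathcal{K}\znew\kvec$, $\partial_\beta\mathcal{K}\znew\kvec$ and $\lambda_\alpha\times\lambda_\beta$; each differences exactly as in (a) --- the differenced null-form commutator in particular gaining a full power of $\epsilon$ because its remaining factors are $O(\epsilon)$ in $L^\infty$ --- and the $D_t\zeta-\tilde{D}_t\tilde{\zeta}$ pieces are controlled using part (a). Collecting yields a bound $C(E + \epsilon E^{\frac12} + \epsilon^3)$ plus an absorbable $C\epsilon\,\|\mathcal{A}-\tilde{\mathcal{A}}\|_{H^s}$, hence (c). For (b) I would differentiate the identity of Proposition~\ref{BADtAFormulas}(a) by $D_t$, producing (via Proposition~\ref{CommutatorIdentities}) a double commutator $[D_t,[D_t,\nht]]$ together with first-order commutators acting on the $D_t$-derivatives of the data already appearing in (a); subtracting the $\tilde{D}_t$-derivative of \eqref{BFormulaApprox} (whose residual is again of physical size $O(\epsilon^4)$), isolating $D_t b - \tilde{D}_t\tilde{b}$ by the $H^s$-version of Lemma~\ref{IsolateIJValued} (note $D_t b$ is again $\ivec,\jvec$-valued), and observing that $D_t(b-\tilde{b})$ differs from $D_t b - \tilde{D}_t\tilde{b}$ by $((b-\tilde{b})\cdot\mathcal{D})\tilde{b} = O(\epsilon^3)\|b-\tilde{b}\|_{L^\infty}$, which is negligible by (a) and Sobolev embedding, the argument closes exactly as before; Lemma~\ref{HMinusHTildeEstimates}(b)--(c) supplies the needed $[D_t,\nht_{\tilde{\zeta}}-\tilde{\nht}]$ and $[D_t,[D_t,\nht_{\tilde{\zeta}}-\tilde{\nht}]]$ bounds.

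The main obstacle is the bookkeeping of which factor in each differenced term carries the $L^2$-based norm: one must check, term by term, that the remainder difference can always be isolated against factors that are $O(\epsilon)$ in $W^{s,\infty}$ --- and not merely $O(1)$ in $H^s$ --- so that no term of size $O(\epsilon^2)$ survives without either a remainder factor attached or two extra powers of $\epsilon$. This is precisely what the formal construction of Section~3 guarantees, since the $O(1)$, $O(\epsilon)$ and $O(\epsilon^2)$ parts of the exact and approximate identities coincide by construction; the delicate points are verifying this for the null-form commutator in (c), and, for the half-derivative estimate in (a), making sure that the placement of $|\mathcal{D}|^\nu$ between the two factors of each commutator dictated by Proposition~\ref{CommutatorEstimates} is compatible with the fact that $E$ controls only $\|\,|\mathcal{D}|^{\frac12}r\|_{H^{s+\frac12}}$ and not $\|r\|_{L^2}$.
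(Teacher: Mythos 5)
Your proposal follows essentially the same route as the paper: subtract the exact identities of Proposition \ref{BADtAFormulas} from \eqref{BFormulaApprox}--\eqref{AFormulaApprox}, telescope each difference into pieces carrying exactly one remainder factor (estimated in an $L^2$-based norm via $E^{\frac12}$ or $\|b-\tilde{b}\|_{H^{s+\frac12}}$) against $O(\epsilon)$ background factors in $W^{s,\infty}$, control the operator differences with Lemma \ref{HMinusHTildeEstimates} and Propositions \ref{SingularIntegralHsEstimate}--\ref{CommutatorEstimates}, absorb the self-referential $C\epsilon\|b-\tilde{b}\|$ and $C\epsilon\|\mathcal{A}-\tilde{\mathcal{A}}\|$ terms for small $\epsilon_0$, and recover the full quantities via Lemma \ref{IsolateIJValued} (the paper absorbs the $\mathcal{K}(\mathcal{A}-\tilde{\mathcal{A}})$ term directly rather than inverting $I-\mathcal{K}$ by Neumann series, but these are equivalent). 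The argument is correct as proposed.
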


\begin{proof}
In this proof, let $\mathcal{O}(\epsilon^n)$ represent a term with Sobolev norm of size $O(\epsilon^n)$.  To prove (a), recall the formula for $b$ given in Proposition \ref{BADtAFormulas} as well as the approximate equation \eqref{BFormulaApprox}: $$(I - \tilde{\nht})\tilde{b} = -[\tilde{D}_t, \tilde{\nht}](I + \tilde{\nht})\tilde{\znew}\kvec + (I - \tilde{\nht})\tilde{D}_t\tilde{\mathcal{K}}\tilde{\znew}\kvec + \mathcal{O}(\epsilon^3)$$  We subtract these and express the difference in such a way as to show the explicit dependence on approximate and remainder quantities and operators:
\begin{align}\label{IMinusHBMinusBTilde}
(I - \nht)(b - \tilde{b}) & = - [\tilde{D}_t, \tilde{\nht} - \nht](I + \tilde{\nht})\tilde{\znew}\kvec \notag \\
& \quad - [\tilde{D}_t, \nht](\tilde{\nht} - \nht)\tilde{\znew}\kvec \notag \\
& \quad - [\tilde{D}_t, \nht](I + \nht)(\tilde{\znew} - \znew)\kvec \notag \\
& \quad - [(\tilde{b} - b) \cdot \mathcal{D}, \nht](I + \nht)\znew\kvec\notag  \\
& \quad + (\tilde{\nht} - \nht)\tilde{D}_t\tilde{\mathcal{K}}\tilde{\znew}\kvec \notag \\
& \quad + (I - \nht)((\tilde{b} - b) \cdot \mathcal{D})\tilde{\mathcal{K}}\tilde{\znew}\kvec \\
& \quad + (I - \nht)D_t(\tilde{\mathcal{K}} - \mathcal{K})\tilde{\znew}\kvec \notag \\
& \quad + (I - \nht)D_t\mathcal{K}(\tilde{\znew} - \znew)\kvec \notag \\
& \quad + \mathcal{O}(\epsilon^3) \notag \\
& = I_1 + I_2 + \cdots + I_8 + \mathcal{O}(\epsilon^3)\notag 
\end{align}

First, using Proposition \ref{CommutatorEstimates} along with Proposition \ref{ComplexInterpolation} with the multiplication mapping $T(g) = gh$, we have that $$\|I_6\|_{H^{s + \frac12}} \leq \|b - \tilde{b}\|_{H^{s + \frac12}}\|\tilde{\mathcal{K}}\tilde{\znew}\|_{W^{s + 2, \infty}} \leq C\epsilon\|b - \tilde{b}\|_{H^{s + \frac12}}$$  Using Proposition \ref{CommutatorEstimates}, we see that $I_4$ is bounded by $C(E^{1/2} + \epsilon)\|b - \tilde{b}\|_{H^{s + \frac12}}$.  Next, by Lemmas \ref{CommutatorEstimates} and \ref{HMinusHTildeEstimates} we have
$$\|I_1\|_{H^{s + \frac12}}, \, \|I_2\|_{H^{s + \frac12}}, \, \|I_5\|_{H^{s + \frac12}} \leq C(E^\frac12 + \epsilon^3)\epsilon \leq C(\epsilon E^\frac12 + \epsilon^3)$$  Similarly, since $\mathcal{K} = \Re(\nht)$, the same estimates imply that $\|I_7\|_{H^{s + \frac12}} \leq C(\epsilon E^\frac12 + \epsilon^3)$.  The remaining terms satisfy $$\|I_3\|_{H^{s + \frac12}}, \, \|I_8\|_{H^{s + \frac12}} \leq C(E^\frac12 + \epsilon)E^\frac12$$  Applying Lemma \ref{IsolateIJValued} gives
$$\|b - \tilde{b}\|_{H^{s + \frac12}} \leq C(E^\frac12 + \epsilon)\|b - \tilde{b}\|_{H^{s + \frac12}} + C(E + \epsilon E^\frac12 + \epsilon^3),$$ from which (a) follows provided we choose $\epsilon_0 > 0$ to be sufficiently small by the a priori bound on $E$.

To show (b), we write
$$(I - \nht)D_t(b - \tilde{b}) = -[D_t, \nht](b - \tilde{b}) + D_t(I - \nht)(b - \tilde{b})$$
By Proposition \ref{CommutatorEstimates} and (a), the first of these terms is bounded by $C(E + \epsilon E^\frac12 + \epsilon^3)$ in $H^s$.  Applying $D_t$ to \eqref{IMinusHBMinusBTilde} and commuting $D_t$ past the various operators in \eqref{IMinusHBMinusBTilde} yields a sum of terms that can be estimated using (a), Proposition \ref{CommutatorEstimates} and Lemma \ref{HMinusHTildeEstimates}.  This yields bounds of the form
$$\|D_t(I - \nht)(b - \tilde{b})\|_{H^s} \leq C(E^\frac12 + \epsilon)\|D_t(b - \tilde{b})\|_{H^s} + C(E + \epsilon E^\frac12 + \epsilon^3) + \mathcal{O}(\epsilon^3)$$  Then by Lemma \ref{IsolateIJValued} we have
$\|D_t(b - \tilde{b})\|_{H^s} \leq C(E^\frac12 + \epsilon)\|D_t(b - \tilde{b})\|_{H^s} + C(E + \epsilon E^\frac12 + \epsilon^3)$, from which (b) follows after possibly choosing smaller $\epsilon_0 > 0$.

For the proof of (c), we begin with the formula \eqref{AFormulaMultiscale}:
\begin{align*}
(I - \mathcal{K})\mathcal{A} & = \Bigl\{1 + [D_t, \nht]D_t\zeta + [\mathcal{A}(\mathcal{N} \times \nabla), \nht] (I + \nht)\znew\kvec  \\
& \qquad + (I - \nht)\left(-\mathcal{A}\zeta_\beta \times (\partial_\alpha\mathcal{K}\znew\kvec) + \mathcal{A}\zeta_\alpha \times (\partial_\beta\mathcal{K}\znew\kvec) + \mathcal{A}(\lambda_\alpha \times \lambda_\beta)\right)\Bigr\}_3 \notag
\end{align*} 
we subtract its approximate version \eqref{AFormulaApprox} and arrive at the following sum of terms, where we have isolated the occurence of $\mathcal{A} - \tilde{\mathcal{A}}$:
\begin{align}\label{DiffAFormula}
& \quad (I - \mathcal{K})(\mathcal{A} - \tilde{\mathcal{A}}) \notag \\
& = \Bigl((I - \mathcal{K})(\mathcal{A} - 1) - (I - \tilde{\mathcal{K}})(\tilde{\mathcal{A}} - 1)\Bigr) + (\mathcal{K} - \tilde{\mathcal{K}})(\tilde{\mathcal{A}} - 1) \notag \\
& = (\mathcal{K} - \tilde{\mathcal{K}})(\tilde{\mathcal{A}} - 1) \notag \\
& \quad + \Biggl\{[D_t, \nht]D_t\zeta - [\tilde{D}_t, \tilde{\nht}]\tilde{D}_t\tilde{\zeta} \\
& \qquad + [(\mathcal{A} - \tilde{\mathcal{A}})(\mathcal{N} \times \nabla), \nht] (I + \nht)\znew\kvec \notag \\
& \qquad + [\tilde{\mathcal{A}}(\mathcal{N} \times \nabla), \nht] (I + \nht)\znew\kvec - [\tilde{\mathcal{A}}(\tilde{\mathcal{N}} \times \tilde{\nabla}), \tilde{\nht}] (I + \tilde{\nht})\tilde{\znew}\kvec \notag \\
& \qquad + (I - \nht)\left(-(\mathcal{A} - \tilde{\mathcal{A}})\zeta_\beta \times (\partial_\alpha\mathcal{K}\znew\kvec) + (\mathcal{A} - \tilde{\mathcal{A}})\zeta_\alpha \times (\partial_\beta\mathcal{K}\znew\kvec) + (\mathcal{A} - \tilde{\mathcal{A}})(\lambda_\alpha \times \lambda_\beta)\right) \notag \\
& \qquad + \Biggl( (I - \nht)\left(-\tilde{\mathcal{A}}\zeta_\beta \times (\partial_\alpha\mathcal{K}\znew\kvec) + \tilde{\mathcal{A}}\zeta_\alpha \times (\partial_\beta\mathcal{K}\znew\kvec) + \tilde{\mathcal{A}}(\lambda_\alpha \times \lambda_\beta)\right)  \notag \\
& \qquad \qquad - (I - \tilde{\nht})\left(-\tilde{\mathcal{A}}\tilde{\zeta}_\beta \times (\partial_\alpha\tilde{\mathcal{K}}\tilde{\znew}\kvec) + \tilde{\mathcal{A}}\tilde{\zeta}_\alpha \times (\partial_\beta\tilde{\mathcal{K}}\tilde{\znew}\kvec) + \tilde{\mathcal{A}}(\tilde{\lambda}_\alpha \times \tilde{\lambda}_\beta)\right)\Biggr)\Biggr\}_3\notag + \mathcal{O}(\epsilon^3) \notag \\
& = J_1 + J_2 + \cdots + J_6 + \mathcal{O}(\epsilon^3) \notag
\end{align}
As in (a) we have $$\|J_3\|_{H^s}, \, \|J_5\|_{H^s} \leq C(E^\frac12 + \epsilon)\|\mathcal{A} - \tilde{\mathcal{A}}\|_{H^s}$$ and by expanding the other terms into a sum of terms involving only approximate and remainder quantities using the method of \eqref{IMinusHBMinusBTilde}, we estimate the other terms as $$\|J_1\|_{H^s}, \, \|J_2\|_{H^s}, \, \|J_4\|_{H^s}, \, \|J_6\|_{H^s} \leq C(E + \epsilon E^\frac12 + \epsilon^3)$$
Thus we have 
\begin{align*}
\|\mathcal{A} - \tilde{\mathcal{A}}\|_{H^s} & \leq \|\mathcal{K}(\mathcal{A} - \tilde{\mathcal{A}})\|_{H^s} + C\|\mathcal{A} - \tilde{\mathcal{A}}\|_{H^s}(E^\frac12 + \epsilon) + C(E + \epsilon E^\frac12 + \epsilon^3) \\
& \leq C\|\mathcal{A} - \tilde{\mathcal{A}}\|_{H^s}(E^\frac12 + \epsilon) + C(E + \epsilon E^\frac12 + \epsilon^3),
\end{align*}
from which (c) follows by choosing $\epsilon_0 > 0$ sufficiently small.
\end{proof}

The energy we construct in the next section is in terms of quantities such as $D_t\partial^j\rho$, $D_t\partial^j \sigma$ in $L^2$.  We must show that we can bound these quantities in $L^2$ by $E^\frac12$ plus an acceptable error depending on $\epsilon$.  We will be aided by the commutator identities

\begin{equation}\label{DtCommutePartials}
[\partial^j, D_t] = \sum_{j_1 + j_2 = j}^{j_1 \neq 0} (\partial^{j_1} b) \cdot \mathcal{D}\partial^{j_2}
\end{equation}

\begin{equation}\label{Dt2CommutePartials}
[\partial^j, D_t^2] = [\partial^j, D_t]D_t + D_t[\partial^j, D_t]
\end{equation}

\begin{proposition}\label{CarefulControlRhoSigma}
We have the following estimates:
\begin{itemize}
\item[(a)]{$\|\,|\mathcal{D}|^\frac12(\rho - r^\dagger)\|_{H^{s + \frac12}} \leq C\epsilon(E^\frac12 + \epsilon^2)$}
\item[(b)]{$\|\Re(\rho)\|_{H^{s + 1}} + \|\Re(\sigma)\|_{H^{s + \frac12}} \leq C\epsilon (E^\frac12 + \epsilon^2)$} 
\item[(c)]{$\|\sigma - D_t\rho\|_{H^{s + \frac12}} + \|\sigma + D_t r^\dagger\|_{H^{s + \frac12}} \leq C\epsilon(E^\frac12 + \epsilon^2)$}
\item[(d)]{$\|D_t\sigma - D_t^2\rho\|_{H^s} + \|D_t^2\rho - D_t^2r^\dagger\|_{H^s} \leq C\epsilon(E^\frac12 + \epsilon^2)$}
\item[(e)]{For $|j| \leq s$, $$\|D_t\partial^j\rho - \partial^jD_t\rho\|_{H^\frac12} + \|D_t^2 \partial^j \rho - \partial^j D_t^2 \rho\|_{L^2} + \|D_t\partial^j\sigma - \partial^j D_t\sigma\|_{L^2} \leq C\epsilon(E^\frac12 + \epsilon^2)$$}
\end{itemize}
\end{proposition}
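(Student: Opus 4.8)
The plan rests on the two exact algebraic identities for $(I-\nht)\znew\kvec$ that are built into Wu's transform. The definition--level identity \eqref{LambdaIdentity}, $\lambda=(I+\nht-\mathcal{K})\znew\kvec$, rearranges (using that $\lambda$ is vector-valued, so that $\lambda-\lambda^\dagger=2\znew\kvec$) into an exact formula for $(I-\nht)\znew\kvec$ in terms of $\lambda^\dagger$ and the second-order quantity $\mathcal{K}_\zeta\znew\kvec$; the approximate identity \eqref{LambdaFromZApprox}, after trading $\tilde{\nht},\tilde{\mathcal{K}}$ for the true operators $\nht_{\tilde\zeta},\mathcal{K}_{\tilde\zeta}$ at an $O(\epsilon^4)$ cost (Lemma \ref{HMinusHTildeEstimates}), does the same for $\tilde\zeta$. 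Subtracting, the inner bracket of $\rho$,
\[ \Phi-\tilde\Phi:=(I-\nht)\znew\kvec-(I-\nht_{\tilde\zeta})\tilde\znew\kvec, \]
is $\pm r^\dagger$ plus the difference $\mathcal{K}_\zeta\znew\kvec-\mathcal{K}_{\tilde\zeta}\tilde\znew\kvec$ plus $O(\epsilon^4)$. The $\mathcal{K}$-difference is split into $(\mathcal{K}_\zeta-\mathcal{K}_{\tilde\zeta})\znew\kvec$ and $\mathcal{K}_{\tilde\zeta}(\znew-\tilde\znew)\kvec$: the first is bounded by the real part of Proposition \ref{DifferenceHilbertTransformHs} with the half-derivative refinement of the Remark following Proposition \ref{CommutatorEstimates} (placing the fractional derivative on $r$ and using $\|\znew\kvec\|_{W^{s+1,\infty}}\lesssim\epsilon$), and the second by the fact that the double-layer potential $\mathcal{K}_{\tilde\zeta}$ is only weakly singular, of operator norm $O(\|\nabla\tilde\lambda\|)=O(\epsilon)$, together with the decay estimate Proposition \ref{SobolevEmbeddings}(c). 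Throughout one uses A Priori Assumption \ref{APrioriBound}, which yields $\|\nabla r\|\lesssim E^{1/2}$ by interpolation and absorbs $E$, $\epsilon E^{1/2}$, $\epsilon^3$ into $\epsilon(E^{1/2}+\epsilon^2)$. Thus $\Phi-\tilde\Phi=\pm r^\dagger+O(\epsilon(E^{1/2}+\epsilon^2))$ in $\dot H^{\frac12}\cap\dot H^{s+1}$.

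For (a) the outer projection $\tfrac12(I-\nht)$ is essentially inert: $\Phi$ lies exactly in the $(-1)$-eigenspace of $\nht_\zeta$ and $\tilde\Phi$ in that of $\nht_{\tilde\zeta}$, so $\nht(\Phi-\tilde\Phi)=-(\Phi-\tilde\Phi)-(\nht_\zeta-\nht_{\tilde\zeta})\tilde\Phi$, the last term being $O(\epsilon(E^{1/2}+\epsilon^2))$ by the refined difference estimate since $\tilde\Phi=O(\epsilon)$; hence $\rho=\tfrac12(I-\nht)(\Phi-\tilde\Phi)=(\Phi-\tilde\Phi)+O(\epsilon(E^{1/2}+\epsilon^2))$, which with the previous paragraph gives (a) and shows $\rho$ is a faithful proxy for $r$. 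For (b) one notes $\Re\Phi=-\Re(\nht_\zeta\znew\kvec)=-\mathcal{K}_\zeta\znew\kvec$ and likewise for $\tilde\Phi$, so $\Re(\Phi-\tilde\Phi)$ is exactly the $\mathcal{K}$-difference up to $O(\epsilon^4)$, already shown to be $O(\epsilon(E^{1/2}+\epsilon^2))$ in $H^{s+1}$; the bound for $\Re\sigma$ follows from the differentiated version below.

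For (c) and (d) I would run the same argument with $D_t$, respectively $D_t^2$, inserted throughout. One commutes $D_t$ past $\tfrac12(I-\nht)$ via Proposition \ref{CommutatorIdentities}(a), the extra commutator being controlled by Proposition \ref{CommutatorEstimates}; one replaces $D_t\tilde\Phi$ by $\tilde D_t\tilde\Phi$ at the cost of $(b-\tilde b)\cdot\mathcal{D}\tilde\Phi$, bounded by Proposition \ref{DiffQuantitiesEstimates}(a) since $\tilde\Phi=O(\epsilon)$; and one differentiates the algebraic identities, using that $D_t$ commutes with $\dagger$, that $D_t\zeta$ is exactly analytic so that $(I-\nht)D_t\lambda=-(I-\nht)b$ is genuinely second order and $D_t\Phi$ remains in the $(-1)$-eigenspace of $\nht$ up to the small commutator $[D_t,\nht]\Phi$, and Proposition \ref{BADtAFormulas} to recognize the newly produced $b$- and $\mathcal{K}$-type terms as second-order quantities whose differences are estimated exactly as in (a) --- now invoking Proposition \ref{DiffQuantitiesEstimates}(b) for $D_t(b-\tilde b)$ and the evolution equations \eqref{NewEvolutionEquations} versus \eqref{WaterWaveEvolutionApprox} to control $D_t^2\Phi-\tilde D_t^2\tilde\Phi$ through the cubic residual $G-\tilde G$. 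Part (e) is a pure commutator estimate: $[D_t,\partial^j]$ and $[D_t^2,\partial^j]$ are given explicitly by \eqref{DtCommutePartials}--\eqref{Dt2CommutePartials}, each term carrying at least one derivative on $b$; since $\|b\|_{\dot H^s}$, $\|\nabla b\|_{L^\infty}$ and the corresponding norms of $D_t b$ are $O(\epsilon^2)$ (writing $b=\tilde b+(b-\tilde b)$ and using smallness of $\tilde b$ together with Proposition \ref{DiffQuantitiesEstimates}), while $\rho,\sigma,D_t\rho$ are controlled at the needed Sobolev level by (a)--(d) and interpolation, the $C\epsilon(E^{1/2}+\epsilon^2)$ bound follows on keeping one factor in $L^2$, the $H^{\frac12}$ and $L^2$ norms being handled by the product and interpolation inequalities (Proposition \ref{ComplexInterpolation}).

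The principal difficulty is the bookkeeping of fractional and low regularity: one must propagate the half-derivative norms appearing in the statement and reach the $L^2$ level even though $E$ does not control $\|r\|_{L^2}$ itself. This is exactly where the half-derivative refined Hilbert-transform difference estimate (the Remark after Proposition \ref{CommutatorEstimates}) and the weak singularity of the double-layer potential $\mathcal{K}_\gamma$ (through Proposition \ref{SobolevEmbeddings}(c)) are indispensable: they let one redistribute derivatives so that every $\mathcal{K}$-difference and every Hilbert-transform difference is seen to be a full power of $\epsilon$ smaller than a generic second-order quantity, which is what converts the many $O(\epsilon^2)$-looking contributions into $O(\epsilon(E^{1/2}+\epsilon^2))$ ones.
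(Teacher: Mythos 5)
Your proposal follows essentially the same route as the paper: expand $\rho$ using $\nht^2 = I$ and the eigenspace structure so that the outer projection contributes only the cross term $\tfrac12(\nht - \nht_{\tilde{\zeta}})(I - \nht_{\tilde{\zeta}})\tilde{\znew}\kvec$, then use \eqref{LambdaIdentity} and \eqref{LambdaFromZApprox} to write $\rho + r^\dagger$ (and, after applying $D_t$, the quantities in (c) and (d)) as a sum of Hilbert-transform differences, $\mathcal{K}$-terms and residuals, each controlled by Propositions \ref{SingularIntegralHsEstimate}, \ref{DifferenceHilbertTransformHs}, \ref{CommutatorEstimates}, \ref{DiffQuantitiesEstimates} and Lemma \ref{HMinusHTildeEstimates}, with part (e) being exactly the commutator computation via \eqref{DtCommutePartials}--\eqref{Dt2CommutePartials} and the $O(\epsilon^2)$ bounds on $b$ and $D_t b$. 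One small correction for (b): by Proposition \ref{ChangeOfVariablesIJValued} the double layer potential sits in the $\kvec$-component of $\nht\znew\kvec$, so $\Re\bigl((I - \nht)\znew\kvec\bigr) = 0$ exactly rather than $-\mathcal{K}_\zeta\znew$; consequently $\Re(\rho)$ is carried entirely by the cross term you had already bounded, and the stated estimate still follows.
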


\begin{proof}
We first establish some identities between $r$, $\rho$, $\sigma$, and their time derivatives.  From \eqref{RhoDefinition} we have
\begin{align}\label{ExpandRhoPt1}
\rho & = (I - \nht)\znew\kvec - \frac{1}{2}(I - \nht)(I - \nht_{\tilde{\zeta}})\tilde{\znew}\kvec \notag \\
& = (I - \nht)\znew\kvec - (I - \nht_{\tilde{\zeta}}) \tilde{\znew}\kvec + \frac{1}{2}(I + \nht)(I - \nht_{\tilde{\zeta}})\tilde{\znew}\kvec \notag \\
& = (I - \nht)\znew\kvec - (I - \nht_{\tilde{\zeta}}) \tilde{\znew}\kvec + \frac{1}{2}(\nht - \nht_{\tilde{\zeta}})(I - \nht_{\tilde{\zeta}})\tilde{\znew}\kvec 
\end{align}
Note first that by Proposition \ref{ChangeOfVariablesIJValued} this gives 
\begin{equation}\label{RhoScalarPart}
\Re(\rho) = \Re\left(\frac12(\nht - \nht_{\tilde{\zeta}})(I - \nht_{\tilde{\zeta}})\tilde{\znew}\right)
\end{equation}  
and so for $|j| \leq s + 1$, $\|\partial^j \Re(\rho)\|_{L^2} \leq C\epsilon (E^\frac12 + \epsilon^2)$.  Similarly, applying $D_t$ to \eqref{RhoScalarPart} and using Propositions \ref{DiffHilbertTransforms}, \ref{SingularIntegralHsEstimate}, and \ref{DiffQuantitiesEstimates} similarly along with Proposition \ref{ComplexInterpolation} gives $\|\Re(\sigma)\|_{H^{s + \frac12}} \leq C\epsilon E^\frac12$ for all $|j| \leq s$, which is (b).  We can also use \eqref{ExpandRhoPt1} along with \eqref{LambdaFromZMultiscale}-\eqref{LambdaFromZApprox} to make rigorous the heuristic that $\rho \sim -r^\dagger$:
\begin{align}\label{ExpandRhoPt2}
\rho + r^\dagger & = (I - \nht)\znew\kvec - (I - \nht_{\tilde{\zeta}}) \tilde{\znew}\kvec + \frac{1}{2}(\nht - \nht_{\tilde{\zeta}})(I - \nht_{\tilde{\zeta}})\tilde{\znew}\kvec \notag \\
& \quad + \left((I + \nht)\znew\kvec - (I + \tilde{\nht})\tilde{\znew}\kvec\right)^\dagger \notag \\
& \quad + \mathcal{K}\znew\kvec - \tilde{\mathcal{K}}\tilde{\znew}\kvec \notag \\
& \quad + \left((I + \tilde{\nht})\tilde{\znew}\kvec - \tilde{\mathcal{K}}\tilde{\znew}\kvec - \tilde{\lambda}\right)^\dagger \notag \\
& = (\nht^\dagger - \nht)\znew\kvec - (\nht_{\tilde{\zeta}}^\dagger - \nht_{\tilde{\zeta}}) \tilde{\znew}\kvec + \frac{1}{2}(\nht - \nht_{\tilde{\zeta}})(I - \nht_{\tilde{\zeta}})\tilde{\znew}\kvec \\
& \quad - (\tilde{\nht} - \nht_{\tilde{\zeta}})\tilde{\znew}\kvec + \mathcal{K}\znew\kvec - \tilde{\mathcal{K}}\tilde{\znew}\kvec \notag \\
& \quad + \left((I + \tilde{\nht})\tilde{\znew}\kvec - \tilde{\mathcal{K}}\tilde{\znew}\kvec - \tilde{\lambda}\right)^\dagger \notag \\
& = \left((\nht - \nht_{\tilde{\zeta}})^\dagger - (\nht - \nht_{\tilde{\zeta}})\right)\tilde{\znew}\kvec \notag \\
& \quad + \frac{1}{2}(\nht - \nht_{\tilde{\zeta}})(I - \nht_{\tilde{\zeta}})\tilde{\znew}\kvec - (\tilde{\nht} - \nht_{\tilde{\zeta}})\tilde{\znew}\kvec \notag \\
& \quad + \mathcal{K}(\znew - \tilde{\znew})\kvec + (\mathcal{K} - \mathcal{K}_{\tilde{\zeta}})\tilde{\znew}\kvec + (\mathcal{K}_{\tilde{\zeta}} - \tilde{\mathcal{K}})\tilde{\znew}\kvec \notag \\
& \quad + \left((I + \tilde{\nht})\tilde{\znew}\kvec - \tilde{\mathcal{K}}\tilde{\znew}\kvec - \tilde{\lambda}\right)^\dagger \notag
\end{align}
We first use \eqref{ExpandRhoPt2} to show that, for $1 \leq |j| \leq s + 1$
\begin{equation}\label{RhoPlusRDaggerBound}
\|\partial^j(\rho + r^\dagger)\|_{L^2} \leq C(\epsilon E^\frac12 + \epsilon^3)
\end{equation}
To do so, we estimate $\partial^j(\nht - \nht_{\tilde{\zeta}})\tilde{\znew}\kvec$ by writing it via Proposition \ref{DiffHilbertTransforms} and distributing derivatives in this integral expression as in Proposition \ref{SingularIntegralHsEstimate} to get $$\|\partial^j(\nht - \nht_{\tilde{\zeta}})\tilde{\znew}\kvec\|_{L^2} \leq C\|\nabla r\|_{H^s}\|\tilde{\znew}\|_{W^{s, \infty}_\epsilon} \leq C\epsilon E^\frac12$$
By Proposition \ref{TildeHDifferenceEstimates} we have $\|\partial^j(\nht_{\tilde{\zeta}} - \tilde{\nht})\tilde{\znew}\kvec\|_{H^1} \leq C\epsilon^4$.  Since $\mathcal{K} - \mathcal{K}_{\tilde{\zeta}} = \Re(\nht - \nht_{\tilde{\zeta}})$ and $\tilde{\mathcal{K}} - \mathcal{K}_{\tilde{\zeta}} = \Re(\tilde{\nht} - \nht_{\tilde{\zeta}})$, the estimates of terms involving $\mathcal{K} - \mathcal{K}_{\tilde{\zeta}}$ and $\tilde{\mathcal{K}} - \mathcal{K}_{\tilde{\zeta}}$ reduce to those already given.  Notice that by Lemma \ref{SingularIntegralHsEstimate} and the a priori bound of $\zeta$ we have $\|\partial^j\mathcal{K}(\znew - \tilde{\znew})\kvec\|_{L^2} \leq C(E^\frac12 + \epsilon)E^\frac12 \leq C\epsilon E^\frac12$.  Finally, the residual $\left((I + \tilde{\nht})\tilde{\znew}\kvec - \tilde{\mathcal{K}}\tilde{\znew}\kvec - \tilde{\lambda}\right)^\dagger$ is by \eqref{TildeLambdaFormula} bounded in $L^2$ by $C\epsilon^3$.  Estimate (a) follows in the same way by estimating in $\dot{H}^\frac12$ using Proposition \ref{CommutatorEstimates}.  We omit the details.

Next, we make rigorous the heuristic $\sigma \sim D_t\rho$ by writing $\sigma$ as
\begin{align}\label{ExpandSigma}
\sigma & = \frac12[D_t, \nht](I - \nht)\znew\kvec + D_t\frac12(I - \nht)^2\znew\kvec \notag \\
& \quad - \frac12[\tilde{D}_t, \nht](I - \nht_{\tilde{\zeta}})\tilde{\znew}\kvec + \tilde{D}_t\frac12(I - \nht)(I - \nht_{\tilde{\zeta}})\tilde{\znew}\kvec \notag \\
& = \frac12[D_t, \nht](I - \nht)\znew\kvec - \frac12[\tilde{D}_t, \nht](I - \nht_{\tilde{\zeta}})\tilde{\znew}\kvec \notag\\
& \quad - (D_t - \tilde{D}_t)\frac12(I - \nht)(I - \nht_{\tilde{\zeta}})\tilde{\znew}\kvec \notag \\
& \quad + D_t\rho
\end{align}
Again by using Propositions \ref{SingularIntegralHsEstimate}, \ref{CommutatorEstimates}, and \ref{DiffQuantitiesEstimates}, we have the estimate
\begin{align*}
\|\sigma - D_t\rho\|_{H^{s + \frac12}} & \leq C(E + \epsilon E^\frac12 + \epsilon^3)(E^\frac12 + \epsilon) + C\epsilon^2(E^\frac12 + \epsilon) + C(E + \epsilon E^\frac12 + \epsilon^3) \\
& \leq C\epsilon(E^\frac12 + \epsilon^2)
\end{align*}
which is (b).  Similarly, by applying $D_t$ to the right hand side of \eqref{ExpandRhoPt2} and using Proposition \ref{HMinusHTildeEstimates} and (a) on the result implies the bound
\begin{align*}
\|\sigma + D_t r^\dagger\|_{H^{s + \frac12}} & \leq \|\sigma - D_t \rho\|_{H^{s + \frac12}} + \|D_t(\rho + r^\dagger)\|_{H^{s + \frac12}} \\
& \leq C\epsilon(E^\frac12 + \epsilon^2)
\end{align*}
which gives (c).  In the same way, to show (d) we apply another $D_t$ to the right hand side of \eqref{ExpandRhoPt2} as well as a $D_t$ to \eqref{ExpandSigma} and use the same estimates to show that
$$\|D_t\sigma - D_t^2r^\dagger\|_{H^s}, \|D_t^2\rho - D_t^2r^\dagger\|_{H^s} \leq C\epsilon(E^\frac12 + \epsilon^2)$$

Finally we prove (e).  We first show the bounds on $D_t\partial^j \rho$, $D_t^2\partial^j \rho$, and $D_t\partial^j \sigma$ in $L^2$ for $0 \leq |j| \leq s$.  By \eqref{DtCommutePartials} and Proposition \ref{DiffQuantitiesEstimates} we have for $f = \rho, \sigma$ that
\begin{align*}
\|\partial^jD_t f - D_t \partial^j f\|_{L^2} & \leq \sum_{|i| \leq j} C_i \|(\partial^i b) \cdot \mathcal{D}\partial^{j - i}f\|_{L^2} \\
& \leq C\epsilon\|\nabla f\|_{H^{s - 1}}
\end{align*}
Then by part (a) this implies that $\|\partial^jD_t f - D_t \partial^j f\|_{L^2} \leq C\epsilon(E^\frac12 + \epsilon^2)$ when $f = \rho, \sigma$.  In the same way, if we consider only $f = r, \rho$ and use Proposition \ref{ComplexInterpolation} on the product mapping $T(f, g) = fg$, we have in the same way the estimate 
\begin{align*}
\|\partial^jD_t f - D_t \partial^j f\|_{H^\frac12} & \leq \sum_{j_1 + j_2 = j}^{j_1 \neq 0} \|(\partial^{j_1} b) \cdot \mathcal{D}\partial^{j_2}f\|_{H^\frac12} \\
& \leq C\epsilon\|\nabla f\|_{H^{s - \frac12}}
\end{align*}
which follows from Proposition \ref{CommutatorEstimates} and Proposition \ref{DiffQuantitiesEstimates}.  This in turn gives the bound $\|\partial^jD_t f - D_t \partial^j f\|_{H^\frac12} \leq C\epsilon(E^\frac12 + \epsilon^2)$.  Similarly, if we also use \eqref{Dt2CommutePartials}, we can write
\begin{align*}
\partial^j D_t^2 f - D_t^2 \partial^j f & = [\partial^j, D_t]D_t f + D_t \sum_{j_1 + j_2 = j}^{j_1 \neq 0} (\partial^{j_1} b) \cdot \mathcal{D}\partial^{j_2} f \\
& = 2 [\partial^j, D_t]D_t f + \sum_{j_1 + j_2 = j}^{j_1 \neq 0} ([D_t, \partial^{j_1}] b) \cdot \mathcal{D}\partial^{j_2} f \\
& \qquad\qquad + \sum_{j_1 + j_2 = j}^{j_1 \neq 0}  (\partial^{j_1} D_t b) \cdot \mathcal{D}\partial^{j_2} f + \sum_{j_1 + j_2 = j}^{j_1 \neq 0}  (\partial^{j_1} D_t b) \cdot [D_t, \mathcal{D}\partial^{j_2}] f
\end{align*}
From this expression and part (a) we obtain the following estimate for $f = \rho, \sigma$ $$\|\partial^jD_t^2f - D_t^2\partial^jf\|_{L^2} \leq C\epsilon(\|\nabla f\|_{H^{s - 1}} + \|D_t f\|_{H^s}) \leq C\epsilon(E^\frac12 + \epsilon^2)$$
Summing these estimates gives (e).
\end{proof}

Next, we show that we are able to control the change of variables $\kappa$ in terms of the quantity $\zeta$ and its derivatives alone.

\begin{proposition}\label{KappaControlledByZeta}
Let $\kappa = \kappa_1\ivec + \kappa_2\jvec$ be defined as in \eqref{KappaDefinition}, and let $T_0$ be the a priori existence time of $\zeta$.  Then
$$\sup_{0 \leq t \leq \min(T_0, \mathscr{T}\epsilon^{-2})} \|\nabla \kappa - I\|_{W^{s - 2, \infty}} \leq C\epsilon,$$ In particular, $\kappa : \mathbb{R}^2 \to \mathbb{R}^2$ is a global diffeomorphism on $[0, \min(T_0, \mathscr{T}\epsilon^{-2})]$ provided $\epsilon_0 > 0$ is chosen to be sufficiently small.
\end{proposition}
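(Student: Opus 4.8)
The plan is to treat $\kappa$ as the flow of the drift $b=\kappa_t\circ\kappa^{-1}$: it satisfies the ODE $\partial_t\kappa=b\circ\kappa$ with $\kappa(0)=P$, so that
\[
\nabla\kappa(t)-I=\int_0^t\nabla(b\circ\kappa)(s)\,ds ,
\]
and the whole estimate reduces to controlling $b$ and its spatial derivatives along the flow. I set $m(t):=\|\nabla\kappa(t)-I\|_{W^{s-2,\infty}}$ and aim to show $m(t)\le C\epsilon$ on $[0,\min(T_0,\mathscr{T}\epsilon^{-2})]$ by a continuity argument.

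The crucial observation is that $\nabla b$ is \emph{an order of $\epsilon$ smaller} than $b$ itself. By \eqref{TildeBFormula} the approximate drift $\tilde b$ has leading term $-\epsilon^2k\omega|A|^2\ivec$, a function of the slow variables alone, so each of its spatial derivatives costs an extra factor $\epsilon$; the corrector $\epsilon^3 b^{(3)}$ and, by Proposition~\ref{DiffQuantitiesEstimates}(a) together with the a priori bound $E^{1/2}\le C\epsilon^2$ of Assumption~\ref{APrioriBound}, the remainder $b-\tilde b$, which is $O(\epsilon^3)$ in $H^{s+\frac12}$, are already an order smaller before differentiating. Since $L^\infty$-based norms of slow-variable functions are not inflated by the scaling (unlike $L^2$-based ones), combining these with the Sobolev embedding $H^{s+\frac12}\hookrightarrow W^{s-1,\infty}$ gives $\|b\|_{W^{s-2,\infty}}\le C\epsilon^2$ and, crucially, $\|\nabla b\|_{W^{s-2,\infty}}\le C\epsilon^3$, where $C$ depends on $\|A_0\|_{H^{s+13}\cap H^3(0^+)}$.

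Feeding this into the composition (Moser-type) estimates in $W^{s-2,\infty}$ — which yield $\|\nabla(b\circ\kappa)\|_{W^{s-2,\infty}}\le C(1+m(s))^N\|\nabla b\|_{W^{s-2,\infty}}$ for some $N=N(s)$ — produces $m(t)\le\int_0^t C(1+m(s))^N\epsilon^3\,ds$. I then run a bootstrap on $[0,\min(T_0,\mathscr{T}\epsilon^{-2})]$: as long as $m\le1$ on $[0,t]$ one gets $m(t)\le C2^N\epsilon^3 t\le C2^N\mathscr{T}\epsilon$, so choosing $\epsilon_0$ (depending on $s$, $\mathscr{T}$ and $\|A_0\|_{H^{s+13}\cap H^3(0^+)}$) small enough that $C2^N\mathscr{T}\epsilon_0<\tfrac12$ keeps $m$ strictly below $1$, and the bound propagates to the whole interval, giving $\|\nabla\kappa-I\|_{W^{s-2,\infty}}\le C\epsilon$. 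It is precisely the extra power of $\epsilon$ on $\nabla b$ that makes the accumulated deviation $O(\epsilon^3\cdot\epsilon^{-2})=O(\epsilon)$ over a time interval of length $\mathscr{T}\epsilon^{-2}$; with only $\|\nabla b\|_{W^{s-2,\infty}}=O(\epsilon^2)$ one would obtain $O(1)$ and the proposition would fail. For the diffeomorphism claim, $\|\nabla\kappa-I\|_{L^\infty}\le C\epsilon<1$ makes $\nabla\kappa$ pointwise invertible with $J(\kappa)=\det\nabla\kappa>0$, so $\kappa$ is a local diffeomorphism, while $\|\kappa-P\|_{L^\infty}\le\int_0^t\|b\|_{L^\infty}\,ds\le C\mathscr{T}$ is bounded, so $\kappa$ is proper; a proper local diffeomorphism of $\mathbb{R}^2$ is a global diffeomorphism — equivalently, this is automatic from $\kappa$ being the flow of the globally $C^1$ field $b$.

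The main obstacle is the order-counting observation that $\nabla b=O(\epsilon^3)$; once it is in place the rest is routine Moser-composition bookkeeping together with a standard continuity argument. One should also keep in mind that Assumption~\ref{APrioriBound} — in particular the chord–arc condition, which guarantees that the formulas defining $b$ (via Proposition~\ref{BADtAFormulas}(a)) are valid — is in force throughout $[0,T_0]$, which is exactly the interval appearing in the statement.
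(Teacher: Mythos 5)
Your proposal is correct and follows essentially the same route as the paper: both split $b = \tilde b + (b-\tilde b)$, exploit that the leading term of $\tilde b$ is a function of the slow variables so that $\nabla\tilde b = O(\epsilon^3)$, bound $b-\tilde b = O(\epsilon^3)$ via Proposition \ref{DiffQuantitiesEstimates}(a) under the a priori assumption, and integrate over the $O(\epsilon^{-2})$ time interval to get $O(\epsilon)$, closing with a smallness/continuity argument. Your write-up merely makes the bootstrap and the global-diffeomorphism step more explicit than the paper does.
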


\begin{proof}
The proof rests on the fact that we can write $\kappa_t(\alpha, \beta, t) = b(\kappa(\alpha, \beta, t), t)$ and that we have the formula of Proposition \ref{BADtAFormulas}(a) for $b$ in terms of $\zeta$ and its derivatives.  Recall that $\kappa(\alpha, \beta, 0) = \alpha \ivec + \beta \jvec$.  By integrating with respect to $t$, we have
\begin{align}\label{KappaPicardFormula}
\kappa(\alpha, \beta, t) - (\alpha\ivec + \beta\jvec) & = \int_0^t \tilde{b}(\kappa(\alpha, \beta, s), s) \, ds + \int_0^t (b - \tilde{b})(\kappa(\alpha, \beta, s), s) \, ds
\end{align}
Recall the definition of $\tilde{b}$ as given by \eqref{TildeBFormula}, which shows that $\tilde{b} = S\epsilon^2 + F\epsilon^3$, where $S$ is a function of the slow variables alone.  Thus $\nabla \tilde{b}$ is of physical size $O(\epsilon^3)$, and so Proposition \ref{DiffQuantitiesEstimates} gives us the estimate
\begin{align*}
\|\nabla\kappa - I\|_{W^{s - 2, \infty}} & \leq \int_0^t \| (\nabla \tilde{b} \circ \kappa) \cdot \nabla \kappa\|_{W^{s - 2, \infty}} + \| (\nabla (b - \tilde{b}) \circ \kappa) \cdot \nabla \kappa\|_{W^{s - 2, \infty}} \, ds \\
& \leq \mathscr{T}\epsilon^{-2}C(1 + \|\nabla\kappa - I\|_{L^\infty})\epsilon^3 \\
& \leq C\epsilon(1 + \|\nabla\kappa - I\|_{W^{s - 2, \infty}}).
\end{align*}
from which the proposition follows upon choosing $\epsilon_0 > 0$ sufficiently small.
\end{proof}

\begin{remark}\label{BigHorizontalError}
Note that if we were to expect $E^\frac12 = O(\epsilon)$, then the above proof allows us to conclude only that $\nabla(\kappa - P) = O(1)$, which is insufficient to justify inverting $\kappa$ with the Inverse Function Theorem.  Alternatively, if $E^\frac12 = O(\epsilon^2)$ as we intend, then the $b - \tilde{b}$ component of \eqref{KappaPicardFormula} contributes an error of size $O(\epsilon)$.  This is the fact that prevents us from justifying asymptotics for the horizontal component of $\Xi$ in Lagrangian coordinates.  Observe that if we could show that $E^\frac12 = o(\epsilon^2)$, we could in fact justify asymptotics for the horizontal component of $\Xi$ directly.
\end{remark}

	\subsection{Preliminary Energy Identities}

We will build our energy using the following basic energy identity, following \cite{WuGlobal3D}:

\begin{proposition}\label{BasicEnergyIdentity}
Suppose $\theta \in \mathscr{S}(\mathbb{H}\,)$ satisfies $\theta = -\nht\theta$.  Introduce the energy
$$\mathscr{E}(\theta) = \iint \frac{1}{\mathcal{A}}|D_t\theta|^2 - \theta \cdot (\mathcal{N} \times \nabla)\theta \, d\alpha\, d\beta$$  Then
\begin{align*}
\frac{d\mathscr{E}(\theta)}{dt} & = \iint \frac{2}{\mathcal{A}}D_t \theta \cdot \left(D_t^2 - \mathcal{A}(\zeta_\beta\partial_\alpha - \zeta_\alpha \partial_\beta)\right)\theta \, d\alpha\, d\beta \\
& \qquad + \iint - \frac{1}{\mathcal{A}} U_{\kappa^{-1}}\left(\frac{\mathfrak{a}_t}{\mathfrak{a}}\right)|D_t\theta|^2 - \theta \cdot \left((\partial_\beta D_t\zeta)\theta_\alpha - (\partial_\alpha D_t\zeta)\theta_\beta\right) \, d\alpha \, d\beta
\end{align*}
Moreover, $$\iint - \theta \cdot (\mathcal{N} \times \nabla)\theta \, d\alpha\, d\beta \geq 0$$
\end{proposition}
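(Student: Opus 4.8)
The plan is to pull the energy back to the original Lagrangian coordinates, where the material derivative is simply $\partial_t$ and no transport term appears. Set $\Theta := \theta\circ\kappa = U_\kappa\theta$. From $\partial_t U_\kappa = U_\kappa D_t$ one gets $\partial_t\Theta = (D_t\theta)\circ\kappa$ and $\partial_t^2\Theta = (D_t^2\theta)\circ\kappa$, while a direct chain-rule computation using $J(\kappa) = \det\nabla\kappa$ gives the reparametrization identities $\Xi_t = (D_t\zeta)\circ\kappa$ and
\[
(N\times\nabla)(f\circ\kappa) = J(\kappa)\,\bigl((\mathcal{N}\times\nabla)f\bigr)\circ\kappa .
\]
Feeding these together with $\mathcal{A} = (\mathfrak{a}J(\kappa))\circ\kappa^{-1}$ into the change-of-variables formula shows that
\[
\mathscr{E}(\theta) = \iint \frac1{\mathfrak{a}}\,|\partial_t\Theta|^2 - \Theta\cdot(N\times\nabla)\Theta \, d\alpha\, d\beta ,
\]
i.e.\ the standard energy for the Lagrangian system \eqref{WaterWaveLagrangeFirstPass} in the original variables.

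I would then differentiate this flat energy. The kinetic term contributes $\iint -\frac{\mathfrak{a}_t}{\mathfrak{a}^2}|\partial_t\Theta|^2 + \frac2{\mathfrak{a}}\partial_t\Theta\cdot\partial_t^2\Theta\, d\alpha\, d\beta$. For the potential term I would use the antisymmetry identity $\iint f\cdot(g_\beta h_\alpha - g_\alpha h_\beta)\, d\alpha\, d\beta = \iint h\cdot(g_\beta f_\alpha - g_\alpha f_\beta)\, d\alpha\, d\beta$ for vector-valued $g$ — this is exactly the integration-by-parts step in the proof of Proposition \ref{HalfDerivativeEstimates}(a) combined with Proposition \ref{SkewedTripleProduct} — together with $\partial_t(N\times\nabla) = \Xi_{t\beta}\partial_\alpha - \Xi_{t\alpha}\partial_\beta$, to obtain
\[
\frac{d}{dt}\iint\Theta\cdot(N\times\nabla)\Theta\, d\alpha\, d\beta = 2\iint\partial_t\Theta\cdot(N\times\nabla)\Theta\, d\alpha\, d\beta + \iint\Theta\cdot(\Xi_{t\beta}\partial_\alpha - \Xi_{t\alpha}\partial_\beta)\Theta\, d\alpha\, d\beta .
\]
Now I would substitute $\partial_t^2\Theta = (D_t^2\theta)\circ\kappa$ and $D_t^2\theta = \mathcal{P}\theta + \mathcal{A}(\mathcal{N}\times\nabla)\theta$ into the kinetic contribution; the $\mathcal{A}(\mathcal{N}\times\nabla)\theta$ piece, pulled back to the flat frame, produces a term $2\iint\partial_t\Theta\cdot(N\times\nabla)\Theta$ that exactly cancels the like term from differentiating the potential part. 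Changing variables back — using $\frac1{\mathfrak{a}}J(\kappa)^{-1} = \frac1{\mathcal{A}}$, that $\frac{\mathfrak{a}_t}{\mathfrak{a}}$ becomes $U_{\kappa^{-1}}(\mathfrak{a}_t/\mathfrak{a})$, and that $\Xi_{t\beta}\partial_\alpha - \Xi_{t\alpha}\partial_\beta$ becomes $(\partial_\beta D_t\zeta)\partial_\alpha - (\partial_\alpha D_t\zeta)\partial_\beta$ — yields the asserted formula for $\tfrac{d\mathscr{E}(\theta)}{dt}$.

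An equivalent route that avoids the reparametrization is to differentiate $\mathscr{E}(\theta)$ in place, using $\frac{d}{dt}\iint g\, d\alpha\, d\beta = \iint D_t g + (\nabla\cdot b)g\, d\alpha\, d\beta$ (since $\partial_t g = D_t g - \nabla\cdot(bg) + (\nabla\cdot b)g$) together with $\nabla\cdot b = \frac{D_t\mathcal{A}}{\mathcal{A}} - U_{\kappa^{-1}}(\mathfrak{a}_t/\mathfrak{a})$, which follows from $\mathcal{A} = (\mathfrak{a}J(\kappa))\circ\kappa^{-1}$ and $\partial_t J(\kappa) = J(\kappa)(\nabla\cdot b)\circ\kappa$; the $D_t(1/\mathcal{A})$ and $(\nabla\cdot b)/\mathcal{A}$ contributions then combine in the kinetic term to leave precisely $\frac2{\mathcal{A}}D_t\theta\cdot D_t^2\theta - \frac1{\mathcal{A}}U_{\kappa^{-1}}(\mathfrak{a}_t/\mathfrak{a})|D_t\theta|^2$. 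On either route, the step I expect to be the main (but purely computational) obstacle is checking that in the potential term all the transport corrections — those coming from $[D_t,\partial_\alpha]$, $[D_t,\partial_\beta]$, $\partial_t(N\times\nabla)$ and $\nabla\cdot b$ — reorganize, after one integration by parts and the antisymmetry identity above, into exactly the single null-form term $-\theta\cdot((\partial_\beta D_t\zeta)\theta_\alpha - (\partial_\alpha D_t\zeta)\theta_\beta)$, with no leftover. Finally, the positivity $\iint -\theta\cdot(\mathcal{N}\times\nabla)\theta\, d\alpha\, d\beta \geq 0$ is the left-hand estimate of Proposition \ref{HalfDerivativeEstimates}(b), applicable because $\theta = -\nht\theta$ and the a priori bounds make $\|\nabla\lambda\|_{L^\infty}$ small; indeed that estimate shows the quantity bounds $\|\theta\|_{\dot{H}^{\frac12}}^2$ from below.
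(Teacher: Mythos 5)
Your proposal is correct and follows essentially the same route as the paper: the paper simply cites Lemma 3.2 of \cite{WuGlobal3D}, whose proof is precisely the pull-back by $\kappa$ to the Lagrangian frame (where $D_t$ becomes $\partial_t$ and $\mathscr{E}$ becomes the flat energy $\iint \mathfrak{a}^{-1}|\partial_t\Theta|^2 - \Theta\cdot(N\times\nabla)\Theta$) followed by differentiation and a change of variables back. The one step the paper does spell out — symmetrizing $\iint \theta\cdot(\mathcal{N}\times\nabla)D_t\theta$ via integration by parts and Proposition \ref{SkewedTripleProduct} — is exactly your antisymmetry identity, and your attribution of the positivity claim to Proposition \ref{HalfDerivativeEstimates}(b) is also what the paper intends.
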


\begin{proof}  The energy $\mathscr{E}$ is that of Lemma 3.2 of \cite{WuGlobal3D}.  The proof is the same except that, using integration by parts and Proposition \ref{SkewedTripleProduct}, we rewrite $$-\iint \theta \cdot (\mathcal{N} \times \nabla)D_t\theta \, d\alpha \, d\beta = -\iint (\mathcal{N} \times \nabla)\theta \cdot D_t\theta \, d\alpha \, d\beta$$
\end{proof}

Our starting point is to consider the energy
\begin{equation}\label{EnergyFirstTry}
\mathcal{E} = \sum_{|j| \leq s} \mathscr{E}(\rho^{[j]}) + \mathscr{E}(\sigma^{[j]})
\end{equation}
where we use the modified quantities
\begin{equation}\label{RhoSigmaJDefinition}
\theta^{[j]} = \frac12(I - \nht)\partial^j \theta
\end{equation}
to ensure that the energy is nonnegative.  However, using this energy contributes terms in the energy identity that are of third order, instead of the fourth and higher orders necessary for a suitable energy estimate.  Therefore we cannot use this energy directly.  Since the third order terms contributed by this energy motivate our choice of normal form transformation, we spend the rest of the section identifying these terms.  We will need the commutator identity
\begin{align}\label{ANTimesNablaCommutePartials}
[\partial^j, \mathcal{A}(\mathcal{N} \times \nabla)] & = \mathcal{A}\sum_{j_1 + j_2 = j}^{j_2 < j} \left((\partial^{j_1}\lambda_\beta)\partial_\alpha\partial^{j_2} - (\partial^{j_1}\lambda_\alpha)\partial_\beta\partial^{j_2}\right) \notag \\
& \quad + \left(\sum_{j_1 + j_2 = j}^{j_2 < j} (\partial^{j_1}(\mathcal{A} - 1))\partial^{j_2}\right)(\mathcal{N} \times \nabla)
\end{align}
Denote $\mathcal{P}\theta = G_\theta$; applying Proposition \ref{BasicEnergyIdentity}, we have
\begin{align}
\frac{d\mathcal{E}}{dt} & = \sum_{|j| \leq s} \iint \frac{2}{\mathcal{A}}D_t \rho^{[j]} \cdot \left(D_t^2 - \mathcal{A}(\zeta_\beta\partial_\alpha - \zeta_\alpha\partial_\beta)\right)\rho^{[j]} \, d\alpha\, d\beta \notag \\
& \qquad + \iint - \frac{1}{\mathcal{A}} U_{\kappa^{-1}}\left(\frac{\mathfrak{a}_t}{\mathfrak{a}}\right)|D_t\rho^{[j]}|^2 - \rho^{[j]} \cdot \left((\partial_\beta D_t\zeta)\rho^{[j]}_\alpha - (\partial_\alpha D_t\zeta)\rho^{[j]}_\beta\right) \, d\alpha \, d\beta \notag \\
& \qquad + \iint \frac{2}{\mathcal{A}}D_t \sigma^{[j]} \cdot \left(D_t^2 - \mathcal{A}(\zeta_\beta\partial_\alpha - \zeta_\alpha \partial_\beta)\right)\sigma^{[j]} \, d\alpha\, d\beta \notag \\
& \qquad + \iint - \frac{1}{\mathcal{A}} U_{\kappa^{-1}}\left(\frac{\mathfrak{a}_t}{\mathfrak{a}}\right)|D_t\sigma^{[j]}|^2 - \sigma^{[j]} \cdot \left((\partial_\beta D_t\zeta)\sigma^{[j]}_\alpha - (\partial_\alpha D_t\zeta)\sigma^{[j]}_\beta\right) \, d\alpha \, d\beta \notag
\end{align}
The first terms which are of third order are
\begin{equation}\label{QuadraticTermNo1}
\iint - \sigma^{[j]} \cdot \left((\partial_\beta D_t\zeta)\sigma^{[j]}_\alpha - (\partial_\alpha D_t\zeta)\sigma^{[j]}_\beta\right) \, d\alpha \, d\beta
\end{equation}
\begin{equation}\label{QuadraticTermNo2}
\iint - \rho^{[j]} \cdot \left((\partial_\beta D_t\zeta)\rho^{[j]}_\alpha - (\partial_\alpha D_t\zeta)\rho^{[j]}_\beta\right) \, d\alpha \, d\beta
\end{equation}
The other terms of third order must be extracted from the nonlinearities in the first and third lines above.  For $\theta = \rho, \sigma$ we first have
\begin{align*}
\left(D_t^2 - \mathcal{A}(\zeta_\beta\partial_\alpha - \zeta_\alpha\partial_\beta)\right)\theta^{[j]} & = -\frac{1}{2}[D_t^2 - \mathcal{A}(\zeta_\beta\partial_\alpha - \zeta_\alpha\partial_\beta), \nht]\partial^j\theta \\
& \quad + \frac{1}{2}(I - \nht)[D_t^2 - \mathcal{A}(\zeta_\beta\partial_\alpha - \zeta_\alpha\partial_\beta), \partial^j] \theta \\
& \quad + \frac12(I - \nht)\partial^jG_\theta
\end{align*}
The first line on the right hand side appears to contribute a term of third order, but we will show later that we can gain an extra order of smallness using an almost-orthogonality argument.  Using \eqref{ANTimesNablaCommutePartials}, we see that the second line contributes third order terms of the form
\begin{equation}\label{QuadraticTermNo5}
\iint 2 \frac{D_t\theta^{[j]}}{\mathcal{A}} \cdot-\frac12 (I - \nht)\left(\mathcal{A}\sum_{j_1 + j_2 = j}^{j_2 < j} \left((\partial^{j_1}\lambda_\beta)(\partial^{j_2}\theta_\alpha) - (\partial^{j_1}\lambda_\alpha)(\partial^{j_2}\theta_\beta)\right)\right) \, d\alpha \, d\beta
\end{equation}
We must treat the analysis of $G_\theta$ separately for $\theta = \rho, \sigma$.  Expanding the term $G_\rho$ using \eqref{RhoEquation} further yields
\begin{align*}
\frac12(I - \nht)\partial^jG_\rho & = -\frac{1}{4}(I - \nht)\partial^j[\mathcal{P}, \nht]\left((I - \nht)\znew\kvec - (I - \nht_{\tilde{\zeta}})\tilde{\znew}\kvec\right) \notag \\
& \quad - \frac{1}{4}(I - \nht)\partial^j(I - \nht)(\mathcal{P} - \tilde{\mathcal{P}})(I - \nht_{\tilde{\zeta}})\tilde{\znew}\kvec \notag \\
& \quad + \frac14(I - \nht)\partial^j(I - \nht)\left(G - \tilde{\mathcal{P}}(I - \nht_{\tilde{\zeta}})\tilde{\znew}\kvec\right) \notag
\end{align*}
Using \eqref{DiffPFormula}, we see that the second of these terms contributes another term of third order:
\begin{equation}\label{QuadraticTermNo4}
\iint 2 \frac{D_t\rho^{[j]}}{\mathcal{A}} \cdot \frac{1}{4}(I - \nht)\partial^j(I - \nht)\left(\tilde{\mathcal{A}}(r_\beta \partial_\alpha - r_\alpha \partial_\beta)(I - \nht_{\tilde{\zeta}})\tilde{\znew}\kvec\right) \, d\alpha \, d\beta
\end{equation}
To find the other third order terms, we expand using \eqref{SigmaEquation}:
\begin{align*}
\frac12(I - \nht)\partial^jG_\sigma & = -\frac14(I - \nht)[\mathcal{P}, \nht]\left(D_t(I - \nht)\znew\kvec - \tilde{D}_t(I - \nht_{\tilde{\zeta}})\tilde{\znew}\kvec\right) \notag \\
& \quad - \frac14(I - \nht)\partial^j(I - \nht)\left((\mathcal{P} - \tilde{\mathcal{P}})\tilde{D}_t(I - \nht_{\tilde{\zeta}})\tilde{\znew}\kvec\right) \notag \\
& \quad + \frac14(I - \nht)\partial^j(I - \nht)\left([\mathcal{P}, D_t](I - \nht)\znew\kvec - [\tilde{\mathcal{P}}, \tilde{D}_t](I - \nht_{\tilde{\zeta}})\tilde{\znew}\kvec\right) \notag \\
& \quad + \frac14(I - \nht)\partial^j(I - \nht)\left(D_t\mathcal{P}(I - \nht)\znew\kvec - \tilde{D}_t\tilde{\mathcal{P}}(I - \nht_{\tilde{\zeta}})\tilde{\znew}\kvec\right) \notag
\end{align*}
From the second line of the last expression above we have the third order term
\begin{equation}\label{QuadraticTermNo6}
\iint 2 \frac{D_t\sigma^{[j]}}{\mathcal{A}} \cdot \frac14(I - \nht)\partial^j(I - \nht)\left(\mathcal{A}(r_\beta\partial_\alpha - r_\alpha \partial_\beta)\tilde{D}_t(I - \nht_{\tilde{\zeta}})\tilde{\znew}\kvec\right) \, d\alpha \, d\beta
\end{equation}
and the remaining terms of third order are contained in the term
\begin{equation}\label{QuadraticTermNo7}
\iint 2 \frac{D_t\sigma^{[j]}}{\mathcal{A}} \cdot \frac14(I - \nht)\partial^j(I - \nht)\left([\mathcal{P}, D_t](I - \nht)\znew\kvec - [\tilde{\mathcal{P}}, \tilde{D}_t](I - \nht_{\tilde{\zeta}})\tilde{\znew}\kvec\right) \, d\alpha \, d\beta
\end{equation}

	\subsection{The Normal Form Calculation}

To account for the terms of third order isolated in the last section, we use the method of normal forms.  This method, due to Poincar\'e in the context of ODEs, was first pioneered for use in PDEs by Shatah \cite{ShatahNormalForm} and has since been used widely in the study of long-time existence questions for evolution equations.  In particular, this method has been used to great effect in justification of model equations for water waves by Schneider and Wayne in \cite{SchneiderWayveJustifyKdV} and \cite{SchneiderWayneNLSJustify}.  Rather than using the quantities $\theta = \rho^{[j]}, \sigma^{[j]}$ in \eqref{RhoSigmaJDefinition}, we use equivalent quantities formed by perturbing $\theta$ by a quantity $\mathcal{Q}_\theta$ intended to be quadratically small:
\begin{equation}
\Theta = \theta + \mathcal{Q}_\theta
\end{equation}
Each $\mathcal{Q}_\theta$ will be a sum of bilinear expressions involving $\theta$ corresponding to the quadratic nonlinearities which must be eliminated.  In order show systematically how to account for these quadratic contributions, we treat the simplest case first in detail and show its equivalence to $\rho$ and $\sigma$.  Then we will estimate in detail the many higher order terms neglected in deriving the formula for this normal form.   Finally, we perform the same process for the normal form corrections for the other quadratic terms, detailing only the steps that are significantly different from what has been seen at that point.

We begin with a heuristic construction of normal forms in \S 4.3.1; there we will see that the cost of constructing a normal form is roughly speaking one space derivative of smoothness.  Therefore, in order for the corrected quantities and the original quantities to have the same regularity, we must manipulate the quadratic terms to be accounted for to gain a space derivative.  

The third order terms \eqref{QuadraticTermNo5} clearly involve quantities which have a full derivative less than the total number of spatial derivatives controlled by the total energy, and so their normal forms are easily constructed in \S 4.3.2.  In \S4.3.3 we rewrite the third order terms \eqref{QuadraticTermNo4}, \eqref{QuadraticTermNo6} and \eqref{QuadraticTermNo7} using commutators so that the same is true of their resulting normal forms constructed in \S 4.3.3.  Finally, in \S4.3.4 we combine the terms \eqref{QuadraticTermNo1} and \eqref{QuadraticTermNo2} along with contributions arising from carefully chosen third order energy corrections, yielding terms that can be eliminated using normal forms.

\subsubsection{Heuristic Calculation of the Simplest Normal Form; Equivalence of Original and Transformed Unknowns}

To derive the equations that the correction $\mathcal{Q}_\theta$ must satisfy, we repeat the energy estimates of the previous section except using $\Theta$ and omitting terms which can be suitably estimated.  We have using Proposition \ref{BasicEnergyIdentity} that the energy identity will read in part:
\begin{align}\label{IsolateCancellationInNewEnergy}
\frac{d\mathscr{E}(\Theta)}{dt} & = \iint 2\frac{D_t\Theta}{\mathcal{A}} \cdot (I - \nht)\mathcal{P}(\theta + \mathcal{Q}_\theta) \, d\alpha \, d\beta + \cdots
\end{align}
For the moment, we will examine what is involved in accounting for a typical quadratic term: $$(I - \nht)(\tilde{\lambda}_\beta \theta_\alpha - \tilde{\lambda}_\alpha \theta_\beta)$$ Since we wish to use the Fourier transform to construct $\mathcal{Q}_\theta$, it will be more convenient for it to satisfy an equation involving only the ordinary time derivative $\partial_t$ instead of the more complicated convective derivative $D_t$.  To accomplish this without losing derivatives, we change variables by $\kappa$ in the above integral.  Introduce the quantity $\mathscr{Q}_\theta = \mathcal{Q}_\theta \circ \kappa$; then changing variables by $\kappa$ and using the identity $\jvec\partial_\alpha - \ivec\partial_\beta = \kvec\mathcal{D} = \nht_0|\mathcal{D}|$, we derive to leading order the integral
\begin{align}\label{NewEnergyBilinearManipulation}
& \quad \iint 2\frac{D_t\Theta}{\mathcal{A}} \cdot (I - \nht)\Biggl(\mathcal{P}\mathcal{Q}_\theta - \left(\tilde{\lambda}_\beta\partial_\alpha(\theta \circ \kappa) - \tilde{\lambda}_\alpha \partial_\beta(\theta \circ \kappa)\right)\Biggr) \, d\alpha \, d\beta \notag \\
& = \iint 2\frac{\partial_t(\Theta \circ \kappa)}{\mathfrak{a}} \cdot (I - \nht_0)\Biggl(\left(\partial_t^2 + |\mathcal{D}|\right)\mathscr{Q}_\theta \notag \\
& \hspace{2in} - \left(\mathcal{B}_{-k}\tilde{\lambda}_\beta\partial_\alpha(\theta \circ \kappa) - \mathcal{B}_{-k}\tilde{\lambda}_\alpha \partial_\beta(\theta \circ \kappa)\right)\Biggr) \, d\alpha \, d\beta + \text{h.o.t.}
\end{align}
where we have applied the mode filter $\mathcal{B}_{-k}$ of Lemma \ref{WavePacketCutoff}.\footnote{See the next section for a more precise accounting of the higher order terms that have been neglected here.}  Hence it suffices to construct $\mathscr{Q}_\theta$ so as to satisfy
\begin{equation}\label{BilinearFormEquation}
\left(\partial_t^2 + |\mathcal{D}|\right)\mathscr{Q}_\theta = \mathcal{B}_{-k}\tilde{\lambda}_\beta\partial_\alpha(\theta \circ \kappa) - \mathcal{B}_{-k}\tilde{\lambda}_\alpha \partial_\beta(\theta \circ \kappa)
\end{equation}
Observe that if we take the left-hand $\jvec$-Fourier transform $\mathcal{F}_\jvec^L[\cdot]$ of the right hand side we can rewrite it in the form
\begin{align}\label{PreparingQuadraticTerm}
& \quad \mathcal{F}_\jvec^L\left[\mathcal{B}_{-k}\tilde{\lambda}_\beta\partial_\alpha(\theta \circ \kappa) - \mathcal{B}_{-k}\tilde{\lambda}_\alpha \partial_\beta(\theta \circ \kappa)\right] \notag \\
& = \mathcal{F}_\jvec^L\left[\partial_\beta(\epsilon \mathcal{B}_{-k}\overline{A}e^{-\jvec\phi})\partial_\alpha(\ivec\theta \circ \kappa) - \partial_\alpha(\epsilon \mathcal{B}_{-k}\overline{A}e^{-\jvec\phi})\partial_\beta(\ivec\theta \circ \kappa)\right] + \text{h.o.t.} \notag \\
& = \frac{1}{(2\pi)^2} \iint \mathcal{F}_\jvec^L[\epsilon \mathcal{B}_{-k}\overline{A}e^{-\jvec\phi}]_{(\xi - \xi^\prime)}\Bigl((\xi_1 - \xi_1^\prime)\xi_2^\prime -(\xi_2 - \xi_2^\prime)\xi_1^\prime\Bigr)\mathcal{F}_\jvec^L[\ivec\theta \circ \kappa]_{(\xi^\prime)} \, d\xi^\prime + \text{h.o.t.} \\
& = \frac{1}{(2\pi)^2} \iint \mathcal{F}_\jvec^L[\epsilon \mathcal{B}_{-k}\overline{A}e^{-\jvec\phi}]_{(\xi - \xi^\prime)}\left(-k\xi_2^\prime \right)\mathcal{F}_\jvec^L[\ivec\theta \circ \kappa]_{(\xi^\prime)} \, d\xi^\prime + \text{h.o.t.} \notag
\end{align}
This suggests the following ansatz for $\mathscr{Q}_\theta$ in Fourier space:
\begin{align}\label{BilinearFormAnsatz}
\hat{\mathscr{Q}}_\theta = \hat{\mathscr{Q}}(\tilde{\lambda}, \theta)& = \frac{1}{(2\pi)^2} \iint \mathcal{F}_\jvec^L[\epsilon \mathcal{B}_{-k}\overline{A}e^{-\jvec\phi}]_{(\xi - \xi^\prime)}Q_0(\xi, \xi - \xi^\prime, \xi^\prime)\mathcal{F}_\jvec^L[\ivec\theta \circ \kappa]_{(\xi^\prime)} \, d\xi^\prime \notag \\
& \qquad \quad + \frac{1}{(2\pi)^2} \iint \mathcal{F}_\jvec^L[\epsilon \mathcal{B}_{-k}\overline{A}e^{-\jvec\phi}]_{(\xi - \xi^\prime)}Q_1(\xi, \xi - \xi^\prime, \xi^\prime)\mathcal{F}_\jvec^L[\ivec D_t \theta \circ \kappa]_{(\xi^\prime)} \, d\xi^\prime
\end{align}
Note that our unknown loses a half space derivative in constructing a normal form, since we are adding to $\theta$ a correction with the smoothness of $D_t\theta$.

If one substitutes \eqref{BilinearFormAnsatz} into \eqref{BilinearFormEquation}, and makes the substitutions $\partial_t^2(\theta \circ \kappa) \sim -|\mathcal{D}|(\theta \circ \kappa)$ and $|\xi - \xi^\prime + k\ivec| \sim 0$ which contributes negligible higher order terms, one finds that the kernels $Q_0, Q_1$ satisfy a system of the form:
\begin{align}\label{BilinearKernelSystem}
(|\xi^\prime - k\ivec| - |\xi^\prime| - |k\ivec|)Q_0 + 2\jvec\omega |\xi^\prime| Q_1 & = F_0\\
(|\xi^\prime - k\ivec| - |\xi^\prime| - |k\ivec|)Q_1 - 2\jvec\omega Q_0 & = F_1
\end{align}
with $F_0 = k\xi_2^\prime$ and $F_1 = 0$.  The solution to this system for general $F_0, F_1$ is
\begin{equation}\label{Q0GeneralSolution}
Q_0 = \frac{(|\xi^\prime - k\ivec| - |\xi^\prime| - |k\ivec|) F_0 - 2\jvec\omega |\xi^\prime|F_1}{(|\xi^\prime - k\ivec| - |\xi^\prime| - |k\ivec|)^2 - 4k|\xi^\prime|}
\end{equation}
\begin{equation}\label{Q1GeneralSolution}
Q_1 = \frac{2\jvec\omega F_0 + ((|\xi^\prime - k\ivec| - |\xi^\prime| - |k\ivec|)F_1}{(|\xi^\prime - k\ivec| - |\xi^\prime| - |k\ivec|)^2 - 4k|\xi^\prime|}
\end{equation}
and in the present case is
\begin{equation}\label{Q0ParticularSolution1}
Q_0 = \frac{((|\xi^\prime - k\ivec| - |\xi^\prime| - |k\ivec|) (-k\xi_2^\prime)}{(|\xi^\prime - k\ivec| - |\xi^\prime| - |k\ivec|)^2 - 4k|\xi^\prime|}
\end{equation}
\begin{equation}\label{Q1ParticularSolution1}
Q_1 = \frac{2\jvec\omega (-k\xi_2^\prime)}{(|\xi^\prime - k\ivec| - |\xi^\prime| - |k\ivec|)^2 - 4k|\xi^\prime|}
\end{equation}
Taking for granted this formal expression for the change of variables for the moment, we now turn to studying its regularity.  This amounts to studying the asymptotic behavior of the above kernels as $\xi, \xi^\prime \to \infty$, regarding $\xi - \xi^\prime$ as remaining bounded.  More precisely we have the
\begin{lemma}\label{BilinearFormEstimate}
Let $0 \leq q < 12$, let $p : \mathbb{R}^+ \to \mathbb{R}^+$ be in $L^1_{\text{loc}}$, let $S$ be a function of slow variables alone, and let $\mathscr{Q}$ be defined by
$$\mathcal{F}_\jvec^L[\mathscr{Q}] = \iint \mathcal{F}_\jvec^L[\epsilon \mathcal{B}_{-k}\overline{S}e^{-\jvec\phi}]_{(\xi - \xi^\prime)}Q(\xi, \xi - \xi^\prime, \xi^\prime)\mathcal{F}_\jvec^L[\theta]_{(\xi^\prime)} \, d\xi^\prime$$ and suppose that $|Q(\xi, \xi - \xi^\prime, \xi^\prime)| \leq |\xi - \xi^\prime - k\ivec |^q p(|\xi^\prime|)$.  Then $\|\mathscr{Q}\|_{L^2} \leq C\epsilon^{q + 1} \|S\|_{H^{q + 3}} \|p(|\mathcal{D}|)\theta\|_{L^2}$.
\end{lemma}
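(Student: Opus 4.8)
The plan is to carry out the entire estimate on the left $\jvec$-Fourier side and, after extracting the $\epsilon$-gain carried by the slowly varying amplitude, reduce the bound to Young's inequality for convolutions.

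First I would unpack the amplitude factor. Since $\overline{S}$ is $1,\jvec$-valued, the scalar rotation $e^{-\jvec\omega t}$ contained in $e^{-\jvec\phi}$ commutes past it and may be absorbed into $S$, so it suffices to treat $\epsilon\,\mathcal{B}_{-k}\,\overline{S}(\epsilon\,\cdot\,)\,e^{-\jvec k\alpha}$. The scaling and modulation laws for $\mathcal{F}_\jvec^L$ give
\[
\mathcal{F}_\jvec^L[\overline{S}(\epsilon\,\cdot\,)](\mu)=\epsilon^{-2}\,\mathcal{F}_\jvec^L[\overline{S}](\mu/\epsilon),\qquad \mathcal{F}_\jvec^L[\overline{S}(\epsilon\,\cdot\,)e^{-\jvec k\alpha}](\eta)=\mathcal{F}_\jvec^L[\overline{S}(\epsilon\,\cdot\,)](\eta+k\ivec),
\]
while $\widehat{\mathcal{B}}_{-k}$ is $\{0,1\}$-valued; thus $\mathcal{F}_\jvec^L[\epsilon\mathcal{B}_{-k}\overline{S}e^{-\jvec\phi}]$ is $\epsilon$ times a bump of mass $\lesssim\|\mathcal{F}_\jvec^L[\overline{S}]\|_{L^1}$ that is frequency-localized, at scale $\epsilon$, about the carrier $-k\ivec$. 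Inserting the hypothesis on $Q$ into the defining integral for $\mathscr{Q}$ and taking absolute values gives the pointwise domination
\[
\bigl|\mathcal{F}_\jvec^L[\mathscr{Q}](\xi)\bigr|\;\le\;(W\star\Psi)(\xi),\qquad \Psi(\xi'):=p(|\xi'|)\,\bigl|\mathcal{F}_\jvec^L[\theta](\xi')\bigr|,
\]
where $W(\eta):=\epsilon\,\bigl|\eta+k\ivec\bigr|^{q}\bigl|\mathcal{F}_\jvec^L[\mathcal{B}_{-k}\overline{S}e^{-\jvec\phi}](\eta)\bigr|$, the quantity $\bigl|\eta+k\ivec\bigr|$ being the offset of the convolution frequency from the carrier that the bound on $|Q|$ supplies.

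The only substantive step is the estimate $\|W\|_{L^1}\le C\,\epsilon^{q+1}\|S\|_{H^{q+3}}$. Here I would substitute first $\mu=\eta+k\ivec$ and then $\mu=\epsilon\nu$: since $W(\eta)\le \epsilon\,|\mu|^{q}\epsilon^{-2}\bigl|\mathcal{F}_\jvec^L[\overline{S}](\mu/\epsilon)\bigr|$, these substitutions reproduce exactly a factor $\epsilon\cdot\epsilon^{q}$, the $\epsilon^{q}$ arising from $|\mu|^{q}=\epsilon^{q}|\nu|^{q}$ seen against the scale-one bump $\mathcal{F}_\jvec^L[\overline{S}]$, and leave
\[
\|W\|_{L^1}\;\le\;C\,\epsilon^{q+1}\bigl\|\,|\,\cdot\,|^{q}\,\mathcal{F}_\jvec^L[\overline{S}]\,\bigr\|_{L^1}.
\]
The last $L^1$ norm is finite by Cauchy--Schwarz in frequency against $\langle\,\cdot\,\rangle^{-3}\in L^2(\mathbb{R}^2)$, which bounds it by $C\bigl\|\langle\,\cdot\,\rangle^{q+3}\mathcal{F}_\jvec^L[\overline{S}]\bigr\|_{L^2}=C\|S\|_{H^{q+3}}$ (quaternion conjugation preserves all Sobolev norms). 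Then Young's inequality gives $\|\mathcal{F}_\jvec^L[\mathscr{Q}]\|_{L^2}\le\|W\|_{L^1}\|\Psi\|_{L^2}$, and the $\jvec$-Plancherel identity identifies $\|\mathcal{F}_\jvec^L[\mathscr{Q}]\|_{L^2}=\|\mathscr{Q}\|_{L^2}$ and $\|\Psi\|_{L^2}=\|p(|\mathcal{D}|)\theta\|_{L^2}$, which is the claim. The range $0\le q<12$ plays no role in the argument and only records how the lemma is subsequently invoked.

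I expect the only genuine obstacle to be the $\epsilon$-bookkeeping in the displayed bound for $\|W\|_{L^1}$: one must be careful that the weight controlling $|Q|$ is a power of the \emph{offset from the carrier frequency} of the amplitude -- the variable that the scale-$\epsilon$ bump $\mathcal{F}_\jvec^L[\overline{S}](\,\cdot\,/\epsilon)$ actually resolves -- since that is precisely what turns the $q$-th order near-resonance into the gain $\epsilon^{q}$ rather than into a harmless $O(1)$ constant. Everything else is rescaling, Cauchy--Schwarz, and Young's inequality.
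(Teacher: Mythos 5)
Your proof is correct and is essentially the paper's own argument: pointwise domination of $\mathcal{F}_\jvec^L[\mathscr{Q}]$ by a convolution, Young's inequality, rescaling to extract the factor $\epsilon^{q+1}$ from the $L^1$ norm of the weighted amplitude, and Cauchy--Schwarz against $\langle \cdot \rangle^{-3}$ to land in $H^{q+3}$. Your replacement of the stated weight $|\xi - \xi^\prime - k\ivec|^q$ by the offset $|\xi - \xi^\prime + k\ivec|^q$ from the actual carrier of $\mathcal{B}_{-k}\overline{S}e^{-\jvec\phi}$ silently corrects a sign inconsistency that the paper's proof carries along unchanged; with the weight measured from $+k\ivec$ the rescaling step would not produce the gain $\epsilon^{q}$ on the support of $\hat{\mathcal{B}}_{-k}$, so your care on this point is warranted.
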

\begin{proof}
By Parseval's Identity and Young's Inequality we have 
\begin{align*}
\|\mathscr{Q} \|_{L^2} & \leq \| (|\xi - k\ivec|^q |\mathcal{F}_\jvec^L[\epsilon \mathcal{B}_{-k}\overline{S}e^{-\jvec\phi}]|) \star (p(|\xi|)| |\mathcal{F}_\jvec^L[\theta]|) \|_{L^2} \\
& \leq C\| |\mathcal{F}_\jvec^L[\epsilon \mathcal{B}_{-k}\overline{S}e^{-\jvec\phi}]|\|_{L^1} \| p(|\xi|) |\mathcal{F}_\jvec^L[\theta]| \|_{L^2}
\end{align*}
The lemma now follows since, as in Lemma \ref{WavePacketCutoff}, we can write
\begin{align*}
\| |\xi - k\ivec|^q |\mathcal{F}_\jvec^L[\epsilon \mathcal{B}_{-k}\overline{S}e^{-\jvec\phi}]|\|_{L^1} & = \left\||\xi - k\ivec|^q \frac{1}{\epsilon}\mathcal{F}_\jvec^L[\overline{S}]\left(\frac{\xi - k\ivec}{\epsilon}\right)\right\|_{L^1} \\
& = \epsilon^{q + 1} \left\|\mathcal{F}_\jvec^L[|\mathcal{D}|^q\overline{S}]\right\|_{L^1} \\
& \leq C\epsilon^{q + 1} \|S\|_{H^{q + 3}} 
\end{align*}
\end{proof}
In order to use this lemma, we must further analyze the singularities of $Q_0$ and $Q_1$, which in this context are called resonances.  In order to understand these resonances, we record the
\begin{lemma}\label{DenominatorEstimate}
\begin{itemize}
 \item[(a)]{There exists a universal constant $C_0$ so that $$\frac{1}{C_0}\frac{|\xi| + |\xi^\prime| + |\xi - \xi^\prime|}{|\xi|\,|\xi^\prime|\,|\xi - \xi^\prime|} \leq \frac{1}{(|\xi| -|\xi - \xi^\prime| - |\xi^\prime|)^2 - 4|\xi^\prime|\,|\xi - \xi^\prime|} \leq C_0\frac{|\xi| + |\xi^\prime| + |\xi - \xi^\prime|}{|\xi|\,|\xi^\prime|\,|\xi - \xi^\prime|}$$}
 \item[(b)]{$$|\, |\xi| - |\xi^\prime| - |\xi - \xi^\prime|\, | \leq \min\left(2|\xi - \xi^\prime|, |\xi^\prime|\right)$$}
 \item[(c)]{\begin{align*}
\left| \frac{\xi_1\xi_2^\prime - \xi_2 \xi_1^\prime}{(|\xi| - |\xi^\prime| - |\xi - \xi^\prime|)^2 - 4|\xi^\prime|\,|\xi - \xi^\prime|}\right| & \leq C_0\frac{|\xi_1\xi_2^\prime - \xi_2 \xi_1^\prime|}{|\xi|\,|\xi^\prime|} \\
& \quad + C_0\frac{|\xi_1(\xi_2 - \xi_2^\prime) - \xi_2(\xi_1 - \xi_1^\prime)|}{|\xi|\,|\xi - \xi^\prime|} \\
& \quad + C_0\frac{|\xi_1^\prime(\xi_2 - \xi_2^\prime) - \xi_2^\prime(\xi_1 - \xi_1^\prime)|}{|\xi^\prime|\,|\xi - \xi^\prime|}
\end{align*}}
\end{itemize}
\end{lemma}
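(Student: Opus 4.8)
## Proof plan for Lemma \ref{DenominatorEstimate}

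The plan is to prove the three parts in order, with (a) providing the key two-sided bound on the denominator that feeds into (b) and (c). Throughout I would abbreviate $a = |\xi|$, $b = |\xi'|$, $c = |\xi - \xi'|$, noting the triangle inequalities $|b - c| \le a \le b + c$, and set $D(\xi,\xi') := (a - b - c)^2 - 4bc$ for the denominator appearing in the resonance functions \eqref{Q0GeneralSolution}--\eqref{Q1GeneralSolution}. The essential algebraic identity I would exploit is the factorization
$$
(a - b - c)^2 - 4bc = \bigl(a - (b+c)\bigr)^2 - 4bc = (a - b - c - 2\sqrt{bc})(a - b - c + 2\sqrt{bc}),
$$
or, more usefully, rewriting via $(a-b-c)^2 - 4bc = (a - (\sqrt b + \sqrt c)^2)(a - (\sqrt b - \sqrt c)^2)$ after completing the square, so that $D = (a - (\sqrt b - \sqrt c)^2)(a - (\sqrt b + \sqrt c)^2)$. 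This exhibits $D$ as a product of two factors; the first is $\ge 0$ since $a \ge |b - c| \ge (\sqrt b - \sqrt c)^2$ (as $|b-c| = |\sqrt b - \sqrt c|(\sqrt b + \sqrt c) \ge (\sqrt b - \sqrt c)^2$), and the second is $\le 0$ since $a \le b + c \le (\sqrt b + \sqrt c)^2$. Hence $D \le 0$, and the claim of (a) is a two-sided estimate on $|D| = -D = (a - (\sqrt b - \sqrt c)^2)((\sqrt b + \sqrt c)^2 - a)$.

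For part (a): I would bound each of the two factors above and below by quantities comparable to $\tfrac{bc}{a+b+c}$ — or rather, show $|D|$ is comparable to $\tfrac{abc}{a+b+c}$ after inverting. Concretely, $(\sqrt b + \sqrt c)^2 - a \ge (\sqrt b + \sqrt c)^2 - (b + c) = 2\sqrt{bc}$, and it is $\le (\sqrt b + \sqrt c)^2 \le 2(b+c)$, so this factor is comparable to $\sqrt{bc}$ up to a factor comparable to $\tfrac{b+c}{\sqrt{bc}}$ — this is where one must be careful, since the ratio $\tfrac{b+c}{\sqrt{bc}}$ is not bounded. The cleaner route is: use the first factorization $D = (a-b-c-2\sqrt{bc})(a-b-c+2\sqrt{bc})$ and write $|D| = (b + c + 2\sqrt{bc} - a)(b + c - 2\sqrt{bc} + (a - a))$... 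Actually the most robust approach is to directly verify $|D| = (b+c-a+2\sqrt{bc})(b+c+2\sqrt{bc}-a)$ is false; instead I would expand $-D = -(a-b-c)^2 + 4bc = 4bc - (a-b-c)^2$ and bound $4bc - (a-b-c)^2$ from below by $c_0 \tfrac{abc}{a+b+c}$ and above by $C_0 \tfrac{abc}{a+b+c}$ by a compactness/homogeneity argument: both sides are homogeneous of degree $2$ in $(\xi,\xi')$, and on the compact set $\{a + b + c = 1\}$ (intersected with the region $|b-c|\le a\le b+c$) the function $4bc - (a-b-c)^2$ vanishes exactly on the boundary where $a = b+c$ or $a = |b-c|$, i.e. where $\xi, \xi', \xi-\xi'$ are collinear; near that boundary one shows the vanishing is of the same order as $abc$ vanishing, giving comparability. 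I expect this homogeneity-plus-boundary-behavior argument to be the main obstacle, since one must rule out spurious zeros and get uniform constants; the algebraic factorization $-D = (a - (\sqrt b - \sqrt c)^2)((\sqrt b + \sqrt c)^2 - a)$ combined with $\sqrt{bc} \sim \tfrac{bc}{\sqrt{bc}}$ and careful case analysis ($b \sim c$ versus $b \gg c$ or $c \gg b$) is the cleanest way to pin down the constants, and I would carry that out rather than invoke compactness.

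For part (b): this is elementary. We have $|a - b - c| \le |a - c| + |\,|a-c| - b\,|$... more directly, $a - b - c \le 0$ forces $|a - b - c| = b + c - a$, and since $a \ge |b - c|$ we get $b + c - a \le b + c - |b - c| = 2\min(b,c) \le 2c$; similarly $b + c - a \le 2b \le |\xi'|$ when... Actually $b + c - a \le 2\min(b,c)$ gives both $\le 2c = 2|\xi - \xi'|$ and $\le 2b$; and separately $|a - b - c| = b + c - a$ and since $a \ge b - c$ (if $b \ge c$) we also bound by $2c$, while if $c \ge b$ then $a \ge c - b$ gives $b + c - a \le 2b = 2|\xi'|$. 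Taking the minimum over the available bounds yields $|a - b - c| \le \min(2|\xi-\xi'|, |\xi'|)$ after noting $b + c - a \le c + |b - a| \le c + \dots$; I would simply write $|\,|\xi| - |\xi'| - |\xi-\xi'|\,| \le \bigl|\,|\xi| - |\xi'|\,\bigr| + |\xi - \xi'| \le 2|\xi-\xi'|$ by the reverse triangle inequality, and $\le |\xi'| + |\xi - \xi'| - \bigl|\,|\xi'| - |\xi-\xi'|\,\bigr| \le 2\min(|\xi'|,|\xi-\xi'|) \le |\xi'|$, combining to the stated minimum. Part (c) then follows by combining (a) with the triple-product/determinant identity: writing $\xi_1\xi_2' - \xi_2\xi_1' = \xi_1'(\xi_2 - \xi_2') - \xi_2'(\xi_1 - \xi_1') + (\xi_1 - \xi_1')\xi_2' - \ldots$ — the precise decomposition is the $2\times 2$ determinant bilinearity $\det(\xi, \xi') = \det(\xi - \xi', \xi') + \det(\xi', \xi') = \det(\xi-\xi',\xi')$ and symmetrically $\det(\xi,\xi') = \det(\xi, \xi - \xi')$ up to sign, so $|\xi_1\xi_2' - \xi_2\xi_1'|$ equals each of $|\det(\xi - \xi', \xi')|$, and one distributes the factor $\tfrac{1}{|D|} \lesssim \tfrac{a + b + c}{abc} = \tfrac1{bc} + \tfrac1{ac} + \tfrac1{ab}$ from (a) across the three determinant representations $|\det(\xi,\xi')|$, $|\det(\xi,\xi-\xi')|$, $|\det(\xi-\xi',\xi')|$ to land exactly on the three terms on the right of (c). The only care needed is matching which determinant goes with which denominator pair so that each numerator/denominator ratio is genuinely the relevant one; this is bookkeeping rather than a real difficulty.
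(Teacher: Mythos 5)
The paper's own ``proof'' of (a) and (b) is a citation to Appendix C of \cite{WuGlobal3D}, and (c) is dispatched in one line, so you are supplying an argument where the paper supplies none; your route is therefore necessarily different. For (a), your factorization $D=(a-(\sqrt b-\sqrt c)^2)(a-(\sqrt b+\sqrt c)^2)$ with $a=|\xi|$, $b=|\xi'|$, $c=|\xi-\xi'|$ is the right starting point (and you correctly note $D\le 0$, so the displayed inequality must be read with $|D|$). The step you flag as delicate closes cleanly if you use the triangle inequality $a\ge|b-c|=|\sqrt b-\sqrt c|(\sqrt b+\sqrt c)$ on the upper end as well: it gives $(\sqrt b+\sqrt c)^2-a\le(\sqrt b+\sqrt c)\cdot 2\min(\sqrt b,\sqrt c)\le 4\sqrt{bc}$, so together with your lower bound $2\sqrt{bc}$ that factor is genuinely comparable to $\sqrt{bc}$ with no case analysis at all. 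Everything then reduces to showing $a-(\sqrt b-\sqrt c)^2\sim a\sqrt{bc}/(a+b+c)$, which your proposed case split ($a\gtrsim b+c$ versus $a\ll b+c$, the latter forcing $b\sim c$ via $a\ge|b-c|$) does verify; the constants close. Your plan for (c) is exactly the intended argument: the three representations $\det(\xi,\xi')=\det(\xi,\xi-\xi')=-\det(\xi-\xi',\xi')$ distributed over $\frac{a+b+c}{abc}=\frac1{ab}+\frac1{ac}+\frac1{bc}$.

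Part (b), however, contains a genuine error. Your final step ``$\le 2\min(|\xi'|,|\xi-\xi'|)\le|\xi'|$'' is false: $2\min(b,c)\le b$ fails whenever $c\ge b/2$. In fact the inequality as printed is itself false: for $\xi=(1/10,0)$ and $\xi'=(1,0)$ one has $|\,|\xi|-|\xi'|-|\xi-\xi'|\,|=9/5$ while $\min(2|\xi-\xi'|,|\xi'|)=1$. What your (correct) reverse-triangle-inequality computation actually establishes is $|\,|\xi|-|\xi'|-|\xi-\xi'|\,|\le 2\min(|\xi'|,|\xi-\xi'|)$, which is the bound that is true and is all that the resonance analysis requires (the extra factor of $2$ is harmless). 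You should state and prove that version rather than force the last step to match a misprinted right-hand side.
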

\begin{proof}
Parts (a) and (b) are given in Appendix C of \cite{WuGlobal3D}.  Part (c) is a simple consequence of (a) and the triangle inequality.
\end{proof}
Applying Lemma \ref{DenominatorEstimate} to the kernels of the $\mathcal{Q}_\theta = \mathscr{Q}_\theta \circ \kappa^{-1}$ as defined in \eqref{BilinearFormAnsatz}-\eqref{Q0ParticularSolution1}-\eqref{Q1ParticularSolution1} implies that $Q_0$ and $Q_1$ are uniformly bounded in $\xi$ and $\xi^\prime$.  Then Lemma \ref{BilinearFormEstimate} immediately gives
\begin{proposition}\label{NaiveNormalFormEstimate}
For $\mathcal{Q}_\theta$ defined as above we have the estimates
\begin{equation*}
\|\,|\mathcal{D}|^\frac12\mathcal{Q}_\theta\|_{H^\frac12} + \|D_t\mathcal{Q}_\theta\|_{H^\frac12} + \|D_t^2\mathcal{Q}_\theta\|_{L^2} \leq C\epsilon(\|\,|\mathcal{D}|^\frac12\theta\|_{H^\frac12} + \|D_t\theta\|_{H^\frac12} + \|D_t^2 \theta\|_{L^2})
\end{equation*}
\end{proposition}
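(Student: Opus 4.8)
The plan is to read off the estimate directly from the explicit Fourier-space formula for $\mathcal{Q}_\theta$ combined with Lemmas \ref{BilinearFormEstimate} and \ref{DenominatorEstimate}, then handle the $D_t$ derivatives by commuting them through the ansatz. First I would record that $\mathcal{Q}_\theta = \mathscr{Q}_\theta \circ \kappa^{-1}$, so by the diffeomorphism control of Proposition \ref{KappaControlledByZeta} (which gives $\|\nabla\kappa - I\|_{W^{s-2,\infty}} \leq C\epsilon$, hence uniform equivalence of $H^s$ norms under composition with $\kappa^{\pm 1}$) it suffices to estimate $\mathscr{Q}_\theta$ itself in the corresponding norms. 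The leading term in $\|\,|\mathcal{D}|^\frac12\mathcal{Q}_\theta\|_{H^\frac12}$ amounts to estimating $\||\mathcal{D}|\mathscr{Q}_\theta\|_{L^2}$, i.e. multiplying the kernels $Q_0, Q_1$ of \eqref{Q0ParticularSolution1}-\eqref{Q1ParticularSolution1} by an extra factor $|\xi|$.

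Second I would carry out the key resonance analysis. Applying Lemma \ref{DenominatorEstimate}(c) with the numerators $F_0 = -k\xi_2'$ (for $Q_0$) and the analogous computation for $Q_1$ (whose numerator is $2\jvec\omega(-k\xi_2')$, one fewer power of frequency), one finds that $Q_0$ and $Q_1$ are uniformly bounded in $\xi, \xi'$; but to gain the extra derivative needed for $|\mathcal{D}|\mathscr{Q}_\theta$ one needs $|\xi| Q_i$ to be bounded by something of the form $|\xi - \xi' - k\ivec|^q p(|\xi'|)$ with $q \geq 1$ and $p$ locally integrable. Here is where the null-form structure pays off: in \eqref{PreparingQuadraticTerm} the numerator is exactly $(\xi_1 - \xi_1')\xi_2' - (\xi_2 - \xi_2')\xi_1'$, a genuine null form in $\xi - \xi'$ and $\xi'$, which after substituting $\xi - \xi' \approx -k\ivec$ produces the factor $-k\xi_2'$; the point is that one may instead keep one undifferentiated copy of $\xi - \xi' + k\ivec$, which is supported where it is $O(1)$ thanks to $\mathcal{B}_{-k}$ and which, when converted back to physical space, is one derivative on $A$. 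Using Lemma \ref{DenominatorEstimate}(a)-(b) to see that $\frac{|\xi|(|\xi|+|\xi'|+|\xi-\xi'|)}{|\xi|\,|\xi'|\,|\xi-\xi'|}$ times the null-form numerator is bounded by $C(1 + |\xi-\xi'|)^{2}$, I would thus invoke Lemma \ref{BilinearFormEstimate} with $q = 2$ (or the relevant $q \leq 2$), $S = A$, and $p$ a bounded function of $|\xi'|$, giving $\|\,|\mathcal{D}|\mathscr{Q}_\theta\|_{L^2} \leq C\epsilon^{3}\|A\|_{H^{5}}\|\theta\|_{L^2}$, which is certainly $\leq C\epsilon(\|\,|\mathcal{D}|^\frac12\theta\|_{H^\frac12} + \cdots)$; the lower-order pieces of the $H^\frac12$ norm of $|\mathcal{D}|^\frac12\mathcal{Q}_\theta$ are handled the same way with $q$ lowered.

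Third, for the terms $\|D_t\mathcal{Q}_\theta\|_{H^\frac12}$ and $\|D_t^2\mathcal{Q}_\theta\|_{L^2}$ I would use $\tilde D_t U_{\tilde\kappa}^{-1} = U_{\tilde\kappa}^{-1}\partial_t$ together with the commutator estimates $\|(D_t - \tilde D_t)f\| = \|(b - \tilde b)\cdot \mathcal{D}f\|$ and the bound $\|b\|_{H^s}, \|D_t b\|_{H^s} \leq C\epsilon^2$ from Proposition \ref{BADtAFormulas} / \ref{DiffQuantitiesEstimates} to reduce to differentiating $\mathscr{Q}_\theta$ in $t$. One $\partial_t$ either lands on the wave-packet factor $\epsilon\mathcal{B}_{-k}\overline{A}e^{-\jvec\phi}$, producing a factor $\omega = O(1)$ plus slow derivatives (harmless, same $S$-analysis), or lands on $\mathcal{F}_\jvec^L[\ivec\theta\circ\kappa]$, replacing it by $\mathcal{F}_\jvec^L[\ivec D_t\theta\circ\kappa]$ — and the ansatz \eqref{BilinearFormAnsatz} is designed precisely so that this second term is already present with kernel $Q_1$, while a $\partial_t^2$ is rewritten using the equation $\partial_t^2(\theta\circ\kappa)\sim -|\mathcal{D}|(\theta\circ\kappa)$ up to higher-order terms. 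Thus each time derivative on $\mathcal{Q}_\theta$ is controlled by a time derivative on $\theta$ plus $O(\epsilon)$ errors, and Lemma \ref{BilinearFormEstimate} applied with the same $q$ closes the three estimates simultaneously.

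The main obstacle I expect is the bookkeeping of the ``h.o.t.'' that were dropped in passing from \eqref{NewEnergyBilinearManipulation} through \eqref{BilinearKernelSystem}: one must verify that replacing $\partial_t^2(\theta\circ\kappa)$ by $-|\mathcal{D}|(\theta\circ\kappa)$, replacing $\mathfrak{a}$ by $1$, inserting $\mathcal{B}_{-k}$, and substituting $|\xi - \xi' + k\ivec| \sim 0$ each genuinely costs only powers of $\epsilon$ and at most the derivatives of $A$ that the hypothesis on $A_0$ allows — but since the proposition as stated only asserts the clean bilinear estimate for $\mathcal{Q}_\theta$ defined exactly by \eqref{BilinearFormAnsatz}-\eqref{Q1ParticularSolution1}, these higher-order corrections are not part of this particular proof and are deferred to the subsequent sections; what remains genuinely delicate here is checking that the denominators in \eqref{Q0ParticularSolution1}-\eqref{Q1ParticularSolution1} have no zeros other than the ones tamed by the null-form numerator via Lemma \ref{DenominatorEstimate}(c), i.e. confirming the absence of spurious resonances, which is exactly the cancellation advertised in the introduction.
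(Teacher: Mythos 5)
Your overall route is the same as the paper's: bound the kernels $Q_0,Q_1$ of \eqref{Q0ParticularSolution1}--\eqref{Q1ParticularSolution1} using Lemma \ref{DenominatorEstimate}, feed the result into Lemma \ref{BilinearFormEstimate}, and handle the time derivatives by commuting $D_t$ through the composition with $\kappa$ and observing that the $Q_1$-term of the ansatz \eqref{BilinearFormAnsatz} already absorbs one $\partial_t$ landing on $\theta\circ\kappa$. That outline, and your third paragraph in particular, matches what the paper leaves implicit.

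However, the central quantitative step in your second paragraph is wrong. The kernels \eqref{Q0ParticularSolution1}--\eqref{Q1ParticularSolution1} are functions of $\xi'$ alone: the substitution $\xi-\xi'\mapsto -k\ivec$ has already been made, so there is no ``undifferentiated copy of $\xi-\xi'+k\ivec$'' left in the kernel to convert into powers of $\epsilon$. Moreover $Q_0,Q_1$ are uniformly bounded but do \emph{not} decay as $|\xi'|\to\infty$: by Lemma \ref{DenominatorEstimate}(a) the denominator is comparable to $k|\xi'|$ at high frequency, so e.g.\ $|Q_1|\sim 2\omega|\xi_2'|/|\xi'|=O(1)$ along generic directions. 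Hence $|\xi|\,Q_i$ grows linearly in $|\xi'|$, and your claimed bound by $C(1+|\xi-\xi'|)^2$ times a bounded $p(|\xi'|)$ fails; the null-form numerator only gives $|(\xi_1-\xi_1')\xi_2'-(\xi_2-\xi_2')\xi_1'|\le|\xi-\xi'|\,|\xi'|$, which after dividing by the denominator and multiplying by $|\xi|$ still leaves a factor $\sim|\xi'|$. Consequently Lemma \ref{BilinearFormEstimate} cannot be invoked with $q=2$ and $p$ bounded, and the asserted estimate $\|\,|\mathcal{D}|\mathscr{Q}_\theta\|_{L^2}\le C\epsilon^3\|A\|_{H^5}\|\theta\|_{L^2}$ is simply false: take $\theta$ with Fourier support near a single frequency of magnitude $N$ and the left side is of order $\epsilon N\|\theta\|_{L^2}$. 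The correct accounting --- and what the paper's two-line proof intends --- is $q=0$ with $p(|\xi'|)=C(1+|\xi'|)$ (resp.\ the appropriate fractional power): on the support of $\mathcal{F}_\jvec^L[\mathcal{B}_{-k}\overline{A}e^{-\jvec\phi}]$ one has $|\xi|\le|\xi'|+\tfrac32 k$, so the extra derivative is transferred to $\theta$ and $D_t\theta$ rather than to $A$, and the single power of $\epsilon$ in the conclusion comes solely from the wave-packet amplitude via Lemma \ref{WavePacketCutoff}. This is exactly the ``half space derivative'' loss the paper flags immediately before the proposition, and it is why the subsequent sections must arrange for $\theta=\partial^{j_2}\sigma$ with $j_2<j$ before these normal forms are used.
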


	\subsubsection{Construction of the Normal Form and Estimates of the Higher Order Corrections Corresponding to \eqref{QuadraticTermNo5}}
	
Here we construct the normal forms corresponding to \eqref{QuadraticTermNo5}, and also carefully control the higher order terms neglected in the heuristic calculation of the previous section.

First we write
\begin{align*}
\mathcal{A}(\partial^{j_1}\lambda_\beta \partial^{j_2}\partial_\alpha - \partial^{j_1}\lambda_\alpha \partial^{j_2}\partial_\beta)\sigma & = (\mathcal{A} - 1)(\partial^{j_1}\lambda_\beta \partial^{j_2}\sigma_\alpha - \partial^{j_1}\lambda_\alpha \partial^{j_2}\sigma_\beta) \\
& \quad + (\partial^{j_1}r_\beta \partial^{j_2} \sigma_\alpha - \partial^{j_1}r_\alpha \partial^{j_2}\sigma_\beta) \\
& \quad + (\partial^{j_1}(I - \mathcal{B}_{-k})\tilde{\lambda}_\beta \partial^{j_2}\sigma_\alpha - \partial^{j_1}(I - \mathcal{B}_{-k})\tilde{\lambda}_\alpha \partial^{j_2}\sigma_\beta) \\
& \quad + (\partial^{j_1}\mathcal{B}_{-k}\tilde{\lambda}_\beta \partial^{j_2}\sigma_\alpha - \partial^{j_1}\mathcal{B}_{-k}\tilde{\lambda}_\alpha \partial^{j_2}\sigma_\beta)
\end{align*}
Since $j_2 < j$, all of the above terms except for the last are controlled by $C(E^\frac12 + \epsilon^2)^2$ by Propositions \ref{SingularIntegralHsEstimate} and \ref{DiffQuantitiesEstimates}.  Hence it suffices to eliminate the term 
\begin{equation*}
(I - \nht)\left(\sum_{j_1 + j_2 = j}^{j_2 < j} \left((\partial^{j_1}\mathcal{B}_{-k}\tilde{\lambda}_\beta)(\partial^{j_2}\sigma_\alpha) - (\partial^{j_1}\mathcal{B}_{-k}\tilde{\lambda}_\alpha)(\partial^{j_2}\sigma_\beta)\right)\right)
\end{equation*}
To do so, we construct bilinear terms $\mathcal{Q}_{\eqref{QuadraticTermNo5}}^{(j_1, j_2)}$ defined by \eqref{BilinearFormAnsatz} with $\theta = \partial^{j_2} \sigma$ and $\tilde{\lambda}$ replaced by $\partial^{j_1} \tilde{\lambda}$.  The derivation \eqref{NewEnergyBilinearManipulation} is more precisely given by
\begin{align*}
& \quad U_\kappa(I - \nht)\Biggl(\left(D_t^2 - (\zeta_\beta \partial_\alpha - \zeta_\alpha \partial_\beta)\right)\mathcal{Q}(\partial^{j_1}\tilde{\lambda}, \partial^{j_2}\sigma) - \left(\partial^{j_1}\mathcal{B}_{-k}\tilde{\lambda}_\beta\partial^{j_2}\sigma_\alpha - \partial^{j_1}\mathcal{B}_{-k}\tilde{\lambda}_\alpha \partial^{j_2}\sigma_\beta\right)\Biggr) \notag \\
& = (I - \oht)\Biggl(\left(\partial_t^2 - \frac{\Xi_\beta}{J(\kappa)}\partial_\alpha + \frac{\Xi_\alpha}{J(\kappa)}\partial_\beta\right)\mathscr{Q}(\partial^{j_1}\tilde{\lambda}, \partial^{j_2}\sigma) \notag \\
& \qquad\qquad\qquad  - \frac{1}{J(\kappa)}\left(\partial_\beta(\partial^{j_1}\mathcal{B}_{-k}\tilde{\lambda} \circ \kappa)\partial_\alpha(\partial^{j_2}\sigma \circ \kappa) - \partial_\alpha(\partial^{j_1}\mathcal{B}_{-k}\tilde{\lambda} \circ \kappa) \partial_\beta(\partial^{j_2}\sigma \circ \kappa)\right)\Biggr) \notag \\
& = (I - \oht)\Biggl(\left(\partial_t^2 + |\mathcal{D}|\right)\mathscr{Q}(\partial^{j_1}\tilde{\lambda}, \partial^{j_2}\sigma) - \left(\mathcal{B}_{-k} \tilde{\lambda}_\beta\partial_\alpha(\partial^{j_2}\sigma \circ \kappa) - \mathcal{B}_{-k} \tilde{\lambda}_\alpha \partial_\beta(\partial^{j_2}\sigma \circ \kappa)\right)\Biggr) \notag 
\end{align*}
\begin{align}\label{NastyHigherOrderTerms}
& + (I - \oht)\left(-\left(\frac{\Xi_\beta}{J(\kappa)} - \jvec\right)\partial_\alpha + \left(\frac{\Xi_\alpha}{J(\kappa)} - \ivec\right)\partial_\beta\right)\mathscr{Q}(\partial^{j_1}\tilde{\lambda}, \partial^{j_2}\sigma) \notag \\
& + (\oht - \nht_0)(\kvec\mathcal{D} + |\mathcal{D}|)\mathscr{Q}(\partial^{j_1}\tilde{\lambda}, \partial^{j_2}\sigma) \\
& - \left(\frac{1}{J(\kappa)} - 1\right)\left(\partial_\beta(\partial^{j_1}\mathcal{B}_{-k}\tilde{\lambda} \circ \kappa)\partial_\alpha(\partial^{j_2}\sigma \circ \kappa) - \partial_\alpha(\partial^{j_1}\mathcal{B}_{-k}\tilde{\lambda} \circ \kappa) \partial_\beta(\partial^{j_2}\sigma \circ \kappa)\right) \notag \\
& - \left(\partial_\beta(\partial^{j_1}\mathcal{B}_{-k}\tilde{\lambda} \circ \kappa - \partial^{j_1}\mathcal{B}_{-k}\tilde{\lambda})\partial_\alpha(\partial^{j_2}\sigma \circ \kappa) - \partial_\alpha(\partial^{j_1}\mathcal{B}_{-k}\tilde{\lambda} \circ \kappa - \partial^{j_1}\mathcal{B}_{-k}\tilde{\lambda}) \partial_\beta(\partial^{j_2}\sigma \circ \kappa)\right) \notag
\end{align}
Using Propositions \ref{KappaControlledByZeta}, \ref{NaiveNormalFormEstimate}, and the Mean Value Theorem, the higher-order terms given by the last four lines above are bounded by $C\epsilon^2 E^\frac12$ in $L^2$.  It is here that we need a full derivative of smoothness in order to justify constructing a normal form.  As $j_2 < j$, the equation \eqref{SigmaEquation} also yields the estimate
$$\|(\partial_t^2 + |\mathcal{D}|)(\partial^{j_2}\sigma \circ \kappa)\|_{L^2} \leq C(E^\frac12 + \epsilon)E^\frac12$$

Finally, recall that in the process of applying $\partial_t^2$ and $|\mathcal{D}|$ to $\mathscr{Q}_\theta$ we introduced error by replacing $\xi- \xi^\prime$ by $k\ivec$.  This is justified by Lemma \ref{BilinearFormEstimate}, since $|\xi| - |\xi^\prime - k\ivec| \leq |\xi - \xi^\prime + k\ivec|$.  The estimates of the error terms resulting from replacing $|\xi - \xi^\prime|$ with $k$ and $\partial_t \widehat{\overline{A}e^{-\jvec\phi}}(\xi - \xi^\prime)$ by $-\jvec\omega \widehat{\overline{A}e^{-\jvec\phi}}(\xi - \xi^\prime)$ is justified similarly.  Notice that  since the quantities $\sigma^{[j_2]}$ have one fewer derivative than the total energy controls, the terms resulting from making such replacements on derivatives of $\widehat{\overline{A}e^{-\jvec\phi}}(\xi - \xi^\prime)$ always yield error terms controlled by $$C\epsilon^2(\|\sigma^{[j_2]}\|_{L^2} + \|D_t\sigma^{[j_2]}\|_{L^2} + \|D_t^2\sigma^{[j_2]}\|_{L^2}) \leq C\epsilon^2 E^\frac12$$  Hence with this choice of $\mathcal{Q}_{\eqref{QuadraticTermNo5}}^{(j_1, j_2)}$ added to $\sigma^{[j_2]}$ the term \eqref{QuadraticTermNo5} is eliminated from the energy identity at the expense of finitely many terms of size at most $C(E^\frac12 + \epsilon^2)^2$ in $L^2$

	\subsubsection{The Normal Forms for \eqref{QuadraticTermNo4}, \eqref{QuadraticTermNo6}, and \eqref{QuadraticTermNo7}}

The quadratic term \eqref{QuadraticTermNo5} was successfully eliminated since its regularity was one derivative less than that of the total energy.  While the quadratic terms \eqref{QuadraticTermNo4}, \eqref{QuadraticTermNo6}, and \eqref{QuadraticTermNo7} seem to be rougher than this, one can increase their regularity by exploiting the commutator structure of these terms.  Since \eqref{QuadraticTermNo7} contains terms having the worst regularity, we will derive the normal form for this term only in detail and only mention the necessary changes needed to treat the others.  We will see that the case $|j| < s$ follows by the same argument used to treat the top order derivatives, and so for the remainder of the section we assume $|j| = s$.

First we prepare \eqref{QuadraticTermNo7} to motivate our choice of bilinear form as we did in \eqref{PreparingQuadraticTerm}.  Let $\sim$ indicate that we have omitted terms of size at most $C(E^\frac12 + \epsilon^2)^3$.  Then we expand \eqref{QuadraticTermNo7}:
\begin{align*}
& \quad \iint \frac{D_t\sigma^{[j]}}{\mathcal{A}} \cdot \frac12(I - \nht)\partial^j(I - \nht)\left([\mathcal{P}, D_t](I - \nht)\znew\kvec - [\tilde{\mathcal{P}}, \tilde{D}_t](I - \nht_{\tilde{\zeta}})\tilde{\znew}\kvec\right) \, d\alpha \, d\beta \\
& \sim \iint \frac{D_t\sigma^{[j]}}{\mathcal{A}} \cdot (I - \nht)\partial^j\left([\mathcal{P}, D_t](I - \nht)\znew\kvec - [\tilde{\mathcal{P}}, \tilde{D}_t](I - \nht_{\tilde{\zeta}})\tilde{\znew}\kvec\right) \, d\alpha \, d\beta
\end{align*}

We expand using \eqref{DtCommuteP} and break the estimates into cases, depending on whether all of the derivatives $\partial^j$ above fall on the factor $(I - \nht_{\tilde{\zeta}})\tilde{\znew}\kvec$:

\begin{align*}
& \sim \iint \frac{D_t\sigma^{[j]}}{\mathcal{A}} \cdot -(I - \nht)\left((D_t\mathcal{A})(\zeta_\beta \partial_\alpha - \zeta_\alpha \partial_\beta)\partial^j(I - \nht)\znew\kvec - \tilde{D}_t\tilde{\mathcal{A}}(\tilde{\zeta}_\beta \partial_\alpha - \tilde{\zeta}_\alpha \partial_\beta)\partial^j(I - \nht_{\tilde{\zeta}})\tilde{\znew}\kvec\right) \, d\alpha \, d\beta \\
& + \iint \frac{D_t\sigma^{[j]}}{\mathcal{A}} \cdot -(I - \nht)\left(\mathcal{A}(D_t \zeta_\beta \partial_\alpha - D_t \zeta_\alpha \partial_\beta)\partial^j(I - \nht)\znew\kvec - \tilde{\mathcal{A}}(\tilde{D}_t \tilde{\zeta}_\beta \partial_\alpha - \tilde{D}_t \tilde{\zeta}_\alpha \partial_\beta)\partial^j(I - \nht_{\tilde{\zeta}})\tilde{\znew}\kvec\right) \, d\alpha \, d\beta \\
& + \sum_{j_1 + j_2 = j}^{j_1 \neq 0} \iint \frac{D_t\sigma^{[j]}}{\mathcal{A}} \cdot -(I - \nht)\Biggl(\mathcal{A}\partial^{j_1}U_\kappa^{-1}\left(\frac{\mathfrak{a}_t}{\mathfrak{a}}\right)(\zeta_\beta \partial_\alpha - \zeta_\alpha \partial_\beta)\partial^{j_2}(I - \nht)\znew\kvec \\
& \hspace{5.5cm} - \tilde{\mathcal{A}}\partial^{j_1}U_{\tilde{\kappa}}^{-1}\left(\frac{\tilde{\mathfrak{a}}_t}{\tilde{\mathfrak{a}}}\right)(\tilde{\zeta}_\beta \partial_\alpha - \tilde{\zeta}_\alpha \partial_\beta)\partial^{j_2}(I - \nht_{\tilde{\zeta}})\tilde{\znew}\kvec\Biggr) \, d\alpha \, d\beta \\
& + \sum_{j_1 + j_2 = j}^{j_1 \neq 0} \iint \frac{D_t\sigma^{[j]}}{\mathcal{A}} \cdot -(I - \nht)\Bigl(\mathcal{A}(\partial^{j_1}\partial_\beta D_t \zeta \partial_\alpha - \partial^{j_1}\partial_\alpha D_t \zeta \partial_\beta)\partial^{j_2}(I - \nht)\znew\kvec \\
& \hspace{5.5cm} - \tilde{\mathcal{A}}(\partial^{j_1}\partial_\beta \tilde{D}_t \tilde{\zeta} \partial_\alpha - \partial^{j_1} \partial_\alpha \tilde{D}_t \tilde{\zeta} \partial_\beta)\partial^{j_2}(I - \nht_{\tilde{\zeta}})\tilde{\znew}\kvec\Biggr) \, d\alpha \, d\beta \\
& := \mathscr{I}_1 + \mathscr{I}_2 + \mathscr{I}_3 + \mathscr{I}_4 
\end{align*}
We further manipulate these terms in steps.

\vspace{0.5cm}

\noindent \textbf{Estimates of $\mathscr{I}_1$.}

\vspace{0.5cm}

Taking a derivative with respect to $D_t$ of the formula \eqref{DiffAFormula} and estimating as in Proposition \ref{DiffQuantitiesEstimates}(c) yields the estimates $\|D_t\mathcal{A}\|_{H^2} \leq C(E + \epsilon E^\frac12 + \epsilon^2)$ and $\|D_t(\mathcal{A} - \tilde{\mathcal{A}})\|_{H^2} \leq C(E + \epsilon E^\frac12 + \epsilon^3)$.  Now decomposing all of the quantities in $\mathscr{I}_1$ into sums of approximation and remainder quantities shows that it suffices to account for the integral
$$\iint \frac{D_t\sigma^{[j]}}{\mathcal{A}} \cdot -(I - \nht)(D_t\mathcal{A} - \tilde{D}_t\tilde{\mathcal{A}})(\jvec \partial_\alpha - \ivec \partial_\beta)\partial^j(I - \nht_{\tilde{\zeta}})\tilde{\znew}\kvec) \, d\alpha \, d\beta$$  But now writing $$D_t\mathcal{A} - \tilde{D}_t\tilde{\mathcal{A}} = D_t(\mathcal{A} - \tilde{\mathcal{A}}) + \bigl((b - \tilde{b}) \cdot \mathcal{D}\bigr)(\tilde{\mathcal{A}} - 1)$$ shows that this last integral is bounded by $C(E^\frac12 + \epsilon^2)^3$.

\vspace{0.5cm}

\noindent \textbf{Estimates of $\mathscr{I}_3$.}

\vspace{0.5cm}

Recall the formula from Proposition \eqref{BADtAFormulas}, which we rewrite slightly using Proposition \ref{CommutatorIdentities} as
\begin{align}\label{GoodDtAFormula2}
(I - \nht)U_\kappa^{-1}(\mathfrak{a}_t(\Xi_\alpha \times \Xi_\beta)) & = 2D_t[D_t, \nht]D_t\zeta
\end{align}
\begin{align*}
& \quad - \iint D_t K(\zeta^\prime - \zeta)(D_t\zeta - D_t^\prime\zeta^\prime) \times (\zeta_{\beta^\prime}\partial_{\alpha^\prime} - \zeta_{\alpha^\prime}\partial_{\beta^\prime}) D_t^\prime\zeta^\prime \, d\alpha \, d\beta \\
& \quad - \iint K(\zeta^\prime - \zeta)\Bigl(((D_t\zeta - D_t^\prime\zeta^\prime) \times \partial_{\beta^\prime}D_t^\prime \zeta^\prime)\partial_{\alpha^\prime}D_t^\prime \zeta^\prime - ((D_t\zeta - D_t^\prime\zeta^\prime) \times \partial_{\alpha^\prime}D_t^\prime \zeta^\prime)\partial_{\beta^\prime}D_t^\prime \zeta^\prime \Bigr) d\alpha \, d\beta \notag
\end{align*}

By taking the third component of \eqref{GoodDtAFormula2} and using the identity $\mathcal{A} \circ \kappa = \mathfrak{a}J(\kappa)$, we can write $U_\kappa^{-1}(\mathfrak{a}_t)$ as an expression whose leading order terms are just the right hand side of \eqref{GoodDtAFormula2}.  By Proposition \ref{KappaControlledByZeta}, the only terms of the right hand side of \eqref{GoodDtAFormula2} that are not controlled by $C(E + \epsilon E^\frac12)$ are purely approximate contributions.  The only $O(\epsilon^2)$ contributions from \eqref{GoodDtAFormula2} are through the commutator $2\partial_{t_0}[\partial_{t_0}, \nht^{(1)}_1]\partial_{t_0}\lambda^{(1)} = 0$, and $U_{\tilde{\kappa}}^{-1}\left(\tilde{\mathfrak{a}}_t/\tilde{\mathfrak{a}}\right)$ consists of terms of size at most $O(\epsilon^3)$ by \eqref{TildeatOveraFormula}.  Hence it suffices to account for the purely approximate contributions of the term $U_\kappa^{-1}\left(\mathfrak{a}_t/\mathfrak{a}\right) - U_{\tilde{\kappa}}^{-1}\left(\tilde{\mathfrak{a}}_t/\tilde{\mathfrak{a}}\right)$ of size $O(\epsilon^3)$; denote these terms by $\sum_{|i| = 3} F_ie^{i\jvec\phi}$.  Since these contributions are scalar-valued, we can choose the $F_i$ to be $1, \jvec$-valued.  Since $\partial^{j_1}F_0 = O(\epsilon^4)$ when $j_1 \neq 0$, extracting the leading order of $\mathscr{I}_3$ shows that we need only bound
\begin{align*}
 \sum_{j_1 + j_2 = j}^{j_1 \neq 0} k\iint D_t\mathbf{S}^{[j]} \cdot \frac12(I - \nht_0)\epsilon^4\left(\partial^{j_1}\left(\sum_{i= -1, 1, 2, 3} F_ie^{i\jvec\phi} \right)\partial^{j_2}(Ae^{\jvec\phi}\ivec)\right) \, d\alpha \, d\beta
\end{align*}
Note that the right hand factor of the above inner product contains no factor of the form $Se^{\jvec\phi}$.  For the term corresponding to $F_{-1}$ above, note that if we add the following higher order contribution to the energy:
\begin{equation}\label{Term7EnergyCorrection}
\sum_{j_1 + j_2 = j}^{j_1 \neq 0} -k\iint \mathbf{S}^{[j]} \cdot \frac12(I - \nht_0)\epsilon^4\Bigl(\partial^{j_1}(F_{-1}e^{-\jvec\phi})\partial^{j_2}(Ae^{\jvec\phi})\ivec\Bigr) \, d\alpha \, d\beta
\end{equation} then changing variables and taking a time derivative as usual shows that we can eliminate the term corresponding to $i = -1$ at the expense of a term which is to leading order of the form
$$\sum_{j_1 + j_2 = j}^{j_1 \neq 0} -k\iint \mathbf{S}^{[j]} \cdot \frac12(I - \nht_0)\epsilon^4\partial_t\Bigl(\partial^{j_1}(F_{-1}e^{-\jvec\phi})\partial^{j_2}(Ae^{\jvec\phi})\ivec\Bigr) \, d\alpha \, d\beta$$
which is at most of size $CE^\frac12\epsilon^4$ since we have taken a time derivative of a function of slow variables alone.

The remaining terms are non resonant; if we denote these terms by $\epsilon^4\sum_{i = 2}^4 S_i e^{i\jvec\phi}$, then by adding to $\mathbf{S}^{[j]}$ the term 
\begin{equation}\label{SomeCorrectionsToS}
\epsilon^4\sum_{i = 2}^4 \frac{S_i e^{i\jvec\phi}}{ik - (i\omega)^2}
\end{equation}
the rest of the non resonant terms are eliminated as in \eqref{IsolateCancellationInNewEnergy} up to terms of size $C(E^\frac12 + \epsilon^2)^3$.

\vspace{0.5cm}

\noindent \textbf{Estimates of $\mathscr{I}_2$ and $\mathscr{I}_4$.}

\vspace{0.5cm}

The term $\mathscr{I}_2$ will be treated in the same way as $\mathscr{I}_4$, except that it is a term with a small number of derivatives.  Thus we will only provide estimates for $\mathscr{I}_4$ in detail.  Consider anew the term

\begin{align}\label{TermNo7Again}
\mathscr{I}_4 & = \sum_{j_1 + j_2 = j}^{j_1 \neq 0} \iint \frac{D_t\sigma^{[j]}}{\mathcal{A}} \cdot -(I - \nht)\Bigl(\mathcal{A}(\partial^{j_1}\partial_\beta D_t \zeta \partial_\alpha - \partial^{j_1}\partial_\alpha D_t \zeta \partial_\beta)\partial^{j_2}(I - \nht)\znew\kvec \notag \\
& \hspace{5.5cm} - \tilde{\mathcal{A}}(\partial^{j_1}\partial_\beta \tilde{D}_t \tilde{\zeta} \partial_\alpha - \partial^{j_1} \partial_\alpha \tilde{D}_t \tilde{\zeta} \partial_\beta)\partial^{j_2}(I - \nht_{\tilde{\zeta}})\tilde{\znew}\kvec\Biggr) \, d\alpha \, d\beta
\end{align}

As it stands, the term $\partial^j \partial D_t\zeta$ for $\partial = \partial_\alpha, \partial_\beta$ cannot be directly controlled by the energy; hence we rewrite the integrand so as to gain enough regularity to be able to construct a normal form transformation.  The idea is to introduce a commutator with the Hilbert transform in order to use Proposition \ref{SingularIntegralHsEstimate} to remove one derivative.  Specifically, using $D_t\zeta = \nht D_t\zeta$ and omitting directly controlled terms, we write this term as

\begin{align}\label{I4WrittenLikeACommutator}
& \quad -(I - \nht)\Bigl(\mathcal{A}(\partial^{j_1}\partial_\beta D_t \zeta \partial_\alpha - \partial^{j_1}\partial_\alpha D_t \zeta \partial_\beta)\partial^{j_2}(I - \nht)\znew\kvec\Bigr) \notag \\
& = -\Bigl(\mathcal{A}(\partial^{j_1}\partial_\beta \nht D_t \zeta \partial_\alpha - \partial^{j_1}\partial_\alpha D_t \zeta \partial_\beta)\partial^{j_2}(I - \nht)\znew\kvec\Bigr) \notag \\
& \quad + \nht \Bigl(\mathcal{A}(\partial^{j_1}\partial_\beta D_t \zeta \partial_\alpha - \partial^{j_1}\partial_\alpha \nht D_t \zeta \partial_\beta)\partial^{j_2}(I - \nht)\znew\kvec\Bigr) \\
& \sim -\Bigl( \partial^{j_1} \nht(\partial_\beta  D_t \zeta) \partial_\alpha -  \partial^{j_1} \nht(\partial_\alpha D_t \zeta) \partial_\beta \partial^{j_2}(I - \nht)\znew\kvec\Bigr) \notag \\
& \quad + \nht \Bigl((\partial^{j_1}\partial_\beta D_t \zeta \partial_\alpha - \partial^{j_1}\partial_\alpha D_t \zeta \partial_\beta)\partial^{j_2}(I - \nht)\znew\kvec\Bigr) \notag
\end{align}
Similarly, we can rewrite the approximate version of this term from $\mathscr{I}_4$ using a commutator in the exact same way with the exception of the fact that it is only true that $(I - \tilde{\nht})\tilde{D}_t\tilde{\zeta}$ is $O(\epsilon^2)$.  This contributes the following extra term:
\begin{align*}
-\tilde{\mathcal{A}}\Bigl((\partial^{j_1}(I - \tilde{\nht})(\partial_\beta \tilde{D}_t \tilde{\zeta}) \partial_\alpha - \partial^{j_1}(I - \tilde{\nht})(\partial_\alpha  \tilde{D}_t \tilde{\zeta}) \partial_\beta)\partial^{j_2}(I - \tilde{\nht})\tilde{\znew}\kvec\Bigr)
\end{align*}
It is easy to check that this term will be of physical size $O(\epsilon^5)$ if $\partial^j$ contains any $\partial_\beta$ derivatives; the integral contributed by these terms is therefore bounded by $CE^\frac12\epsilon^4$.  Thus it suffices to assume for this term that $\partial^j = \partial_\alpha^j$, and in this case we calculate that
\begin{align}\label{AnotherCorrectionToS}
& \quad -\tilde{\mathcal{A}}\Bigl((\partial_\alpha^{j_1}(I - \tilde{\nht})(\partial_\beta \tilde{D}_t \tilde{\zeta}) \partial_\alpha - \partial_\alpha^{j_1}(I - \tilde{\nht})( \partial_\alpha \tilde{D}_t \tilde{\zeta}) \partial_\beta)\partial_\alpha^{j_2}(I - \tilde{\nht})\tilde{\znew}\kvec\Bigr) \notag \\
& = \epsilon^4 F_2e^{2\jvec\phi}\ivec + O(\epsilon^5)
\end{align}
with $F_2$ being some $1, \jvec$-valued function of slow variables alone.  Since this term is non-resonant, we eliminate it with a higher order correction to $\mathbf{S}^{[j]}$ as we did with the term $\mathscr{I}_3$.

If we now subtract from \eqref{I4WrittenLikeACommutator} its approximate version and expand into expressions involving only approximate and remainder quantities as usual, all terms can be controlled by $C(E^\frac12 + \epsilon^2)^2$ except for the following:
\begin{align*}
& -\Bigl( \partial^{j_1} \nht_{\tilde{\zeta}}(\partial_\beta  D_t r) \partial_\alpha -  \partial^{j_1} \nht_{\tilde{\zeta}}(\partial_\alpha D_t r) \partial_\beta \partial^{j_2}(I - \tilde{\nht})\tilde{\znew}\kvec\Bigr) \notag \\
& + \nht_{\tilde{\zeta}} \Bigl((\partial^{j_1}\partial_\beta D_t r \partial_\alpha - \partial^{j_1}\partial_\alpha D_t r \partial_\beta)\partial^{j_2}(I - \tilde{\nht})\tilde{\znew}\kvec\Bigr) \notag \\
& - \Bigl( \partial^{j_1} \tilde{\nht}(\partial_\beta  \tilde{D}_t \tilde{\zeta}) \partial_\alpha -  \partial^{j_1} \tilde{\nht}(\partial_\alpha \tilde{D}_t \tilde{\zeta}) \partial_\beta \Bigr) \partial^{j_2}\left((I - \nht)\znew\kvec - (I - \tilde{\nht})\tilde{\znew}\kvec\right) \notag \\
& + \nht \Biggl( \Bigl((\partial^{j_1}\partial_\beta \tilde{D}_t \tilde{\zeta} \partial_\alpha - \partial^{j_1}\partial_\alpha \tilde{D}_t \tilde{\zeta} \partial_\beta)\Bigr)\partial^{j_2}\left((I - \nht)\znew\kvec - (I - \tilde{\nht})\tilde{\znew}\kvec\right)\Biggr) \notag
\end{align*}
Since $j_2 < j$, the latter pair of terms can be accounted for using the method of normal forms exactly as in the previous sections.  In anticipation of changing variables in the former pair of terms with respect to $\kappa$, we rewrite them as
\begin{align*}
& \sim \partial_\beta(\nht_{\kappa^{-1}} \partial^{j_1} D_t r)\partial_\alpha(\partial^{j_2}\tilde{\lambda}^\dagger \circ \kappa^{-1}) (J(\kappa) \circ \kappa^{-1}) - \nht_{\kappa^{-1}}\left(\partial_\beta (\partial^{j_1} D_t r) \, \partial_\alpha(\partial^{j_2}\tilde{\lambda}^\dagger \circ \kappa^{-1})(J(\kappa) \circ \kappa^{-1}) \right) \\
& - \partial_\alpha(\nht_{\kappa^{-1}} \partial^{j_1} D_t r)\partial_\beta(\partial^{j_2}\tilde{\lambda}^\dagger \circ \kappa^{-1})(J(\kappa) \circ \kappa^{-1}) + \nht_{\kappa^{-1}}\left(\partial_\alpha (\partial^{j_1} D_t r) \, \partial_\beta(\partial^{j_2}\tilde{\lambda}^\dagger \circ \kappa^{-1})(J(\kappa) \circ \kappa^{-1})\right)
\end{align*}
with a difference controlled in $L^2$ by $C(E^\frac12 + \epsilon^2)^2$, by virtue of Propositions \ref{SingularIntegralHsEstimate}, \ref{DiffQuantitiesEstimates}, and \ref{KappaControlledByZeta}, along with the Mean Value Theorem to control $\tilde{\lambda}^\dagger \circ \kappa^{-1} - \tilde{\lambda}^\dagger$ and identity \eqref{DtCommutePartials}.  Thankfully, when we change variables by $\kappa$ in the integral \eqref{TermNo7Again}, this expression simplifies by Proposition \ref{CarefulControlRhoSigma}(b) to
\begin{align*}
& \sim \partial_\beta \nht_0 (\partial^{j_1} D_t r \circ \kappa)\partial_\alpha\partial^{j_2}\tilde{\lambda}^\dagger - \nht_0\Bigl(\partial_\beta (\partial^{j_1} D_t r \circ \kappa) \, \partial_\alpha\partial^{j_2}\tilde{\lambda}^\dagger\Bigr) \\
&  \quad - \partial_\alpha \nht_0 (\partial^{j_1} D_t r \circ \kappa)\partial_\beta\partial^{j_2}\tilde{\lambda}^\dagger + \nht_0\Bigl(\partial_\alpha (\partial^{j_1} D_t r \circ \kappa) \, \partial_\beta\partial^{j_2}\tilde{\lambda}^\dagger\Bigr)
\end{align*}
Since $\tilde{\lambda}_\beta = O(\epsilon^2)$, it suffices to retain only the first line.  Then, writing $\nht_0 = -\jvec\mathcal{R}_1 + \ivec\mathcal{R}_2$ and $\tilde{\lambda}^\dagger = \epsilon\ivec\overline{A}e^{-\jvec\phi} + O(\epsilon^2)$, we expand the expression to read
\begin{align*}
& \sum_{l = 1, 2} \Biggl( \partial_\beta \mathcal{R}_l (\ivec \kvec^{2 - l} \partial^{j_1}  D_t r \ivec \circ \kappa)\partial_\alpha\partial^{j_2}\epsilon\overline{A}e^{-\jvec\phi} - \mathcal{R}_l\Bigl(\partial_\beta (\ivec\kvec^{2 - l}\partial^{j_1} D_t r \ivec \circ \kappa) \, \partial_\alpha\partial^{j_2}\epsilon\overline{A}e^{-\jvec\phi}\Bigr)\Biggr)
\end{align*}
Unlike in our past uses of the normal form transformation, the approximate wave packet term occurs as the right factor as opposed to the left factor.  Therefore it will more convenient to use the right-hand $\jvec$-Fourier transform.  Since the Riesz transforms $\mathcal{R}_l$ have scalar-valued kernels, one can regard them as acting on the right.  Therefore taking the right $\jvec$-Fourier transform of this expression and using \eqref{RieszPotentialFormulas} gives us
\begin{equation}\label{ManipulatedTermNo7}
\sum_{l = 1, 2} \frac{1}{(2\pi)^2} \iint \Biggl( \mathcal{F}_\jvec^R[\ivec \kvec^{2 - l} \partial^{j_1}  D_t r \ivec \circ \kappa)]_{(\xi^\prime)}\jvec k \xi_2^\prime\left(\frac{\xi_l^\prime}{|\xi^\prime|} - \frac{\xi_l}{|\xi|}\right)\mathcal{F}_\jvec^R[\epsilon\partial^{j_2}\overline{A}e^{-\jvec\phi}]_{(\xi - \xi^\prime)}\Biggr) d\xi^\prime
\end{equation}
To avoid dealing with singularities near zero in frequency, we also make another simplification: We decompose this integral into the domains $|\xi^\prime| \geq 4k$ and $|\xi^\prime| \leq 4k$; since $|\xi - \xi^\prime|$ is bounded away from 0 and $\infty$, we can trade derivatives in the low frequency $|\xi^\prime| \leq 4k$ for constants.  Normal form transformations can be constructed for these low-frequency contributions just as for terms with $|j| < s$ without needing to use the commutator structure as we have done above.

Hence without loss of generality we consider only frequencies $|\xi^\prime| \geq 4k$.  On this region, we have since $|\xi - \xi^\prime| \leq \frac32 k$ that
\begin{equation*}
\left|\frac{\xi_l^\prime}{|\xi^\prime|} - \frac{\xi_l}{|\xi|} \right| = \left|\frac{\xi_l^\prime}{|\xi^\prime|}\frac{|\xi| - |\xi^\prime|}{|\xi|} + \frac{\xi_l^\prime - \xi_l}{|\xi|}\right| \leq C\frac{|\xi - \xi^\prime|}{\frac12|\xi^\prime| + \frac12 k}  \leq \frac{C}{|\xi^\prime|}
\end{equation*}
which gives the promised gain of one derivative.  With this, we adopt the ansatz
\begin{align*}
\mathcal{F}_\jvec^R[\mathscr{Q}_{\eqref{QuadraticTermNo7}}^{(j_1, j_2)}] & = \sum_{l = 1, 2} \frac{1}{(2\pi)^2} \iint_{|\xi^\prime| \geq 4k} \Biggl( \mathcal{F}_\jvec^R[\ivec \kvec^{2 - l} \partial^{j_1}  D_t r \ivec \circ \kappa)]_{(\xi^\prime)}Q_0^l(\xi^\prime - k\ivec, \xi^\prime)\mathcal{F}_\jvec^R[\epsilon\partial^{j_2}\overline{A}e^{-\jvec\phi}]_{(\xi - \xi^\prime)}\Biggr) d\xi^\prime \\
& \quad + \sum_{l = 1, 2} \frac{1}{(2\pi)^2} \iint_{|\xi^\prime| \geq 4k} \Biggl( \mathcal{F}_\jvec^R[\ivec \kvec^{2 - l} \partial^{j_1}  D^2_t r \ivec \circ \kappa)]_{(\xi^\prime)}Q_1^l(\xi^\prime - k\ivec, \xi^\prime)\mathcal{F}_\jvec^R[\epsilon\partial^{j_2}\overline{A}e^{-\jvec\phi}]_{(\xi - \xi^\prime)}\Biggr) d\xi^\prime
\end{align*}
Substituting this ansatz into \eqref{BilinearFormEquation} using \eqref{ManipulatedTermNo7} as the forcing term yields the same formal system \eqref{Q0GeneralSolution}-\eqref{Q1GeneralSolution} as in the last section, and so are given by
\begin{equation}\label{Q0ParticularSolution7}
Q_0^l = \frac{((|\xi^\prime - k\ivec| - |\xi^\prime| - |k\ivec|)(\jvec k\xi_2^\prime)}{(|\xi^\prime - k\ivec| - |\xi^\prime| - |k\ivec|)^2 - 4k|\xi^\prime|} \left(\frac{\xi_l^\prime}{|\xi^\prime|} - \frac{\xi_l}{|\xi|}\right)
\end{equation}
\begin{equation}\label{Q1ParticularSolution7}
Q_1^l = \frac{(-2\omega k\xi_2^\prime)}{(|\xi^\prime - k\ivec| - |\xi^\prime| - |k\ivec|)^2 - 4k|\xi^\prime|}\left(\frac{\xi_l^\prime}{|\xi^\prime|} - \frac{\xi_l}{|\xi|}\right)
\end{equation}
Hence, denoting $\mathcal{Q}_{\eqref{QuadraticTermNo7}}^{(j_1, j_2)} = \mathscr{Q}_{\eqref{QuadraticTermNo7}}^{(j_1, j_2)} \circ \kappa^{-1}$ as before, Lemma \ref{BilinearFormEstimate} gives us the estimate
\begin{equation}
\||\mathcal{D}|^\frac12\mathcal{Q}_{\eqref{QuadraticTermNo7}}^{(j_1, j_2)}\|_{H^\frac12} + \|D_t\mathcal{Q}_{\eqref{QuadraticTermNo7}}^{(j_1, j_2)}\|_{H^\frac12} + \|D_t^2\mathcal{Q}_{\eqref{QuadraticTermNo7}}^{(j_1, j_2)}\|_{L^2} \leq C(\epsilon E^\frac12 + \epsilon^3)
\end{equation}
Moreover, because of the above gain in regularity, the higher order error terms are controlled just as in the previous section as well; in particular by Proposition \ref{CarefulControlRhoSigma} we have the estimate $$\|(\partial_t^2 + |\mathcal{D}|)(D_t r \circ \kappa)\|_{H^{s - 1}} \leq C\|(\partial_t^2 + |\mathcal{D}|)(\sigma^\dagger \circ \kappa)\|_{H^{s - 1}} + C\epsilon E^\frac12 \leq C\epsilon E^\frac12$$ and $$\|(\partial_t^2 + |\mathcal{D}|)(D_t^2 r \circ \kappa)\|_{H^{s - 1}} \leq C\|(\partial_t^2 + |\mathcal{D}|)(D_t\sigma^\dagger \circ \kappa)\|_{H^{s - 1}} + C\epsilon E^\frac12 \leq C\epsilon E^\frac12$$
 The rest of the details are left to the reader.

\vspace{0.5cm}

\noindent \textbf{Estimates of \eqref{QuadraticTermNo4} and \eqref{QuadraticTermNo6}.}

\vspace{0.5cm}

Finally, we mention the modifications to this construction that need to be made in order to account for the term \eqref{QuadraticTermNo6}; this term is easier to estimate because it has more regularity than \eqref{QuadraticTermNo7}.  The treatment of \eqref{QuadraticTermNo4} is almost identical, and so we omit it.  First note that by Proposition \ref{CarefulControlRhoSigma} we have
$r \sim -\rho^\dagger$ and $\rho^\dagger \sim \nht \rho^\dagger$ and hence it suffices to estimate
\begin{align*}
& \sum_{j_1 + j_2 = j} \iint \frac{D_t\sigma^{[j]}}{\mathcal{A}} \cdot -\frac12(I - \nht)\Biggl(\mathcal{A}(\partial_\beta \nht (\partial^{j_1}\rho^\dagger)\partial_\alpha - \partial_\alpha \nht(\partial^{j_1}\rho^\dagger) \partial_\beta)\tilde{D}_t\partial^{j_2}(I - \nht_{\tilde{\zeta}})\tilde{\znew}\kvec \\
& \hspace{4cm} - \mathcal{A}\nht \Bigl(((\partial^{j_1}\rho^\dagger_\beta)\partial_\alpha - (\partial^{j_1}\rho^\dagger_\alpha) \partial_\beta)\tilde{D}_t\partial^{j_2}(I - \nht_{\tilde{\zeta}})\tilde{\znew}\kvec\Bigr)\Biggr) \, d\alpha \, d\beta
\end{align*}
The normal form to account for this term can now be constructed exactly as for the term $\mathscr{I}_4$.  If we denote this normal form by $\mathcal{Q}_{\eqref{QuadraticTermNo6}}$, we again arrive at the estimates
\begin{equation}
\||\mathcal{D}|^\frac12\mathcal{Q}_{\eqref{QuadraticTermNo6}}\|_{H^\frac12} + \|D_t\mathcal{Q}_{\eqref{QuadraticTermNo6}}\|_{H^\frac12} + \|D_t^2\mathcal{Q}_{\eqref{QuadraticTermNo6}}\|_{L^2} \leq C(\epsilon E^\frac12 + \epsilon^3)
\end{equation}
The higher order terms can be estimated in a less careful fashion than the higher order terms corresponding to $\mathscr{I}_4$.  We omit the details.

	\subsubsection{The Normal Forms for \eqref{QuadraticTermNo1} and \eqref{QuadraticTermNo2}}

Unlike the other quadratic terms, \eqref{QuadraticTermNo1} and \eqref{QuadraticTermNo2} arise in the energy inequality in such a way as to need significant preparation to be accounted for by a normal form transformation.  In order to do so we must introduce higher order corrections to the original energy.

As in previous sections it suffices to account for terms of the form
\begin{equation}
\iint (\theta \circ \kappa) \cdot \tilde{\lambda}_{\alpha t}\partial_\beta(\theta \circ \kappa) \, d\alpha \, d\beta = \iint (\theta \circ \kappa) \cdot -\tilde{\lambda}_{\alpha t}\mathcal{R}_2|\mathcal{D}|(\theta \circ \kappa) \, d\alpha \, d\beta
\end{equation}
where $\theta = \rho^{[j]}, \sigma^{[j]}$.

We first rewrite the term corresponding to $\theta = \rho^{[j]}$ with $0 < |j| \leq s$ so that we can use a normal form transformation to account for it.  As usual we write $\sim$ to indicate that we have omitted terms that are controlled by $C(E^\frac12 + \epsilon^2)^3$.  Since we have the estimate $\|(\partial_t^2 + |\mathcal{D}|)\rho^{[j]}\|_{L^2} \leq C\epsilon E^\frac12$, we have
\begin{align*}
& \quad \iint -(\rho^{[j]} \circ \kappa) \cdot \tilde{\lambda}_{\alpha t}\mathcal{R}_2|\mathcal{D}|(\rho^{[j]} \circ \kappa) \, d\alpha \, d\beta \\
& \sim \iint (\rho^{[j]} \circ \kappa) \cdot \tilde{\lambda}_{\alpha t}\mathcal{R}_2(\rho^{[j]} \circ \kappa)_{tt} \, d\alpha \, d\beta
\end{align*}
Suppose we add the correction
\begin{equation}\label{RhoEnergyCorrection}
\iint (\rho^{[j]} \circ \kappa) \cdot -\tilde{\lambda}_{\alpha t}\mathcal{R}_2(\rho^{[j]} \circ \kappa)_t \, d\alpha \, d\beta
\end{equation}
to our energy.  Then taking a time derivative of this correction \eqref{RhoEnergyCorrection}, adding the result to the above term, and using Proposition \ref{SkewedTripleProduct} and $\mathcal{R}_2^* = -\mathcal{R}_2$ yields the terms
\begin{align*}
& \iint (\rho^{[j]} \circ \kappa)_t \cdot -\tilde{\lambda}_{\alpha t}\mathcal{R}_2(\rho^{[j]} \circ \kappa)_t \, d\alpha \, d\beta + \iint (\rho^{[j]} \circ \kappa)_t \cdot -\mathcal{R}_2(\tilde{\lambda}_{\alpha tt}(\rho^{[j]} \circ \kappa)) \, d\alpha \, d\beta
\end{align*}
These terms are not yet removable by the method of normal forms, since the crucial cancellation in Fourier space is not present.  However, we can alter these terms in a straightforward way so that the null structure is apparent at the expense of suitably small error terms.  First, add terms identical to the above except that the sign is reversed and the substitutions $\partial_\alpha \leftrightarrow \partial_\beta$ and $\mathcal{R}_1 \leftrightarrow \mathcal{R}_2$ are made:
\begin{align*}
& \iint (\rho^{[j]} \circ \kappa)_t \cdot \tilde{\lambda}_{\beta t}\mathcal{R}_1(\rho^{[j]} \circ \kappa)_t \, d\alpha \, d\beta + \iint (\rho^{[j]} \circ \kappa)_t \cdot \mathcal{R}_1(\tilde{\lambda}_{\beta tt}(\rho^{[j]} \circ \kappa)) \, d\alpha \, d\beta
\end{align*}
Since the $\beta$ derivatives fall on $\tilde{\lambda}$ in these terms, they are controlled by $C(E^\frac12 + \epsilon^2)^3$.  Now associate these terms so that the null structure is present.  Using Lemmas \ref{BilinearFormEstimate} and \ref{DenominatorEstimate}, we see that the terms 
$$\iint (\rho^{[j]} \circ \kappa)_t \cdot \Biggl(\tilde{\lambda}_{\beta t}\mathcal{R}_1(\rho^{[j]} \circ \kappa)_t - \tilde{\lambda}_{\alpha t}\mathcal{R}_2(\rho^{[j]} \circ \kappa)_t\Biggr)  \, d\alpha \, d\beta $$
have the null structure needed to complete the normal form construction.  The remaining pair of terms must be altered further.  Since $|j| > 0$, there exists a multi-index $j_1$ of length 1 so that we can write $j = j_1 + j^\prime$.  Then using Propositions \ref{SingularIntegralHsEstimate}(b) and \ref{KappaControlledByZeta}, we can extract a derivative from the term $(\rho^{[j]} \circ \kappa)_t$ with the estimate $\|(\rho^{[j]} \circ \kappa)_t - \partial^{j_1}(\rho^{[j^\prime]} \circ \kappa)_t\|_{L^2} \leq C\epsilon E^\frac12$.  This allows us to rewrite the remaining terms as
\begin{align*}
& \quad \iint (\rho^{[j]} \circ \kappa)_t \cdot \Biggl( \mathcal{R}_1(\tilde{\lambda}_{\beta tt}(\rho^{[j]} \circ \kappa)) - \mathcal{R}_2(\tilde{\lambda}_{\alpha tt}(\rho^{[j]} \circ \kappa)) \Biggr) \, d\alpha \, d\beta \\
& \sim \iint (\rho^{[j^\prime]} \circ \kappa)_t \cdot -\Biggl( \partial^{j_1}\mathcal{R}_1(\tilde{\lambda}_{\beta tt}(\rho^{[j]} \circ \kappa)) - \partial^{j_1}\mathcal{R}_2(\tilde{\lambda}_{\alpha tt}(\rho^{[j]} \circ \kappa)) \Biggr) \, d\alpha \, d\beta \\
& = \iint (\rho^{[j^\prime]} \circ \kappa)_t \cdot -\mathcal{R}_{j_1}\Biggl( \tilde{\lambda}_{\beta tt}(\rho^{[j]} \circ \kappa)_\alpha -\tilde{\lambda}_{\alpha tt}(\rho^{[j]} \circ \kappa)_\beta \Biggr) \, d\alpha \, d\beta
\end{align*}
which is now in the proper form to be accounted for using the method of normal forms using Lemmas \ref{BilinearFormEstimate} and \ref{DenominatorEstimate}.  The terms in the sequel that will be accounted for by normal forms can all be treated in the same way.

The case $\theta = \rho$ must be handled slightly differently since we do not have control of $\rho$ in $L^2$.  We add the correction
\begin{equation}\label{RhoLowEnergyCorrection}
\iint (\rho \circ \kappa) \cdot -\tilde{\lambda}_\alpha(\rho \circ \kappa)_\beta \, d\alpha \, d\beta
\end{equation}
to the energy; taking a time derivative and combining with the third order term corresponding to $\theta = \rho$ contributes the following terms in the energy inequality:
\begin{align*}
& \quad \iint (\rho \circ \kappa)_t \cdot -(\tilde{\lambda}_\alpha\partial_\beta + \partial_\beta\tilde{\lambda}_\alpha)(\rho \circ \kappa) \, d\alpha \, d\beta \\
& \sim \iint (\rho \circ \kappa)_t \cdot -2\tilde{\lambda}_\alpha(\rho \circ \kappa)_\beta \, d\alpha \, d\beta
\end{align*}
which are again accounted for using normal forms.

We now turn to the terms corresponding to $\theta = \sigma^{[j]}$.  Observe that when $|j| < s$ we can proceed as above at the expense of adding analogous corrections to the energy.  This method fails when $|j| = s$ because we can no longer control $(\partial_t^2 + |\mathcal{D}|)\sigma^{[j]}$ with our energy; we therefore proceed more carefully.  Consider the following candidate for correction to the energy:
\begin{equation}\label{SigmaEnergyCorrection}
\frac12 \iint -\frac{1}{\mathfrak{a}}(\sigma^{[j]} \circ \kappa)_t \cdot \tilde{\lambda}_\alpha\mathcal{R}_2 (\sigma^{[j]} \circ \kappa)_t + (\sigma^{[j]} \circ \kappa) \cdot \frac12\left((N \times \nabla)\tilde{\lambda}_\alpha\mathcal{R}_2 + \tilde{\lambda}_\alpha\mathcal{R}_2(N \times \nabla)\right)(\sigma^{[j]} \circ \kappa) \, d\alpha \, d\beta
\end{equation}
If we take a derivative with respect to $t$ of this correction, we have of the first term by Proposition \ref{DiffQuantitiesEstimates} that
\begin{align}\label{EnergyCorrectionCalc1}
& \quad \frac{d}{dt} \iint -\frac{1}{\mathfrak{a}}(\sigma^{[j]} \circ \kappa)_t \cdot \tilde{\lambda}_\alpha\mathcal{R}_2 (\sigma^{[j]} \circ \kappa)_t \, d\alpha \, d\beta \notag \\
& \sim \iint -\frac{1}{\mathfrak{a}}(\sigma^{[j]} \circ \kappa)_{tt} \cdot \tilde{\lambda}_\alpha\mathcal{R}_2 (\sigma^{[j]} \circ \kappa)_t \, d\alpha \, d\beta \\
& \quad + \iint -\frac{1}{\mathfrak{a}}(\sigma^{[j]} \circ \kappa)_t \cdot \tilde{\lambda}_\alpha\mathcal{R}_2 (\sigma^{[j]} \circ \kappa)_{tt} \, d\alpha \, d\beta \notag \\
& \quad + \iint -\frac{1}{\mathfrak{a}}(\sigma^{[j]} \circ \kappa)_t \cdot \tilde{\lambda}_{\alpha t}\mathcal{R}_2 (\sigma^{[j]} \circ \kappa)_t \, d\alpha \, d\beta \notag
\end{align}
If we add the correction
\begin{equation*}
\iint \frac{1}{\mathfrak{a}}(\sigma^{[j]} \circ \kappa) \cdot \tilde{\lambda}_{\alpha t}\mathcal{R}_2(\sigma^{[j]} \circ \kappa)_t \, d\alpha \, d\beta
\end{equation*}
to the energy and combine its derivative with respect to $t$ with the last of the terms above, we have after an application of Proposition \ref{SkewedTripleProduct} that only the following terms remain:
\begin{align*}
& \qquad \iint \frac{1}{\mathfrak{a}}(\sigma^{[j]} \circ \kappa)_t \cdot \mathcal{R}_2(\tilde{\lambda}_{\alpha tt} (\sigma^{[j]} \circ \kappa)) \, d\alpha \, d\beta \\
& \quad + \iint \frac{1}{\mathfrak{a}}(\sigma^{[j]} \circ \kappa) \cdot \tilde{\lambda}_{\alpha t}\mathcal{R}_2 (\sigma^{[j]} \circ \kappa)_{tt} \, d\alpha \, d\beta
\end{align*}
The first of these terms is accountable with a normal form transformation.  The second we rewrite using the estimate $\|(\partial_t^2 - \mathfrak{a}(N \times \nabla))(\sigma^{[j]} \circ \kappa)\|_{L^2} \leq C\epsilon E^\frac12$ as
\begin{align*}
& \quad \iint \frac{1}{\mathfrak{a}}(\sigma^{[j]} \circ \kappa) \cdot \tilde{\lambda}_{\alpha t}\mathcal{R}_2 (\sigma^{[j]} \circ \kappa)_{tt} \, d\alpha \, d\beta \\
& \sim \iint \left(\frac{1}{\mathfrak{a}}(\sigma^{[j]} \circ \kappa)\right) \cdot \left(\tilde{\lambda}_{\alpha t}\mathcal{R}_2 (\mathfrak{a}(N \times \nabla)(\sigma^{[j]} \circ \kappa))\right) \, d\alpha \, d\beta \\
& \sim \iint -\left(\frac{1}{\mathfrak{a}}(\sigma^{[j]} \circ \kappa)\right) \cdot \left(\tilde{\lambda}_{\alpha t}\mathcal{R}_2 |\mathcal{D}|(\sigma^{[j]} \circ \kappa)\right) \, d\alpha \, d\beta \\
& \sim \iint -(\sigma^{[j]} \circ \kappa) \cdot \tilde{\lambda}_{\alpha t}\mathcal{R}_2 |\mathcal{D}|(\sigma^{[j]} \circ \kappa) \, d\alpha \, d\beta 
\end{align*}
In particular, note that the error terms neglected from the second to the third line above containing $$\left(\mathfrak{a}(N \times \nabla) + |D|\right)(\sigma^{[j]} \circ \kappa)$$ are controlled using Proposition \ref{HalfDerivativeEstimates} and complex interpolation after decomposing similar to \eqref{NastyHigherOrderTerms}.  This is possible because $(\sigma^{[j]} \circ \kappa)/\mathfrak{a}$ is controlled in $H^\frac12$ by the energy.

Next, taking a derivative with respect to $t$ of the second term of \eqref{SigmaEnergyCorrection} yields the expression
\begin{align}\label{EnergyCorrectionCalc2}
& \quad \frac{d}{dt} \iint (\sigma^{[j]} \circ \kappa) \cdot \frac12\left((N \times \nabla)\tilde{\lambda}_\alpha\mathcal{R}_2 + \tilde{\lambda}_\alpha\mathcal{R}_2(N \times \nabla)\right)(\sigma^{[j]} \circ \kappa) \, d\alpha \, d\beta \notag \\
& = \iint (\sigma^{[j]} \circ \kappa)_t \cdot \frac12\left((N \times \nabla)\tilde{\lambda}_\alpha\mathcal{R}_2 + \tilde{\lambda}_\alpha\mathcal{R}_2(N \times \nabla)\right)(\sigma^{[j]} \circ \kappa) \, d\alpha \, d\beta \notag \\
& \quad + \iint (\sigma^{[j]} \circ \kappa) \cdot \frac12\left((N \times \nabla)\tilde{\lambda}_\alpha\mathcal{R}_2 + \tilde{\lambda}_\alpha\mathcal{R}_2(N \times \nabla)\right)(\sigma^{[j]} \circ \kappa)_t \, d\alpha \, d\beta \\
& \quad + \iint (\sigma^{[j]} \circ \kappa) \cdot \frac12\left((N \times \nabla)\tilde{\lambda}_{\alpha t}\mathcal{R}_2 + \tilde{\lambda}_{\alpha t}\mathcal{R}_2(N \times \nabla)\right)(\sigma^{[j]} \circ \kappa) \, d\alpha \, d\beta \notag \\
& \quad + \iint (\sigma^{[j]} \circ \kappa) \cdot \frac12\left([\partial_t, (N \times \nabla)]\tilde{\lambda}_\alpha\mathcal{R}_2 + \tilde{\lambda}_\alpha\mathcal{R}_2[\partial_t, (N \times \nabla)]\right)(\sigma^{[j]} \circ \kappa) \, d\alpha \, d\beta \notag
\end{align}
The fourth term above is directly controlled by $C(E^\frac12 + \epsilon^2)^3$ thanks to Proposition \ref{HalfDerivativeEstimates}.  The third term must be rewritten further; indeed, using Proposition \ref{SkewedTripleProduct} we have
\begin{align*}
(\sigma^{[j]} \circ \kappa) \cdot \frac12\left(\tilde{\lambda}_{\alpha t}\mathcal{R}_2(N \times \nabla)\right)(\sigma^{[j]} \circ \kappa) & = (N \times \nabla)(\sigma^{[j]} \circ \kappa) \cdot \frac12\mathcal{R}_2\tilde{\lambda}_{\alpha t}(\sigma^{[j]} \circ \kappa)
\end{align*}
from which we have
\begin{align*}
& \quad \iint (\sigma^{[j]} \circ \kappa) \cdot \frac12\left(\tilde{\lambda}_{\alpha t}\mathcal{R}_2(N \times \nabla)\right)(\sigma^{[j]} \circ \kappa) \, d\alpha \, d\beta \\
& = \iint (\sigma^{[j]} \circ \kappa) \cdot \frac12(N \times \nabla)\mathcal{R}_2\tilde{\lambda}_{\alpha t}(\sigma^{[j]} \circ \kappa) \, d\alpha \, d\beta \\
& \sim \iint \frac{1}{\mathfrak{a}}(\rho^{[j]} \circ \kappa)_t \cdot \frac12(N \times \nabla)[\mathcal{R}_2, \tilde{\lambda}_{\alpha t}](\sigma^{[j]} \circ \kappa) \, d\alpha \, d\beta \\
& \quad + \iint (\sigma^{[j]} \circ \kappa) \cdot \frac12(N \times \nabla)\tilde{\lambda}_{\alpha t}\mathcal{R}_2(\sigma^{[j]} \circ \kappa) \, d\alpha \, d\beta
\end{align*}
and so because $[\mathcal{R}_2, \tilde{\lambda}_{\alpha t}]$ gains one derivative, we can eliminate the first term of the last expression with a normal form transformation.  What remains of the third term from \eqref{EnergyCorrectionCalc2} can then be further rewritten as
\begin{align*}
& \quad \iint (\sigma^{[j]} \circ \kappa) \cdot \left(\tilde{\lambda}_{\alpha t}\mathcal{R}_2(N \times \nabla)\right)(\sigma^{[j]} \circ \kappa) \, d\alpha \, d\beta \\
& \sim \iint -(\sigma^{[j]} \circ \kappa) \cdot \tilde{\lambda}_{\alpha t}\mathcal{R}_2|\mathcal{D}|(\sigma^{[j]} \circ \kappa) \, d\alpha \, d\beta
\end{align*}
Similarly, the first and second terms from \eqref{EnergyCorrectionCalc2} can be rewritten as
\begin{align*}
& = \iint (\sigma^{[j]} \circ \kappa)_t \cdot \tilde{\lambda}_\alpha\mathcal{R}_2(N \times \nabla)(\sigma^{[j]} \circ \kappa) \, d\alpha \, d\beta \\
& \quad + \iint (N \times \nabla)(\sigma^{[j]} \circ \kappa) \cdot \tilde{\lambda}_\alpha\mathcal{R}_2(\sigma^{[j]} \circ \kappa)_t \, d\alpha \, d\beta
\end{align*}
at the expense of terms containing commutators $[\mathcal{R}_2, \tilde{\lambda}_\alpha]$ which are therefore susceptible to elimination by normal form.  These terms then combine with the outstanding terms contributed from \eqref{EnergyCorrectionCalc1} to yield the quantities
\begin{equation*}
\iint \frac{1}{\mathfrak{a}}(\sigma^{[j]} \circ \kappa)_t \cdot \tilde{\lambda}_\alpha\mathcal{R}_2(\mathcal{P}\sigma^{[j]} \circ \kappa) \, d\alpha \, d\beta + \iint \frac{1}{\mathfrak{a}}(\mathcal{P}\sigma^{[j]} \circ \kappa) \cdot \tilde{\lambda}_\alpha\mathcal{R}_2(\sigma^{[j]} \circ \kappa)_t \, d\alpha \, d\beta
\end{equation*}
which is controlled by $C(E^\frac12 + \epsilon^2)^3$.  In summary, we have shown that the time derivative of \eqref{SigmaEnergyCorrection} is
\begin{equation*}
\iint -(\sigma^{[j]} \circ \kappa) \cdot \tilde{\lambda}_{\alpha t}\mathcal{R}_2|\mathcal{D}|(\sigma^{[j]} \circ \kappa) \, d\alpha \, d\beta
\end{equation*}
up to terms that are either directly controlled by $C(E^\frac12 + \epsilon^2)^3$ or which can be accounted for using a normal form transformation.  Therefore adding the negative of \eqref{SigmaEnergyCorrection} to the energy eliminates the remaining third order terms corresponding to $\theta = \sigma^{[j]}$.

\begin{remark}
The quadratic corrections $\mathcal{Q}^{[j]}$ of the normal form transformations constructed in this section no longer control the quantities $$\|D_t^2 \mathcal{Q}^{[j]}\|_{L^2}, \qquad \|\,|\mathcal{D}|^\frac12D_t\mathcal{Q}^{[j]}\|_{L^2}, \qquad \|\,|\mathcal{D}|\mathcal{Q}^{[j]}\|_{L^2}$$  This is because we allowed ourselves to construct normal forms from unknowns of the same regularity of $\sigma$ in the above section, which is a half-derivative less regular than those constructed in previous sections.  However, in the sequel we will see that we only need control over the quantities $$\|D_t\mathcal{Q}^{[j]}\|_{L^2}, \qquad \|\,|\mathcal{D}|^\frac12\mathcal{Q}^{[j]}\|_{L^2}$$
\end{remark}

\section{Justification of HNLS in Transformed and Eulerian Coordinates}

We have now constructed all of the quantities needed to begin the proof of Theorem \ref{MainTheorem}.  We proceed in three steps. The first is to use the quantities constructed in \S4 to construct the energy in earnest and show that it implies an appropriate a priori bound on the remainder.  Next, we must take the approximate solution $\tilde{\zeta}$ at time $t = 0$ and construct from it initial data for the water wave system \eqref{WaterWaveLagrangeFirstPass}-\eqref{XiTAnalyticFirstPass} that satisfies the compatibility conditions of this system and is sufficiently close to the approximate solution at $t = 0$.  Since the system \eqref{WaterWaveLagrangeFirstPass}-\eqref{XiTAnalyticFirstPass} has a local well-posedness theory in $H^s$, the next step is to establish $O(\epsilon^{-2})$ existence times for the full problem with this initial data followed by bootstrapping to the approximate solution using our a priori bound.  Finally we demonstrate Theorem \ref{EulerianTheorem} by showing that the initial data given in the hypothesis can be used to construct suitable initial data in the sense of Theorem \ref{MainTheorem}.

	\subsection{Construction of the Corrected Quantities, and the A Priori Energy Bound}

With all of our normal forms and third order energy corrections constructed, we at last have the
\begin{definition}\label{NewEnergy}
Fix some $s \geq 6$, and let $\mathcal{Q}_\rho^{[j]}$, $\mathcal{Q}_\sigma^{[j]}$ denote the sum of all of the normal forms constructed in \S\S 4.3.1-4 for $\rho^{[j]}$ and $\sigma^{[j]}$, respectively, along with the corrections to $\mathbf{S}^{[j]}$ given by \eqref{SomeCorrectionsToS} and \eqref{AnotherCorrectionToS}.  Then define $\mathbf{R}^{[j]} = \rho + \mathcal{Q}_\rho^{[j]}$ and $\mathbf{S}^{[j]} = \sigma + \mathcal{Q}_\sigma^{[j]}$.  With the $\mathscr{E}$ defined as in Proposition \ref{BasicEnergyIdentity}, set $\mathfrak{E}(t) := \sum_{|j| \leq s} \mathscr{E}(\mathbf{R}^{[j]}) + \mathscr{E}(\mathbf{S}^{[j]}) + \mathfrak{E}_3$, where $\mathfrak{E}_3$ consists of \eqref{Term7EnergyCorrection} as well as the sum of the third-order corrections derived in \S 4.3.4.
\end{definition}
Then our work in \S 4 demonstrates that using Proposition \ref{NaiveNormalFormEstimate} along with the fact that $|\mathfrak{E}_3| \leq C\epsilon(E^\frac12 + \epsilon^2)^2$, we have the conclusive estimates
\begin{corollary}\label{NewBoundsOld}
For $s \geq 6$ and $\epsilon_0 > 0$ chosen sufficiently small, for all $0 < \epsilon < \epsilon_0$ we have the estimates
\begin{align*}
\sum_{|j| \leq s} \|D_t\rho^{[j]}\|_{L^2}^2 + \|D_t\sigma^{[j]}\|_{L^2}^2 + \|\,|\mathcal{D}|\rho^{[j]}\|_{L^2}^2 & \leq C\sum_{|j| \leq s} \|D_t\mathbf{R}^{[j]}\|_{L^2}^2 + \|D_t\mathbf{S}^{[j]}\|_{L^2}^2 + C\epsilon^5
\end{align*}
\begin{align*}
\sum_{|j| \leq s} \|\,|\mathcal{D}|^\frac12\rho^{[j]}\|_{L^2}^2 + \|\,|\mathcal{D}|^\frac12\sigma^{[j]}\|_{L^2}^2 & \leq C\sum_{|j| \leq s} \|\,|\mathcal{D}|^\frac12\mathbf{R}^{[j]}\|_{L^2}^2 + \|\,|\mathcal{D}|^\frac12\mathbf{S}^{[j]}\|_{L^2}^2 + C\epsilon^5
\end{align*}
\begin{align*}
E & \leq C\mathfrak{E} + C\epsilon^5
\end{align*}
\end{corollary}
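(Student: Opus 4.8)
The plan is to obtain all three estimates as short consequences of the ingredients assembled in Sections 2--4, so that the argument reduces to the triangle inequality together with the smallness of the normal-form corrections and a single use of the remainder equation. For the first two estimates I would write $\rho^{[j]} = \mathbf{R}^{[j]} - \mathcal{Q}_\rho^{[j]}$ and $\sigma^{[j]} = \mathbf{S}^{[j]} - \mathcal{Q}_\sigma^{[j]}$ (modulo the slow-variable corrections \eqref{SomeCorrectionsToS} and \eqref{AnotherCorrectionToS} to $\mathbf{S}^{[j]}$, which carry an explicit factor $\epsilon^4$ and are negligible in every Sobolev norm) and apply the triangle inequality. Each $\mathcal{Q}_\rho^{[j]}$, $\mathcal{Q}_\sigma^{[j]}$ is a finite sum of the bilinear corrections built in \S4.3, each of which obeys an estimate of the type of Proposition \ref{NaiveNormalFormEstimate}---or, for the corrections of \S4.3.4, the weaker variant recorded in the remark closing that section, which still controls $\|D_t\mathcal{Q}\|_{L^2}$ and $\|\,|\mathcal{D}|^\frac12\mathcal{Q}\|_{L^2}$. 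The ``input'' factor of each such correction is one of $\partial^{j_2}\rho$, $\partial^{j_2}\sigma$, $\partial^{j_1}D_t r$, $\partial^{j_1}D_t^2 r$, all of whose relevant norms are bounded by $C(E^\frac12+\epsilon^2)$ by Proposition \ref{CarefulControlRhoSigma}; hence each correction is $\le C\epsilon(E^\frac12+\epsilon^2)$, and by Assumption \ref{APrioriBound} this is $\le C\epsilon^3$. Squaring and summing over the finitely many $|j|\le s$ yields the error $C\epsilon^6\le C\epsilon^5$, which already gives the second estimate and the $\|D_t\rho^{[j]}\|_{L^2}$, $\|D_t\sigma^{[j]}\|_{L^2}$ parts of the first.

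The $\|\,|\mathcal{D}|\rho^{[j]}\|_{L^2}$ term of the first estimate requires a separate device, because the normal forms of \S4.3.4 return only a half derivative of $\mathcal{Q}$. Here I would use \eqref{RhoEquation}: the quantity $\rho^{[j]}=\tfrac12(I-\nht)\partial^j\rho$ satisfies an identity of the form $\mathcal{A}(\zeta_\beta\partial_\alpha-\zeta_\alpha\partial_\beta)\rho^{[j]} = D_t^2\rho^{[j]} - G_{\rho^{[j]}}$, where $G_{\rho^{[j]}}$ collects the commutators of $\partial^j$ and $\nht$ with $\mathcal{P}$ together with $\tfrac12(I-\nht)\partial^j G_\rho$. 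Since $\zeta_\beta\partial_\alpha-\zeta_\alpha\partial_\beta = \kvec\mathcal{D} + (\lambda_\beta\partial_\alpha-\lambda_\alpha\partial_\beta)$, $\|\kvec\mathcal{D}f\|_{L^2}=\|\,|\mathcal{D}|f\|_{L^2}$, and $\|\nabla\lambda\|_{L^\infty}=O(\epsilon)$ under the standing assumptions, one absorbs the $\lambda$-term and obtains $\|\,|\mathcal{D}|\rho^{[j]}\|_{L^2}\le C(\|D_t^2\rho^{[j]}\|_{L^2}+\|G_{\rho^{[j]}}\|_{L^2})$. Proposition \ref{CarefulControlRhoSigma} gives $\|D_t^2\rho^{[j]}\|_{L^2}\le\|D_t\sigma^{[j]}\|_{L^2}+C\epsilon(E^\frac12+\epsilon^2)$, while the routine nonlinearity bounds of \S4 together with Assumption \ref{APrioriBound} give $\|G_{\rho^{[j]}}\|_{L^2}\le C(E+\epsilon E^\frac12+\epsilon^3)\le C\epsilon^3$; squaring and summing closes the first estimate.

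For the final bound $E\le C\mathfrak{E}+C\epsilon^5$ I would chain three inequalities. First, since $|r^\dagger|=|r|$ pointwise, Proposition \ref{CarefulControlRhoSigma}(a)--(d) bound $\|\,|\mathcal{D}|^\frac12 r\|_{H^{s+\frac12}}$, $\|D_t r\|_{H^{s+\frac12}}$ and $\|D_t^2 r\|_{H^s}$---hence $E$---by $\sum_{|j|\le s}$ of the $L^2$ norms of $|\mathcal{D}|^\frac12\rho^{[j]}$, $|\mathcal{D}|\rho^{[j]}$, $|\mathcal{D}|^\frac12\sigma^{[j]}$, $D_t\rho^{[j]}$, $D_t\sigma^{[j]}$, up to an error $C\epsilon(E^\frac12+\epsilon^2)\le C\epsilon^3$. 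Second, those quantities are controlled by the right-hand sides of the first two estimates just proved, i.e.\ by $\sum_{|j|\le s}(\|D_t\mathbf{R}^{[j]}\|_{L^2}^2+\|D_t\mathbf{S}^{[j]}\|_{L^2}^2+\|\,|\mathcal{D}|^\frac12\mathbf{R}^{[j]}\|_{L^2}^2+\|\,|\mathcal{D}|^\frac12\mathbf{S}^{[j]}\|_{L^2}^2)+C\epsilon^5$. Third, because $\mathbf{R}^{[j]}$ and $\mathbf{S}^{[j]}$ are $\nht$-antisymmetric by construction and $\mathcal{A}=1+O(\epsilon^2)$ (Proposition \ref{DiffQuantitiesEstimates}(c) and \eqref{TildeAFormula}), Propositions \ref{BasicEnergyIdentity} and \ref{HalfDerivativeEstimates}(b) give $\mathscr{E}(\mathbf{R}^{[j]})\ge\tfrac12\|D_t\mathbf{R}^{[j]}\|_{L^2}^2+\tfrac1C\|\,|\mathcal{D}|^\frac12\mathbf{R}^{[j]}\|_{L^2}^2$ and likewise for $\mathbf{S}^{[j]}$; since $\mathscr{E}(\mathbf{S}^{[j]})\ge 0$ and $|\mathfrak{E}_3|\le C\epsilon(E^\frac12+\epsilon^2)^2\le C\epsilon^5$, summing gives $\sum_{|j|\le s}(\mathscr{E}(\mathbf{R}^{[j]})+\mathscr{E}(\mathbf{S}^{[j]}))=\mathfrak{E}-\mathfrak{E}_3\le\mathfrak{E}+C\epsilon^5$. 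Composing the three links proves the claim. The only genuinely delicate point is the one handled in the second paragraph: since the normal forms of \S4.3.4 cost a half derivative, the top-order $|\mathcal{D}|$-piece of $E$ cannot be returned directly by the corrected quantities and must be recovered from the remainder equation, and one must verify that Proposition \ref{CarefulControlRhoSigma} indeed supplies every remaining (in particular low-frequency) piece so that nothing in $E$ is left uncontrolled.
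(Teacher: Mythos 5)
Your argument is correct and takes essentially the route the paper intends (the paper compresses the proof to a citation of Proposition \ref{NaiveNormalFormEstimate} and the bound $|\mathfrak{E}_3|\leq C\epsilon(E^{\frac12}+\epsilon^2)^2$): triangle inequality plus smallness of the normal-form corrections for the first two estimates, recovery of the full derivative of $\rho^{[j]}$ from the evolution equation, and coercivity of $\mathscr{E}$ for the last. The one imprecision is the claim that $\mathbf{R}^{[j]}$, $\mathbf{S}^{[j]}$ are exactly $\nht$-antisymmetric (and vector-valued) ``by construction'' --- the corrections $\mathcal{Q}^{[j]}$ and the small scalar parts of $\rho$, $\sigma$ are not --- so Proposition \ref{HalfDerivativeEstimates}(b) must be applied to the exactly anti-invariant pieces $\rho^{[j]}$, $\sigma^{[j]}$ with the cross terms estimated via Proposition \ref{HalfDerivativeEstimates}(a) and the $O(\epsilon^3)$ smallness of the discrepancy, which costs only a further $O(\epsilon^5)$ and is absorbed into the stated error.
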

We now present the following energy inequality.

\begin{proposition}\label{APrioriEnergyBound}
Let $s \geq 6$ be given, and let $T_0 > 0$ be the existence time given in the a priori assumption on $\zeta$.  Suppose that $\mathfrak{E}(0) \leq M_0\epsilon^4$.  Then for every $\iota > 0$ we have the a priori bound $$\sup_{0 \leq t \leq \min(\mathscr{T}\epsilon^{-2}, T_0)} \mathfrak{E}(t)^\frac12 \leq C\epsilon^{2 - \iota}$$ where $C$ depends on $k$, $s$, $M_0$, $\|A_0\|_{H^{s + 13} \cap H^3(0^+)}$ and $\iota$.
\end{proposition}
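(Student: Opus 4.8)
The idea is a standard Gr\"onwall/continuity argument applied to the corrected energy $\mathfrak{E}$, exploiting the fact that all the quadratic (third-order) contributions to $\frac{d\mathfrak{E}}{dt}$ have been removed by the normal form transformations and third-order energy corrections of \S 4.3, so that what remains is genuinely of fourth order. First I would differentiate $\mathfrak{E}(t)$ in time. By Proposition \ref{BasicEnergyIdentity} applied to each $\mathbf{R}^{[j]}$ and $\mathbf{S}^{[j]}$, together with the time derivative of $\mathfrak{E}_3$, the terms of third order cancel by construction; the surviving terms fall into three groups: (i) terms of the form $\iint \frac{2}{\mathcal{A}} D_t\mathbf{R}^{[j]} \cdot (I - \nht)\partial^j G_\rho\,d\alpha\,d\beta$ and its $\sigma$-analogue (after commuting $\partial^j$ and $(I-\nht)$ past $\mathcal{P}$, using \eqref{ANTimesNablaCommutePartials}, \eqref{DtCommutePartials}, \eqref{Dt2CommutePartials}, Propositions \ref{SingularIntegralHsEstimate} and \ref{CommutatorEstimates}); (ii) the ``benign'' terms $-\frac{1}{\mathcal{A}}U_{\kappa^{-1}}(\mathfrak{a}_t/\mathfrak{a})|D_t\theta^{[j]}|^2$, which are controlled using $\|U_{\kappa^{-1}}(\mathfrak{a}_t/\mathfrak{a})\|_{L^\infty} \leq C\epsilon$ (which itself follows from Proposition \ref{BADtAFormulas}(c) and the a priori bound, in parallel with \eqref{TildeatOveraFormula}); and (iii) the residual terms coming from the approximate equations \eqref{WaterWaveEvolutionApprox}--\eqref{DtWaterWaveEvolutionApprox}, including the formally-$O(\epsilon^4)$ pieces $(I-\nht_0)F_0 + \sum S'_m e^{m\jvec\phi}$ that cannot be removed by correctors (Remark \ref{ResidualFormallyEpsilon4}).

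The key structural point, carried out next, is that for group (i) and (iii) one uses Propositions \ref{CarefulControlRhoSigma} and \ref{DiffQuantitiesEstimates} to rewrite everything in terms of $E$, obtaining an estimate of the schematic form
\begin{equation*}
\left|\frac{d\mathfrak{E}}{dt}\right| \leq C\epsilon E + C\epsilon^2 E^{\frac12} + C\epsilon^4 E^{\frac12} + (\text{almost-orthogonal terms}) + C\epsilon^5.
\end{equation*}
Here the $C\epsilon E$ comes from the cubic nonlinearities (each factor $\nabla\lambda$, $D_t\zeta$ etc. being $O(\epsilon)$ by Assumption \ref{APrioriBound}), the $C\epsilon^2 E^{\frac12}$ from the interaction of the remainder with the approximate solution, and the $C\epsilon^4 E^{\frac12}$ from pairing $D_t\mathbf{S}^{[j]}$ (size $E^{\frac12}$) against the $O(\epsilon^4)$ residual. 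The almost-orthogonal terms are those flagged after \eqref{QuadraticTermNo2} and in Remark \ref{ResidualFormallyEpsilon4}: terms like $\iint 2\frac{D_t\theta^{[j]}}{\mathcal{A}} \cdot (-\frac12)(I-\nht)\partial^j[\mathcal{P},\nht]((I-\nht)\znew\kvec - (I - \nht_{\tilde\zeta})\tilde\znew\kvec)\,d\alpha\,d\beta$, and the pairing of $D_t\mathbf{S}^{[j]}$ against $\epsilon^4 S'_m e^{m\jvec\phi}$. For these I would integrate by parts in time, writing $\iint \partial_t(\sigma^{[j]}\circ\kappa)\cdot \epsilon^4 S'_m e^{m\jvec\phi}\,d\alpha\,d\beta = \frac{d}{dt}\iint (\sigma^{[j]}\circ\kappa)\cdot \epsilon^4 S'_m e^{m\jvec\phi} - \iint (\sigma^{[j]}\circ\kappa)\cdot \epsilon^4 \partial_t(S'_m e^{m\jvec\phi})$; the boundary term is absorbed into a further small modification of the energy (it is $O(\epsilon^4)\cdot O(\epsilon)\cdot\|\sigma^{[j]}\|_{L^2}$, hence negligible relative to $\mathfrak{E}$), while $\partial_t$ of a function of slow variables alone gains a factor $\epsilon$, rendering the remaining term $O(\epsilon^5 E^{\frac12})$. (The $m = k$ resonant mode of the residual does not appear here precisely because the HNLS equation \eqref{HNLSQuaternion} and the auxiliary equation \eqref{AuxiliaryEquationVector} were chosen to kill it, as in Lemma \ref{AbstractEpsilon4}.) This almost-orthogonality maneuver is what allows the nominally-$O(\epsilon^4)$ residual in $L^2$ to be treated as $O(\epsilon^5)$ in the energy estimate.

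Combining, and using $E \leq C\mathfrak{E} + C\epsilon^5$ from Corollary \ref{NewBoundsOld}, I obtain an inequality of the form $\frac{d\mathfrak{E}}{dt} \leq C\epsilon\mathfrak{E} + C\epsilon^2\mathfrak{E}^{\frac12} + C\epsilon^5$ valid on $[0,\min(\mathscr{T}\epsilon^{-2}, T_0)]$. Writing $Y(t) = \mathfrak{E}(t) + \epsilon^4$, this gives $\frac{d}{dt}Y \leq C\epsilon Y + C\epsilon^2 Y^{\frac12} \leq C\epsilon Y + C\epsilon^2(Y + 1)^{\frac12}$; since $Y \geq \epsilon^4$ we have $Y^{\frac12} \geq \epsilon^2$, so $\epsilon^2 Y^{\frac12} \leq \epsilon^{-2}\cdot\epsilon^2\cdot\epsilon^2\cdot Y^{\frac12}\cdot\ldots$ — more cleanly, divide by $2Y^{\frac12}$ to get $\frac{d}{dt}Y^{\frac12} \leq \frac{C\epsilon}{2}Y^{\frac12} + \frac{C\epsilon^2}{2}$, so by Gr\"onwall on $[0,T]$ with $T \leq \mathscr{T}\epsilon^{-2}$,
\begin{equation*}
Y(t)^{\frac12} \leq \left(Y(0)^{\frac12} + C\epsilon T\right)e^{C\epsilon T/2} \leq \left(\sqrt{M_0 + 1}\,\epsilon^2 + C\mathscr{T}\epsilon^{-1}\right)e^{C\mathscr{T}\epsilon^{-1}/2}.
\end{equation*}
This is too weak as stated — the linear-in-time growth from the cubic terms costs an exponential factor $e^{C\mathscr{T}/\epsilon}$, which destroys the bound. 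The resolution, and the main obstacle of the proof, is that the $C\epsilon\mathfrak{E}$ term must in fact carry an \emph{extra} gain: a careful accounting shows the cubic terms contributing $\epsilon\cdot E$ actually come with a further factor of $E^{\frac12} + \epsilon^2$ rather than just being linear in $E$ — that is, the nonlinearity is genuinely cubic, so $\frac{d\mathfrak{E}}{dt} \leq C(E^{\frac12}+\epsilon^2)\mathfrak{E} + C\epsilon^2\mathfrak{E}^{\frac12} + C\epsilon^5$. Under the bootstrap hypothesis $E^{\frac12} \leq C\epsilon^{2-\iota}$ we then have the coefficient $C(E^{\frac12}+\epsilon^2) \leq C\epsilon^{2-\iota}$, so Gr\"onwall over time $\mathscr{T}\epsilon^{-2}$ produces $e^{C\mathscr{T}\epsilon^{-\iota}}$ — still not quite bounded, but here one iterates: on any subinterval the growth is $e^{C\epsilon^{2-\iota}\cdot\epsilon^{-2}} = e^{C\epsilon^{-\iota}}$, which forces the slightly-worse-than-$\epsilon^2$ bound with the loss $\iota$. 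Concretely, I would close the continuity argument by assuming $\mathfrak{E}(t)^{\frac12} \leq \epsilon^{2-\iota}$ on a maximal subinterval, plugging into the energy inequality to get $\mathfrak{E}(t)^{\frac12} \leq C(M_0,\ldots)(\epsilon^2 + \epsilon^{-2}\cdot\epsilon^2\cdot\epsilon^{2-\iota} + \epsilon^{-2}\cdot\epsilon^{5/2}) \leq C\epsilon^{2}\cdot\epsilon^{-\iota/2}$ for $\epsilon_0$ small enough depending on $\iota$, which strictly improves the bootstrap and hence (by continuity of $\mathfrak{E}$ and the a priori existence interval) shows the bound $\mathfrak{E}^{\frac12} \leq C\epsilon^{2-\iota}$ persists on all of $[0,\min(\mathscr{T}\epsilon^{-2},T_0)]$. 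The delicate point throughout is bookkeeping: ensuring every term not of the benign cubic-with-gain type is either genuinely $O(\epsilon^5)$ in $L^2$, reducible to such by the almost-orthogonality time-integration trick, or already cancelled by a normal form; this is exactly the content assembled piecemeal in \S 4.3 and \S 4.2, so the proof here is mainly the act of collecting those estimates and running Gr\"onwall.
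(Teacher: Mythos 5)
Your overall architecture matches the paper's (energy identity, normal forms and third-order corrections killing the cubic terms, almost-orthogonality for the commutator and residual terms, continuity argument), but the quantitative bookkeeping contains gaps that would prevent the argument from closing. The central one is the form of the energy inequality. The paper establishes the fully cubic bound $\frac{d\mathfrak{E}}{dt} \leq C(E^{\frac12} + \epsilon^2)^3$, whose every term is $O(\epsilon^6)$ under the bootstrap $E^{\frac12} \lesssim \epsilon^2$ and hence integrates to $O(\epsilon^4)$ over times $O(\epsilon^{-2})$ with \emph{no Gr\"onwall exponential at all} — the continuity argument is a direct comparison $\mathfrak{E}(t) \leq \mathfrak{E}(0) + C_0 t \epsilon^6$. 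Your schematic inequality retains terms $C\epsilon^2 E^{\frac12}$ and $C\epsilon^5$ (and, in the ``resolved'' version, $C\epsilon^2\mathfrak{E}^{\frac12} + C\epsilon^5$): under the bootstrap the first is $O(\epsilon^4)$ and the second $O(\epsilon^5)$, which integrate to $O(\epsilon^2)$ and $O(\epsilon^3)$ respectively, yielding only $\mathfrak{E}^{\frac12} \lesssim \epsilon$ or $\epsilon^{\frac32}$ — far too weak, and indeed your own bootstrap arithmetic confirms this ($\epsilon^{-2}\cdot\epsilon^{\frac52} = \epsilon^{\frac12}$, not $\epsilon^{2-\iota/2}$). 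Such genuinely quadratic contributions are exactly what the normal forms and the almost-orthogonality arguments are there to remove; a constant forcing term is only harmless if it is $O(\epsilon^6)$. Relatedly, your proposed time-integration-by-parts for the residual terms $\epsilon^4 S'_m e^{m\jvec\phi}$ does not gain the needed factor of $\epsilon$, because $\partial_t e^{m\jvec\phi} = m\jvec\omega e^{m\jvec\phi}$ is $O(1)$ for $m \neq 0$; the paper instead exploits that $\mathcal{A}^{-1}D_t\mathbf{R}^{[j]}$ is almost-holomorphic while $F_0$ is scalar-valued and $\overline{S}'_m e^{-m\jvec\phi}$ is almost-holomorphic, so each pairing gains a factor through $(\nht - \nht_0)$ or $D_t\Re(\mathbf{R}^{[j]})$.

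The second gap is the source of the loss $\epsilon^{-\iota}$. You attribute it to a Gr\"onwall factor $e^{C\epsilon^{-\iota}}$, but that quantity is unbounded as $\epsilon \to 0$ and cannot produce a bound of the form $C\epsilon^{2-\iota}$. In the paper the first continuity argument gives the clean bound $\mathfrak{E}^{\frac12} \leq C_1\epsilon^2$ with no loss, but only on a possibly shorter interval $[0, \mathscr{T}'\epsilon^{-2}]$ with $C_0\mathscr{T}' \leq 2M_0$; the loss arises from iterating this finite-time estimate $n$ times (following Rauch--Lannes and the renormalization-group idea), so the constant grows geometrically as $C_1^n$ while the time covered grows linearly as $n\mathscr{T}'\epsilon^{-2}$. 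Choosing $n \sim \iota|\log\epsilon|/\log C_1$ keeps $C_1^n \leq \epsilon^{-\iota}$ and, for $\epsilon_0$ small, makes $n\mathscr{T}' \geq \mathscr{T}$. Without this logarithmic iteration you cannot reach the full time $\mathscr{T}\epsilon^{-2}$ for arbitrary $\mathscr{T}$.
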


\begin{proof}
We would like to demonstrate first that the inequality $$\frac{d\mathfrak{E}}{dt} \leq C(E^\frac12 + \epsilon^2)^3$$ holds.  Begin by expanding $\frac{d\mathfrak{E}}{dt}$ through Definition \ref{NewEnergy} and Proposition \ref{BasicEnergyIdentity} as follows:
\begin{align}\label{PrelimEnergyIdentity}
\frac{d\mathfrak{E}}{dt} & = \sum_{|j| \leq s} \iint \frac{2}{\mathcal{A}}D_t \mathbf{R}^{[j]} \cdot \left(D_t^2 - \mathcal{A}(\zeta_\beta\partial_\alpha - \zeta_\alpha\partial_\beta)\right)\mathbf{R}^{[j]} \, d\alpha\, d\beta \notag \\
& \qquad + \iint - \frac{1}{\mathcal{A}} U_{\kappa^{-1}}\left(\frac{\mathfrak{a}_t}{\mathfrak{a}}\right)|D_t\mathbf{R}^{[j]}|^2 - \frac{1}{\mathcal{A}} U_{\kappa^{-1}}\left(\frac{\mathfrak{a}_t}{\mathfrak{a}}\right)|D_t\mathbf{S}^{[j]}|^2  \, d\alpha \, d\beta \notag \\
& \qquad + \iint \frac{2}{\mathcal{A}}D_t \mathbf{S}^{[j]} \cdot \left(D_t^2 - \mathcal{A}(\zeta_\beta\partial_\alpha - \zeta_\alpha \partial_\beta)\right)\mathbf{S}^{[j]} \, d\alpha\, d\beta \notag \\
& \qquad + \frac{d\mathfrak{E}_3}{dt} + \iint  - \mathbf{R}^{[j]} \cdot \left(D_t\zeta_\beta\mathbf{R}^{[j]}_\alpha - D_t\zeta_\alpha\mathbf{R}^{[j]}_\beta\right) - \mathbf{S}^{[j]} \cdot \left((D_t\zeta_\beta)\mathbf{S}^{[j]}_\alpha - (D_t\zeta_\alpha)\mathbf{S}^{[j]}_\beta\right) \, d\alpha \, d\beta \; \notag
\end{align}
By Proposition \ref{DiffQuantitiesEstimates} and Corollary \ref{NewBoundsOld}, the second line is bounded by $C(E^\frac12 + \epsilon^2)^3$.  By our work in \S 4.3.4 we know that the fourth line is of size at most $C(E^\frac12 + \epsilon^2)^3$.  The remaining terms are analyzed by decomposing them as in \S4.2, except that in some of these terms occurrences of $D_t \rho^{[j]}$ and $D_t \sigma^{[j]}$ in \S 4.2 are here replaced by $D_t \mathbf{R}^{[j]}$ and $D_t \mathbf{S}^{[j]}$ where appropriate.  This replacement does not change the estimates thanks to Corollary \ref{NewBoundsOld}.  Generally we decompose all of the factors of the nonlinearities into sums of remainders and approximations.  Most of these terms are straightforwardly estimated using the estimates of \S 4.1.  We list only the terms requiring further treatment:

\begin{enumerate}
\item[(1)]{Ostensibly quadratic terms involving operators of the form $[T, \nht]\theta$ with $\theta = -\nht \theta$.}
\item[(2)]{Terms involving the residual of the equation $\mathcal{P}\theta = G_\theta$.}
\end{enumerate}

\vspace{0.5cm}

\textbf{(1)} There are several quadratic terms that can be quickly written as cubic using almost-orthogonality methods.  For example, for $\theta = \rho, \sigma$ and $\Theta = \mathbf{R}, \mathbf{S}$ respectively,
$$\iint -\frac{1}{\mathcal{A}} D_t\Theta^{[j]} \cdot [\mathcal{P}, \nht]\partial^j\theta \, d\alpha \, d\beta$$
As it stands, the commutator $[\mathcal{P}, \nht]\partial^j\theta$ is only quadratic.  To treat this term, we use the identity $(I - \nht)[T, \nht] = [T, \nht](I + \nht)$ to exploit almost-orthogonality.  Write 
\begin{align*}
\frac{1}{\mathcal{A}}D_t\Theta^{[j]} & = \frac{1}{\mathcal{A}} D_t\mathcal{Q}_\theta + \frac12(I - \nht)\frac{1}{\mathcal{A}}D_t\theta^{[j]} + \frac12 \frac{1}{\mathcal{A}}[D_t, \nht]\theta^{[j]} + \frac12 \left[\frac{1}{\mathcal{A}}, \nht \right] D_t\theta^{[j]}
\end{align*}
Then, letting $\sim$ denote that we have omitted terms of size $C(E^\frac12 + \epsilon^2)^3$, this yields
\begin{align*}
\iint -\frac{1}{\mathcal{A}} D_t\Theta^{[j]} \cdot [\mathcal{P}, \nht]\partial^j\theta \, d\alpha \, d\beta & \sim \iint -\frac12(I - \nht)\frac{1}{\mathcal{A}} D_t\theta^{[j]} \cdot [\mathcal{P}, \nht]\partial^j\theta \, d\alpha \, d\beta \\
& = \iint -\frac{1}{\mathcal{A}} D_t\theta^{[j]} \cdot \frac12(\nht- \nht^*)[\mathcal{P}, \nht]\partial^j\theta \, d\alpha \, d\beta \\
& \quad + \iint -\frac{1}{\mathcal{A}} D_t\theta^{[j]} \cdot [\mathcal{P}, \nht](I + \nht)\partial^j\theta \, d\alpha \, d\beta \\
& = \iint -\frac{1}{\mathcal{A}} D_t\theta^{[j]} \cdot \frac12(\nht- \nht^*)[\mathcal{P}, \nht]\partial^j\theta \, d\alpha \, d\beta \\
& \quad + \iint -\frac{1}{\mathcal{A}} D_t\theta^{[j]} \cdot [\mathcal{P}, \nht][\nht, \partial^j]\theta \, d\alpha \, d\beta
\end{align*}
All of these integrals are controlled by $C(E^\frac12 + \epsilon^2)^3$ by Propositions \ref{SingularIntegralHsEstimate}, \ref{DiffQuantitiesEstimates}, and \ref{CarefulControlRhoSigma}.  The other terms arising from the energy containing the commutator $[\mathcal{P}, \nht]$ are treated in the same way.

Similarly, the term $$\iint \frac{D_t\mathbf{R}^{[j]}}{\mathcal{A}} \cdot \frac12(I - \nht)\Bigl(G - \tilde{\mathcal{P}}(I - \nht_{\tilde{\zeta}})\tilde{\znew}\kvec\Bigr) \, d\alpha \, d\beta$$ contributes the following difference of commutators: $$\iint \frac{D_t\mathbf{R}^{[j]}}{\mathcal{A}} \cdot \frac12(I - \nht)\Bigl([D_t, \nht]D_t\zeta^\dagger - [\tilde{D}_t, \tilde{\nht}]\tilde{D}_t\tilde{\zeta}^\dagger\Bigr) \, d\alpha \, d\beta$$  As above, we can decompose this difference as a sum of differences involving approximate and remainder quantities and use an almost orthogonality argument to gain an extra factor of smallness.  This implies the above is bounded by $C(E^\frac12 + \epsilon^2)^3$.  The other terms arising from this quadratic commutator are treated similarly.

\vspace{0.5cm}

\textbf{(2)}  As mentioned in \S 3, the residual is only of appropriate size provided it is estimated in the context of the energy estimate.  Recall from Lemma \ref{AbstractEpsilon4} that the residual yields an integral of the form
\begin{align*}
\iint \frac{1}{\mathcal{A}} D_t \mathbf{R}^{[j]} \cdot \frac12 (I - \nht)\partial^j(I - \nht)\epsilon^4\left( (I - \nht_0)F_0 + \frac12 (I + \nht_0)F + \sum_{n = -3}^{-1} S_n e^{n\jvec\phi}\ivec + \sum_{0 < |m| \leq 3} S^\prime_m e^{m\jvec\phi}\right) \, d\alpha \, d\beta
\end{align*}
As in (1) we know that $\mathcal{A}^{-1}D_t\mathbf{R}^{[j]}$ is almost-holomorphic, and so it suffices to treat
\begin{align*}
\iint \frac{1}{\mathcal{A}} D_t \mathbf{R}^{[j]} \cdot \epsilon^4 \partial^j\left( (I - \nht_0)F_0 + \frac12 (I + \nht_0)F + \sum_{n = -3}^{-1} S_n e^{n\jvec\phi}\ivec + \sum_{0 < |m| \leq 3} S^\prime_m e^{m\jvec\phi}\right) \, d\alpha \, d\beta
\end{align*}
First, by Lemma \ref{AbstractEpsilon4} we know that $F_0$ is scalar-valued.  Therefore we can write
\begin{align*}
& \quad \iint \frac{1}{\mathcal{A}} D_t \mathbf{R}^{[j]} \cdot \epsilon^4 \partial^j\left((I - \nht_0)F_0\right) \, d\alpha \, d\beta \\
& = \iint (I - \nht_0)\frac{1}{\mathcal{A}} D_t \mathbf{R}^{[j]} \cdot \epsilon^4\partial^j F_0 \, d\alpha \, d\beta \\
& \sim \iint (\nht - \nht_0)\frac{1}{\mathcal{A}} D_t \mathbf{R}^{[j]} \cdot \epsilon^4\partial^j F_0 \, d\alpha \, d\beta + \iint \frac{1}{\mathcal{A}} D_t \Re(\mathbf{R}^{[j]}) \cdot \epsilon^4\partial^j F_0 \, d\alpha \, d\beta 
\end{align*}
and all of these integrals are of size at most $C(\mathfrak{E}^\frac12 + \epsilon^2)^3$.  A similar almost-orthogonality argument treats the term
\begin{align*}
& \quad \iint \frac{1}{\mathcal{A}} D_t \mathbf{R}^{[j]} \cdot \epsilon^4 \partial^j\frac12 (I + \nht_0)F \, d\alpha \, d\beta \\
& \sim \iint \frac{1}{\mathcal{A}} D_t \mathbf{R}^{[j]} \cdot \epsilon^4 \partial^j\frac12 (\nht_0 - \nht)^*F \, d\alpha \, d\beta
\end{align*}
which is now seen to be of size at most $C(\mathfrak{E}^\frac12 + \epsilon^2)^3$.  Since the terms $\sum_{n = -3}^{-1} S_n e^{n\jvec\phi}\ivec + \sum_{m = -3}^{-1} S^\prime_m e^{m\jvec\phi}$ are all almost-antiholomorphic, they are disposed of similarly.  Finally, taking only the term $m = 1$ above for simplicity, we know by Lemma \ref{FlatHilbertTransformExpansion} that $\overline{S}^\prime_1e^{-\jvec\phi} = \frac12(I + \nht_0)\overline{S}^\prime_1e^{-\jvec\phi}$ up to a term of size at most $O(\epsilon^2)$ in $L^2$.  Therefore we can use almost orthogonality to write
\begin{align*}
& \quad \iint \frac{1}{\mathcal{A}} D_t \mathbf{R}^{[j]} \cdot \epsilon^4\partial^j\left(S^\prime_1 e^{\jvec\phi}\right) \, d\alpha \, d\beta \\
& = \iint \frac{1}{\mathcal{A}} D_t \mathbf{R}^{[j]} \cdot \epsilon^4\partial^j\left(S^\prime_1 e^{\jvec\phi} + \overline{S}^\prime_1 e^{-\jvec\phi}\right) \, d\alpha \, d\beta - \iint \frac{1}{\mathcal{A}} D_t \mathbf{R}^{[j]} \cdot \epsilon^4\partial^j\left(\overline{S}^\prime_1 e^{-\jvec\phi}\right) \, d\alpha \, d\beta \\
& \sim \iint \frac{1}{\mathcal{A}} D_t \Re(\mathbf{R}^{[j]}) \cdot \epsilon^4\partial^j\left(S^\prime_1 e^{\jvec\phi} + \overline{S}^\prime_1 e^{-\jvec\phi}\right) \, d\alpha \, d\beta \\
& \quad - \iint \frac{1}{\mathcal{A}} D_t \mathbf{R}^{[j]} \cdot \frac12(\nht_0 - \nht)^*\epsilon^4\partial^j\left(\overline{S}^\prime_1 e^{-\jvec\phi}\right) \, d\alpha \, d\beta
\end{align*}
all of which are bounded by $C(\mathfrak{E}^\frac12 + \epsilon^2)^3$.  The estimates for the residual of the time derivative is essentially the same, so we do not estimate it explicitly.  Summing all of these estimates gives us (a).  

\vspace{0.5cm}

Applying Corollary \ref{NewBoundsOld} gives us the inequality
\begin{equation*}
\frac{d\mathfrak{E}}{dt} \leq C(\mathfrak{E}^\frac12 + \epsilon^2)^3
\end{equation*}
We begin a continuity argument.  By hypothesis we know that $\mathfrak{E}(0) = M_0\epsilon^4$.  Let $T^*$ be the first time at which $\mathfrak{E}(T^*) = 4M_0\epsilon^4$.  If $T^* \geq \mathscr{T}\epsilon^{-2}$, then choose $\mathscr{T}^\prime = \mathscr{T}$.  If not, then on the interval $[0, T^*]$ we have $\frac{d\mathfrak{E}}{dt} \leq C_0\epsilon^6$ from which we have immediately that $\mathfrak{E}(t) \leq \mathfrak{E}(0) + C_0t\epsilon^6$.  But then if we choose $\mathscr{T}^\prime$ so that $C_0\mathscr{T}^\prime \leq 2M_0$ and assume further that $T^* \leq \mathscr{T}^\prime\epsilon^{-2}$, we find that at $t = T^*$, $$4M_0\epsilon^4 = \mathfrak{E}(T^*) \leq \mathfrak{E}(0) + C_0\mathscr{T}^\prime\epsilon^4 \leq 3M_0\epsilon^4$$ which is a contradiction.

Finally, following the idea given in \cite{RauchLannesLogTime} and \cite{SalemRenormGroup}, we remove the restriction on the time $\mathscr{T}^\prime \leq \mathscr{T}$ in the above estimate by extending the validity of the a priori estimate by a logarithmic factor of $\epsilon$ at the expense of enlarging the error bound slightly.  We have just established that there exists a time $\mathscr{T}^\prime \leq \mathscr{T}$ so that $$\mathfrak{E}(t)^\frac12 \leq C_1\epsilon^2, \qquad 0 \leq t \leq \mathscr{T}^\prime\epsilon^{-2}$$  Introduce an $n \in \mathbb{N}$ to be fixed later.  Applying the above estimate $n$ times yields $$\mathfrak{E}(t) \leq C_1^n \epsilon^2, \qquad 0 \leq t \leq n\mathscr{T}^\prime\epsilon^{-2}$$  Suppose that $\iota > 0$ is chosen so that $C_1^n \leq \epsilon^{-\iota}$.  Then $n \leq \iota |\log(\epsilon)|/\log(C_1)$.  For $\epsilon_0$ chosen small enough the set of all positive $n$ satisfying this inequality is nonempty; fix $n$ to be the largest such satisfying this inequality.  Then we have $$\mathfrak{E}(t) \leq \epsilon^{2 - \iota}, \qquad 0 \leq t \leq \lfloor \iota |\log(\epsilon)|/\log(C_1) \rfloor \mathscr{T}^\prime\epsilon^{-2}$$  But now for any $\iota > 0$ and $\mathscr{T} > 0$ we please, we may choose $\epsilon_0 > 0$ so small depending on $\iota, \mathscr{T}^\prime, \|A_0\|_{H^{s + 13} \cap H^3(\delta)}$ so that $\lfloor \iota |\log(\epsilon)|/\log(C_1) \rfloor \mathscr{T}^\prime \geq \mathscr{T}$, as desired.
\end{proof}

	\subsection{Construction of Appropriate Initial Data}

Now that we have a suitable a priori estimate we can show that there is a solution to the water wave problem \eqref{WaterWaveLagrangeFirstPass}-\eqref{XiTAnalyticFirstPass} which remains closer than $O(\epsilon^2)$ in Sobolev space to the approximate solution for $O(\epsilon^{-2})$ times.  To start, we need the following local well-posedness result with blow-up alternative of \cite{WuLocal3D}:

\begin{theorem}\label{LocalWellPosednessWW}
(c.f. also Theorem 4.3 of \cite{WuGlobal3D})  Let $s \geq 5$ be given, and suppose that we are given initial data 
\begin{align*}
\Xi(0) - P & = \zeta_0 - P \in \dot{H}^\frac12 \cap \dot{H}^{s + 1} \\
\Xi_t(0) & = u_0 \in H^{s + \frac12} \\
\Xi_{tt}(0) &= w_0 \in H^s \\
\mathfrak{a}(0) & = \mathfrak{a}_0 \in H^s
\end{align*}
for the system \eqref{WaterWaveLagrangeFirstPass}-\eqref{XiTAnalyticFirstPass} which satisfies the compatibility conditions
$$\Re(\zeta_0) = 0 \qquad (I - \nht_{\zeta_0})u_0 = 0 \qquad w_0 + \kvec = \mathfrak{a}_0(\partial_\alpha \zeta_0 \times \partial_\beta \zeta_0)$$ with $\mathfrak{a}_0$ given by the formula in Proposition \ref{BADtAFormulas}(b), and suppose further that there are numbers $\nu_0, N_0 > 0$ so that $\zeta_0$ satisfies the chord-arc condition $$\nu_0 \leq \sup_{(\alpha, \beta) \neq (\alpha^\prime, \beta^\prime)} \frac{|\zeta_0(\alpha, \beta) - \zeta_0(\alpha^\prime, \beta^\prime)|}{|(\alpha, \beta) - (\alpha^\prime, \beta^\prime)|} \leq N_0$$ as well as that $|\partial_\alpha\zeta_0 \times \partial_\beta \zeta_0|^{-1} \leq N_0$.  Then there exists a time $T_0 > 0$ and constants $\nu, N$ depending on $\|\,|\mathcal{D}|^\frac12(\zeta_0 - P)\|_{H^{s + \frac12}}$, $\|u_0\|_{H^{s + \frac12}}$, and $\|w_0\|_{H^s}$ for which there is a solution $\Xi$ satisfying \eqref{WaterWaveLagrangeFirstPass}-\eqref{XiTAnalyticFirstPass} with the following properties for all $0 \leq t \leq T_0$:
\begin{itemize}
\item{$\partial_t^j \Xi \in C^{2 - j}([0, T_0], H^{s + 1 - j/2})$}
\item{$$\nu \leq \sup_{(\alpha, \beta) \neq (\alpha^\prime, \beta^\prime)} \frac{|\Xi(\alpha, \beta) - \Xi(\alpha^\prime, \beta^\prime)|}{|(\alpha, \beta) - (\alpha^\prime, \beta^\prime)|} \leq N$$}
\item{$|\Xi_\alpha \times \Xi_\beta|^{-1} \leq N$}
\end{itemize}
Moreover, if $T_0^*$ is the supremum of all such times $T_0$ on which the above solution $\Xi$ with the above properties exists, then either $T_0^* = \infty$ or $$\|\Xi_t\|_{W^{\lfloor s/2 \rfloor + 3, \infty}} + \|\Xi_{tt}\|_{W^{\lfloor s/2 \rfloor + 3, \infty}} + \sup_{(\alpha, \beta) \neq (\alpha^\prime, \beta^\prime)} \frac{|\Xi(\alpha, \beta) - \Xi(\alpha^\prime, \beta^\prime)|}{|(\alpha, \beta) - (\alpha^\prime, \beta^\prime)|} + \frac{1}{|\Xi_\alpha \times \Xi_\beta|} \not\in L^\infty[0, T_0^*)$$
\end{theorem}

Ideally we could take the approximate solution $\tilde{\zeta}$ of \S 3 and use $(\tilde{\zeta}, \tilde{\zeta}_t, \tilde{\zeta}_{tt}, \tilde{\mathcal{A}})$ as initial data. However, this candidate for the initial data need not satisfy the compatibility conditions in Theorem \ref{LocalWellPosednessWW}.  To rectify this, the a priori bound of the last section guarantees that if we can find data sufficiently close to the approximate solution, the resulting solution will remain close for the appropriate time scales.  Hence it suffices to construct initial data so that the energy of the remainder is sufficiently small initially.  This is done in the following

\begin{proposition}\label{InitialDataExists}
Let $\epsilon_0 > 0$ be chosen sufficiently small.  Then there exists initial data $(\zeta_0, v_0, w_0, \mathfrak{a}_0)$ satisfying the compatibility conditions for the water wave problem.  Moreover, with this initial data the initial energy $\mathfrak{E}(0)$ constructed through $\rho(0)$ and $\sigma(0)$ has the property that $\mathfrak{E}(0)^\frac12 \leq C\epsilon^3$.
\end{proposition}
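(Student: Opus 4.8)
The plan is to build the initial data by starting from the approximate solution $\tilde\zeta(0)$ and then projecting/correcting it so that the three compatibility conditions of Theorem \ref{LocalWellPosednessWW} are met exactly, while controlling the size of each correction. Concretely, the construction proceeds in the order in which the compatibility conditions appear. First, since $\tilde\zeta = P + \tilde\lambda$ and $\tilde\lambda$ is not exactly $\ivec,\jvec$-valued (its scalar part vanishes only to the orders computed in \S 3, e.g.\ the scalar part of $\lambda^{(3)}$ in \eqref{Lambda3Formula} is exactly zero but the scalar part of $\znew^{(4)}\kvec$-type correctors need not be), I take $\zeta_0$ to be $\tilde\zeta(0)$ with its scalar part removed, i.e.\ $\zeta_0 = P + \vecpart(\tilde\lambda(0))$, which costs at most $O(\epsilon^4)$ in $H^{s+\frac12}$ by the residual analysis of \eqref{LambdaFromZApprox}; equivalently one may define $\znew_0$ directly and set $\zeta_0 = (I + \nht_{\zeta_0} - \mathcal{K}_{\zeta_0})\znew_0\kvec$ but this requires solving a fixed-point problem for $\zeta_0$, which is cleaner to avoid. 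Then $\Re(\zeta_0) = 0$ holds by construction and $\zeta_0$ still satisfies the chord-arc condition for $\epsilon_0$ small.

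Second, to enforce $(I - \nht_{\zeta_0})v_0 = 0$, I set $v_0 := \tfrac12(I + \nht_{\zeta_0})\,\tilde D_t\tilde\zeta(0)$ (or, what differs by an acceptable amount, apply the projection to $\partial_t(\tilde\zeta\circ\tilde\kappa)(0)$). Since $\tilde D_t\tilde\zeta$ is, to the order to which the approximate solution solves \eqref{ZetaTAnalyticFirstPass} — namely up to the residual in \eqref{BFormulaApprox} and the Hilbert transform error of Lemma \ref{HMinusHTildeEstimates} — already anti-invariant under $\nht$, the difference $v_0 - \tilde D_t\tilde\zeta(0)$ is $O(\epsilon^4)$ in $H^{s+\frac12}$ by Propositions \ref{SingularIntegralHsEstimate}, \ref{DiffHilbertTransformHs} and Lemma \ref{HMinusHTildeEstimates}. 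Third, I define $\mathfrak{a}_0$ by the formula of Proposition \ref{BADtAFormulas}(b) evaluated at $\zeta_0, v_0$ (this is forced on us) and then set $w_0 + \kvec := \mathfrak{a}_0(\partial_\alpha\zeta_0 \times \partial_\beta\zeta_0)$, so the third compatibility condition holds identically; the point is then to check that $w_0$ is close to $\tilde\zeta_{tt}(0)$ (resp.\ $\partial_t^2(\tilde\zeta\circ\tilde\kappa)(0)$), which follows because $\mathfrak{a}_0$ is an $O(\epsilon^4)$ perturbation of $\tilde{\mathcal{A}}(0)\cdot(J(\kappa_0)\circ\kappa_0^{-1})^{-1}$ and $\tilde{\mathcal{A}}$ satisfies \eqref{AFormulaApprox}, using Proposition \ref{DiffQuantitiesEstimates}-type arguments applied at a single time.

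With $(\zeta_0, v_0, w_0, \mathfrak{a}_0)$ in hand, the final step is to translate the three bounds $\||\mathcal{D}|^{1/2}(\zeta_0 - \tilde\zeta(0))\|_{H^{s+1/2}}$, $\|v_0 - \tilde D_t\tilde\zeta(0)\|_{H^{s+1/2}}$, $\|w_0 - \tilde\zeta_{tt}(0)\|_{H^s}$, each shown above to be $O(\epsilon^4)$ — in fact only $O(\epsilon^{5/2})$ is claimed in the theorem, giving plenty of room — into a bound on $\mathfrak{E}(0)$. For this I use the definitions \eqref{RhoDefinition}, \eqref{SigmaDefinition} of $\rho(0), \sigma(0)$: these are $\tfrac12(I-\nht_{\zeta_0})$ applied to $(I-\nht_{\zeta_0})\znew_0\kvec - (I-\nht_{\tilde\zeta(0)})\tilde\znew(0)\kvec$ and its $D_t$-analogue, which are $O(\epsilon^4)$ in the relevant norms by the estimates just obtained together with Lemma \ref{TildeHDifferenceEstimates}. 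Then Proposition \ref{CarefulControlRhoSigma} and Corollary \ref{NewBoundsOld} (the normal-form corrections $\mathcal{Q}^{[j]}$ are $O(\epsilon)$ times the base quantities by Proposition \ref{NaiveNormalFormEstimate}, and $|\mathfrak{E}_3| \leq C\epsilon(E^{1/2}+\epsilon^2)^2$) yield $\mathfrak{E}(0)^{1/2} \leq C\epsilon^3$, with room to spare. The main obstacle I anticipate is not the smallness estimates themselves but the bookkeeping of the third compatibility condition: one must verify that solving for $w_0$ from the exactly-imposed relation $w_0 + \kvec = \mathfrak{a}_0(\zeta_{0,\alpha}\times\zeta_{0,\beta})$, with $\mathfrak{a}_0$ determined implicitly by the elliptic-type formula of Proposition \ref{BADtAFormulas}(b), produces a $w_0$ whose distance from $\partial_t^2(\tilde\zeta\circ\tilde\kappa)(0)$ is genuinely $O(\epsilon^{5/2})$ and not merely $O(\epsilon^2)$ — this requires repeating, at the single time $t=0$, the decomposition-into-approximate-and-remainder-quantities argument of \S 4.1 applied to the $\mathcal{A}$-formula, which is where all the work is concentrated.
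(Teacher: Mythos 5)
Your overall strategy (build data near $\tilde\zeta(0)$ satisfying the compatibility conditions, then bound $\mathfrak{E}(0)$) is the right one, but the shortcuts you take in the first two steps create genuine gaps.  First, the fixed-point construction of $\zeta_0$ is not an optional convenience.  The paper's normalization $\kappa(\cdot,0)=P$ — which is what makes $\zeta_0=\Xi_0$ and lets every transformed-variable formula of \S 2--4 be evaluated at $t=0$ — is, by \eqref{KappaDefinition}, \emph{equivalent} to the self-consistency relation $\lambda_0=(I+\nht_{\zeta_0}-\mathcal{K}_{\zeta_0})\znew_0\kvec$, and that relation is exactly what the contraction mapping $\mathbf{F}(f)=(I+\nht_{f+P}-\mathcal{K}_{f+P})\tilde\znew(0)\kvec$ produces.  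If you instead set $\zeta_0=P+\vecpart(\tilde\lambda(0))$, this relation fails by $O(\epsilon^3)$, so $\kappa_0\neq P$, the Lagrangian data $\Xi_0$ is no longer the surface you prescribed, and you must carry a nontrivial initial change of variables through all subsequent estimates — which is strictly more work than the fixed point you are trying to avoid.  (It also does not give you $\Re(\zeta_0)=0$ for free in the way you suggest; in the paper this follows from Proposition \ref{ChangeOfVariablesIJValued} applied to the fixed-point identity.)  Second, $v_0=\tfrac12(I+\nht_{\zeta_0})\tilde D_t\tilde\zeta(0)$ is not admissible: it need not be vector-valued ($\Re(v_0)\neq 0$ in general, so it is not a velocity field), and it does not fix the vertical component.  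The paper's more elaborate choice $v_0=(I+\nht_{\zeta_0})\bigl(\mathbf{n}_0(I+\{\mathbf{n}_0-\kvec\}_3+\{\nht_{\zeta_0}\mathbf{n}_0\}_3)^{-1}\tilde\znew_t(0)\bigr)$ is engineered to achieve three things simultaneously — $(I-\nht_{\zeta_0})v_0=0$, $\Re(v_0)=0$, and $\{v_0\}_3=\tilde\znew_t(0)$ \emph{exactly} — and the last of these is used crucially: it makes the term $(I+\nht-\mathcal{K})(\znew_t-\tilde\znew_t)\kvec$ vanish at $t=0$, which is what closes the self-referential estimate $\|v_0-\tilde\lambda_t(0)\|_{H^{s+1/2}}\leq C\epsilon\|v_0-\tilde\lambda_t(0)\|_{H^{s+1/2}}+C\epsilon^3$.

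Two further points on the final energy bound.  Your repeated claim of $O(\epsilon^4)$ for the various differences is too strong; the residuals of \eqref{LambdaFromZApprox}--\eqref{AFormulaApprox} are of physical size $O(\epsilon^4)$, hence only $O(\epsilon^3)$ in $L^2$-based norms, and $O(\epsilon^3)$ is both what one actually gets and all that the proposition requires.  More seriously, your plan to bound $\|w_0-\tilde\zeta_{tt}(0)\|_{H^s}$ and feed it directly into the definition of $D_t^2\rho(0)$ is precisely the route the paper rejects: one does not have adequate control over $\lambda_{tt}(0)-\tilde\lambda_{tt}(0)$ at the required regularity, and the correct device is to evaluate $D_t^2\rho=\mathcal{A}(\zeta_\beta\rho_\alpha-\zeta_\alpha\rho_\beta)+G_\rho$ at $t=0$, observing that no second time derivatives appear on the right-hand side (and likewise that the implicit formula \eqref{AInitialFormula} for $\mathfrak{a}_0$ involves only $\zeta_0$ and $v_0$).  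This substitution of the evolution equation for the missing second-time-derivative bound is the one idea your proposal is missing in the final step.
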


\begin{proof}
The reader is reminded that we have chosen the initial parametrization so that $\kappa(\alpha, \beta, 0) = P$; therefore all of the formulas derived in the new variables continue to hold in the original Lagrangian variables when $t = 0$.  

In order to construct the initial parametrization, we use Banach's Fixed Point Theorem.  Define the functional $$\mathbf{F}(f) = (I + \nht_{f + P} - \mathcal{K}_{f + P})\tilde{\znew}(0)\kvec$$  By the coarse estimate $\|\mathbf{F}(f)\|_{H^{s + 1}} \leq C(1 + \|f\|_{H^{s + 1}})$ we see that $\mathbf{F} : H^{s + 1} \to H^{s + 1}$.  Moreover, by Proposition \ref{DifferenceHilbertTransformHs}, we have for $f, g \in H^s$ the estimate
\begin{align*}
\|\mathbf{F}(f) - \mathbf{F}(g)\|_{H^{s + 1}} & \leq \|(\nht_{f + P} - \nht_{g + P})\tilde{\znew}(0)\kvec + (\mathcal{K}_{f + P} - \mathcal{K}_{g + P})\tilde{\znew}(0)\kvec\|_{H^{s + 1}} \\
& \leq C\|f - g\|_{H^{s + 1}}\|\tilde{\znew}(0)\|_{W^{\infty, s + 1}} \\
& \leq C\epsilon\|f - g\|_{H^{s + 1}}
\end{align*} and so for $\epsilon_0 > 0$ chosen sufficiently small $\mathbf{F}$ is a contraction mapping.  Hence there exists a unique $\lambda_0 \in H^{s + 1}$ such that $$\lambda_0 = (I + \nht_{\zeta_0} - \mathcal{K}_{\zeta_0})\tilde{\znew}(0)\kvec,$$ where we have denoted $\zeta_0 = \lambda_0 + P$.  Taking the $\kvec$-component of this equation implies $\znew_0 = \tilde{\znew}(0)$, and so Proposition \ref{ChangeOfVariablesIJValued} implies that the scalar part of $\lambda_0$ is zero.  Denote by $\tilde{\nht}|_0$ and $\tilde{\mathcal{K}}|_0$ the operators $\tilde{\nht}$ and $\tilde{\mathcal{K}}$ evaluated at $t = 0$, respectively.  Using Proposition \ref{SingularIntegralHsEstimate} we have the estimate
\begin{align}\label{InitialLambdaFormula}
\lambda_0 - \tilde{\lambda} & =  ((\nht_{\zeta_0} - \mathcal{K}_{\zeta_0}) - (\nht_{\tilde{\zeta}(0)} - \mathcal{K}_{\tilde{\zeta}(0)}))\tilde{\znew}\kvec \notag \\
& \quad + ((\nht_{\tilde{\zeta}(0)} - \mathcal{K}_{\tilde{\zeta}(0)}) - (\tilde{\nht}|_0 - \tilde{\mathcal{K}}|_0))\tilde{\znew}\kvec \\
& \quad + (I + \tilde{\nht}|_0 - \tilde{\mathcal{K}}|_0)\tilde{\znew}(0)\kvec - \tilde{\lambda}(0) \notag
\end{align} Now we can choose $\epsilon_0 < 0$ sufficiently small so that 
\begin{align}\label{HalfDerivLambdaEstimate}
\| \, \lambda_0 - \tilde{\lambda}(0) \|_{H^{s + 1}} & \leq C\epsilon^4 + C\epsilon^3 \leq C\epsilon^3
\end{align}
Using \eqref{InitialLambdaFormula} we similarly have the estimate
\begin{align}\label{FullDerivLambdaEstimate}
\|\,|\mathcal{D}|(\lambda_0 - \tilde{\lambda}(0))\|_{H^s} \leq C\epsilon^3
\end{align}
Let $\mathcal{N}_0 = \partial_\alpha \zeta_0 \times \partial_\beta \zeta_0$ and $\mathbf{n}_0 = \mathcal{N}_0/|\mathcal{N}_0|$ be the outward unit normal to $\Sigma(t)$; choose $$v_0 = (I + \nht_{\zeta_0})\left(\mathbf{n}_0(I + \{\mathbf{n}_0 - \kvec\}_3 + \{\nht_{\zeta_0}\mathbf{n}_0\}_3)^{-1}\tilde{\znew}_t(0)\right)$$  We clearly have $(I - \nht_{\zeta_0})v_0 = 0$, we have $\{v_0\}_3 = \tilde{\znew}_t(0)$ by construction, and $\Re(v_0) = 0$ by the definition of the Hilbert transform.  (The operator $(I + \{\mathbf{n}_0 - \kvec\}_3 + \{\nht_{\zeta_0}\mathbf{n}_0\}_3)^{-1}$ is a small perturbation from the identity and can be constructed as usual using a Neumann series provided $\epsilon_0 > 0$ is chosen to be sufficiently small.  We leave the routine details to the reader.)  Finally, to satisfy the compatibility conditions, we set
$$w_0 = \mathfrak{a}_0(\partial_\alpha\zeta_0 \times \partial_\beta \zeta_0) - \kvec$$
which clearly satisfies $\Re(w_0) = 0$, and where we are again forced by compatibility to define $\mathfrak{a}_0$ so that it satisfies the relation
\begin{align}\label{AInitialFormula}
\mathfrak{a}_0 & = (I - \mathcal{K}_{\zeta_0})^{-1}\Bigl\{\kvec + [\partial_t, \nht_{\zeta_0}]v_0 + [\mathfrak{a}_0(\partial_\beta\zeta_0\partial_\alpha - \partial_\alpha \zeta_0 \partial_\beta), \nht_{\zeta_0}] (I + \nht_{\zeta_0})\znew_0\kvec  \\
& \qquad + (I - \nht)\left(-\mathfrak{a}_0\partial_\beta \zeta_0 \times (\partial_\alpha\mathcal{K}_{\zeta_0} \znew_0 \kvec) + \mathfrak{a}_0\partial_\alpha \zeta_0 \times (\partial_\beta\mathcal{K}_{\zeta_0}\znew_0\kvec) + \mathfrak{a}_0(\partial_\alpha \lambda_0 \times \partial_\beta \lambda_0)\right)\Bigr\}_3 \notag
\end{align}
That such an $\mathfrak{a}_0$ exists follows by a fixed point argument in $H^s$ applied to the map
\begin{align*}
\mathbf{G}(f) & = (I - \mathcal{K}_{\zeta_0})^{-1}\Bigl\{[\partial_t, \nht_{\zeta_0}]v_0 + [(1 + f)(\partial_\beta\zeta_0\partial_\alpha - \partial_\alpha \zeta_0 \partial_\beta), \nht_{\zeta_0}] (I + \nht_{\zeta_0})\znew_0\kvec  \\
& \qquad\qquad + (I - \nht)\Bigl(-(1 + f)\partial_\beta \zeta_0 \times (\partial_\alpha\mathcal{K}_{\zeta_0} \znew_0 \kvec) + (1 + f)\partial_\alpha \zeta_0 \times (\partial_\beta\mathcal{K}_{\zeta_0}\znew_0\kvec) \\
& \qquad\qquad + (1 + f)(\partial_\alpha \lambda_0 \times \partial_\beta \lambda_0)\Bigr)\Bigr\}_3
\end{align*}
We now give another relation between the quantities $v_0$ and $\tilde{\lambda}_t(0)$.  Since $\|\nabla \lambda\|_{L^\infty} \leq C\epsilon$ and the data is constructed to lie in $H^{s + 1} \times H^{s + \frac12} \times H^s$, we can apply Theorem \ref{LocalWellPosednessWW} to construct a solution $\Xi$ having all of the properties listed in the theorem on some time interval $[0, T_0]$.  In particular we have $\zeta_t(0) = v_0$ and $\zeta_{tt}(0) = w_0$.  Taking a derivative of \eqref{LambdaIdentity} with respect to $t$ and using gives
\begin{align*}\label{DiffLambdaIdentity}
\lambda_t - \tilde{\lambda}_t & = [\partial_t, \nht - \mathcal{K}](\znew - \tilde{\znew})\kvec + (I + \nht - \mathcal{K})(\znew_t - \tilde{\znew}_t)\kvec \\
& \quad + \left[\partial_t, \left((\nht - \mathcal{K}) - (\tilde{\nht} - \tilde{\mathcal{K}})\right)\right]\tilde{\znew}\kvec + \left((\nht - \mathcal{K}) - (\tilde{\nht} - \tilde{\mathcal{K}})\right)\tilde{\znew}_t\kvec \\
& \quad + \partial_t\left((I + \tilde{\nht} - \tilde{\mathcal{K}})\tilde{\znew}\kvec - \tilde{\lambda}\right)
\end{align*}
By our construction of $v_0$, the term $(I + \nht - \mathcal{K})(\znew_t - \tilde{\znew}_t)\kvec$ vanishes for $t = 0$.  The other terms evaluated at $t = 0$ give us the estimate
\begin{align*}
\|v_0 - \tilde{\lambda}_t(0)\|_{H^{s + \frac12}} & \leq C\epsilon\|v_0 - \tilde{\lambda}_t(0)\|_{H^{s + \frac12}} + C\epsilon^4 + C\epsilon^3 \\
& \leq C\epsilon\|v_0 - \tilde{\lambda}_t(0)\|_{H^{s + \frac12}} + C\epsilon^3
\end{align*}
and by choosing $\epsilon_0 > 0$ we obtain the estimate
\begin{equation}\label{DtDiffLambdaEstimate}
\|v_0 - \tilde{\lambda}_t(0)\|_{H^{s + \frac12}} \leq C\epsilon^3
\end{equation}

We first use this to give a bound on the remainder of $\mathfrak{a}_0$.  Note that no second derivatives in time appear in the formula \eqref{AInitialFormula}.  Therefore, since $\tilde{\mathcal{A}}(0)$ satisfies the approximate version \eqref{AFormulaApprox} of \eqref{AInitialFormula}, we can decompose $\mathfrak{a}_0 - \tilde{\mathcal{A}}(0)$ as in \eqref{DiffAFormula} and use \eqref{FullDerivLambdaEstimate} and \eqref{DtDiffLambdaEstimate} to arrive at the estimate 
\begin{equation}\label{DiffAInitialEstimate}
\|\mathfrak{a}_0 - \tilde{\mathcal{A}}(0)\|_{H^s} \leq C\epsilon^3
\end{equation}

Next, we use these estimates to show that $\mathfrak{E}(0) \leq C\epsilon^3$. The decompositions here are similar to those in the proof of Proposition \ref{CarefulControlRhoSigma}.  First consider $$\rho(0) = \frac12(I - \nht_{\zeta_0})\left((I - \nht_{\zeta_0})\znew_0\kvec - (I - \nht_{\tilde{\zeta}(0)})\tilde{\znew}(0)\kvec\right)$$  Decomposing $\zeta_0 = \tilde{\zeta}(0) + (\zeta_0 - \tilde{\zeta}(0))$ and using \eqref{HalfDerivLambdaEstimate} and \eqref{FullDerivLambdaEstimate} yields the estimates
$$\|\,(|\mathcal{D}|^\frac12\rho)(0)\|_{H^{s + \frac12}} \leq C\epsilon^3 \qquad \text{and}\qquad \|\,(|\mathcal{D}|\rho)(0)\|_{H^s} \leq C\epsilon^3$$

By Proposition \ref{CarefulControlRhoSigma} and Corollary \ref{NewBoundsOld}, we know that $\|\sigma(0) - (D_t\rho)(0)\|_{H^{s + \frac12}} \leq C\epsilon(\mathfrak{E}(0)^\frac12 + \epsilon^2)$ and $\|(D_t \sigma)(0) - (D_t^2\rho)(0)\|_{H^s} \leq C\epsilon(\mathfrak{E}(0)^\frac12 + \epsilon^2)$.  Hence it suffices to show $\|\sigma(0)\|_{H^{s + \frac12}} \leq C\epsilon^3$ and $\|(D_t^2\rho)(0)\|_{H^{s + \frac12}} \leq C\epsilon^3$.  Write $\sigma$ as
$$\sigma = \frac12(I - \nht)\left([D_t, \nht]\znew\kvec - [\tilde{D}_t, \nht_{\tilde{\zeta}}]\tilde{\znew}\kvec\right) + \frac12(I - \nht)\left((I - \nht)(D_t\znew\kvec - \tilde{D}_t\tilde{\znew}\kvec) + (\nht - \nht_{\tilde{\zeta}})\tilde{D}_t\tilde{\znew}\kvec\right)$$ and decompose into approximate plus remainder quantities.  Then evaluating at $t = 0$ and using \eqref{FullDerivLambdaEstimate} and \eqref{DtDiffLambdaEstimate} yields the estimate $\|\sigma(0)\|_{H^{s + \frac12}} \leq C\epsilon^3$.  Since we do not have adequate control over $\lambda_{tt}(0) - \tilde{\lambda}_{tt}(0)$, we cannot estimate $D_t^2\rho(0)$ as we did for $\sigma(0)$.  Instead, we use \eqref{RhoEquation} to write $$D_t^2\rho = \mathcal{A}(\zeta_\beta\rho_\alpha - \zeta_\alpha \rho_\beta) + G_\rho$$  We note that no second derivatives in time appear in either the null-form term or the cubic term $G_\rho$; similarly the formulas for $\mathfrak{a}_0$ also do not depend on second time derivatives of $\Xi$.  We can then obtain the estimate by decomposing into approximate plus remainder quantities, evaluating at $t = 0$, and applying \eqref{FullDerivLambdaEstimate}, \eqref{DtDiffLambdaEstimate} and \eqref{DiffAInitialEstimate}; we omit the details.
\end{proof}

	\subsection{Long-Time Existence of Wave Packet-Like Solutions}

It is now only a matter of combining Theorem \ref{LocalWellPosednessWW}, Proposition \ref{InitialDataExists} and Proposition \ref{APrioriEnergyBound} in a bootstrapping argument to give the

\begin{proof}[Proof of Theorem \ref{MainTheorem}]

We begin by choosing initial data $A_0 \in H^{s + 13} \cap H^3(\delta)$ of the HNLS equation \eqref{HNLSQuaternion} for some $\delta > 0$, and moreover choose $B(0) = 0$.  By Proposition \ref{HNLSLocallyWellPosed}, there is a time $\mathscr{T} > 0$, a solution $A$ to \eqref{HNLSQuaternion} in $C([0, \mathscr{T}], H^{s + 13} \cap H^3(\delta))$, and a solution $B$ to the equation \eqref{AuxiliaryEquationVector} in $C([0, \mathscr{T}], H^{s + 10} \cap L^2(\delta))$ with $B(0) = 0$.  This guarantees the existence of the formal approximation $\tilde{\zeta}$ constructed in \S 3.

This approximation is used to construct the initial data given in Proposition \ref{InitialDataExists}.  We have by the estimates of that proposition that $\|\nabla(\zeta_0 - P)\|_{L^\infty} \leq C\epsilon$, and so for sufficiently small $\epsilon_0$ the chord arc condition and $|\partial_\alpha \zeta_0 \times \partial_\beta \zeta_0|^{-1} \leq N_0$ hold for some $\nu_0, N_0$.  Thus  we are guaranteed the existence of a solution $\Xi$ to the water wave problem having all of the properties listed in Theorem \ref{LocalWellPosednessWW} on some interval of time $T_0 > 0$.  

We first turn to constructing the change of variables $\kappa$.  We can construct $\kappa$ through \eqref{KappaDefinition}; we know that this $\kappa$ can be written as
$$\kappa(\alpha, \beta, t) - P = \int_0^t b(\kappa(\alpha, \beta, \tau), \tau) \, d\tau$$ where by changing variables in \eqref{BFormulaMultiscale} we know that
$$(I - \oht)(b \circ \kappa) = -[\partial_t, \oht](I + \oht)z\kvec - (I - \oht)[\partial_t, \mathfrak{K}]z\kvec - (I - \oht)\mathfrak{K}z_t\kvec$$  Hence, we can bound $\|\nabla(\kappa - P)\|_{L^\infty}$ by $C\|z_t\|_{W^{1, \infty}}$, and so we can choose $T_0$ sufficiently small so that $\kappa$ is a diffeomorphism.  Then, we can therefore construct $\zeta = \Xi \circ \kappa^{-1}$ on $[0, T_0]$, and all of the formulas and equations in \S2 are now valid there.  

We begin a continuity argument.  Let $T_0^*$ be the largest time for which $\Xi$ exists as in Theorem \ref{LocalWellPosednessWW}, and for which $\kappa$ is a diffeomorphism.  As in in Proposition \ref{APrioriEnergyBound}, let $\mathscr{T}^{\prime\prime}$ denote either $\mathscr{T}^\prime$ in the case where $\iota = 0$ or $\mathscr{T}$ in the case where $\iota > 0$.  If $T_0^* \geq \mathscr{T}^{\prime\prime} \epsilon^{-2}$ then we are done.  If not, then by the estimates on the initial data given in Proposition \ref{InitialDataExists}, the a priori bound of Proposition \ref{APrioriEnergyBound} holds, and so the estimates of \S 4 are valid.  In particular, Proposition \ref{KappaControlledByZeta} now guarantees the control $\|\nabla(\kappa - P)\|_{L^\infty} \leq C\epsilon$, \textit{where $C$ is independent of $T_0^*$}.  Therefore $\Xi = \zeta \circ \kappa$, $\Xi_t = (D_t\zeta) \circ \kappa$ and $\Xi_{tt} = (D_t^2\zeta) \circ \kappa$ must agree with the original Lagrangian quantities by uniqueness, and moreover must satisfy the estimates
\begin{align*}
\|\Xi_t\|_{W^{\lfloor s/2 \rfloor + 3, \infty}} + \|\Xi_{tt}\|_{W^{\lfloor s/2 \rfloor + 3, \infty}} & \leq C(\|D_t\zeta\|_{W^{\lfloor s/2 \rfloor + 3, \infty}} + \|D_t^2\zeta\|_{W^{\lfloor s/2 \rfloor + 3, \infty}}) \\
& \leq C(\|\tilde{D}_t\tilde{\zeta}\|_{W^{\lfloor s/2 \rfloor + 3, \infty}} + \|\tilde{D}_t^2\tilde{\zeta}\|_{W^{\lfloor s/2 \rfloor + 3, \infty}}) + C(E^\frac12 + \epsilon^2) \\
& \leq C\epsilon + C\epsilon^2 \\
& \leq C\epsilon
\end{align*}
as well as
\begin{align*}
\frac{1}{|\Xi_\alpha \times \Xi_\beta|} - 1& \leq \frac{1}{1 - C\|\nabla (\Xi - P)\|_{L^\infty}} - 1 \leq \frac{1}{1 - C\|\nabla (\zeta - \kappa^{-1})\|_{L^\infty}} - 1 \leq C\epsilon
\end{align*}
and
\begin{align*}
\sup_{(\alpha, \beta) \neq (\alpha^\prime, \beta^\prime)} \frac{|\Xi(\alpha, \beta) - \Xi(\alpha^\prime, \beta^\prime)|}{|(\alpha, \beta) - (\alpha^\prime, \beta^\prime)|} - 1 & \leq C\|\nabla (\Xi - P)\|_{L^\infty} \leq C\|\nabla (\zeta - \kappa^{-1})\|_{L^\infty} \leq C\epsilon
\end{align*}
with $C$ independent of $T_0^*$ throughout.  But then by the blow-up alternative of Theorem \ref{LocalWellPosednessWW}, we can continue the solution $\Xi$ to a larger interval $[0, T_1^*]$ with $T_0^* < T_1^*$.  By choosing $T_1^*$ smaller but still strictly greater than $T_0^*$ we can also guarantee that $\kappa$ is a diffeomorphism on $[0, T_1^*]$.  But this contradicts the maximality of $T_0^*$.  Therefore $\Xi$ exists on the time interval $[0, \mathscr{T}^{\prime\prime}\epsilon^{-2}]$ and $\zeta$ also exists on $[0, \mathscr{T}^{\prime\prime}\epsilon^{-2}]$ and satisfies the bounds of Proposition \ref{APrioriEnergyBound} there.
\end{proof}

Finally we give the

\begin{proof}[Proof of Theorem \ref{EulerianTheorem}.]
The conclusion follows immediately from the conclusion of Theorem \ref{MainTheorem} and Corollary \ref{MainTheoremLInftyBounds} along with the definitions of $h, \tau, \tilde{h}, \tilde{\tau}$.  Equally straightforward is showing that the initial data constructed in Proposition \ref{InitialDataExists} also generates appropriate initial data for Theorem \ref{EulerianTheorem}.  We need only show that the initial data $\eta_0, \mathfrak{v}_0$ given by the hypothesis can be used to generate initial data in the sense of Proposition \ref{InitialDataExists}.  

Following the proof of that proposition, we use a contraction mapping argument to construct $\lambda_0 \in H^{s + 1}$ satisfying
$$\lambda_0 = (I + \nht_{\zeta_0} - \mathcal{K}_{\zeta_0})(h_0 \circ \tilde{\tau})\kvec$$
Taking the $\kvec$-component of this relation implies that $\znew_0 = h_0 \circ \tilde{\tau}$, and so by Proposition \ref{ChangeOfVariablesIJValued} we have $\Re(\lambda_0) = 0$.  Since $\tilde{\znew} = \tilde{h} \circ \tilde{\tau}$, we have that
\begin{align*}
\lambda_0 - \tilde{\lambda}(0) & = \left((\nht_{\zeta_0} - \mathcal{K}_{\zeta_0}) - (\tilde{\nht} - \tilde{\mathcal{K}})\right)(h_0 \circ \tilde{\tau}) + (I + \tilde{\nht} - \tilde{\mathcal{K}})U_{\tilde{\tau}}(h_0 - \tilde{h})
\end{align*}
which by hypothesis yields $\|\,|\mathcal{D}|^\frac12(\lambda_0 - \tilde{\lambda}(0))\|_{H^{s + \frac12}} \leq C\epsilon^{2 + \eta}$.  

Similarly, let $\mathcal{N}_0 = \partial_\alpha \zeta_0 \times \partial_\beta \zeta_0$ and $\mathbf{n}_0 = \mathcal{N}_0/|\mathcal{N}_0|$ be the outward unit normal to $\Sigma(t)$; choose $$v_0 = (I + \nht_{\zeta_0})\left(\mathbf{n}_0(I + \{\mathbf{n}_0 - \kvec\}_3 + \{\nht_{\zeta_0}\mathbf{n}_0\}_3)^{-1}(\mathfrak{v}_0 \circ \tilde{\tau})\right)$$  We again have $(I - \nht_{\zeta_0})v_0 = 0$, $\{v_0\}_3 = \mathfrak{v}_0 \circ \tilde{\tau}$, and $\Re(v_0) = 0$ by the definition of the Hilbert transform.  One defines the initial data $w_0, \mathfrak{a}_0$ by compatibility through $\lambda_0$ and $v_0$ as in Proposition \ref{InitialDataExists}.  This allows us to assume local well-posedness of $\Xi$ and hence derive the bound $\|v_0 - \tilde{\lambda}_t\|_{H^{s + \frac12}}$ as in Proposition \ref{InitialDataExists}; the term that was designed to vanish in that estimate is instead bounded by $C\epsilon^{2 + \eta}$ by hypothesis.  One follows the proof of this proposition to similarly bound the initial energy by $\mathfrak{E}(0)^\frac12 \leq C\epsilon^{2 + \eta}$.
\end{proof}

\appendix

\section{Formal Expansion of the Hilbert Transform}

We give a detailed derivation of expressions for the expansion of the Hilbert transform $\nht$ in terms of only the flat Hilbert transform and its commutators with approximate quantities as defined in \S 3.  Our starting point is the power series expansions \eqref{CauchyKernelExpansion}-\eqref{NormalVectorExpansion} allowing us to expand $\nht$ into a series of operators homogeneous in $\lambda$: $$\nht = \sum_{n = 0}^\infty \nht_n$$

\subsection{Contributions from $\nht_1$}

We read off that
\begin{align*}
\nht_1 f & = \frac{1}{2\pi^2} \iint \frac{\lambda - \lambda^\prime}{|P - P^\prime|^3} \kvec f^\prime dP^\prime \\
& \quad \; \frac{1}{2\pi^2} \iint \frac{P - P^\prime}{|P - P^\prime|^3} (\lambda_\alpha^\prime \times \jvec + \ivec \times \lambda_\beta^\prime) f^\prime dP^\prime \\
& \quad \; \frac{1}{2\pi^2} \iint -3\frac{P - P^\prime}{|P - P^\prime|^3}\frac{(P - P^\prime) \cdot (\lambda - \lambda^\prime)}{|P - P^\prime|^2} \kvec f^\prime dP^\prime \\
& := I_1 + I_2 + I_3
\end{align*}
At this point we can write both $I_1$ and $I_2$ in terms of Riesz potentials and Riesz transforms:
\begin{equation*}
I_1 = [\lambda \kvec, -|\mathcal{D}|]f
\end{equation*}
\begin{equation*}
I_2 = \nht_0(-\kvec(\lambda_\alpha \times \jvec + \ivec \times \lambda_\beta)f) = \nht_0\left\{(\xnew_\alpha + \ynew_\beta)f + (\kvec\mathcal{D}\znew)f\right\}
\end{equation*}  We write $I_3$ in the same form by writing it a commutator of $\xnew$ with $\kvec f$ and of $\ynew$ with $\kvec f$ by the convolution operators
\begin{align*}
I_3 & = \frac{1}{2\pi^2} \iint \left(-3\frac{P - P^\prime}{|P - P^\prime|^5}\right)\bigl((\alpha - \alpha^\prime)(\xnew - \xnew^\prime) + (\beta - \beta^\prime)(\ynew - \ynew^\prime)\bigr)\kvec f^\prime dP^\prime \\
& = [\xnew, T_3^\alpha]\kvec f + [\ynew, T_3^\beta]\kvec f,
\end{align*} where we can rewrite these operators $T_3^\alpha$ and $T_3^\beta$ by a computation on the Fourier side, giving
\begin{align*}
T_3^\alpha & = \pv (-3)\frac{1}{2\pi^2}\frac{P\alpha}{|P|^5} \star = \ivec|\mathcal{D}| - \nht_0\kvec\partial_\alpha \\
T_3^\beta & = \pv (-3)\frac{1}{2\pi^2} \frac{P\beta}{|P|^5} \star = \jvec|\mathcal{D}| - \nht_0\kvec\partial_\beta
\end{align*} and so using the formula $|\mathcal{D}| = \nht_0\kvec\mathcal{D}$ we simplify to find that
\begin{align*}
\nht_1 f & = -[\lambda, |\mathcal{D}|]\kvec f \\
& \quad + [\xnew, \ivec|\mathcal{D}| - \nht_0\kvec\partial_\alpha]\kvec f \\
& \quad + [\ynew, \jvec|\mathcal{D}| - \nht_0\kvec\partial_\beta]\kvec f \\
& \quad + \nht_0\left\{(\xnew_\alpha + \ynew_\beta)f + (\kvec\mathcal{D}\znew)f\right\} \\
& = -[\lambda, |\mathcal{D}|]\kvec f + [\xnew \ivec, |\mathcal{D}|]\kvec f] + [\ynew\jvec, |\mathcal{D}|]\kvec f \\
& \quad + \nht_0\left\{(\kvec\mathcal{D}\znew)f\right\} \\
& \quad + [\xnew, \nht_0\partial_\alpha]f + [\ynew, \nht_0\partial_\beta]f + \nht_0\left\{(\xnew_\alpha + \ynew_\beta)f\right\} \\
& = [\znew, |\mathcal{D}|]f + \nht_0\left\{(\kvec\mathcal{D}\znew)f\right\} \\
& \quad + [\xnew, \nht_0]\partial_\alpha f + [\ynew, \nht_0]\partial_\beta f
\end{align*} and so
\begin{align*}
\nht_1 & = [\xnew, \nht_0]\partial_\alpha + [\ynew, \nht_0]\partial_\beta + [\znew, \nht_0]\kvec\mathcal{D} \\
& = [(\xnew + \jvec\znew), \nht_0]\partial_\alpha + 2\znew\kvec\mathcal{R}_2\partial_\alpha \\
& + [(\ynew - \ivec\znew), \nht_0]\partial_\beta - 2\znew\kvec\mathcal{R}_1\partial_\beta \\
& = [(\xnew + \jvec\znew), \nht_0]\partial_\alpha + [(\ynew - \ivec\znew), \nht_0]\partial_\beta
\end{align*}  Introducing the quantities $p_1 = \xnew + \jvec\znew$ and $p_2 = \ynew - \ivec\znew$ along with $\partial_\alpha = \partial_1$ and $\partial_\beta = \partial_2$, we can express this formula compactly as follows:
\begin{equation}
\nht_1 f = \sum_{i = 1}^2 [p_i, \nht_0]\partial_i f
\end{equation}

\subsection{Contributions from $\nht_2$}

We again read off the expansion in terms of Riesz potentials:

\begin{align*}
\nht_2 & = \frac{1}{2\pi^2}\iint\frac{\lambda - \lambda^\prime}{|P - P^\prime|^3}(-3)\frac{(P - P^\prime) \cdot (\lambda - \lambda^\prime)}{|P - P^\prime|^2} \kvec f^\prime dP^\prime \\
& + \frac{1}{2\pi^2}\iint\frac{P - P^\prime}{|P - P^\prime|^3}\left(-\frac{3}{2}\right)\frac{|\lambda - \lambda^\prime|^2}{|P - P^\prime|^2} \kvec f^\prime dP^\prime \\
& + \frac{1}{2\pi^2}\iint\frac{P - P^\prime}{|P - P^\prime|^3}\left(\frac{15}{2}\right)\frac{((P - P^\prime) \cdot (\lambda - \lambda^\prime))^2}{|P - P^\prime|^4} \kvec f^\prime dP^\prime \\
& + \frac{1}{2\pi^2}\iint\frac{\lambda - \lambda^\prime}{|P - P^\prime|^3} (\lambda_\alpha^\prime \times \jvec + \ivec \times \lambda_\beta^\prime) f^\prime dP^\prime \\
& + \frac{1}{2\pi^2}\iint\frac{P - P^\prime}{|P - P^\prime|^3}(-3)\frac{(P - P^\prime) \cdot (\lambda - \lambda^\prime)}{|P - P^\prime|^2} (\lambda_\alpha^\prime \times \jvec + \ivec \times \lambda_\beta^\prime) f^\prime dP^\prime \\
& + \frac{1}{2\pi^2} \iint \frac{P - P^\prime}{|P - P^\prime|^3}(\lambda_\alpha^\prime \times \lambda_\beta^\prime) f^\prime dP^\prime \\
& = I_1 + I_2 + I_3 + I_4 + I_5 - \nht_0(\kvec(\lambda_\alpha \times \lambda_\beta))
\end{align*}Many of the same operators appearing in our calculation of $\nht_1$ appear here.  We have immediately that
\begin{align*}
I_4 & = -[\lambda, |\mathcal{D}|]\left\{(\xnew_\alpha + \ynew_\beta)\kvec f - (\mathcal{D}\znew)f\right\}
\end{align*}  As in the calculation of $\nht_1$, we have
\begin{align*}
I_5 & = \Bigl([\xnew, \ivec|\mathcal{D}| - \nht_0\kvec\partial_\alpha] + [\ynew, \jvec|\mathcal{D}| - \nht_0\kvec\partial_\beta]\Bigr)\left\{(\xnew_\alpha + \ynew_\beta)\kvec f - (\mathcal{D}\znew)f\right\}
\end{align*}  We cast $I_1$ as the sum of two double commutators by the convolution operators
\begin{align*}
T_1^\alpha & = \pv (-3)\frac{1}{2\pi^2}\frac{\alpha}{|P|^5} \star = -\partial_\alpha|\mathcal{D}|\\
T_1^\beta & = \pv (-3)\frac{1}{2\pi^2}\frac{\beta}{|P|^5} \star = -\partial_\beta|\mathcal{D}|
\end{align*} and so
\begin{align*}
I_1 & = -\Bigl[\lambda, [\xnew, \partial_\alpha|\mathcal{D}|] + [\ynew, \partial_\beta|\mathcal{D}|]\Bigr]\kvec f
\end{align*}  Next, writing $|\lambda - \lambda^\prime|^2 = (\xnew - \xnew^\prime)^2 + (\ynew - \ynew^\prime)^2 + (\znew -\znew^\prime)^2$ leads to the following expression of $I_2$ as a sum of double commutators:
\begin{align*}
I_2 & = -\frac{1}{2}\Bigl[\xnew, [\xnew, |\mathcal{D}|\mathcal{D}]\Bigr]\kvec f \\
& - \frac{1}{2}\Bigl[\ynew, [\ynew, |\mathcal{D}|\mathcal{D}]\Bigr]\kvec f \\
& - \frac{1}{2}\Bigl[\znew, [\znew, |\mathcal{D}|\mathcal{D}]\Bigr]\kvec f
\end{align*}  Finally, using the following expression for the convolution operator in the term $I_3$:
\begin{align*}
T_3^{\alpha, \alpha} & = \ivec|\mathcal{D}|\partial_\alpha  + \frac{1}{2}\mathcal{D}|\mathcal{D}| - \frac{1}{2}\nht_0\kvec\partial_\alpha^2 \\
T_3^{\alpha, \beta} & = (\ivec\partial_\beta + \jvec\partial_\alpha)|\mathcal{D}| - \nht_0\kvec\partial_\alpha\partial_\beta\\
T_3^{\beta, \beta} & = \jvec|\mathcal{D}|\partial_\beta  + \frac{1}{2}\mathcal{D}|\mathcal{D}|- \frac{1}{2}\nht_0\kvec\partial_\beta^2
\end{align*} in which we have taken pains to write these operators as differential operators with a Hilbert transform, we have the following expression for $I_3$:
\begin{align*}
I_3 & = \Bigl[\xnew, [\xnew, \ivec|\mathcal{D}|\partial_\alpha  + \frac{1}{2}\mathcal{D}|\mathcal{D}| - \frac{1}{2}\nht_0\kvec\partial_\alpha^2] \Bigr]\kvec f \\
& + \Bigl[\ynew, [\xnew, (\ivec\partial_\beta + \jvec\partial_\alpha)|\mathcal{D}| - \nht_0\kvec\partial_\alpha\partial_\beta] \Bigr]\kvec f \\
& + \Bigl[\ynew, [\ynew, \jvec|\mathcal{D}|\partial_\beta  + \frac{1}{2}\mathcal{D}|\mathcal{D}|- \frac{1}{2}\nht_0\kvec\partial_\beta^2] \Bigr]\kvec f \\
\end{align*}
We now collect like terms from $I_1 + \cdots + I_5$ as follows:
\begin{align*}
& \quad \Bigl[-\lambda + \ivec\xnew + \jvec \ynew, \Bigl([\xnew, |\mathcal{D}|\partial_\alpha] + [\ynew, |\mathcal{D}|\partial_\beta ]\Bigr)\Bigr]\kvec f \\
& + [-\lambda + \xnew\ivec + \ynew\jvec, |\mathcal{D}|]\left\{(x_\alpha + y_\beta)\kvec f - (\mathcal{D}\znew )f\right\} \\
& -\frac{1}{2}[\znew, [\znew, |\mathcal{D}|\mathcal{D}]]\kvec f \\
& - \frac{1}{2}[\xnew, [\xnew, \nht_0 \kvec \partial_\alpha^2]]\kvec f - \frac{1}{2}[\ynew, [\ynew, \nht_0 \kvec \partial_\beta^2]]\kvec f \\
& - \Bigl([\xnew, \nht_0\kvec\partial_\alpha] + [\ynew, \nht_0\kvec\partial_\beta]\Bigr)\left\{(\xnew_\alpha + \ynew_\beta)\kvec f - (\mathcal{D}\znew)f\right\} \\
& - [\ynew, [\xnew, \nht_0\kvec\partial_\alpha \partial_\beta]]\kvec f \\
& \\
& = \Bigl[-\znew \kvec, \Bigl([\xnew, |\mathcal{D}|\partial_\alpha] + [\ynew, |\mathcal{D}|\partial_\beta ]\Bigr)\Bigr]\kvec f \\
& + [-\znew\kvec, |\mathcal{D}|]\left\{(x_\alpha + y_\beta)\kvec f - (\mathcal{D}\znew )f\right\} - \frac{1}{2}[\znew, [\znew, |\mathcal{D}|\mathcal{D}]]\kvec f \\
& - \frac{1}{2}[\xnew, [\xnew, \nht_0 \kvec \partial_\alpha]] \kvec f_\alpha - \frac{1}{2}[\ynew, [\ynew, \nht_0 \kvec \partial_\beta]]\kvec f_\beta \\
& - [\xnew, \nht_0\kvec\partial_\alpha]\left\{\ynew_\beta \kvec f - (\mathcal{D}\znew)f\right\} \\
& - [\ynew, \nht_0\kvec\partial_\beta]\left\{\xnew_\alpha\kvec f - (\mathcal{D}\znew)f\right\} \\
& - [\ynew, [\xnew, \nht_0\kvec\partial_\alpha \partial_\beta]]\kvec f
\end{align*}
In simplifying so as to reduce the degree of the operators in the above commutators, the components of the quantity $\lambda_\alpha \times \lambda_\beta$ occur naturally.  Denote the $i$th component of this quantity by $(\lambda_\alpha \times \lambda_\beta)_i$.  Then we continue to simplify:
\begin{align*}
& = [\znew, [\xnew, |\mathcal{D}|]]f_\alpha - [\xnew, |\mathcal{D}|](\znew_\alpha f) + [\xnew, \nht_0\kvec\partial_\alpha]((\mathcal{D}\znew)f) \\
& + [\znew, [\ynew, |\mathcal{D}|]]f_\beta - [\ynew, |\mathcal{D}|](\znew_\beta f) + [\ynew, \nht_0\kvec\partial_\beta]((\mathcal{D}\znew)f) \\
& + [\xnew, [\ynew, \nht_0]]f_{\alpha \beta} - [\ynew, \nht_0](\xnew_\beta f_\alpha) - [\xnew, \nht_0](\ynew_\alpha f_\beta) - \nht_0\{(\lambda_\alpha \times \lambda_\beta)_3 f\} \\
& - \frac{1}{2}[\xnew, [\xnew, \nht_0 \kvec \partial_\alpha]] \kvec f_\alpha - \frac{1}{2}[\ynew, [\ynew, \nht_0 \kvec \partial_\beta]]\kvec f_\beta  - \frac{1}{2}[\znew, [\znew, |\mathcal{D}|]]\mathcal{D}\kvec f \\
& \\
& = [\znew, [\xnew, \nht_0\kvec]]\mathcal{D}f_\alpha - [\znew, \nht_0\kvec](\mathcal{D}\xnew)f_\alpha - [\xnew, \nht_0\kvec](\znew_\alpha(\mathcal{D}f)) - \nht_0\{\ivec (\lambda_\alpha \times \lambda_\beta)_2 f\} \\
& + [\znew, [\ynew, \nht_0\kvec]]\mathcal{D}f_\beta - [\znew, \nht_0\kvec](\mathcal{D}\ynew)f_\beta - [\ynew, \nht_0\kvec](\znew_\beta(\mathcal{D}f)) + \nht_0\{\jvec (\lambda_\alpha \times \lambda_\beta)_1 f\} \\
& + [\xnew, [\ynew, \nht_0]]f_{\alpha \beta} - [\ynew, \nht_0](\xnew_\beta f_\alpha) - [\xnew, \nht_0](\ynew_\alpha f_\beta) - \nht_0\{(\lambda_\alpha \times \lambda_\beta)_3 f\} \\
& + \frac{1}{2}[\xnew, [\xnew, \nht_0 \partial_\alpha]] f_\alpha + \frac{1}{2}[\ynew, [\ynew, \nht_0 \partial_\beta]] f_\beta  - \frac{1}{2}[\znew, [\znew, |\mathcal{D}|]]\mathcal{D}\kvec f
\end{align*}
By further rewriting the above so that all commutators contain only the Hilbert transform $\nht_0$, we can collect the above terms into a compact formula after introducing some notation.  Denoting $\lambda = \lambda_1\ivec + \lambda_2\jvec + \lambda_3\kvec$ as well as $\partial_1 = \partial_\alpha$, $\partial_2 = \partial_\beta$, $\partial_3 = \kvec\mathcal{D}$, we have the formula
\begin{equation}
\nht_2 f = -\sum_{i, j = 1}^3 [\lambda_i, \nht_0]\Bigl((\partial_i \lambda_j)(\partial_j f)\Bigr) + \frac{1}{2}\sum_{i, j = 1}^3 [\lambda_i, [\lambda_j, \nht_0]]\partial_i\partial_j f
\end{equation}  As we did with the operator $\nht_1$, we rewrite this large sum into a smaller sum involving the quantities $p_1 = \xnew + \jvec\znew$ and $p_2 = \ynew - \ivec\znew$ as follows:
\begin{align*}
\nht_2 f & = \biggl(\sum_{i, j = 1}^2 -[p_i, \nht_0](\partial_i p_j)(\partial_j f) + \sum_{i, j = 1}^2 (-1)^i 2\znew\kvec\mathcal{R}_{3 - i}((\partial_i p_j)(\partial_j f))\biggr) \\
& \quad + \biggl( \sum_{i, j = 1}^2 \frac{1}{2}[p_i, [p_j, \nht_0]]\partial_i \partial_j f + \sum_{i, j = 1}^2 (-1)^{j + 1} 2\znew\lambda_j\kvec\mathcal{R}_{3 - i}\partial_i\partial_j f + \sum_{i, j = 1}^2 (-1)^i 2\znew\kvec\mathcal{R}_{3 - i}(p_j \partial_i \partial_j f) \biggr) \\
& = \sum_{i, j = 1}^2 \biggl( -[p_i, \nht_0](\partial_i p_j)(\partial_j f) + \frac{1}{2}[p_i, [p_j, \nht_0]]\partial_i \partial_j f \biggr) \\
& \quad + \sum_{i, j = 1}^2 \biggl( (-1)^{j + 1} 2\znew\lambda_j\kvec\mathcal{R}_{3 - i}\partial_i\partial_j f + (-1)^i 2\znew\kvec\mathcal{R}_{3 - i}\partial_i(p_j \partial_j f) \biggr)
\end{align*}  The second sum above vanishes because of the identity $\sum_{i = 1}^2 (-1)^i \mathcal{R}_{3 - i}\partial_i = \mathcal{R}_1\partial_\beta - \mathcal{R}_2\partial_\alpha = 0$.  Therefore we have the formula
\begin{equation}
\nht_2 f = \sum_{i, j = 1}^2 \biggl( -[p_i, \nht_0](\partial_i p_j)(\partial_j f) + \frac{1}{2}[p_i, [p_j, \nht_0]]\partial_i \partial_j f \biggr)
\end{equation}  

\subsection{Contributions From $\nht_3$.}

We record the kernel of $\mathcal{H}_3$ here.  For brevity, we have further abbreviated $\Delta f := f - f^\prime$:
\begin{align*}
& \left(-\frac{35}{2} \frac{\Delta P(\Delta P \cdot \Delta \lambda)^3}{|\Delta P|^9} + \frac{15}{2}\frac{\Delta P (\Delta P \cdot \Delta\lambda)|\Delta \lambda|^2}{|\Delta P|^7} + \frac{15}{2}\frac{\Delta \lambda(\Delta P \cdot \Delta \lambda)^2}{|\Delta P|^7} - \frac{3}{2}\frac{\Delta \lambda |\Delta \lambda|^2}{|\Delta P|^5}\right)\kvec \\
& + \left(-3\frac{\Delta \lambda(\Delta P \cdot \Delta \lambda)}{|\Delta P|^5} - \frac{3}{2}\frac{\Delta P |\Delta \lambda|^2}{|\Delta P|^5} + \frac{15}{2}\frac{\Delta P(\Delta P \cdot \Delta \lambda)^2}{|\Delta P|^7} \right)(\lambda_\alpha^\prime \times \jvec + \ivec \times \lambda_\beta^\prime) \\
& + \left(\frac{\Delta \lambda}{|\Delta P|^3} - 3\frac{\Delta P}{|\Delta P|^3}\frac{\Delta P \cdot \Delta \lambda}{|\Delta P|^2}\right)(\lambda_\alpha \times \lambda_\beta)
\end{align*}

It suffices for our purposes to recognize that, as was the case for $\nht_1$ and $\nht_2$ above, $\nht_3$ can be written as above involving commutators of the surface coordinates $\lambda_j$, spatial derivatives $\partial_i$ and the flat Riesz transforms $\mathcal{R}_k$.%

\bibliography{Mybib}{}

\begin{thebibliography}{10}

\bibitem{AblowitzSegur}
Mark~J. Ablowitz and Harvey Segur.
\newblock {\em Solitons and the Inverse Scattering Transform}, volume~4 of {\em
  SIAM Studies in Applied Mathematics}.
\newblock SIAM, 2000.

\bibitem{InterpolationSpaces}
J.~Bergh and J.~L\"ofstr\"om.
\newblock {\em Interpolation Spaces: an Introduction}, volume 223 of {\em
  Grundlehren der mathematischen Wissenschaften}.
\newblock Springer-Verlag, 1976.

\bibitem{CazenaveSemilinearSchrodingerEquations}
Thierry Cazenave.
\newblock {\em Semilinear Schr{\"o}dinger Equations}, volume~10 of {\em Courant
  Lecture Notes}.
\newblock American Mathematical Society, 2003.

\bibitem{CoifmanMeyerMcintoshL2BoundsLip}
R.~R. Coifman, G.~David, and Y.~Meyer.
\newblock La solution des conjectures de {C}alder\'on.
\newblock {\em Advances in Mathematics}, 48:144--148, 1983.

\bibitem{CoifmanMeyerMcintoshL2Bounds}
R.~R. Coifman, A.~McIntosh, and Y.~Meyer.
\newblock L'integral de cauchy definit un operateur borne sur ${L}^2$ pour les
  courbes lipschitziennes.
\newblock {\em Ann. of Math., 2nd Series}, 116, no. 2:361--387, 1982.

\bibitem{CraigSchantzSulem}
Walter Craig, Ulrich Schanz, and Catherine Sulem.
\newblock The modulational regime of three-dimensional water waves and the
  davey-stewartson system.
\newblock {\em Analyse nonlineaire}, 14(5):615--667, 1997.

\bibitem{GallaySchneiderKP}
T.~Gallay and G.~Schneider.
\newblock {K}{P} description of unidirectional long waves. {T}he model case.
\newblock {\em Proc. Roy. Soc. Edinburgh Sect. A}, 131:885--898, 2001.

\bibitem{GermainMasmoudiShatah}
P.~Germain, N.~Masmoudi, and J.~Shatah.
\newblock Global solutions for the gravity water waves equation in dimension 3.
\newblock {\em Annals of Mathematics}, 175(2):691--754, 2012.

\bibitem{GhidagliaSautDaveyStewartson}
G.~M. Ghidaglia and J.~C. Saut.
\newblock On the initial value problem for the davey-stewartson systems.
\newblock {\em Nonlinearity}, 3:475--506, 1990.

\bibitem{CliffordAlgebras}
John~E. Gilbert and Margaret A.~M. Murray.
\newblock {\em Clifford Algebras and Dirac operators in harmonic analysis},
  volume~26 of {\em Cambridge studies in advanced mathematics}.
\newblock Cambridge University, 1991.

\bibitem{KSMCubicNonlinearityLongtimeRemainder}
Pius Kirrmann, Guido Schneider, and Alexander Mielke.
\newblock The validity of modulation equations for extended systems with cubic
  nonlinearities.
\newblock {\em Proc. Roy. Soc. Edinburgh Sect. A}, 122(1-2):85--91, 1992.

\bibitem{RauchLannesLogTime}
Jeffrey Rauch and David Lannes.
\newblock Validity of {N}onlinear {G}eometric {O}ptics with {T}imes {G}rowing
  {L}ogarithmically.
\newblock {\em Proc. Am. Math. Soc.}, 129(4):1087--1096, 2000.

\bibitem{SalemRenormGroup}
Walid K.~Abou Salem.
\newblock On the {R}enormalization {G}roup {A}pproach to {P}erturbation
  {T}heory for {PDE}s.
\newblock {\em Ann. Henri Poincare}, (11):1007--1021, 2010.

\bibitem{SchneiderValidityNW}
Guido Schneider.
\newblock {V}alidity and limitation of the {N}ewell-{W}hitehead equation.
\newblock {\em Math. Nachr.}, 176:249--263, 1995.

\bibitem{SchneiderWayveJustifyKdV}
Guido Schneider and C.~Eugene Wayne.
\newblock The long-wave limit for the water wave problem. {I}. {T}he case of
  zero surface tension.
\newblock {\em Comm. Pure Appl. Math.}, 53(12):1475--1535, 2000.

\bibitem{SchneiderWayneNLSJustify}
Guido Schneider and C.~Eugene Wayne.
\newblock {J}ustification of the {NLS} approximation for a quasilinear water
  wave model., 2011.

\bibitem{ShatahNormalForm}
Jalal Shatah.
\newblock {N}ormal forms and quadratic nonlinear {K}lein-{G}ordon equations.
\newblock {\em Comm. Math. Phys.}, 38(5):685--696, 1985.

\bibitem{SteinFourierAnalysis}
Elias~M. Stein.
\newblock {\em Introduction to Fourier Analysis on Euclidean Spaces}.
\newblock Princeton University Press, 1971.

\bibitem{TotzWu2DNLS}
Nathan Totz and Sijue Wu.
\newblock A rigorous justification of the modulation approximation to the 2d
  full water wave problem.
\newblock {\em Communications in Mathematical Physics}, 310:817--883, 2012.

\bibitem{WuLocal3D}
Sijue Wu.
\newblock Well-posedness in {S}obolev spaces of the full water wave problem in
  3-{D}.
\newblock {\em J. Amer. Math. Soc.}, 12(2):445--495, 1999.

\bibitem{WuGlobal3D}
Sijue Wu.
\newblock Global wellposedness of the 3-d full water wave problem.
\newblock {\em Inventiones Mathematicae}, 182, 2010.

\bibitem{ZakharovInfiniteDepth}
V.~E. Zakharov.
\newblock Stability of periodic waves of finite amplitude on the surface of a
  deep fluid.
\newblock {\em Zhurnal Prikladnoi Mekhaniki i Teckhnicheskoi Fiziki},
  9(2):86--94, 1968.

\end{thebibliography}
\bibliographystyle{plain}

\end{document}